\title{Derived localisation of algebras and modules}
\author{C.~Braun}
\address{Department of Mathematics and Statistics\\
	Lancaster University\\
	Lancaster LA1 4YF\\United Kingdom}
\email{c.braun@lancaster.ac.uk}
\author{J.~Chuang}
\address{Department of Mathematics\\
City, University of London\\
Northampton Square\\
London EC1V 0HB\\United Kingdom}
\email{j.chuang@city.ac.uk}
\author{A.~Lazarev}
\address{Department of Mathematics and Statistics\\
Lancaster University\\
Lancaster LA1 4YF\\United Kingdom}
\email{a.lazarev@lancaster.ac.uk}
\thanks{This work was partially supported by EPSRC grants EP/J00877X/1 and EP/J008451/1.}
\subjclass[2010]{18G55, 16S85, 55U99}
\keywords{derived localisation, dg algebra, Ore set, group completion}
\theoremstyle{plain}
\newtheorem{theorem}{Theorem}[section]
\newcommand{\newautoreftheorem}[2]{
\newaliascnt{#1}{theorem}\newtheorem{#1}[#1]{#2}\aliascntresetthe{#1}%
\expandafter\def\csname #1autorefname\endcsname{#2}}
\newtheorem*{theorem*}{Theorem}
\theoremstyle{definition}
\newcommand{\co}{\colon}
\newcommand{\itbbk}{\ensuremath{\mathrlap{I}\kern 0.4ex K}}
\newcommand{\cof}{\rightarrowtail}
\newcommand{\fib}{\twoheadrightarrow}
\newcommand{\acyccof}{\stackrel{\sim}{\rightarrowtail}}
\newcommand{\acycfib}{\stackrel{\sim}{\twoheadrightarrow}}
\newcommand{\we}{\stackrel{\sim}{\rightarrow}}
\newcommand{\ground}{\mathsf{k}}
\newcommand{\id}{\mathrm{id}}
\newcommand{\degree}[1]{\lvert #1 \rvert}
\newcommand{\op}{\mathrm{op}}
\newcommand{\dgalg}{\mathsf{dgAlg}}
\newcommand{\alg}{\mathsf{Alg}}
\newcommand{\undercat}[2]{#1\downarrow #2}
\newcommand{\derundercat}[2]{#1\downarrow^{\mathbb{L}} #2}
\newcommand{\underalg}[1]{\undercat{#1}{\dgalg}}
\newcommand{\derunderalg}[1]{\derundercat{#1}{\dgalg}}
\mathchardef\mhyphen="2D
\newcommand{\modcat}[1]{#1\mhyphen\mathsf{Mod}}
\newcommand{\amod}{\modcat{A}}
\newcommand{\Hmo}{\mathsf{Hmo}}
\newcommand{\SMon}{\mathsf{SMon}}
\newcommand{\SAlg}{\mathsf{SAlg}}
\newcommand{\B}{\mathrm{B}}
\newcommand{\GL}{\mathrm{GL}}
\newcommand{\gl}{\mathfrak{gl}}
\newcommand{\HCC}{\mathrm{HC}}
\newcommand{\CE}{\mathrm{CE}}
\newcommand{\HCE}{\mathrm{HCE}}
\newcommand{\HH}{\mathrm{HH}}
\newcommand{\feyn}{\mathsf{F}}
\newcommand{\ass}{\mathcal{A}ss}
\newcommand{\fass}{\feyn\ass}
\newcommand{\frc}{\vert}
\DeclareMathOperator{\Ho}{Ho}
\DeclareMathOperator*{\colim}{colim}
\DeclareMathOperator*{\holim}{holim}
\DeclareMathOperator*{\hocolim}{hocolim}
\DeclareMathOperator{\Hom}{Hom}
\DeclareMathOperator{\Map}{Map}
\DeclareMathOperator{\RHom}{\mathbb{R}Hom}
\DeclareMathOperator{\REnd}{\mathbb{R}End}
\DeclareMathOperator{\End}{End}
\DeclareMathOperator{\Aut}{Aut}
\DeclareMathOperator{\Diff}{Diff}
\DeclareMathOperator{\Tor}{Tor}
\begin{document}
\def\sectionautorefname{Section}

\begin{abstract}
For any dg algebra $A$, not necessarily commutative, and a subset $S$ in $H(A)$, the homology of $A$, we construct its derived localisation $L_S(A)$ together with a map $A\to L_S(A)$, well-defined in the homotopy category of dg algebras, which possesses a universal property, similar to that of the ordinary localisation, but formulated in homotopy invariant terms. Even if $A$ is an ordinary ring, $L_S(A)$ may have non-trivial homology. Unlike the commutative case, the localisation functor does not commute, in general, with homology but instead there is a spectral sequence relating $H(L_S(A))$ and $L_S(H(A))$; this spectral sequence collapses when, e.g.~$S$ is an Ore set or when $A$ is a free ring.

We prove that $L_S(A)$ could also be regarded as a Bousfield localisation of $A$ viewed as a left or right dg module over itself. Combined with the results of Dwyer--Kan on simplicial localisation, this leads to a simple and conceptual proof of the topological group completion theorem. Further applications include algebraic $K$--theory, cyclic and Hochschild homology, strictification of homotopy unital algebras, idempotent ideals, the stable homology of various mapping class groups and Kontsevich's graph homology.
\end{abstract}

\maketitle
\tableofcontents

\section{Introduction}
Localisation of a commutative ring is among the fundamental tools in commutative algebra and algebraic geometry; it has been well-understood and documented for a long time. Let us recall the basic construction.

Given an element $s$ in a commutative ring $A$, we form its localisation $A[s^{-1}]$ as $A[s^{-1}]:=A\otimes_{\mathbb{Z}[x]}\mathbb{Z}[x,x^{-1}]$ where $A$ is viewed as a $\mathbb{Z}[x]$--algebra via the map $x\mapsto s$. Then $A[s^{-1}]$ is an $A$--algebra having a universal property: any ring map $A\to B$ taking $s$ to an invertible element in $B$, factors uniquely through $A[s^{-1}]$. This construction, and the universal property, easily generalise to an arbitrary multiplicatively closed set of elements $S\subset A$ to produce an $A$--algebra $A[S^{-1}]$. The latter has many good properties: it is flat over $A$, its elements could be represented as fractions with denominators in $S$ and the kernel of the map $A\to A[S^{-1}]$ is easily described in terms of $S$.

This story has an equally straightforward analogue for modules: given an $A$--module $M$, we form its localisation $MS^{-1}$ as $A[S^{-1}]\otimes_AM$. Then there is an $A$--module map $M\to MS^{-1}$ which is easily seen to satisfy an appropriate universal property. Since $A[S^{-1}]$ is $A$--flat, the functor $M\mapsto MS^{-1}$ is exact. Note that (trivially) the localisation of $A$ as an $A$--module coincides with its localisation as a ring. The derived category of $A[S^{-1}]$ is a full subcategory of the derived category of $A$.

Now suppose that the ring $A$ is \emph{not commutative}. It is still possible to form $A[s^{-1}]$ by formally inverting $s$, in other words setting $A[s^{-1}]:=A*_{\mathbb{Z}[x]}\mathbb{Z}[x,x^{-1}]$ where $*_{\mathbb{Z}[x]}$ stands for the coproduct in the category of associative rings under $\mathbb{Z}[x]$. There is still a ring map $A\to A[s^{-1}]$ satisfying an appropriate universal property \emph{in the category of associative rings}. This could be generalised to the case of an arbitrary multiplicatively closed set of elements $S\subset A$. The localisation of a (left) $A$--module $M$ can be defined by the same formula as in the commutative situation and it will also satisfy the same universal property.

However this is as far as the analogy could be extended. The ring $A[S^{-1}]$ will not, in general, be flat over $A$, its elements may not be representable as fractions with denominators in $S$ and the functor $M\mapsto MS^{-1}$ may not be exact. The derived category of $A[S^{-1}]$ may not be a full subcategory of that of $A$.

There is one situation when localisation of a noncommutative ring does have all the good properties of the commutative localisation: when $S$ is central in $A$ or, more generally, is a (left or right) \emph{Ore set}, cf.~for example \cite{Cohn} regarding this notion. On the other hand, one wants to have a good theory of localisation for rings which are not Ore, such as free rings. Cohn's theory of localisations of free ideal rings (firs) \cite{Cohn} suggests that this could be possible.

The main idea of the present paper is to `derive' the localisation functor $A\mapsto A[S^{-1}]$ described above to a functor $A\mapsto L_S(A)$ so that the latter has all the good exactness properties of the commutative localisation. To do that, we embed the category of rings into the category of differential graded (dg) rings (or, more generally, dg algebras over a given commutative ring $\ground$). The category of dg algebras is a closed model category in the sense of Quillen and its category of dg modules is also such. The universal properties described above should then be formulated in the homotopy categories of dg $A$--algebras and (left) dg $A$--modules.

This simple idea indeed works and we can construct the derived algebra localisation of $A$ (which is now taken to be a dg ring) at its homology class $s$ as $L_s(A)=A*^{\mathbb{L}}_{\mathbb {Z}[x]}\mathbb{Z}[x,x^{-1}]$, the \emph{derived} coproduct under $\mathbb{Z}[x]$. In this particular case, we will see that the latter could be computed by replacing just $A$ with a cofibrant dg algebra under $\mathbb{Z}[x]$. This construction can then be extended to a \emph{collection} $S$ of homology classes of $A$. A priori, this construction, although very natural, is rather intractable in general and difficult to compute.

The main technical theorem of the paper is that the derived localisation $L_S(A)$ so defined turns out to be also the Bousfield localisation of $A$ as a (left) dg module over itself; in contrast with the non-derived case, this fact is highly non-trivial to prove. The Bousfield localisation of $A$ as a dg module, being accessible via tools from classical homological algebra, is much more amenable to computation. Therefore, this theorem constitutes the fundamental ingredient one needs in order to work with derived localisation in a meaningful way and is the central technical tool in all our applications. Although this theorem is likely not too surprising for a specialist, it seems that this result, or rather the lack of it, has been the main roadblock to the development of a useful theory of derived noncommutative localisation thus far.

One consequence of our main result is that the homotopy category of dg modules over $L_S(A)$ is equivalent to the homotopy category of dg $A$--modules upon which elements of $S$ act by quasi-isomorphisms. In other words, the close relationship between the derived categories of $A$ and that of its derived localisation is restored. One should mention that the notion of a \emph{module} localisation over a possibly noncommutative ring was already considered in \cite{dwyer:localisationinhomotopytheory} and this work of Dwyer's served as an inspiration for many constructions in the present paper.

It needs to be emphasised that even for an ordinary (i.e.~ungraded) ring $A$ its derived localisation will not, in general, be concentrated in degree zero (although its degree zero homology will coincide with its non-derived localisation). This reinforces the intuition that the derived localisation is a derived functor of the ordinary localisation.

A related phenomenon, which is not seen in the commutative world, is that the homology of $L_S(A)$ is not, generally, the homology of the derived localisation of $H(A)$, but rather, there is a spectral sequence whose $E^2$~term is $H(L_S(H(A)))$ converging to $H(L_S(A))$. The spectral sequence collapses in some special situations, e.g. if $S$ is an Ore set. We extend the classical theory of Ore localisation, obtaining a characterisation of derived localisations at countable Ore sets as the localisations expressible as `derived modules of fractions', a result which appears to be new even in the non-derived setting.

Derived localisation appears in a range of situations, both directly and indirectly. Building on work of Tabuada, To\"en \cite{Toenmorita} has developed in detail the homotopy theory of dg categories and, as one application of this theory, he showed the existence of a derived localisation of dg categories which possesses a certain universal property, essentially equivalent to the one we consider in the case of dg algebras. However, this `many object' derived localisation is, like the dg algebra localisation, rather inaccessible without first developing our fundamental theorem in this setting. The techniques and theory developed here are general enough that they can be extended beyond dg algebras to dg categories and this will be done in a sequel to this paper, in order to provide the means to work effectively with this dg category localisation.

We have decided to avoid any mention of dg categories in the present paper in order to keep our exposition elementary and focused. We do, however, discuss briefly the derived version of Cohn's (or matrix) localisation which is a sort of a half-way house between the categorical localisation and the ordinary one treated here. This is because Cohn's localisation is generally thought to be part of noncommutative ring theory (as opposed to category theory); it is very closely related to the notion of a \emph{homological epimorphism} of dg algebras: we show that the two notions are the same up to the failure of the telescope conjecture. The theory constructed here suggests a far-reaching generalisation of the usual applications of Cohn's localisation, in particular the theory of embeddings of noncommutative rings into skew-fields, but again we refrain from developing this theme in the present paper.

It is likely that a similar theory of localisation can be developed with other enriching closed symmetric monoidal model categories (or symmetric monoidal $\infty$--categories); simplicial localisation of Dwyer--Kan \cite{dk:simploc} is an example of this kind -- indeed, we show that the functor of singular chains carries simplicial localisation to (derived) dg localisation. In this vein, Lurie has also begun to develop localisation in the context of $\mathbb{E}_1$--algebras in symmetric monoidal $\infty$--categories \cite[\S{}7.2.4]{lurie}, focusing particularly on the simpler special case of localisation at an Ore set. It is worth noting that in the context of arbitrary enriching closed symmetric
monoidal model categories, localisations of commutative monoids in the category of associative monoids need not be commutative, c.f. \cite{HopkinsHill,HopkinsHillB} in the case of equivariant $E_\infty$ ring spectra. On the other hand, one expects that the relationship between module localisation and localisation of \emph{associative} monoids should hold in more general situations than dg algebras.

Our tools for working with derived localisation have a wide range of applications.  Many standard theorems involving the ordinary notion of localisation hold only under certain constraints and we are able to show that these can often be removed when localisation is considered in the derived framework. For example, it is well-known that localisation at a central set of elements in an algebra preserves Hochschild homology and cohomology and we show that the corresponding derived result is true without any centrality assumptions.

Next, one has a localisation long exact sequence in algebraic $K$--theory of noncommutative rings under the assumption of stable flatness; again this assumption is shown to be unnecessary in the derived context. In a similar fashion we show that certain recollements of derived categories associated to quotients by so-called stratifying idempotent ideals, important in the theory of quasi-hereditary rings, are valid for all idempotent ideals, as long as quotients are replaced by `derived quotients'. Underlying these results is the basic phenomenon, already mentioned, that the derived category of a derived localised ring is a full subcategory of the derived category of the original ring; this only holds in the non-derived context under the strong condition of stable flatness. Furthermore, we show that the derived localisation of a dg ring at an idempotent is a version of Drinfeld's quotient of a dg category \cite{Drinfeld_dg} and construct the corresponding small model for it. We also give a a very short proof of a strictification result: any dg algebra that is unital up to homotopy can be replaced by a strictly unital dg algebra; this replacement is unique up to homotopy.

One of the most striking applications of the theory of derived localisation is that, by combining our techniques with the simplicial localisation of Dwyer--Kan, we arrive at a very simple and conceptual proof of a general form of the well-known group completion theorem. Previously, this theorem has been regarded as somewhat mysterious (\cite[Section 3.2]{tillmann:gctcomment}). A number of versions of this statement have been given, most of which use adaptations of the proof of McDuff--Segal \cite{mcduffsegal:gct}; a proof which is not particularly homotopy theoretic in nature, especially given the homotopy theoretic nature of the statement. Our proof is completely different and driven entirely by the engine of homotopical algebra. Indeed, we show that it is really just a topological statement of a special case of an entirely natural and unmysterious result concerning derived localisation.

Influenced by this new algebraic perspective on the group completion theorem, we obtain an interpretation of the Loday--Quillen theorem on cyclic homology as a calculation of the derived localisation of a certain dg algebra and a similar interpretation of Kontsevich's theorem on graph homology. We also consider various monoids arising from mapping class groups of $2$--dimensional surfaces and compute the homology of their \emph{partial} group completions in terms of the stable mapping class groups and finally, we compute the derived localisation of the dg algebra of ribbon graphs.

The authors are grateful to the referee for a careful reading of this paper and a host of useful suggestions for improvement.
\subsection{Notation and conventions}
Throughout this paper we work with homologically graded chain complexes, unless otherwise stated. For a homologically $\mathbb{Z}$--graded chain complex $A$ with differential $d$, given $n\in \mathbb{Z}$ we denote $\Sigma^nA$ the $n$--fold suspension/desuspension given by $(\Sigma^n A)_i = A_{i-n}$ with differential $(\Sigma^n d)_i\co (\Sigma^nA)_i \to (\Sigma^n A)_{i-1}$ given by $(\Sigma^n d)_i(a) = (-1)^nd_{i-n}(a)$.

We will normally use the abbreviation `dg' for `differential graded'. By `dg (co)algebra' we will mean `dg (co)associative (co)algebra' unless otherwise stated.

Throughout this paper, $\ground$ will denote a fixed unital commutative ring. Unadorned tensor products will be assumed to be taken over $\ground$.

\section{Prerequisites on model categories of dg algebras and modules}
Denote by $\dgalg$ the category of unital differential $\mathbb{Z}$--graded associative algebras over $\ground$, in other words the category of unital monoids in the category of $\mathbb{Z}$--graded chain complexes of $\ground$--modules. This has the structure of a cofibrantly generated model category (the model structure transferred \cite[Theorem 4.1 (3)]{schwedeshipley:algebras} from that of $\mathbb{Z}$--graded chain complexes of $\ground$--modules \cite[Theorem 2.3.11]{hovey:modelcategories}) in which the weak equivalences are quasi-isomorphisms and the fibrations are surjections. As a technical point note that the domains in the set of generating cofibrations $\dgalg$ are compact, resulting in what was termed in \cite{MayP} a \emph{compactly generated} model category. This results in a minor technical simplification coming from the fact that the small object argument requires no transfinite induction. Consequently, cell dg algebras are constructing by \emph{countably} iterating the procedure of attaching simultaneously (a possibly uncountable) collection of cells.

For a dg algebra $A\in \dgalg$ we denote the under category of $A$ by $\underalg{A}$. Recall that this is the category with objects being dg algebras $C$ equipped with a map $A\to C$ and morphisms being commuting triangles. We will refer to the objects of $A\downarrow \dgalg$ as \emph{dg $A$--algebras}. Note that if $A$ happens to be commutative, we \emph{do not} insist that the map $A\to C$ be central. Recall that an under category in any model category has a natural model structure with weak equivalences, cofibrations and fibrations those maps which are weak equivalences, cofibrations and fibrations in the original model category.

A map of dg algebras $f\co A\to B$ induces a Quillen adjunction $\underalg{A}\rightleftarrows \underalg{B}$ given by restriction or pushout along $f$. It follows from \cite[Theorem B]{reedy1974homotopy} that if $A$ and $B$ are cofibrant then this adjunction is a Quillen equivalence whenever $f$ is a quasi-isomorphism. Consequently we make the following definition.

\begin{definition}
For any $A$ in $\dgalg$ we denote by $\derunderalg{A}$ the \emph{derived under category of $A$}, which is given by taking the homotopy category of the under category of a cofibrant replacement $\ground \cof Q \acycfib A$, so that $\derunderalg{A}=\Ho(\underalg{Q})$.

This is well-defined up to natural equivalence of categories. More precisely, different choices of cofibrant replacement functors give rise to different functors $A\mapsto \derunderalg{A}$, but they are naturally equivalent, since quasi-isomorphic cofibrant dg algebras have Quillen equivalent under categories.
\end{definition}

Given $A\in \dgalg$, denote by $\amod$ the category of differential $\mathbb{Z}$--graded $A$--modules. This has the structure of a cofibrantly generated model category in which the weak equivalences are quasi-isomorphisms and the fibrations are surjections \cite[Theorem 4.1 (1)]{schwedeshipley:algebras}.

\begin{convention}
We will refer to the objects of $\amod$ as \emph{$A$--modules}, in particular they will always be assumed to be differential graded unless explicitly stated.
\end{convention}

The category $\amod$ is left proper, i.e.~weak equivalences are preserved by pushouts along cofibrations. This can be seen as follows. Cofibrations of $\ground$--modules are, in particular, degreewise split injections \cite[Proposition 2.3.9]{hovey:modelcategories} and it follows that the same is true for the generating cofibrations in $\amod$ (which is given the transferred model structure). But pushouts in $\amod$ are created in $\ground$--modules and pushouts of chain complexes along injections preserve quasi-isomorphisms.

Let $M,N\in\amod$ and denote by $\Hom_A(M,N)$ the chain complex of $A$--linear maps from $M$ to $N$. If $M$ is cofibrant then $H_n(\Hom_A(M,N))$ is the set of maps from $\Sigma^n M$ to $N$ in the homotopy category. We write $\RHom_A(M,N)$ for the corresponding derived functor obtained by replacing $M$ with a cofibrant replacement if necessary.

\subsection{Derived free products of dg algebras}
In $\dgalg$, the pushout will be called the free product.

\begin{definition}
The \emph{free product}, denoted $B\ast_A C$, of dg algebras is the pushout in dg algebras:
\[
\xymatrix{
A \ar[r]\ar[d] & B\ar[d]\\
C\ar[r] & B\ast_A C
}
\]
This will often be viewed as the left adjoint $\underalg{A}\to\underalg{B}$ given by $C\mapsto B\ast_A C$, with right adjoint given by restriction along the map $A\to B$.
\end{definition}

Of particular importance to us is a corresponding derived construction.

\begin{definition}
Let $\ground \cof Q \acycfib A$ and $Q \cof P \acycfib B$ be cofibrant replacements for $A$ and $B$. The functor $\underalg{Q}\to\underalg{P}$ given by $C\mapsto P\ast_Q C$ is a left Quillen functor with right adjoint given by restriction.

The total derived functors give an adjunction $\derunderalg{A}\rightleftarrows \derunderalg{B}$, well-defined up to natural equivalence. The left adjoint will be denoted by $C\mapsto B\ast_A^{\mathbb{L}} C$ and called the \emph{derived free product}.

As an object of the homotopy category of $\dgalg$, $B\ast_A^{\mathbb{L}} C$ is simply the homotopy pushout in dg algebras.
\end{definition}

We need some basic facts about the homotopical properties of derived free products of dg algebras. In particular we will see that the derived free product can often be computed more simply, without needing to cofibrantly replace all the dg algebras involved.

\begin{definition}
An object $A\in\dgalg$ will be called \emph{left proper} if for any cofibrant replacement $P\acycfib A$ and any cofibration $P\cof X$, the map $g$ in the pushout diagram
\[
\xymatrix{
P \ar@{->>}^{\sim}[r]\ar@{>->}[d] & A\ar[d]\\
X\ar[r]^g & A\ast_{P} X
}
\]
is a quasi-isomorphism.
\end{definition}

Cofibrant dg algebras are left proper \cite[Theorem B]{reedy1974homotopy}. Left proper objects are of interest to us because they have homotopically correct under categories, in the following sense.

\begin{proposition}\label{prop:derivedundercatproper}
A dg algebra $A$ is left proper if and only if the adjunction \[\derunderalg{A}\rightleftarrows\Ho(\underalg{A})\] given by the total derived functors pushout and restriction is an equivalence.
\end{proposition}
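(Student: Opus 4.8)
\textit{Plan.} The idea is to unwind the unit and counit of the total derived adjunction and to observe that left properness of $A$ is precisely the condition needed to invert them. Two features of the situation make this clean: every object of every under category here is fibrant (a surjection onto the zero ring is a fibration), and a cofibrant object of $\underalg{Q}$ is exactly a dg algebra $C$ equipped with a cofibration $Q\cof C$. Fix the cofibrant replacement $\ground\cof Q\acycfib A$ defining $\derunderalg{A}=\Ho(\underalg{Q})$. The adjunction in the statement is the total derived adjunction of the Quillen pair whose left Quillen functor is $F\co\underalg{Q}\to\underalg{A}$, $F(C)=A\ast_Q C$, and whose right adjoint $G$ is restriction along $Q\to A$. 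Since $G$ preserves all quasi-isomorphisms and every object is fibrant, $\mathbb{R}G=G$; and for a cofibrant $C$ the derived unit at $C$ needs no replacement and is the canonical underlying map $C\to A\ast_Q C$. Finally, since $Q$ is cofibrant and $Q\acycfib A$ is a weak equivalence, the defining diagram of left properness applies verbatim to the pushout square with corners $Q,A,C,A\ast_Q C$ for any such cofibrant $C$.

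For the forward implication, assume $A$ is left proper. The derived unit at a cofibrant $C\in\underalg{Q}$ is the map $C\to A\ast_Q C$, a quasi-isomorphism by left properness, hence an isomorphism in $\Ho(\underalg{Q})$. For the derived counit at $D\in\underalg{A}$, choose a cofibrant replacement $Q\cof\widetilde{D}\acycfib D$ in $\underalg{Q}$; the counit is the pushout-induced map $A\ast_Q\widetilde{D}\to D$. In the commuting triangle $\widetilde{D}\to A\ast_Q\widetilde{D}\to D$ the first arrow is a quasi-isomorphism by left properness and the composite $\widetilde{D}\acycfib D$ is a quasi-isomorphism by construction, so the counit is a quasi-isomorphism by two out of three. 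Thus the derived adjunction is an adjoint equivalence.

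For the converse, assume the derived adjunction is an equivalence, and let $P\acycfib A$ be an arbitrary cofibrant replacement and $P\cof X$ an arbitrary cofibration; we must show $X\to A\ast_P X$ is a quasi-isomorphism. Lifting $\ground\cof P$ against the acyclic fibration $Q\acycfib A$ along the given map $P\to A$ produces a quasi-isomorphism $P\to Q$ of cofibrant dg algebras over $A$. By the Quillen-equivalence result recalled earlier, pushout along $P\to Q$ is a left Quillen equivalence $\underalg{P}\to\underalg{Q}$; moreover $A\ast_Q(Q\ast_P(-))\cong A\ast_P(-)$, so the induced equivalence $\Ho(\underalg{P})\simeq\Ho(\underalg{Q})$ intertwines the two derived free-product functors to $\Ho(\underalg{A})$. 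Consequently $\mathbb{L}(A\ast_P(-))$ is again an equivalence of categories, so the unit of the resulting adjoint equivalence $\Ho(\underalg{P})\rightleftarrows\Ho(\underalg{A})$ is a natural isomorphism; at the cofibrant object $X$ this unit is the map $X\to A\ast_P X$. Since the maps inverted by the localisation functor of a model category are exactly its weak equivalences, $X\to A\ast_P X$ is a quasi-isomorphism, as required.

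\textit{Main obstacle.} The step I expect to need the most care is the compatibility in the converse: one must verify that the equivalence $\Ho(\underalg{P})\simeq\Ho(\underalg{Q})$ genuinely intertwines $\mathbb{L}(A\ast_P(-))$ and $\mathbb{L}(A\ast_Q(-))$, which rests on the elementary but easily-mishandled associativity isomorphism $A\ast_Q(Q\ast_P C)\cong A\ast_P C$ together with the fact that $P\to Q$ may be chosen over $A$. Everything else is comparatively routine once one notices that the defining condition of left properness is exactly what controls the unit map $C\to A\ast_Q C$ and the counit map $A\ast_Q\widetilde{D}\to D$, and that fibrancy of all objects ensures the derived (co)unit is computed by the naive formula.
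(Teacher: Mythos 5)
Your proof is correct and is a careful unfolding of the argument the paper dismisses in one line as following ``directly from the definition of left properness'': the forward direction unwinds the derived unit and counit exactly as you do, and the converse is handled by the same comparison of cofibrant replacements that you spell out. The one point worth flagging is that you correctly supply the non-trivial half — namely that the equivalence hypothesis must be transported from the fixed replacement $Q$ to an arbitrary cofibrant replacement $P$ via the associativity of pushouts and Reedy's theorem — which the paper's proof leaves entirely implicit.
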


\begin{proof}
This follows directly from the definition of left properness.
\end{proof}

\begin{remark}
Since the adjunction in the proposition above comes from choosing any cofibrant replacement $\ground \cof Q \acycfib A$ and considering the Quillen adjunction $\underalg{Q}\rightleftarrows\underalg{A}$ the proposition above could also be restated as follows: A dg algebra $A$ is left proper if and only if for any such cofibrant replacement this Quillen adjunction is a Quillen equivalence.
\end{remark}

Given a map $A\to B$ the following diagram of functors commutes (up to natural isomorphism)
\[
\xymatrix{
\derunderalg{A}\ar[r]\ar[d] & \Ho(\underalg{A})\ar[d]\\
\derunderalg{B}\ar[r] & \Ho(\underalg{B})
}
\]
where the arrows are all induced by (total derived functors of) pushouts. If $A$ and $B$ are left proper then the horizontal arrows are equivalences of categories. In this case, the derived pushout between derived under categories of left proper objects is therefore equivalent to the total left derived functor of the usual pushout between the normal under categories.

\begin{remark}
This means that, when dealing with left proper dg algebras, the derived under category and derived free product are modelled correctly, in a homotopical sense, by the usual under category and the usual free product. This connection between left properness and under categories was observed in \cite{rezk}.
\end{remark}

In particular, when all dg algebras involved are left proper, one can compute the derived free product by cofibrantly replacing either dg algebra:

\begin{corollary}[Balancing for the derived free product]\label{cor:balancing}
If $B$ and $C$ are dg $A$--algebras and $A,B,C$ are all left proper then the derived free product $B\ast_A^{\mathbb{L}} C$ can be computed by cofibrantly replacing just one of either $B$ or $C$ in $\underalg{A}$.\qed
\end{corollary}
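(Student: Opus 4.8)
The plan is to obtain this as an essentially immediate consequence of the discussion immediately preceding the corollary. Since $A$ and $B$ are left proper, Proposition~\ref{prop:derivedundercatproper} supplies equivalences $\derunderalg{A}\simeq\Ho(\underalg{A})$ and $\derunderalg{B}\simeq\Ho(\underalg{B})$, and, as noted there, under these equivalences the functor $B\ast_A^{\mathbb{L}}(-)$ is identified with the total left derived functor $\mathbb{L}(B\ast_A-)$ of the \emph{ordinary} free product functor $B\ast_A-\co\underalg{A}\to\underalg{B}$.

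First I would observe that $B\ast_A-\co\underalg{A}\to\underalg{B}$ is a left Quillen functor: its right adjoint is restriction along $A\to B$, which preserves fibrations and acyclic fibrations because these are exactly the (acyclic) surjections. Hence its total left derived functor is computed by evaluating on a cofibrant replacement of the source object. So for any cofibrant replacement $A\cof\tilde{C}\acycfib C$ in $\underalg{A}$ we get $B\ast_A^{\mathbb{L}}C\simeq B\ast_A\tilde{C}$, the ordinary free product; note that $B$ itself enters here unreplaced, which is precisely the content of the claim.

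For the symmetric assertion I would use that the free product is symmetric, so $B\ast_A^{\mathbb{L}}C\cong C\ast_A^{\mathbb{L}}B$ in the homotopy category of $\dgalg$; applying the previous two paragraphs with the roles of $B$ and $C$ interchanged (now invoking left properness of $A$ and $C$) shows this is equally computed as $C\ast_A\tilde{B}$ for a cofibrant replacement $A\cof\tilde{B}\acycfib B$ in $\underalg{A}$. Thus the hypothesis that all three of $A$, $B$, $C$ be left proper is exactly what makes either one-sided computation legitimate. Since the corollary merely repackages the preceding observations, I do not anticipate any real obstacle; the one point deserving a moment's care is to check that the equivalences of Proposition~\ref{prop:derivedundercatproper} are exactly those used to set up the commuting square of pushout functors just above, so that $\mathbb{L}(B\ast_A-)(C)$ genuinely represents the object $B\ast_A^{\mathbb{L}}C$ and not merely some dg algebra with the correct underlying complex.
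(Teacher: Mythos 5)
Your proposal is correct and follows essentially the same reasoning as the paper: the corollary is stated with \qed precisely because it is intended as an immediate consequence of \autoref{prop:derivedundercatproper} and the surrounding discussion, namely that when $A$ and $B$ are left proper the derived free product $B\ast_A^{\mathbb{L}}(-)$ is identified with $\mathbb{L}(B\ast_A-)$, which is evaluated on a cofibrant replacement of $C$ in $\underalg{A}$; symmetry then handles the other case. Your added remark that replacing $C$ requires only $A,B$ left proper (and replacing $B$ only $A,C$), so that the full hypothesis is exactly what makes both choices available, is a correct reading of the statement.
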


\begin{remark}\label{rem:replaceone}
The corollary above also implies that, since cofibrant dg algebras are left proper, provided $A$ and $C$ are left proper (but perhaps $B$ is not) then to compute $B\ast_A^{\mathbb{L}} C$ it is sufficient to just cofibrantly replace $B$. In more detail, if $A$ and $C$ are left proper and $B'$ is a cofibrant replacement for $B$ under $A$ then $B'\ast_A^{\mathbb{L}} C$ is quasi-isomorphic to $B \ast_A^{\mathbb{L}} C$. Since $A,B',C$ are now all left proper we can compute the former derived free product by cofibrantly replacing just $B'$ in $\underalg{A}$. But $B'$ is already cofibrant under $A$.
\end{remark}

\begin{remark}
The definitions and results above are, of course, valid for any model category. In particular, a model category is left proper (meaning that weak equivalences are preserved under pushout along cofibrations) if and only if all the objects are left proper.
\end{remark}

\begin{definition}
A left $A$--module $M$ is \emph{flat} over $A$ if for any right $A$--module $N$, it holds that $N \otimes_A^{\mathbb{L}} M\to N\otimes_A M$ is a quasi-isomorphism.

We will call a dg algebra $A$ flat if the underlying $\ground$--module of $A$ is flat over $\ground$. In particular, if $A$ is a cofibrant dg algebra then it is flat.
\end{definition}

\begin{remark}
The above notion of flatness for dg modules is also called \emph{homotopically flat} \cite{Drinfeld_dg} or \emph{$K$--flat} \cite{spalt}.
\end{remark}

\begin{theorem}\label{thm:flatleftproper}
Any flat dg algebra is left proper.
\end{theorem}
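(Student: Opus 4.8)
The plan is to show that if $A$ is flat then for any cofibrant replacement $P \acycfib A$ and any cofibration $P \cof X$, the induced map $g \co X \to A \ast_P X$ is a quasi-isomorphism. Since $P$ is cofibrant it is flat, so by a standard two-out-of-three argument it suffices to prove the claim for one fixed cofibrant replacement; we may therefore choose $P$ conveniently. The first step is to reduce to understanding the pushout $A \ast_P X$ at the level of underlying chain complexes. The key observation is that a cofibration of dg algebras $P \cof X$ is, up to retract, a relative cell attachment, i.e.\ built from free extensions $P \to P \ast_\ground \ground\langle V \rangle$ for graded $\ground$-modules $V$ (possibly with differential adjusting cells), and since cofibrations of $\ground$-modules are degreewise split injections, one can arrange the underlying graded $P$-bimodule of $X$ to be a direct sum $\bigoplus_n P^{\otimes n} \otimes \bar X \otimes P^{\otimes m}$-type expression — more precisely, $X$ is free as a graded left $P$-module on a basis that is itself a flat $\ground$-module. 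I would isolate this as the technical input: \emph{for a cofibration $P \cof X$ with $P$ cofibrant, $X$ is cofibrant as a left $P$-module, hence flat, and moreover the underlying graded $\ground$-module of $X$ is $\ground$-flat}.

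The second step is to identify $A \ast_P X$ explicitly. Pushing out $P \cof X$ along $P \acycfib A$ in $\dgalg$, one gets $A \ast_P X$, and because $X$ is built by free extensions from $P$, the underlying chain complex of $A \ast_P X$ is obtained from that of $X$ by the base change $P \rightsquigarrow A$ applied to the "generators", leaving a chain complex that is a sum of tensor powers of the form $A \otimes_P^{} (\text{stuff}) \otimes_P^{} \cdots$. The point is that each such tensor product is actually a \emph{derived} tensor product: since $X$ is flat as a left (and, symmetrically, right) $P$-module and $A$ is $\ground$-flat, all the relevant $\otimes_P$'s compute $\otimes_P^{\mathbb L}$. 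Therefore the map $g \co X \to A \ast_P X$ is, on underlying complexes, a transfinite composition / filtered colimit of maps each of which is a base change along $P \to A$ of a quasi-isomorphism tensored (derived) with flat modules, hence each is a quasi-isomorphism; filtered colimits of quasi-isomorphisms along cofibrations are quasi-isomorphisms (left properness of $\ground$-modules, already invoked in the excerpt for $\amod$). Concretely: the map $g$ is the filtered colimit of the maps $X_\alpha \to A \ast_P X_\alpha$ over the stages of the cell filtration, and each of these is a quasi-isomorphism because $P \we A$ and tensoring with the (flat) cells preserves quasi-isomorphisms.

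The main obstacle, and where I would spend the most care, is the bookkeeping for the noncommutative free product: unlike the tensor-algebra situation over a commutative base, $A \ast_P X$ does not have an obvious small model, so one must either (a) use the explicit description of $X$ as a quasi-free $P$-algebra on a generating complex and track how free extensions interact with pushout along $P \to A$, or (b) first prove the statement for a single free extension $P \cof P \ast_\ground \ground\langle V\rangle$ (where $V$ is $\ground$-flat, which holds since the generators of cofibrations have flat — indeed free — underlying modules) and then bootstrap along the cell filtration by transfinite induction, using left properness of $\ground\mathsf{-Mod}$ at limit stages. Option (b) is cleaner: the base case amounts to checking that $A \ast_\ground \ground\langle V \rangle \cong A$-tensor-algebra-on-$(A \otimes \bigoplus V^{\otimes\text{-shuffles}} \otimes A)$ has the same homology as the corresponding $P$-construction, which follows from $\ground$-flatness of $A$, $P$, and $V$ together with $P \we A$; the induction step and limit step are then formal. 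I would also remark that this recovers the fact that cofibrant dg algebras are left proper (the case $X$ cofibrant), consistent with \cite{reedy1974homotopy}.
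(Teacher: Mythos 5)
Your high-level strategy — reduce to relative cell complexes by transfinite induction and retracts, then handle a single cell attachment using $\ground$-flatness — matches the paper's, and identifying $\ground$-flatness of $A$ and of the cofibrant $P$ as the decisive hypothesis is correct. However, two things go wrong in the details. The appeal to flatness of $X$ as a left $P$-module and to "$\otimes_P$'s computing $\otimes_P^{\mathbb{L}}$" is a red herring: the free product $A \ast_P X$ is not a base change $A \otimes_P(-)$ or $A\otimes_P(-)\otimes_P A$ applied to $X$. For a cell attachment $X=P\langle z\rangle$, the map $g\co X\to A\ast_P X$ replaces \emph{every} copy of $P$ in a word $p_0 z p_1 z\cdots z p_k$ by $A$, which a module-theoretic tensor product over $P$ does not do. What the argument actually needs is the $\ground$-linear K\"unneth comparison for iterated tensor powers over $\ground$, driven by $\ground$-flatness of $P$ and $A$ alone; $P$-module flatness of $X$ plays no role.

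More seriously, your base case is not the actual base case. A pushout of $S^{n-1}\cof D^n$ along $S^{n-1}\to P$ produces $P\langle z\rangle$ with a \emph{twisted} differential $d(z)=p_0$ for a prescribed cycle $p_0\in P$; it is not $P\ast_\ground\ground\langle V\rangle$ with trivial differential on $V$. You flag the twist parenthetically ("possibly with differential adjusting cells") but then run the flatness argument as though the differential were trivial. To handle the twist one filters $P\langle z\rangle$ (and $A\langle z\rangle$) by the number of occurrences of $z$: the differential preserves this filtration, the associated graded is a direct sum of shifted iterated $\ground$-tensor powers of $P$ (resp.\ $A$), $\ground$-flatness together with $P\we A$ gives a quasi-isomorphism on associated graded, and a spectral-sequence comparison then yields the quasi-isomorphism $P\langle z\rangle\to A\langle z\rangle$. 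This filtration and spectral-sequence step is the technical heart of the paper's proof and is missing from your proposal; with it supplied (and the routine observation that the intermediate algebras $A\ast_P X_\alpha$ remain $\ground$-flat, so the induction may proceed), your "option (b)" does recover the paper's argument.
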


\begin{proof}
Let $S^{n-1} = \ground[x]$ denote the free dg algebra generated by a single element $x$ of degree $n-1$ with $d(x)=0$. Let $D^n = \ground \langle z, dz \rangle$ denote the free dg algebra generated by two elements $z$ and $dz$ of degrees $n$ and $n-1$ with $d(z) = dz$. Note that $H(D^n) \cong \ground$. There is a natural inclusion $S^{n-1}\hookrightarrow D^n$ given by sending $x$ to $dz$. Recall \cite{schwedeshipley:algebras} that the model category $\dgalg$ is cofibrantly generated, with generating cofibrations given by $I = \{\, S^{n-1}\cof D^n : n\in \mathbb{Z}\,\}$ and generating acyclic cofibrations given by $J= \{\, \ground \acyccof D^n : n\in\mathbb{Z}\,\}$.

Let $A$ be a flat dg algebra, let $f\co P\acycfib A$ be a cofibrant replacement for $A$ and let $i\co P\cof X$ be a cofibration.

First assume that $i\co P\cof X$ is obtained as some pushout of a generating cofibration in $I$ along a map $S^{n-1}\to P$ so that $X \cong P\ast_{S^{n-1}} D^n$ and consider the commutative diagram:
\[
\xymatrix@C=4em{
S^{n-1}\ar[r]\ar@{>->}[d]& P\ar@{->>}[r]^{\sim}_f\ar@{>->}[d]^i & A\ar[d]\\
D^{n}\ar[r] & P\ast_{S^{n-1}} D^n\ar[r]^g & A\ast_P (P\ast_{S^{n-1}} D^n)
}
\]
We want to show $g$ is a quasi-isomorphism. Since the two inner squares are pushouts so is the outer square, by the pasting property of pushouts, so $A\ast_P(P\ast_{S^{n-1}}D^n)\cong A\ast_{S^{n-1}} D^n$ and the map $g\co P\ast_{S^{n-1}} D^n\to A\ast_{S^{n-1}} D^n$ is the obvious map induced by $f$. Elements of $P\ast_{S^{n-1}} D^n$ are all sums of the form $a_1 z^{j_1}a_2z^{j_2}\dots a_kz^{j_k}a_{k+1}$ where $a_i\in P$ and $z$ is the degree $n$ element in $D^n$. In particular, forgetting the differential, this means that the underlying $\ground$--module of $P\ast_{S^{n-1}} D^n$ is isomorphic to a direct sum of iterated tensor products over $\ground$ of $P$ (again, forgetting the differential). Consider the filtration on $P\ast_{S^{n-1}} D^n$ given by setting $F_i$ to be the subspace generated by elements of this form which have $j_1+j_2+\dots+j_k \leq i$. Then the filtration of chain complexes $0\subset F_0 \subset F_1 \subset \dots \subset P\ast_{S^{n-1}}D^n$ is bounded below and exhaustive. The quotient $F_i/F_{i-1}$ of chain complexes is isomorphic, as a chain complex, to a direct sum of iterated tensor products of $P$ with itself over $\ground$ since the part of the differential coming from that on $D^n$ decreases filtration degree so only the contribution from the differential on $P$ survives. Similarly we get a filtration $0\subset F'_0\subset F'_1\subset \dots \subset A\ast_{S^{n-1}}D^n$ with the same property. Furthermore, $P$, being cofibrant, is in particular flat over $\ground$ and since $A$ is also flat over $\ground$, the quasi-isomorphism $f$ induces a quasi-isomorphism between the chain complexes $F_i/F_{i-1}$ and $F'_i/F'_{i-1}$ (since they are both just a direct sum of iterated tensor products of quasi-isomorphic flat $\ground$--modules) and so the $E^1$~terms of the associated convergent spectral sequences are isomorphic and therefore $g$ is a quasi-isomorphism.

The result now follows by a general argument for cofibrantly generated model categories. In particular if $i\co P\cof X$ is obtained as a transfinite composition of pushouts of generating cofibrations (in other words, $i$ is a relative cell complex) then by transfinite induction it follows that pushouts of quasi-isomorphisms along relative cell complexes are quasi-isomorphisms (since a filtered colimit of quasi-isomorphisms is a quasi-isomorphism). Finally, since the set of generating cofibrations admits the small object argument then any cofibration $i\co P\cof X$ can be factored as $P\stackrel{j}{\cof} Y \stackrel{p}{\fib} X$ with $j$ a relative cell complex and $p$ an acyclic fibration. But then $i$ satisfies the left lifting property with respect to $p$ so, by the retract lemma, $i$ is a retract in $\underalg{P}$ of $j$. In particular it follows that pushouts along $i$ are retracts of pushouts of along $j$. But $j$ is a relative cell complex so pushouts of quasi-isomorphisms along $j$ are quasi-isomorphisms. Retracts of quasi-isomorphisms are quasi-isomorphisms so this is true for $i$ as well.
\end{proof}

\begin{remark}
If $\ground$ is a field then every dg algebra is of course flat and hence \autoref{thm:flatleftproper} shows that, in this case, $\dgalg$ is left proper. On the other hand, if $\ground$ is not a field, then $\dgalg$ is \emph{not} left proper, in general. The following construction is a slight variation of \cite[Example 2.11]{rezk}. Let $\ground=\mathbb Z$ and consider a $\mathbb Z$--algebra $A$ (with vanishing differential). Let
\[
A\langle x\rangle:=A*_{\mathbb Z}{\mathbb Z}[x]\cong A\oplus A\otimes_{\mathbb Z}A\oplus A^{\otimes_{\mathbb Z}3}\oplus\dots,
\]
the algebra obtained from $A$ by adjoining freely a generator $x$ in degree zero. Note that $A\langle x\rangle$ is a free product where one of the factors, namely $\mathbb Z[x]$, is cofibrant, but the other, $A$, need not be. If $A$ is $\mathbb Z$--flat, i.e.~it has no torsion, then $A\langle x\rangle$ has the correct homotopy type by \autoref{cor:balancing} and \autoref{thm:flatleftproper}. Suppose that $A$ does have torsion and let $\tilde{A}$ be its cofibrant (hence torsion-free) replacement. Then the homology of the dg algebra
\[
\tilde{A}*^{\mathbb{L}}_{\mathbb Z}{\mathbb Z}[x]\simeq\tilde{A}\langle x\rangle=\tilde{A}*_{\mathbb Z}{\mathbb Z}[x]\cong\tilde{A}\oplus \tilde{A}\otimes_{\mathbb Z}\tilde{A}\oplus \tilde{A}^{\otimes_{\mathbb Z}3}\oplus\dots
\]
has $\Tor^{\mathbb Z}(A,A)$ as a summand and so it will definitely be different from the homology of $A\langle x\rangle$. We conclude that an algebra over $\mathbb Z$ is left proper if and only if it has no torsion.
\end{remark}

\subsection{Bigraded resolutions of algebras}
It is a well-known fact that if $A$ is a graded algebra and $M$, $N$ are graded $A$--modules (all with vanishing differentials) then $\Tor^A(M,N)$ is \emph{bigraded}. The reason is that one can choose bigraded $A$--projective resolutions of $M$ and $N$. An analogous result holds for derived free products of algebras.

\begin{proposition}\label{prop:bigraded}
Let $A$, $B$ be $\ground$--algebras with vanishing differentials and assume that $B$ is an $A$--algebra through an algebra map $A\to B$. Then there is a differential bigraded $A$--cofibrant algebra $\tilde{B}=\{\tilde{B}_{i,j}\}, i\in \mathbb{Z}, j\in\mathbb{N}$ supplied with a differential $d\co\tilde{B}_{i,j}\to \tilde{B}_{i,j-1}$. There is a quasi-isomorphism of differential bigraded $A$--algebras $\tilde{B}\to B$ where $B$ is viewed as trivially bigraded: $B=\{B_{i,0},i\in\mathbb Z\}$.
\end{proposition}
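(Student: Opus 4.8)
The plan is to build $\tilde{B}$ as a cell complex in the category $\underalg{A}$ of $A$-algebras, introducing generators in bidegrees that track both the internal grading (the first index $i$) and a homological "resolution degree" (the second index $j$), and to do this by a standard Koszul–Tate style induction that simultaneously kills homology and keeps everything $A$-cofibrant. More precisely, I would first form the free $A$-algebra $A\langle V_0\rangle$ on a bigraded $\ground$-module $V_0=\{(V_0)_{i,0}\}$ surjecting onto $B$ as an $A$-module (for instance take $V_0$ to be all of $B$, or a generating set), placed in resolution degree $j=0$ with zero differential. This gives a surjection $A\langle V_0\rangle\to B$ of bigraded $A$-algebras which is an isomorphism in resolution degree $0$ and which need not be a quasi-isomorphism. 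Then inductively, having constructed a differential bigraded $A$-cofibrant algebra $\tilde{B}^{(n)}$ with generators in resolution degrees $0,\dots,n$, together with a map $\tilde{B}^{(n)}\to B$, I choose a bigraded $\ground$-module $V_{n+1}=\{(V_{n+1})_{i,n+1}\}$ of generators in resolution degree $n+1$ mapping onto the kernel of $H(\tilde{B}^{(n)})\to B$ in the appropriate degrees, and set $\tilde{B}^{(n+1)}=\tilde{B}^{(n)}\ast_A A\langle V_{n+1}\rangle$ with differential $d\co (V_{n+1})_{i,n+1}\to (\tilde{B}^{(n)})_{i,n}$ chosen to hit cycles representing those homology classes. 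The colimit $\tilde{B}=\colim_n \tilde{B}^{(n)}$ is then a differential bigraded $A$-algebra, it is $A$-cofibrant because it is built from $A$ by a transfinite composition of pushouts of generating cofibrations of $\underalg{A}$ (each generator $x$ in degree $(i,j)$ with $dx$ in degree $(i,j+1)$ contributes exactly a cell $S^{i-1}\cof D^i$, possibly shifted, adjoined over $A$), and the bigrading is preserved throughout since all structure maps respect it. The key point for $A$-cofibrancy is that pushout of a generating cofibration of $\dgalg$ along $\ground\to A$ gives a generating cofibration of $\underalg{A}$, so the standard cell-complex description of cofibrant objects applies verbatim in the under category.

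The remaining issue is to verify that the resulting map $\tilde{B}\to B$ is a quasi-isomorphism, where $B$ is regarded as concentrated in resolution degree $0$ with zero differential, i.e.\ as the trivially bigraded object $\{B_{0,j}\}$. This is where the construction has to be set up with a little care: the total differential on $\tilde{B}$ must be compatible, meaning that when we collapse the bigrading to a single grading the pieces assemble into an honest chain complex computing $H_*(\tilde{B})$. Because the resolution index $j$ is bounded below (it lives in $\mathbb{N}$) and exhaustive, there is a convergent spectral sequence of the filtration by $j$, and by construction of the differentials at each stage the homology is concentrated, in the limit, in resolution degree $0$ and equal to $B$ there. Concretely, at the inductive step the new generators $V_{n+1}$ were chosen precisely to make $H(\tilde{B}^{(n+1)})\to B$ surjective onto $B$ and to kill the part of $H(\tilde{B}^{(n)})$ not coming from $B$ up through resolution degree $n$; a standard telescoping argument then shows $H(\tilde{B})\xrightarrow{\sim}B$.

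I expect the main obstacle to be bookkeeping rather than conceptual difficulty: one must be careful that adjoining a generator freely in a fixed bidegree, together with its differential in the adjacent resolution degree, genuinely stays inside the subcategory of \emph{bigraded} $A$-algebras — that is, that the free product $\ast_A$ does not mix resolution degrees in an uncontrolled way. Since $A$ itself sits in resolution degree $0$ and all new generators carry a well-defined $j$, the free product inherits a bigrading where the $j$-degree of a monomial is the sum of the $j$-degrees of its factors, and the differential raises total $j$-degree by exactly one; one checks $d^2=0$ holds bidegree-wise because it holds after forgetting the bigrading (it is a dg algebra) and the pieces are indexed by bidegree. The only subtlety worth flagging explicitly is convergence of the spectral sequence and the claim that nothing survives outside resolution degree $0$, which follows because the filtration is exhaustive and bounded below and the construction is engineered, degree by degree, so that the $E^\infty$-page has a single nonzero row. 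Compare the filtration argument already used in the proof of \autoref{thm:flatleftproper}.
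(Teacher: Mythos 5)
Your proposal is essentially the same Koszul--Tate style construction that the paper uses: start from a free $A$-algebra on generators of resolution weight $0$ surjecting onto $B$, and inductively freely adjoin generators in weight $n+1$ whose differentials kill the weight-$n$ part of $\ker(H(\tilde{B}^{(n)})\to B)$, giving an $A$-cell (hence $A$-cofibrant) bigraded dg algebra in the limit. One small wobble worth fixing: you say the differential ``raises total $j$-degree by exactly one'' while your formula $d\co (V_{n+1})_{i,n+1}\to (\tilde{B}^{(n)})_{i,n}$ shows it moves a new generator from weight $n+1$ to weight $n$; make the direction consistent, and replace ``in the appropriate degrees'' with the explicit statement that at stage $n+1$ one only kills kernel classes of weight exactly $n$, which is what keeps the differential a map between adjacent weights (otherwise the bigrading is destroyed). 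The paper asserts the quasi-isomorphism ``is clear'' while you supply the bounded-below exhaustive filtration/spectral-sequence argument; that is a sensible and compatible piece of extra detail rather than a different route.
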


\begin{proof}
Let us consider the free $A$--algebra $\tilde{B}_0:=A\langle\{x^{\alpha_0}\}\rangle, \alpha_0\in B$ with one generator $x^{\alpha_0}$ for every homogeneous element $\alpha_0\in B$. Clearly there is a surjective map of $A$--algebras $i_0\co\tilde{B_0}\to B$.

Next, for every element $\alpha_1\in\ker i_0$ consider a generator $x^{\alpha_1}$ and a dg algebra $\tilde{B}_1:=A\langle\{x^{\alpha_0}\},\{x^{\alpha_1}\}\rangle$ with the differential $d(x^{\alpha_1})=\alpha_1$. We extend the map $i_0$ to a map $i_1\co\tilde{B}_1\to B$ by setting $i_1(x^{\alpha_1})=0$.

Proceeding by induction we construct a sequence of dg $A$--algebras and dg $A$--algebra maps
\[
\xymatrix
{
\tilde{B_0}\ar[r]\ar_{i_0}[d]&\tilde{B_1}\ar^{i_1}[dl]\ar[r]&\dots\ar[r]&\tilde{B}_n\ar^{i_n}[dlll]\\
B
}
\]
such that $\tilde{B}_n:= A\langle\{x^{\alpha_0}\},\{x^{\alpha_1}\},\dots, \{x^{\alpha_n}\}\rangle$. Assigning the generators $x^{\alpha_j}$ the second (weight) grading $j$, we can view $\tilde{B}_n$ as bigraded. We construct $\tilde{B}_{n+1}$ by adjoining to $\tilde{B}_n$ one generator $x^{\alpha_{n+1}}$ for every homogeneous element $\alpha_{n+1}\in \ker H(i_n)\co H(\tilde{B}_n)\to B$ of weight grading $n$. Choosing a representative $\tilde{\alpha}_{n+1}\in \tilde{B}_{n}$ of the corresponding cohomology class, we define the differential on $\tilde{B}_{n+1}$ by $d(x^{\alpha_{n+1}})=\tilde{\alpha}_{n+1}$. Furthermore, the map $i_n$ extends to $i_{n+1}\co\tilde{B}_{n+1}\to B$ by setting $i_{n+1}(x^{\alpha_{n+1}})=0$.
It is clear that the resulting dg $A$--algebra map $\tilde{i}\co\tilde{B}:=\lim_{ n}\tilde{B}_n\to B$ is a quasi-isomorphism and the differential on $\tilde{B}$ is compatible with the bigrading as required.
\end{proof}

\begin{remark}\label{comparison}
Given another bigraded resolution $\hat{B}=\{\hat{B}_{ij}\}$ of the $A$--algebra $B$, not necessarily cofibrant, it is easy to see that there is a bigraded quasi-isomorphism $\tilde{B}\to \hat{B}$ such that the diagram of dg $A$--algebras
\[
\xymatrix
{
\tilde{B}\ar[r]\ar[d]&\hat{B}\ar[dl]\\
B
}
\]
is homotopy commutative.
\end{remark}

\begin{corollary}\label{cor:bigraded}
Let $A$, $B$ and $C$ be graded algebras with vanishing differentials and let $B$ and $C$ be also $A$--algebras via maps $A\to B$ and $A\to C$. Assume also that $A$ is left proper. Then $H(B*^{\mathbb{L}}_AC)$ is a naturally differential bigraded $\ground$--algebra.
\end{corollary}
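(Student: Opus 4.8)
The plan is to combine the previous proposition, which provides a differential bigraded $A$--cofibrant resolution $\tilde B \to B$, with the balancing principle for the derived free product (\autoref{cor:balancing}), after verifying that the left properness hypotheses are met. First I would note that $A$, $B$ and $C$ are graded $\ground$--algebras with vanishing differentials, but we cannot assume they are $\ground$--flat, so \autoref{thm:flatleftproper} does not immediately apply; instead I would pass to a cofibrant replacement $Q \acycfib A$ of $A$ in $\dgalg$ and work in $\underalg{Q}$, where the ambient algebras are automatically cofibrant and hence left proper. Concretely, replace $B$ and $C$ by their pullbacks $Q$--algebras and apply the proposition (in its evident relative form over $Q$) to obtain a bigraded cofibrant resolution $\tilde B = \{\tilde B_{ij}\}$ of $B$ as a $Q$--algebra, with the weight differential $d\co \tilde B_{i,j}\to \tilde B_{i,j+1}$ raising the second grading by one.

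Next I would form the ordinary (non-derived) free product $\tilde B *_Q C$ in $\underalg{Q}$. Since $\tilde B$ is $Q$--cofibrant, \autoref{cor:balancing} identifies $\tilde B *_Q C$ with the derived free product $B *^{\mathbb L}_A C$ in the homotopy category of dg algebras (using that derived under categories over $A$ and over $Q$ agree, by the definition of $\derunderalg{A}$, and that $Q$ is left proper). Now the crucial point is that the free product $\tilde B *_Q C$ inherits a bigrading: the second (weight) grading on $\tilde B$ extends to a weight grading on $\tilde B *_Q C$ by declaring the image of $C$ (and of $Q$) to sit in weight zero, and the differential of $\tilde B *_Q C$ is the sum of the internal differential of $C$ (which vanishes, as $C$ has trivial differential) and the weight differential of $\tilde B$, extended as a derivation; since the latter raises weight by one and the former preserves it, the total differential is compatible with the bigrading. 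Passing to homology, $H(\tilde B *_Q C) = H(B *^{\mathbb L}_A C)$ acquires a bigrading from the internal grading and the weight grading, and the multiplication respects both, so it is a bigraded $\ground$--algebra.

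Finally I would address naturality. By \autoref{comparison}, any two bigraded resolutions of $B$ over $Q$ are connected by a bigraded quasi-isomorphism commuting up to homotopy with the augmentations, so the induced bigrading on $H(B*^{\mathbb L}_A C)$ is independent of the choice of $\tilde B$; likewise it is independent of the choice of cofibrant replacement $Q$ since different choices give naturally equivalent derived under categories. I expect the main obstacle to be purely bookkeeping rather than conceptual: one must check carefully that forming the free product $-\ast_Q C$ is compatible with the weight filtration/grading — i.e.\ that no relations in the free product mix weights — and that the weight differential, being a $Q$--linear derivation determined on generators, extends unambiguously to the free product and still shifts weight by exactly one. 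Once this compatibility is in place, the bigrading on homology and the fact that it makes $H(B*^{\mathbb L}_A C)$ a bigraded algebra follow formally.
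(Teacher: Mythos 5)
Your proposal diverges from the paper's proof in a way that introduces two genuine gaps.

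The paper's proof is shorter and different in structure: it resolves \emph{both} $B$ and $C$ by the $A$--cofibrant differential bigraded resolutions $\tilde B$ and $\tilde C$ of the preceding proposition, then observes that $\tilde B *_A \tilde C$ carries a total weight grading (with $A$ sitting cleanly in weight $0$, since $A$ has vanishing differential) compatible with the differential; $H(B*^{\mathbb L}_A C)$ is computed as $H(\tilde B *_A \tilde C)$, and the bigrading passes to homology. You instead replace $A$ by a cofibrant dg algebra $Q$, resolve only $B$ over $Q$, and invoke \autoref{cor:balancing} to avoid resolving $C$.

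The first gap is that a cofibrant replacement $Q$ of $A$ in $\dgalg$ will generically have a \emph{nonzero} differential (e.g.~when $A$ is not $\ground$--projective). The preceding proposition is stated, and its construction works, for a base algebra with vanishing differential; it is what guarantees that the base sits entirely in weight $0$ with zero weight--differential. If you try to run it over $Q$, the internal differential of $Q$ preserves weight $0$ rather than raising weight by one, so the ``bigrading'' on $\tilde B$ and hence on $\tilde B *_Q C$ is \emph{not} compatible with the full differential. Indeed your description of the differential of $\tilde B *_Q C$ as ``the internal differential of $C$ plus the weight differential of $\tilde B$, extended as a derivation'' simply omits the internal differential of $Q$. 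This is exactly the point that breaks, and it is not a bookkeeping issue; the paper sidesteps it by never replacing $A$ at all, and by resolving $C$ over $A$ directly so that everything except the adjoined generators lives in weight $0$ with zero differential.

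The second gap is the appeal to \autoref{cor:balancing}. Balancing requires $A$, $B$ \emph{and} $C$ to be left proper before you are allowed to skip a cofibrant replacement. Replacing $A$ by $Q$ ensures that the new base is left proper, and resolving $B$ makes its replacement left proper, but $C$ is unchanged: it is still just a graded $\ground$--algebra, not assumed $\ground$--flat, and so need not be left proper. Hence $\tilde B *_Q C$ may fail to compute the derived free product. The paper avoids this by replacing $C$ as well, which is also precisely what it needs for the bigrading. So both gaps are cured simultaneously by the paper's strategy of resolving both factors over $A$.
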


\begin{proof}
By  \autoref{cor:balancing}, the $\ground$--algebra $H(B*^{\mathbb{L}}_AC)$ can be computed as the homology of $\tilde{B}*_A\tilde{C}$ where $\tilde{B}$ and $\tilde{C}$ are $A$--cofibrant differential bigraded resolutions of $B$ and $C$ constructed above. It follows that $\tilde{B}*_A\tilde{C}$ possesses a total bigrading that is compatible with the differential and the conclusion follows.
\end{proof}

\begin{remark}\mbox{}
\begin{itemize}
\item If the algebras $A$, $B$ and $C$ as above are \emph{ungraded} (or concentrated in degree $0$) then clearly the bigrading on $H(B\ast^\mathbb{L}_AC)$ reduces to a single grading (which coincides with the weight, or resolution, grading).
\item We have $H_{i,0}(B*^\mathbb{L}_AC)\cong B*_AC$.
\end{itemize}
\end{remark}

\subsection{Derived endomorphism dg algebras of bimodules}
Let $A,B$ be dg algebras and let $M$ be an $(A,B)$--bimodule (in other words $M\in\modcat{A\otimes B^{\op}}$). Then $\Hom_B(M,M)$ is a dg $A$--algebra with the map $A\to \Hom_B(M,M)$ given by $a\mapsto l_a$ with $l_a\co M\to M$ the left multiplication by $A$. We will now show that this construction can be derived in an appropriate sense.

Note that this is only a slight generalisation of the standard construction of the derived endomorphism dg algebra of a module. However, since we start with a bimodule, we are obtaining not just a dg algebra but a dg $A$--algebra and as such we must show that quasi-isomorphic bimodules give not just quasi-isomorphic dg algebras, but quasi-isomorphic dg $A$--algebras.

We will make use of the following almost obvious lemma.

\begin{lemma}\label{lem:bimodtomod}
If $A,B$ are dg algebras and $A$ is cofibrant as a $\ground$--module and $P$ is a cofibrant $(A,B)$--bimodule then $P$ is also cofibrant when regarded as a right $B$--module.
\end{lemma}

\begin{proof}
It is enough to show that the forgetful functor from $(A,B)$--bimodules to right $B$--modules preserves cofibrations. Since it preserves colimits, it suffices to show that it preserves generating cofibrations. The image of a generating cofibration is just a generating cofibration of right B-modules tensored with $A$ (which now is regarded just as a dg $\ground$--module). This is a cofibration since, because $A$ is a cofibrant $k$--module, tensoring with $A$ is a left Quillen functor on the category of $B$--modules.
\end{proof}

\begin{remark}
The condition that $A$ is cofibrant as a $\ground$--module would hold, for example, if $A$ were a cofibrant dg algebra \cite[Theorem 4.1 (3)]{schwedeshipley:algebras}.
\end{remark}

\begin{lemma}\label{lem:endzigzag}
Let $A,B$ be dg algebras, with $A$ cofibrant as a $\ground$--module and let $P,Q$ be cofibrant quasi-isomorphic $(A,B)$--bimodules. Then $\Hom_B(P,P)\simeq \Hom_B(Q,Q)$ as dg $A$--algebras.
\end{lemma}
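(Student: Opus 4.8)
The plan is to reduce the statement to a symmetric zig-zag of quasi-isomorphisms of $(A,B)$-bimodules and then argue that each elementary link of such a zig-zag induces a quasi-isomorphism of dg $A$-algebras. Since $P$ and $Q$ are cofibrant $(A,B)$-bimodules and quasi-isomorphic, a standard argument in the model category $\modcat{A\otimes B^{\op}}$ produces an actual bimodule quasi-isomorphism $f\co P\to Q$ (cofibrant objects are related by a genuine map, not merely a zig-zag, when a quasi-isomorphism exists between them — use that any quasi-isomorphism between cofibrant objects admitting a common map can be rectified, or simply factor and use lifting). So it suffices to show: if $f\co P\to Q$ is a quasi-isomorphism of cofibrant $(A,B)$-bimodules, then $\Hom_B(P,P)\simeq\Hom_B(Q,Q)$ as dg $A$-algebras.

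First I would introduce the bimodule $\Hom_B(P,Q)$ of right-$B$-linear maps, which is an $(A,A)$-bimodule (precompose and postcompose with left $A$-multiplications on $P$ and $Q$ respectively), hence in particular carries commuting left actions of $A$ through each variable. Composition gives maps
\[
\Hom_B(Q,Q)\otimes\Hom_B(P,Q)\to\Hom_B(P,Q)\quad\text{and}\quad \Hom_B(P,Q)\otimes\Hom_B(P,P)\to\Hom_B(P,Q),
\]
so $\Hom_B(P,Q)$ is a $(\Hom_B(Q,Q),\Hom_B(P,P))$-bimodule. Postcomposition with $f$ defines a map of dg algebras (indeed of dg $A$-algebras, since $f$ is left $A$-linear and $A$ acts on $P$, $Q$ through the first slot) $\phi\co\Hom_B(P,P)\to\Hom_B(P,Q)$ viewed via the left module structure; likewise precomposition with $f$ gives $\psi\co\Hom_B(Q,Q)\to\Hom_B(P,Q)$. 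The key homological input is that, because $P$ and $Q$ are cofibrant $B$-modules and $f$ is a quasi-isomorphism, $\Hom_B(f,Q)$ and $\Hom_B(P,f)$ are both quasi-isomorphisms (cofibrant modules are homotopically small enough that $\Hom_B(-,-)$ preserves quasi-isomorphisms in each variable between cofibrant objects), so $\phi$ and $\psi$ are quasi-isomorphisms of dg $A$-algebras with common target $\Hom_B(P,Q)$. This exhibits the required zig-zag
\[
\Hom_B(P,P)\xrightarrow{\ \phi\ }\Hom_B(P,Q)\xleftarrow{\ \psi\ }\Hom_B(Q,Q)
\]
in the category of dg $A$-algebras, all maps being quasi-isomorphisms, which is exactly the assertion $\Hom_B(P,P)\simeq\Hom_B(Q,Q)$.

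The main obstacle is verifying cleanly that $\phi$ and $\psi$ are maps \emph{of dg $A$-algebras} and not merely of dg algebras: one must check that the structure map $A\to\Hom_B(P,Q)$ is unambiguous, i.e. that left multiplication by $a\in A$ on $P$ followed by $f$ agrees with $f$ followed by left multiplication by $a$ on $Q$, which is precisely the left $A$-linearity of $f$ — so this is where the hypothesis that $f$ is a bimodule map (rather than just a $B$-module map) is used, and it is the one place the argument genuinely needs the bimodule structure. The remaining points are routine: that $\Hom_B(-,-)$ preserves quasi-isomorphisms between cofibrant modules in each variable follows from the corresponding statement for $\RHom_B$ recalled earlier in the text together with cofibrancy, and that $\phi$, $\psi$ are compatible with differentials and composition is a direct check. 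A minor subtlety worth a line is the reduction from ``quasi-isomorphic cofibrant bimodules'' to ``there is a quasi-isomorphism $P\to Q$'': if the quasi-isomorphism witnessing $P\simeq Q$ a priori points the wrong way or is a zig-zag, one uses that a quasi-isomorphism between cofibrant objects has a quasi-inverse up to homotopy and that homotopic maps induce homotopic (hence still quasi-isomorphic on passing to homology of $\Hom$) maps on endomorphism algebras, so the conclusion is unaffected.
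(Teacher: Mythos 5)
Your proposed zig-zag
\[
\Hom_B(P,P)\xrightarrow{\ \phi\ }\Hom_B(P,Q)\xleftarrow{\ \psi\ }\Hom_B(Q,Q)
\]
has a fatal type error at the middle term: $\Hom_B(P,Q)$ is not a dg algebra, only a $(\Hom_B(Q,Q),\Hom_B(P,P))$-bimodule, as you yourself note earlier in the argument. There is no multiplication $\Hom_B(P,Q)\otimes\Hom_B(P,Q)\to\Hom_B(P,Q)$ because two maps $P\to Q$ do not compose. Consequently $\phi$ and $\psi$ cannot be ``maps of dg $A$-algebras'' and the displayed diagram is not a zig-zag in the category of dg $A$-algebras; it is only a zig-zag of $(A,A)$-bimodules (or $A$-modules). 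That is strictly weaker than the assertion being proved, which is a statement about quasi-isomorphism \emph{of dg $A$-algebras} and is precisely what makes this lemma nontrivial. Said differently: you have correctly produced a quasi-isomorphism $\RHom_B(P,P)\simeq\RHom_B(Q,Q)$ of complexes, but the whole content of the lemma is to upgrade this to the level of multiplicative structure, and that upgrade does not follow formally from the bimodule zig-zag.

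The paper gets around this obstruction by a more careful decomposition. One may assume $f$ is a split injection (factor $f$ as an acyclic cofibration followed by an acyclic fibration, use lifting to split each, and observe the two one-sided cases reduce to the left-inverse case). Writing $f^l$ for the retraction and $L=\ker f^l$, the cofibrancy hypotheses force $L$ to be a cofibrant $(A,B)$-bimodule with $\Hom_B(L,L)\simeq 0$, and the isomorphism $Q\cong P\oplus L$ of $(A,B)$-bimodules yields a \emph{block-diagonal inclusion} of dg $A$-algebras $\Hom_B(L,L)\times\Hom_B(P,P)\to\Hom_B(Q,Q)$. This inclusion is a genuine dg $A$-algebra map (the $A$-action is compatible because $L$ is an $A$-submodule of $Q$), and it is a quasi-isomorphism because all the off-diagonal corners of $\Hom_B(P\oplus L,P\oplus L)$ involve $L$ and are acyclic. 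Combined with the projection $\Hom_B(L,L)\times\Hom_B(P,P)\to\Hom_B(P,P)$, which is also a quasi-isomorphism of dg $A$-algebras, one obtains the desired zig-zag entirely inside dg $A$-algebras. The key idea your proposal is missing is that the splitting produces an actual product decomposition of $Q$, letting you route the zig-zag through an honest algebra $\Hom_B(L,L)\times\Hom_B(P,P)$ rather than through the non-algebra $\Hom_B(P,Q)$.
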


\begin{proof}
Since $P$ and $Q$ are cofibrant $(A,B)$--bimodules and quasi-isomorphic, this quasi-isomorphism is represented by an actual map $f\co P\xrightarrow{\sim} Q$ of $(A,B)$--modules (recall that all objects are fibrant here). Factor this map as $p\circ i\co P\acyccof R\acycfib Q$. The maps $i$ and $p$ have a left inverse and a right inverse respectively by standard model category arguments, namely applying the left lifting property and right lifting property to the following diagrams:
\[
\xymatrix{
0\ar@{>->}[d]\ar[r] & R\ar@{->>}[d]^{p} \\
Q\ar[r]^{\id}\ar@{.>}[ru] & Q
}\qquad
\xymatrix{
P\ar@{>->}[d]^{i}\ar[r]^{\id} & P\ar@{->>}[d] \\
R\ar[r]\ar@{.>}[ru] & 0
}
\]
Therefore it is sufficient to prove the special cases when $f$ has a left inverse and when $f$ has a right inverse. Furthermore if $f$ has a right inverse then the right inverse is, of course, a quasi-isomorphism with left inverse $f$, so it is in fact sufficient to just prove the case when $f$ has a left inverse.

Denote the left inverse by $f^l\co Q\twoheadrightarrow P$ and denote the kernel of $f^l$ by $L$. Since $f^l$ is split, so is $L\hookrightarrow Q$ therefore $L$ is a retract of $Q$ and hence is also a cofibrant $(A,B)$--bimodule. Since $A$ is a cofibrant $\ground$--module, $P,Q$ and $L$ are cofibrant as right $B$--modules. Therefore, $\Hom_B(L,L)\simeq 0$ and the map of complexes $\Hom_B(P,P)\to \Hom_B(Q,Q)$ given by $g\mapsto f\circ g \circ f^l$ is a quasi-isomorphism.

Since $Q\cong P\oplus L$ then there is a map $\Hom_B(L,L)\times \Hom_B(P,P)\to \Hom_B(Q,Q)$. This is a map of dg $A$--algebras, which, by the arguments above, is also a quasi-isomorphism. Similarly, the projection map $\Hom_B(L,L)\times \Hom_B(P,P)\to \Hom_B(P,P)$ is also a quasi-isomorphism of dg $A$--algebras.

Therefore there is a zig-zag of quasi-isomorphisms of dg $A$--algebras between $\Hom_B(Q,Q)$ and $\Hom_B(P,P)$ so they are in the same quasi-isomorphism class of dg $A$--algebras as required.
\end{proof}

\begin{remark}
Although there appears to be direct map of dg algebras $\Hom_B(Q,Q)\to \Hom_B(P,P)$, this not a map of \emph{unital} dg algebras and so is not sufficient to prove the preceding lemma.
\end{remark}

\begin{definition}
Let $A,B$ be dg algebras with $A$ cofibrant as a $\ground$--module and let $M$ be an $(A,B)$--bimodule, then we define the \emph{derived endomorphism dg $A$--algebra of $M$}, denoted $\REnd_B(M)$, to be the dg $A$--algebra $\Hom_B(P,P)$, where $P$ is a cofibrant replacement for $M$ as an $(A,B)$--bimodule.
\end{definition}

The following theorem tells us that the derived endomorphism algebra is well-defined up to quasi-isomorphism and is quasi-isomorphism invariant.

\begin{theorem}\label{thm:endhomotopy}
Let $M$ be an $(A,B)$--bimodule. Then $\REnd_B(M)$ is well-defined up to quasi-isomorphism of dg $A$--algebras, i.e.~it does not depend on the choice of cofibrant replacement for $M$. Furthermore if $N$ is an $(A,B)$--bimodule and $M\simeq N$ as $(A,B)$--bimodules then $\REnd_B(M)\simeq \REnd_B(N)$ as dg $A$--algebras.
\end{theorem}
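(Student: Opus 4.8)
The plan is to reduce the entire statement to \autoref{lem:endzigzag}; the only work beyond that lemma is bookkeeping about cofibrant replacements and about upgrading zig-zags of quasi-isomorphisms to honest maps.

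First I would treat well-definedness. Let $P \acycfib M$ and $P' \acycfib M$ be two cofibrant replacements of $M$ as an $(A,B)$--bimodule. Then $P$ and $P'$ are cofibrant and are quasi-isomorphic (via the evident zig-zag through $M$), so \autoref{lem:endzigzag} applies verbatim and yields $\Hom_B(P,P)\simeq \Hom_B(P',P')$ as dg $A$--algebras. Hence $\REnd_B(M)$ is independent, up to quasi-isomorphism of dg $A$--algebras, of the chosen cofibrant replacement.

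For quasi-isomorphism invariance, suppose $M\simeq N$ as $(A,B)$--bimodules and choose cofibrant replacements $P\acycfib M$ and $Q\acycfib N$. Concatenating with the given zig-zag between $M$ and $N$ shows that $P$ and $Q$ are quasi-isomorphic cofibrant $(A,B)$--bimodules. (If one prefers an honest quasi-isomorphism $P\we Q$: every object of $\modcat{A\otimes B^{\op}}$ is fibrant since fibrations are surjections, so $P$ and $Q$ are bifibrant and the zig-zag collapses to a single weak equivalence; in any case \autoref{lem:endzigzag} performs this upgrade internally.) Applying \autoref{lem:endzigzag} once more gives $\REnd_B(M)=\Hom_B(P,P)\simeq\Hom_B(Q,Q)=\REnd_B(N)$ as dg $A$--algebras.

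The only point requiring any care — and it is the one genuine obstacle, already dispatched in the proof of \autoref{lem:endzigzag} — is that the two bimodules being compared must be simultaneously cofibrant (as $(A,B)$--bimodules, hence in particular as right $B$--modules, using the standing hypothesis that $A$ and $B$ are cofibrant over $\ground$); this is exactly why cofibrant replacements are taken on both $M$ and $N$. With that in place the argument is entirely formal.
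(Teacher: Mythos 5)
Your proof is correct and follows exactly the same approach as the paper: both the well-definedness and the quasi-isomorphism-invariance claims are reduced to the statement that quasi-isomorphic cofibrant $(A,B)$--bimodules have quasi-isomorphic endomorphism dg $A$--algebras, which is precisely \autoref{lem:endzigzag}. The extra remarks about fibrancy and about cofibrancy over $\ground$ are accurate but simply unpack bookkeeping that the cited lemma already handles internally.
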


\begin{proof}
In both cases it is sufficient to prove that if $P$ and $Q$ are quasi-isomorphic cofibrant $(A,B)$--modules then $\Hom_B(Q,Q)$ and $\Hom_B(P,P)$ are quasi-isomorphic as dg $A$--algebras, which is just \autoref{lem:endzigzag}.
\end{proof}

\begin{remark}
Let $A$ be a dg algebra which is cofibrant as a $\ground$--module and let $f\co A'\acycfib A$ be a cofibrant replacement for $A$ as an $(A,A)$--bimodule. Since $A$ is a cofibrant $\ground$--module, $A'$ is cofibrant as a right $A$--module. We have the following commutative diagram
\[
\xymatrix{
\Hom_A(A',A')\ar[r]^{f_*}_{\simeq} & \Hom_A(A',A)\\
A\ar[u]\ar[r]_{\cong} & \Hom_A(A,A)\ar[u]^{f^*}_{\simeq}
}
\]
where the left vertical map is the map making $\REnd_A(A)\simeq \Hom_A(A',A')$ into a dg $A$--algebra. The maps $f_*$ and $f^*$ are quasi-isomorphisms since $f$ is a quasi-isomorphism between cofibrant--fibrant right $A$--modules. It follows that the left vertical map is a quasi-isomorphism, in other words $\REnd_A(A)\simeq A$ as dg $A$--algebras.
\end{remark}

\section{Derived localisation of dg algebras}

Let $A$ be a dg algebra and let $S\subset H(A)$ be an arbitrary subset of homogeneous homology classes.

\begin{definition}\label{def:S-inverting}
A dg $A$--algebra $f\co A\to Y$ will be called \emph{$S$--inverting} if for all $s\in S$ the homology class $f_*(s)\in H(Y)$ is invertible in the algebra $H(Y)$.

Let $p\co A'\acycfib A$ be a cofibrant replacement for $A$, so that $p_*\co H(A')\to H(A)$ is an isomorphism. A dg algebra $Y\in\derunderalg{A}\simeq \Ho(\underalg{A'})$ will be called $S$--inverting if it is $S$--inverting as a dg $A'$--algebra. Furthermore, for a dg algebra map $f\co A\to B$, we will refer to $f_*(S)$--inverting dg $B$--algebras as simply $S$--inverting dg $B$--algebras.
\end{definition}

\begin{proposition}\label{prop:sinvertingadjunction}
Given a map $A\to B$ between cofibrant dg algebras, the adjunction $\derunderalg{A}\rightleftarrows\derunderalg{B}$ restricts to an adjunction between the full subcategories of $S$--inverting dg algebras.
\end{proposition}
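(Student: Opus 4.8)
The plan is to show that the left adjoint $F\colon \derunderalg{A}\to\derunderalg{B}$ (derived pushout along $A\to B$) and the right adjoint $G\colon\derunderalg{B}\to\derunderalg{A}$ (restriction) both preserve the property of being $S$--inverting, and that the unit and counit of the adjunction restrict appropriately; the claim then follows by the standard fact that an adjunction restricts to full subcategories that are preserved by both functors.

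First I would treat the right adjoint $G$. If $Y$ is an $S$--inverting dg $B$--algebra, then for $s\in S$ its image $f_*(s)$ is invertible in $H(Y)$, where $f\colon B\to Y$ is the structure map. The structure map of $G(Y)$ as a dg $A$--algebra is the composite $A\to B\xrightarrow{f} Y$, so the image of $s$ in $H(G(Y))=H(Y)$ is precisely the image of $f_*(s)$ under $H(B)\to H(Y)$ composed with $H(A)\to H(B)$; since $s$ is being inverted by construction (recall that "$S$--inverting dg $B$--algebra" means $f(S)$--inverting, where $f(S)\subset H(B)$ is the image of $S$), invertibility is immediate. Thus $G$ preserves $S$--inverting objects, essentially by unwinding the definition.

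Next, the left adjoint $F$. Here I would argue that for a dg $A$--algebra $C$, the homology class of $s\in S$ in $H(B\ast^{\mathbb L}_A C)$ is the image of $s$ under $H(A)\to H(B)\to H(B\ast^{\mathbb L}_A C)$, and this factors through the structure map $B\to B\ast^{\mathbb L}_A C$. Hence if $C$ is already $S$--inverting as a dg $A$--algebra — so $s$ becomes invertible in $H(C)$ — then its image in $H(B\ast^{\mathbb L}_A C)$ is the image of an invertible element under the algebra map $H(C)\to H(B\ast^{\mathbb L}_A C)$ (induced by $C\to B\ast^{\mathbb L}_A C$), hence invertible. So $F$ preserves $S$--inverting objects. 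Since $A$ and $B$ are cofibrant, $\derunderalg{A}\simeq\Ho(\underalg{A})$ and likewise for $B$ (using \autoref{prop:derivedundercatproper}, noting cofibrant algebras are left proper), so these homology computations may be carried out with honest pushouts after a cofibrant replacement of $C$, and the maps $B\to B\ast_A C$ and $C\to B\ast_A C$ are the evident structural ones.

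Finally, with both functors shown to preserve the full subcategories of $S$--inverting objects, the restricted adjunction is automatic: the unit $\eta_C\colon C\to GF(C)$ and counit $\varepsilon_Y\colon FG(Y)\to Y$ are morphisms in $\derunderalg{A}$ and $\derunderalg{B}$ respectively, and the triangle identities continue to hold in the full subcategories; fullness guarantees that the restricted functors, together with these (co)units, again form an adjunction. I do not expect a serious obstacle here; the only point requiring a little care is the bookkeeping of structure maps and the fact that "being invertible in homology" is preserved by any algebra map, together with the reduction to honest (underived) pushouts via left properness of cofibrant dg algebras, which is where \autoref{prop:derivedundercatproper} and \autoref{thm:flatleftproper} are invoked.
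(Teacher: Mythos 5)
Your proposal is correct and matches the paper's argument: restriction preserves $S$--inverting dg algebras by unwinding the definition, and the derived pushout preserves them because the structure map $C\to B\ast_A^{\mathbb{L}} C$ in $\derunderalg{A}$ carries the invertible image of $s$ in $H(C)$ to an invertible element of $H(B\ast_A^{\mathbb{L}}C)$. The paper's proof is just a terser version of yours.
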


\begin{proof}
Restriction clearly preserves the property of being $S$--inverting. Given $C\in\derunderalg{A}$ then there is a map $C\to B\ast_{A}^{\mathbb{L}} C$ in the derived under category of $A$ which is $S$--inverting if and only if the map $B\to B\ast_A^{\mathbb{L}} C$ is $S$-inverting, since both maps agree upon precomposition with the maps from $A$. Therefore $B\ast_{A}^{\mathbb{L}} C$ is $S$--inverting if $C$ is. Thus the derived free product preserves the property of being $S$--inverting.
\end{proof}

\begin{definition}\label{def:alglocalisation}
The derived localisation of $A$, denoted $L_S^{\dgalg}(A)$, is the initial object in the full subcategory on $S$--inverting dg algebras of $\derunderalg{A}$.
\end{definition}

\begin{remark}
Clearly, a localisation of $A$, if it exists, is unique up to a unique isomorphism in the derived under category of $A$. Because of that, we will refer to it as \emph{the} localisation of $A$. The existence will be established later on but in the mean time, it will simply be assumed; of course care will be taken to avoid circularity of arguments.
\end{remark}

Any quasi-isomorphism of dg algebras $A\to A'$ induces an equivalence $\derunderalg{A}\rightleftarrows\derunderalg{A'}$ and this restricts to an equivalence between the full subcategories on $S$--inverting dg algebras. This means that the definition of localisation above is well-defined in the sense that the subcategory of $S$--inverting dg algebras, up to natural equivalence, is independent of the choice of cofibrant replacement. Moreover, it implies that localisation is a quasi-isomorphism invariant:

\begin{proposition}\label{prop:equivloc}
Let $A\we A'$ be a quasi-isomorphism. Then this induces a quasi-isomorphism $L_S^{\dgalg}(A)\we L_S^{\dgalg}(A')$ as dg algebras.
\end{proposition}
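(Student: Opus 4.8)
The plan is to deduce this directly from the quasi-isomorphism invariance of the underlying structures, which has essentially been set up already. First I would invoke the observation, made just before the statement, that a quasi-isomorphism $A \we A'$ induces an equivalence $\derunderalg{A} \rightleftarrows \derunderalg{A'}$ which restricts to an equivalence between the full subcategories of $S$--inverting dg algebras. Concretely, if $p \co A_c \acycfib A$ and $p' \co A'_c \acycfib A'$ are cofibrant replacements, then one can choose a quasi-isomorphism $A_c \we A'_c$ lifting $A \we A'$ (using that $A_c$ is cofibrant and $p'$ is an acyclic fibration), and this is a Quillen equivalence $\underalg{A_c} \rightleftarrows \underalg{A'_c}$ whose derived functors give the claimed equivalence of homotopy categories. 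Since the notion of being $S$--inverting is phrased purely in terms of the induced map on homology being invertible, and the equivalence is compatible with the maps from $A_c$ and $A'_c$ respectively, it preserves and reflects the $S$--inverting property in both directions.

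Next I would use the fact that an equivalence of categories carries an initial object of a full subcategory to an initial object of the corresponding full subcategory. By \autoref{def:alglocalisation}, $L_S^{\dgalg}(A)$ is the initial object of the subcategory of $S$--inverting objects in $\derunderalg{A}$, and $L_S^{\dgalg}(A')$ is the initial object of the corresponding subcategory in $\derunderalg{A'}$. Applying the equivalence, the image of $L_S^{\dgalg}(A)$ is an initial object of the $S$--inverting subcategory of $\derunderalg{A'}$, hence isomorphic to $L_S^{\dgalg}(A')$ in $\derunderalg{A'}$. Unwinding what an isomorphism in the derived under category means — namely a zig-zag of quasi-isomorphisms of dg algebras, compatible with the structure maps — yields in particular a quasi-isomorphism $L_S^{\dgalg}(A) \we L_S^{\dgalg}(A')$ of dg algebras, which is the assertion.

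The only mildly delicate point, and the one I would be careful about, is the implicit assertion that $L_S^{\dgalg}(A)$ \emph{exists}; as the preceding remark notes, existence is assumed for now and established later, so I would simply carry that assumption through here, noting that the argument is conditional on it and introduces no circularity (the later existence proof does not rely on \autoref{prop:equivloc}). A second small point is bookkeeping of basepoints: one should check that the chosen lift $A_c \we A'_c$ makes the relevant triangles commute up to homotopy so that the equivalence really does respect the under-category structure, but this is exactly the content of the Quillen equivalence statement for under categories quoted from \cite{reedy1974homotopy} earlier in the text, so no new work is needed.
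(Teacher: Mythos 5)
Your proposal is correct and follows essentially the same argument as the paper: both use the observation preceding the statement that the induced equivalence of derived under categories restricts to the $S$--inverting subcategories, and that equivalences carry initial objects to initial objects. The paper phrases this using the restriction functor while you phrase it using the forward equivalence, but these are inverse to one another and the content is identical.
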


\begin{proof}
The dg $A'$--algebra $L_S^{\dgalg}(A')$ and the dg $A$--algebra $L_S^{\dgalg}(A)$ are initial $S$--inverting objects in the corresponding derived under categories. Since the restriction functor is an equivalence between these under categories, it sends $L_S^{\dgalg}(A')$ into $L_S^{\dgalg}(A)$.
\end{proof}

\begin{remark}
It is clear that in order to study the localisation of $A$ we may assume, without loss of generality, that $A$ is cofibrant. However if $A$ is flat, or more generally, left proper, from the practical point of view it is not necessary to cofibrantly replace it.
\end{remark}

If $A$ is cofibrant (or more generally, left proper) the localisation of $A$ is represented by an $S$--inverting cofibrant dg $A$--algebra $A\to L_S^{\dgalg}(A)$ with the property that for any other $S$--inverting dg $A$--algebra $A\to Y$ there is a map of dg $A$--algebras $L_S^{\dgalg}(A)\to Y$ which is unique up to homotopy of dg $A$--algebra maps.

\begin{lemma}\label{lem:pushoutlemma}
Let $B$ be a dg $A$--algebra. Then $B\ast_A^{\mathbb{L}}L_S^{\dgalg}(A)$ is the localisation of $B$.
\end{lemma}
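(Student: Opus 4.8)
The plan is to exhibit $B\ast_A^{\mathbb L} L_S^{\dgalg}(A)$ as an $S$--inverting dg $B$--algebra and then verify the required universal property. First I would note that we may assume $A$ and $B$ are cofibrant dg algebras, so that $L_S^{\dgalg}(A)$ is represented by an honest $S$--inverting cofibrant dg $A$--algebra and the derived free product $B\ast_A^{\mathbb L}(-)$ is modelled by the ordinary pushout along $A\to B$ after a single cofibrant replacement, using \autoref{prop:derivedundercatproper}, \autoref{cor:balancing} and \autoref{thm:flatleftproper}. That $B\ast_A^{\mathbb L} L_S^{\dgalg}(A)$ is $S$--inverting as a dg $B$--algebra is immediate from the proof of \autoref{prop:sinvertingadjunction}: there is a map of dg $A$--algebras $L_S^{\dgalg}(A)\to B\ast_A^{\mathbb L} L_S^{\dgalg}(A)$, so every $s\in S$ already becomes invertible in $H(L_S^{\dgalg}(A))$ and hence in the homology of the free product.

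Next I would check initiality in the full subcategory of $S$--inverting dg $B$--algebras of $\derunderalg{B}$. The key observation is the standard adjunction: for a dg $B$--algebra $Y$, giving a map of dg $B$--algebras $B\ast_A^{\mathbb L} L_S^{\dgalg}(A)\to Y$ is the same, via the derived free product/restriction adjunction $\derunderalg{A}\rightleftarrows\derunderalg{B}$, as giving a map of dg $A$--algebras $L_S^{\dgalg}(A)\to Y$, where $Y$ is regarded as a dg $A$--algebra by restriction along $A\to B$. Now if $Y$ is $S$--inverting as a dg $B$--algebra then, since restriction preserves the $S$--inverting property (again as in the proof of \autoref{prop:sinvertingadjunction}), $Y$ is $S$--inverting as a dg $A$--algebra, so by the universal property of $L_S^{\dgalg}(A)$ (\autoref{def:alglocalisation}) there is a unique such map $L_S^{\dgalg}(A)\to Y$ in $\derunderalg{A}$. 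Transporting back across the adjunction gives a unique map $B\ast_A^{\mathbb L} L_S^{\dgalg}(A)\to Y$ in $\derunderalg{B}$.

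Finally I would need to confirm that the object $B\ast_A^{\mathbb L} L_S^{\dgalg}(A)$ actually lies in the subcategory over which we are claiming initiality, i.e.\ that it is itself an $S$--inverting dg $B$--algebra — which we have already done — and that the Hom-set bijection above really does send the identity-like universal map to the canonical map, so that $B\ast_A^{\mathbb L} L_S^{\dgalg}(A)$ is the initial object rather than merely weakly initial; this is just naturality of the adjunction isomorphism and causes no trouble. The main obstacle, such as it is, is purely bookkeeping: one must be careful that the adjunction $\derunderalg{A}\rightleftarrows\derunderalg{B}$ is the derived one and only genuinely restricts to the $S$--inverting subcategories because of \autoref{prop:sinvertingadjunction}, and that all the identifications of derived free products with strict pushouts are legitimate because the dg algebras in play have been replaced by cofibrant (hence flat, hence left proper) ones. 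Once these properness reductions are in place, the argument is a formal consequence of the adjunction together with the defining universal property of $L_S^{\dgalg}$.
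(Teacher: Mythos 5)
Your argument is correct and is exactly the content of the paper's one-line proof ("This follows directly from \autoref{prop:sinvertingadjunction}"): you are simply spelling out the standard fact that a left adjoint in an adjunction restricting to full subcategories carries the initial object of the source subcategory to the initial object of the target subcategory. The cofibrancy reductions and the appeal to \autoref{prop:sinvertingadjunction} are precisely what the paper intends.
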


\begin{proof}
This follows directly from \autoref{prop:sinvertingadjunction}.

\end{proof}

\begin{remark}
Let $Y$ be an arbitrary dg algebra. Given any homotopy class of maps from $L_S^{\dgalg}(A)$ to $Y$ we obtain a homotopy class of maps from $A$ to $Y$ which is $S$--inverting. The universal property of localisation implies that this map is surjective. It is also injective since a right homotopy of $S$--inverting maps from (a cofibrant replacement of) $A$ to $Y$ is also $S$--inverting and hence, again by the universal property, factors through a right homotopy of maps from $L_S^{\dgalg}(A)$ to $Y$.
This gives an equivalent definition for the localisation of $A$ as a representing object of the functor which takes a dg algebra $Y$ to the set of homotopy classes of maps from $A$ to $Y$ which are $S$--inverting.
\end{remark}

Let $\ground \langle S \rangle$ denote the free graded algebra over $\ground$ generated by the set $S$, where $s\in S$ has the same degree in $\ground \langle S \rangle$ as it does in $H(A)$. Denote $\ground \langle S, S^{-1} \rangle$ the graded algebra generated by the symbols $\{s, s^{-1}\}_{s\in S}$, with $s^{-1}$ having degree the negative of the degree of $s$, modulo the ideal generated by the relations $s^{}s^{-1} = s^{-1}s = 1$. We will prove the following lemma later but we state it here since, in combination with the previous lemma, it allows us to identify explicitly the localisation of a dg algebra and, in particular, show that it always exists.

\begin{lemma}\label{lem:simplelocalisation}
The localisation of $\ground \langle S\rangle$ is given by $\ground \langle S,S^{-1}\rangle$.
\end{lemma}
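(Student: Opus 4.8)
The plan is to show that $\ground\langle S,S^{-1}\rangle$, equipped with the evident map $\ground\langle S\rangle\to\ground\langle S,S^{-1}\rangle$, is the initial object among $S$--inverting dg algebras under $\ground\langle S\rangle$. Note first that $\ground\langle S\rangle$ is cofibrant (it is a free graded algebra with zero differential, built by attaching cells $S^{n-1}\cof D^{n}$ with trivial attaching data), so by the remarks following \autoref{def:alglocalisation} it suffices to produce, for every $S$--inverting dg $\ground\langle S\rangle$--algebra $f\co\ground\langle S\rangle\to Y$, a map of dg $\ground\langle S\rangle$--algebras $\ground\langle S,S^{-1}\rangle\to Y$, unique up to homotopy of such maps. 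The key point is that $\ground\langle S,S^{-1}\rangle$ is \emph{intrinsically formal} in the relevant sense: although an $S$--inverting $Y$ only has the classes $f_*(s)$ invertible in \emph{homology}, one can rectify this to a strict inverse on a cofibrant model and thereby build an actual algebra map.

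In more detail, I would proceed as follows. First, reduce to a single generator: since $\ground\langle S\rangle$ is the free product $\ast_{s\in S}\,\ground\langle s\rangle$ over $\ground$ and, by \autoref{prop:sinvertingadjunction} together with \autoref{lem:pushoutlemma}, localisation commutes with free products (the localisation of $B\ast_A^{\mathbb L}C$ at a union of the two sets is the derived free product of the localisations), it is enough to treat $S=\{s\}$, i.e.\ to show the localisation of $\ground[x]$ (free on one generator $x$ of degree $|s|$, zero differential) is $\ground[x,x^{-1}]$. Second, for an $S$--inverting dg $\ground[x]$--algebra $Y$, choose a cofibrant replacement; the element $f_*(x)\in H(Y)$ is invertible, say with inverse represented by a cycle $y\in Y$ with $f(x)y-1=d(u)$ and $yf(x)-1=d(v)$ for some $u,v$. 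One then attaches to $Y$, inside $\underalg{Y}$, a polynomial generator $w$ in degree $-|x|$ together with contractible generators killing $f(x)w-1$ and $wf(x)-1$; concretely $Y\langle w\rangle\ast(\text{cells killing }f(x)w-1,\ wf(x)-1)$ receives a map from $\ground[x,x^{-1}]$ (sending $x^{-1}\mapsto w$) and a quasi-isomorphism back to $Y$ (the standard "adjoin a homotopy inverse" argument, which is a quasi-isomorphism precisely because $f(x)$ already acts invertibly on $H(Y)$ — this is the homotopy-invertible-element lemma). Composing gives the required map $\ground[x,x^{-1}]\to Y$ in the homotopy category. Uniqueness up to homotopy follows because a map out of $\ground[x,x^{-1}]$ is determined up to homotopy by where $x$ goes (the generator $x^{-1}$ is forced to a homotopy inverse of $f(x)$, and any two homotopy inverses are homotopic through the contractibility of the relevant space of inverses), so any two $S$--inverting maps $\ground\langle S\rangle\to Y$ extend to homotopic maps from the localisation.

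The main obstacle is the quasi-isomorphism claim in the "adjoin a homotopy inverse" step: one must show that freely adjoining a two-sided inverse $w$ to an element $f(x)\in Y$ that is already a homology unit does not change the homotopy type of $Y$. This is where the derived, as opposed to naive, localisation is subtle — for a non-invertible element this step genuinely changes the homology. The argument is a filtration/spectral-sequence computation on the free product $Y\ast_{\ground[x]}\ground[x,x^{-1}]$ (filtering by the number of occurrences of $w$), very much in the spirit of the filtration used in the proof of \autoref{thm:flatleftproper}, showing that the associated graded complex is acyclic in positive filtration because $f(x)$ acts invertibly on $H(Y)$. I expect this computation to be carried out in the paper as a separate lemma (indeed \autoref{lem:simplelocalisation} is flagged as "to be proved later"), and once it is available the rest of the argument is the formal bookkeeping sketched above. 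An alternative route, which I would mention, is to identify $\ground[x,x^{-1}]$ with a derived endomorphism algebra: $\ground[x,x^{-1}]\simeq\REnd_{\ground[x]}\big(\ground[x,x^{-1}]\big)$ and, since $\ground[x,x^{-1}]$ is the Bousfield localisation of $\ground[x]$ as a module over itself, deduce the universal property from \autoref{thm:endhomotopy} and \autoref{prop:derivedundercatproper}; but as the module-theoretic characterisation is itself the main theorem of the paper, the self-contained filtration argument is preferable here to avoid circularity.
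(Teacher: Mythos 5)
Your proposal departs from the paper's route in a substantial way, but it leaves two genuine gaps, and without closing them the argument does not constitute a proof.

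The first and main gap is the key quasi-isomorphism claim — that freely adjoining a two-sided homotopy inverse to an element which is already a homology unit does not change the homotopy type. You acknowledge this yourself (``I expect this computation to be carried out in the paper as a separate lemma''). But this is precisely where all the difficulty of \autoref{lem:simplelocalisation} lives, and the proposed filtration/spectral-sequence argument in the style of the proof of \autoref{thm:flatleftproper} is not obviously workable: the filtration of $A\ast_{\ground[x]}\ground[x,x^{-1}]$ by the number of occurrences of $x^{-1}$ is not compatible with the differential because the relation $xx^{-1}-1=0$ mixes weights, and after replacing $\ground[x,x^{-1}]$ by the cofibrant model $Q$ the differential is strictly weight-decreasing (e.g.\ $d(h^1_0)=xt-1$), so the $E^0$ page is not simply the associated free algebra. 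The paper instead reduces, via \autoref{lem:freeprodsufficient}, to showing $A\to A[S^{-1}]$ is a quasi-isomorphism for $S$--inverting relative cell $\ground\langle S\rangle$--algebras $A$, and proves this by combining the module-level splitting of \autoref{lem:splitting} with the homological epimorphism statement \autoref{lem:selftensorgeneral}. The latter rests on \autoref{lem:2-res} and Schofield's $\Tor_1$--vanishing theorem for localisations of free (hence hereditary) algebras (\autoref{prop:freeloc}, \autoref{cor:schofield}); this hereditary-algebra input is genuinely used and is not recovered by the sketched filtration.

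The second gap is the uniqueness argument. You claim a dg $\ground\langle S\rangle$--algebra map out of $\ground[x,x^{-1}]$ is determined up to homotopy by the image of $x$, appealing to ``contractibility of the relevant space of inverses.'' But computing $\pi_0$ of the mapping space from a cofibrant replacement $Q$ involves a whole tower of choices (where $t, h^1_0, h^1_1, h^2_m,\dots$ go) with obstructions at each stage; showing any two such choices are coherently homotopic needs an actual obstruction-theoretic argument which is not supplied. This is exactly the issue \autoref{lem:freeprodsufficient} is designed to sidestep: by reformulating the universal property as the single condition that $C\to C\ast_{\ground\langle S\rangle}^{\mathbb{L}} B$ is a quasi-isomorphism for every $S$--inverting $C$, the paper handles existence and uniqueness in one stroke, with no need to analyse the mapping space directly. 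Your proposal needs both of these missing ingredients supplied before it can stand.
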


By choosing a cycle representing each homology class $s\in S\subset H(A)$ we obtain a map $\ground \langle S \rangle \to A$. We obtain the following theorem immediately from the previous lemmata:

\begin{theorem}\label{cor:alglocalisation}
The localisation of $A$ is given by $ L_S^{\dgalg}(A)\simeq A \ast_{\ground \langle S \rangle}^{\mathbb{L}}\ground\langle S,S^{-1}\rangle$.\qed
\end{theorem}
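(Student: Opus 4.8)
The plan is to combine \autoref{lem:pushoutlemma} with \autoref{lem:simplelocalisation}. First I observe that the map $\ground\langle S\rangle\to A$ obtained by choosing cycle representatives exhibits $A$ as a dg $\ground\langle S\rangle$--algebra; note that $\ground\langle S\rangle$ is a free graded algebra, hence cofibrant as a dg algebra (with zero differential), and in particular flat over $\ground$, so by \autoref{thm:flatleftproper} it is left proper. Then \autoref{lem:pushoutlemma}, applied with the base $\ground\langle S\rangle$ in place of $A$ and $B=A$, tells us that the localisation of $A$ (as a dg $\ground\langle S\rangle$--algebra at the image of $S$) is computed as $A\ast^{\mathbb L}_{\ground\langle S\rangle}L_S^{\dgalg}(\ground\langle S\rangle)$. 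By \autoref{lem:simplelocalisation} the inner factor $L_S^{\dgalg}(\ground\langle S\rangle)$ is $\ground\langle S,S^{-1}\rangle$, yielding the asserted formula $L_S^{\dgalg}(A)\simeq A\ast^{\mathbb L}_{\ground\langle S\rangle}\ground\langle S,S^{-1}\rangle$.

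The one point that needs a word of justification is that localising $A$ as a dg $\ground\langle S\rangle$--algebra at the image of $S$ agrees with localising $A$ as a dg algebra at $S\subset H(A)$ in the sense of \autoref{def:alglocalisation}: this is because the image of $S$ in $H(A)$ under the composite $H(\ground\langle S\rangle)=\ground\langle S\rangle\to H(A)$ is exactly $S$ by construction of the cycle representatives, and the notion of an $S$--inverting dg algebra over $A$ depends only on the classes $S\subset H(A)$, not on whether we remember the factorisation through $\ground\langle S\rangle$. Concretely, the full subcategory of $S$--inverting objects of $\derunderalg{A}$ is the same whether we regard objects as dg $A$--algebras directly or as dg $\ground\langle S\rangle$--algebras equipped with a compatible map from $A$; the forgetful functor between the corresponding derived under categories matches up the two initial objects. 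This is routine once one unwinds \autoref{def:S-inverting}.

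I do not expect any serious obstacle here, since the real content has been deferred to \autoref{lem:simplelocalisation} (whose proof is promised later) and to \autoref{lem:pushoutlemma}; the present statement is essentially the formal consequence of chaining those two together, with the bookkeeping about base algebras as above. The only thing to be careful about is circularity: \autoref{lem:simplelocalisation} and hence this theorem presuppose existence of localisations, but since $L_S^{\dgalg}(\ground\langle S\rangle)$ is constructed explicitly as $\ground\langle S,S^{-1}\rangle$, the derived free product $A\ast^{\mathbb L}_{\ground\langle S\rangle}\ground\langle S,S^{-1}\rangle$ provides, via \autoref{prop:sinvertingadjunction}, an honest $S$--inverting dg $A$--algebra together with the requisite universal property, establishing existence of $L_S^{\dgalg}(A)$ at the same time. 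Hence the theorem may be read either as a computation of an a priori existing object or as an existence-and-computation statement, and both readings are consistent.
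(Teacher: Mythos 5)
Your argument is the paper's own: the theorem carries a \qed precisely because it is the immediate combination of \autoref{lem:pushoutlemma} (applied with base $\ground\langle S\rangle$ and $B=A$) and \autoref{lem:simplelocalisation}, via the map $\ground\langle S\rangle\to A$ chosen by cycle representatives. Your remark that $S$--inverting in $\derunderalg{A}$ is insensitive to whether one views objects as dg $A$--algebras or as dg $\ground\langle S\rangle$--algebras is exactly the bookkeeping built into the convention at the end of \autoref{def:S-inverting}, so the identification of the two localisations is as routine as you say.
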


\begin{remark}\mbox{}
\begin{itemize}
\item Since $\ground\langle S \rangle$ and $\ground \langle S, S^{-1} \rangle$ are free, and thus flat, over $\ground$, it is only necessary to cofibrantly replace $A$ under $\ground \langle S \rangle$ when computing the derived free product $A \ast_{\ground \langle S \rangle}^{\mathbb{L}}\ground\langle S,S^{-1}\rangle$, by \autoref{rem:replaceone} and \autoref{thm:flatleftproper}. On the other hand, if $A$ is flat over $k$, or more generally left proper, then instead just cofibrantly replacing $\ground \langle S, S^{-1} \rangle$ under $\ground\langle S \rangle$ does the job; an explicit replacement is given in \autoref{sec:explicitmodel} below.
\item The above result implies that the derived localisation of a dg algebra can be represented both as a derived pushout and a strict pushout in the homotopy category. This is a surprising phenomenon since one normally does not expect the existence of strict colimits in homotopy categories. Related to this is the non-existence of `higher' localisations, cf.~below.
\end{itemize}
\end{remark}

\subsection{Non-derived localisation}\label{nonderivedalgebralocalisation}
Recall the standard definition of the (non-derived) localisation. Let $A$ be a dg algebra and let $S\subset Z(A)$ be an arbitrary subset of homogeneous cycles in $A$.

\begin{definition}
A dg $A$--algebra $f\co A\to Y$ will be called \emph{strictly $S$--inverting} if for all $s\in S$ the cycle $f(s)\in Y$ is invertible in the dg algebra $Y$.
\end{definition}

\begin{definition}
The non-derived localisation of $A$, denoted $A[S^{-1}]$, is the initial object in the full subcategory on strictly $S$--inverting dg algebras of $\underalg{A}$.
\end{definition}

\begin{remark}
Normally one would define non-derived localisation for ungraded algebras without a differential. The definition above, of course, reduces to the same thing in that case by regarding such algebras as concentrated in degree $0$.
\end{remark}

Note, therefore, that the definition of the derived localisation is essentially the same, but formulated in the derived under category of $A$ in order to be a quasi-isomorphism invariant notion. However, this makes derived localisation a good deal less trivial. In particular, the following non-derived analogue of \autoref{cor:alglocalisation} is obvious, whereas the proof of \autoref{cor:alglocalisation} is much more involved.

\begin{proposition}
The non-derived localisation of $A$ is given by $A[S^{-1}]\cong A\ast_{\ground \langle S \rangle } \ground \langle S, S^{-1} \rangle$.
\end{proposition}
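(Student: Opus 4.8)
The plan is to verify the universal property directly, mirroring the chain of identifications behind \autoref{cor:alglocalisation} but without any homotopical bookkeeping. Fix the map $\ground\langle S\rangle\to A$ sending each generator $s$ to the corresponding cycle, and form the ordinary pushout $P:=A\ast_{\ground\langle S\rangle}\ground\langle S,S^{-1}\rangle$ in $\dgalg$, regarded as a dg $A$--algebra via the canonical map $A\to P$. First I would check that $P$ is strictly $S$--inverting: for $s\in S$ the image in $P$ of the cycle $s\in A$ coincides with the image of the corresponding generator of $\ground\langle S\rangle$, which in turn maps to the invertible element $s\in\ground\langle S,S^{-1}\rangle$ inside $P$; hence it is invertible in $P$.

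For the universal property, let $g\co A\to Y$ be an arbitrary strictly $S$--inverting dg $A$--algebra. Each $g(s)$ is a cycle which is invertible in the dg algebra $Y$, and its two-sided inverse $g(s)^{-1}$ is automatically a cycle: differentiating $g(s)g(s)^{-1}=1$ and using $d(g(s))=0$ gives $g(s)\,d(g(s)^{-1})=0$, and left multiplication by $g(s)^{-1}$ yields $d(g(s)^{-1})=0$. Therefore the assignment $s_i\mapsto g(s_i)$, $s_i^{-1}\mapsto g(s_i)^{-1}$ defines a map of graded algebras $\ground\langle S,S^{-1}\rangle\to Y$ — the defining relations $s_is_i^{-1}=s_i^{-1}s_i=1$ are respected — and it is a chain map since every generator is sent to a cycle. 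By construction this map agrees on $\ground\langle S\rangle$ with the composite $\ground\langle S\rangle\to A\xrightarrow{g}Y$, so the universal property of the pushout in $\dgalg$ produces a dg algebra map $P\to Y$ under $A$.

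Finally I would check uniqueness. Any dg $A$--algebra map $h\co P\to Y$ restricts along $A\to P$ to the fixed structure map $g$ and along $\ground\langle S,S^{-1}\rangle\to P$ to some map $\varphi$; since $\varphi$ must agree with $g$ on the generators $s_i$ of $\ground\langle S\rangle$, and $\varphi(s_i^{-1})$ is then forced to equal $\varphi(s_i)^{-1}=g(s_i)^{-1}$, the map $\varphi$ is determined, whence $h$ is determined by the pushout property. Thus $P$ is the initial strictly $S$--inverting dg $A$--algebra, i.e.\ $A[S^{-1}]\cong A\ast_{\ground\langle S\rangle}\ground\langle S,S^{-1}\rangle$.

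There is no genuine obstacle here: the statement is a purely formal consequence of the universal properties of the free product and of $\ground\langle S,S^{-1}\rangle$ (which is the reason \autoref{cor:alglocalisation} singled this proposition out as the \emph{easy} analogue). The only point requiring a moment's thought is the observation that the inverse of a cycle in a dg algebra is again a cycle, which is precisely what keeps the whole argument inside $\dgalg$ rather than merely in graded algebras.
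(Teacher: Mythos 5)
Your proof is correct and follows the same approach as the paper, which simply observes that any strictly $S$--inverting dg $A$--algebra admits a unique map from $\ground\langle S,S^{-1}\rangle$ extending the map from $\ground\langle S\rangle$, then invokes the universal property of pushouts. You have usefully spelled out the one non-trivial detail the paper elides --- that the inverse of a cycle in a dg algebra is automatically a cycle --- but this is the same argument, just with the steps made explicit.
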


\begin{proof}
It is clear that any strictly $S$--inverting dg $A$--algebra admits a unique map from $\ground \langle S, S^{-1} \rangle$ extending the map from $\ground \langle S \rangle$. By the universal property of pushouts the result follows.
\end{proof}

\subsection{Homotopy coherence}
Our definition of localisation is as a universal $S$--inverting object in a certain homotopy category (`initial $S$--inverting dg algebra up to homotopy'). One could also demand a stronger property be satisfied: to be the universal $S$--inverting object in a certain $\infty$--category (`initial $S$--inverting dg algebra up to coherent higher homotopy'), as opposed to just in the underlying homotopy category. We will now show that this higher homotopy coherence is, in fact, automatically satisfied.

Recall that the derived mapping space $\Map(X,Y)$ between objects $X$ and $Y$ in a category with weak equivalences is (the homotopy type of) the space of morphisms in the Dwyer--Kan simplicial localisation \cite{dk:simploc}. In particular, the set of connected components of the derived mapping space is the set of morphisms in the homotopy category.

Recall also the notion of a homotopy epimorphism of \cite{muro}.
\begin{definition}\label{def:homotopy_epimorphism}
	A morphism $f\co X \to Y$ in a model category is said to be a
	homotopy epimorphism if for any object $Z$, the induced morphism on (derived)
	mapping spaces
	\[
	\Map(Y,Z)\to\Map(X,Z)
	\]
	gives an injection on connected components of the corresponding simplicial sets and an isomorphism on homotopy groups for any choice of a base point.
\end{definition}
\begin{proposition}\label{prop:homotopy_epimorphism}
For a dg algebra $A$, the localisation map $A\to L_S^{\dgalg}(A)$ is a homotopy epimorphism.
\end{proposition}
\begin{proof}
The map $L_S^{\dgalg}(A)\to L_S^{\dgalg}(A)*^{\mathbb L}_AL_S^{\dgalg}(A)$ given by the inclusion of the right factor is the localisation of $L_S^{\dgalg}(A)$ by \autoref{lem:pushoutlemma}. Since the latter is already $S$--inverting, this map is a quasi-isomorphism. Now the claim follows from the characterisation of homotopy epimorphisms given in \cite[Proposition 2.1]{muro}.
\end{proof}

Let $A$ be a dg algebra and let $A'$ be a cofibrant replacement for $A$. For $X,Y\in\underalg{A'}$ we denote by $\textrm{Map}_{A}(X,Y)$ the derived mapping space from $X$ to $Y$ in the model category $\underalg{A'}$. This does not depend, up to weak equivalence, on the choice of cofibrant replacement for $A$. The set of connected components of $\textrm{Map}_A(X,Y)$ is the set of maps from $X$ to $Y$ in $\derunderalg{A}$.

\begin{theorem}
Let $L_S^{\dgalg}(A)\in\derunderalg{A}$ be the localisation of $A$. Then for any $B\in\derunderalg{A}$ which is $S$--inverting, $\Map_{A}(L_S^{\dgalg}(A),B)\simeq *$. If $B$ is not $S$--inverting then the derived mapping space is empty.
\end{theorem}

\begin{proof}
Without loss of generality, assume $A$ is cofibrant and assume that the localisation $A\cof L_S^{\dgalg}(A)$ is a cofibrant dg $A$--algebra. Since the localisation map $A\to L_S^{\dgalg}(A)$ is a homotopy epimorphism (\autoref{prop:homotopy_epimorphism}), the homotopy fibre of the map
\[
\Map(L_S^{\dgalg}(A),B)\to\Map(A,B)
\] is empty or contractible. If $B$ is $S$--inverting, then it is contractible, otherwise it is empty.  But this homotopy fibre is precisely $\Map_{A}(L_S^{\dgalg}(A),B)$.
\end{proof}

\begin{remark}
Combining the result above with \autoref{lem:simplelocalisation} it follows that the localisation of $A$ at $S$ is a fibrant replacement in the Bousfield localisation of the model category of dg $\ground \langle S \rangle$--algebras with respect to the map of $\ground\langle S \rangle$--algebras $f\co \ground \langle S \rangle \to \ground \langle S, S^{-1} \rangle$.

Indeed, \autoref{lem:simplelocalisation} and the result above implies that a $\ground \langle S \rangle$--algebra is $f$--local if and only if it is $S$--inverting. Then the above result implies that $A\to L_S^{\dgalg}(A)$ is an $f$--local equivalence.
\end{remark}

\subsection{An explicit model for localisation}\label{sec:explicitmodel}
If $A$ is a left proper dg algebra, then the localisation $ L_S^{\dgalg}(A)\simeq A \ast_{\ground \langle S \rangle}^{\mathbb{L}}\ground\langle S,S^{-1}\rangle$ may be computed by cofibrantly replacing $\ground\langle S,S^{-1}\rangle$ in the under category
$\underalg{\ground \langle S \rangle}$ and without having to replace $A$, cf.~\autoref{cor:balancing}. The condition on $A$ holds, for example, when $A$ is flat over $k$, by~\autoref{thm:flatleftproper}.

In the case of $S=\{s\}$, in other words inverting a single homogeneous element $s$, we can easily obtain a simple and explicit cofibrant replacement for $\ground \langle s,s^{-1}\rangle = \ground [ s,s^{-1}]$. We will denote this replacement by $Q$, which we now describe for completeness.

For the case of an arbitrary set $S$ a cofibrant replacement for $\ground \langle S, S^{-1} \rangle$ is then given by a free product of multiple copies of $Q$, one for each $s\in S$. Indeed, using \autoref{cor:balancing} and \autoref{thm:flatleftproper} we can reduce this, by induction, to showing that $\ground \langle S\setminus \{s\}, S^{-1}\setminus \{s^{-1}\} \rangle \ast_\ground Q\to \ground\langle S, S^{-1}\rangle$ is a quasi-isomorphism. But this is true since both sides are seen to compute the derived localisation $L_S^{\dgalg}(\ground \langle S \rangle)$.

Let $Q$ be the dg algebra
\[
Q = \ground \langle s=h^0_0, t=h^0_1, h^1_0, h^1_1, h^2_0, h^2_1, \dots \rangle
\]
with
\[\degree{h^n_m}=
\begin{cases}
n+\degree{s} &\text{if $n$ and $m$ are even}\\
n-\degree{s} &\text{if $n$ is even and $m$ is odd}\\
n &\text{if $n$ is odd}
\end{cases}
\]
and differential defined by $d(t)=d(s)=0$, $d(h^1_0) = st - 1$, $d(h^1_1) = ts - 1$ and for $n\geq 2$
\[
d(h^n_m) = \sum_{i=0}^{n-1}(-1)^{\degree{h^i_m}+1}h^i_mh^{n-1-i}_{m-1-i}
\]
where the lower index of an element $h^n_m$ is of course understood as an element in $\mathbb{Z}_2$.

\begin{proposition}\label{prop:cofibreplacement}
The dg algebra $Q$ is a cofibrant replacement for $\ground \langle s,s^{-1}\rangle $ as a dg $\ground \langle s \rangle$--algebra.
\end{proposition}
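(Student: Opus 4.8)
The plan is to establish the two halves of the statement separately: that $\ground\langle s\rangle\to Q$ is a cofibration, and that the evident augmentation $Q\to\ground\langle s,s^{-1}\rangle$ is a quasi-isomorphism. A single observation organises both. Declare the generator $h^n_m$ to have \emph{weight} $n$ — so $s=h^0_0$ and $t=h^0_1$ have weight $0$ — and extend the weight multiplicatively, so that $Q=\bigoplus_{w\ge 0}Q_w$. Inspecting the formulas for $d$ (and using $d^2=0$ on $Q$, which one checks directly from those formulas with the stated degrees) one sees that $d$ \emph{lowers weight by exactly one}: $d\co Q_w\to Q_{w-1}$.

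For cofibrancy, let $Q^{(N)}\subseteq Q$ be the sub-dg-algebra generated over $\ground\langle s\rangle$ by $t$ together with all $h^n_m$ with $n\le N$. The differential of each generator lies in the subalgebra generated by generators of strictly smaller weight — $d(h^1_m)\in\ground\langle s,t\rangle$, and for $n\ge 2$ each summand $h^i_m h^{n-1-i}_{m-1-i}$ of $d(h^n_m)$ has both factors of weight $<n$ — and it is a cycle there because $d^2=0$. Hence $Q^{(0)}=\ground\langle s,t\rangle$ is obtained from $\ground\langle s\rangle$ by a single pushout of a generating cofibration $S^{n-1}\cof D^n$ along the zero map (freely adjoining $t$), and each $Q^{(N)}$ is obtained from $Q^{(N-1)}$ by two further such pushouts, attaching $h^N_0$ and $h^N_1$ along their differentials. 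Since $Q=\colim_N Q^{(N)}$, the map $\ground\langle s\rangle\to Q$ is a transfinite composite of pushouts of generating cofibrations, hence a cofibration.

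For the quasi-isomorphism, note first that $s\mapsto s$, $t\mapsto s^{-1}$, $h^n_m\mapsto 0\ (n\ge 1)$ defines a surjection of dg algebras $\phi\co Q\to\ground\langle s,s^{-1}\rangle=\ground[s,s^{-1}]$, since $\phi$ annihilates each $d(h^n_m)$ (for $n\ge 2$ every summand of $d(h^n_m)$ has a factor $h^k_l$ with $k\ge 1$). With respect to the weight grading, $\phi$ is the projection $Q\to Q_0=\ground\langle s,t\rangle$ followed by the quotient $\ground\langle s,t\rangle\to\ground[s,s^{-1}]$, and $\ground[s,s^{-1}]$ lives entirely in weight $0$. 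So it suffices to show the complex $(Q_\bullet,d)$ has homology $\ground[s,s^{-1}]$ in weight $0$ and vanishing homology in every weight $\ge 1$. In weight $0$ the homology is $Q_0/d(Q_1)$; since $Q_1\cong\ground\langle s,t\rangle\otimes(\ground h^1_0\oplus\ground h^1_1)\otimes\ground\langle s,t\rangle$ and $d$ sends $u\,h^1_0\,v\mapsto\pm u(st-1)v$ and $u\,h^1_1\,v\mapsto\pm u(ts-1)v$, the subspace $d(Q_1)$ is exactly the two-sided ideal $(st-1,ts-1)$, so $H_{*,0}(Q)\cong\ground\langle s,t\rangle/(st-1,ts-1)=\ground[s,s^{-1}]$, the isomorphism being induced by $\phi$.

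The remaining point — exactness of $(Q_\bullet,d)$ in all weights $\ge 1$ — is the crux, and I expect it to be the main obstacle. Heuristically $Q$ is a minimal Tate-style resolution of $\ground[s,s^{-1}]$ over $\ground\langle s\rangle$ obtained by killing homology weight by weight (in the spirit of the bigraded resolutions constructed earlier, though far smaller). To prove acyclicity directly I would build an explicit $\ground$-linear contracting homotopy $\eta\co Q_w\to Q_{w+1}$ for $w\ge 1$ adapted to the terminating confluent rewriting system $st\to 1$, $ts\to 1$ presenting $\ground[s,s^{-1}]$ as a quotient of $\ground\langle s,t\rangle$: the generators $h^1_0,h^1_1$ are the two reductions, the higher $h^n_m$ record the (finitely many at each weight) overlaps among reductions, and $\eta$ is defined by performing one reduction and remembering which. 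The identity $d\eta+\eta d=\mathrm{id}$ on positive-weight components then reduces to a bookkeeping over how successive reductions overlap — which is precisely what the infinite tower $\{h^n_m\}$ encodes — and this is where the real work lies. Granting this acyclicity, $\phi$ is a quasi-isomorphism and the proposition follows.
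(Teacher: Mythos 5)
Your setup is correct and matches the paper's up to the point where the real content begins: the weight grading is exactly the paper's second grading (bidegree $(\degree{h^n_m},n)$), the cofibrancy argument via the exhaustion $Q^{(N)}$ is a fine (more explicit) version of the paper's one-line ``$Q$ is clearly cofibrant'', and the weight-zero computation of homology as $\ground\langle s,t\rangle/(st-1,ts-1)\cong\ground[s,s^{-1}]$ agrees. But you stop precisely at the crux: you assert that the crucial point is acyclicity of $(Q_{\bullet},d)$ in all weights $\geq 1$, propose to prove it via a contracting homotopy adapted to the rewriting system $st\to 1$, $ts\to 1$, and then write ``this is where the real work lies'' and ``granting this acyclicity\dots''. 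That is a genuine gap, not a deferred routine check. Making the homotopy identity $d\eta+\eta d=\id$ close on all of $Q_{\geq 1}$ requires controlling the alternating signs in $d(h^n_m)=\sum(-1)^i h^i_m h^{n-1-i}_{m-1-i}$ and the overlaps between the two reduction rules; this is exactly what the infinite tower $\{h^n_m\}$ is designed to encode, but verifying that the bookkeeping actually terminates consistently is the entire content of the proposition and cannot be waved at.

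The paper avoids this by a structural identification rather than a hands-on homotopy. Passing to the quotient bicomplex $C$ in the short exact sequence $0\to\ground\to Q\to C\to 0$ (which, in your notation, kills the constant $-1$ appearing in $d(h^1_m)$, so that $d(h^n_m)=\sum(-1)^i h^i_m h^{n-1-i}_{m-1-i}$ uniformly for all $n\geq 1$), one recognises $C$ as the cobar construction associated to the non-unital quadratic \emph{monomial} algebra $\ground\langle s,t\rangle/(s^2=t^2=0)$. Quadratic monomial algebras are Koszul by Priddy's theorem, and Koszulity is precisely the statement that this cobar homology is concentrated in weight zero. Hence $H(C)$, and therefore $H(Q)$, lives in bidegrees $(i,0)$ — which is exactly the positive-weight acyclicity you left unproved. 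If you insist on your route, you would essentially be re-proving Priddy's theorem for this particular monomial algebra (say via an Anick-type resolution or a noncommutative Gr\"obner-basis contracting homotopy); that can be done, but it is a substantive argument, not the ``bookkeeping'' your sketch suggests, and the paper's appeal to Koszul duality is the cleaner path.
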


\begin{proof}
The dg algebra $Q$ is clearly cofibrant. Note that $Q$ is a bigraded complex, with $h^n_m$ in bidegree $(\degree{h^n_m},n)$ for $n=0,1,\dots$. Denote by $C$ the bicomplex defined by the exact sequence $0\to \ground \to Q \to C \to 0$ of chain bicomplexes. Then $C$ is the cobar construction of the \emph{non-unital} associative algebra $\ground \langle s, t \rangle / (s^2 = t^2 = 0)$. Since the ideal of relations is generated by monomials, this algebra is Koszul \cite{priddy:koszul} and so $H(C)$ is concentrated in bidegrees $(i,0), i\in \mathbb{Z}$ and it follows that $H(Q)$ is also concentrated in bidegrees $(i,0)$. Clearly, the space of cycles in $Q$ of bidegrees $(i,0), i\in\mathbb{Z}$ is just $\ground \langle s, t \rangle$ and the space of boundaries is the ideal $(st-1, ts-1)$, so $H(Q)\cong\ground \langle s,s^{-1}\rangle $.
\end{proof}

\begin{remark}
The dg algebra $Q$ is the cobar construction of a certain \emph{curved algebra}: the curved algebra $\tilde{\ground}\langle \sigma, \tau \rangle /(\sigma^2=\tau^2=0, \sigma\tau + \tau\sigma = \Omega)$, where here $\tilde{\ground}\langle \sigma, \tau \rangle$ is the free curved algebra on two generators, with curvature denoted by $\Omega$. \autoref{prop:cofibreplacement} therefore says that this is the Koszul dual of $\ground \langle s,s^{-1}\rangle $ (cf.~\cite[Section 6]{positselski:curvedkoszul} for Koszul duality of unital/curved algebras).
\end{remark}

\subsection{Homology of localisations}
In general, it appears that little can be said about the homology of the derived localisation of a dg algebra $A$. For example, it is not true that the homology of the derived localisation is necessarily isomorphic to the derived localisation of the homology. In fact, we have a spectral sequence relating the two, that does not always collapse as we show later on. Note, first of all, that if $B$ is a graded algebra with vanishing differential and $S\subset B$ then $H(L_S^{\dgalg}(B))\cong H(B*^{\mathbb{L}}_{\ground\langle S\rangle}\ground\langle S, S^{-1}\rangle)$ is bigraded (cf.~\autoref{cor:bigraded}) and so we could write $H_{ij}(L_S^{\dgalg}(B))$ for its bigraded component $(i,j)$.

\begin{theorem}\label{thm:spectralseq}
There exists a spectral sequence converging to $H_{i+j}(L_S^{\dgalg}(A))$ whose $E^2$~term is isomorphic to $H_{i,j}(L_S^{\dgalg}(H(A)))$, $i\in\mathbb{Z}, j\in\mathbb{N}$.
\end{theorem}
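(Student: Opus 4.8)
The plan is to build the spectral sequence from a filtered complex computing $H(L_S^{\dgalg}(A))$. First I would reduce to a convenient model: by \autoref{cor:alglocalisation} we have $L_S^{\dgalg}(A)\simeq A\ast^{\mathbb L}_{\ground\langle S\rangle}\ground\langle S,S^{-1}\rangle$, and since $\ground\langle S\rangle$ and $\ground\langle S,S^{-1}\rangle$ are free (hence flat) over $\ground$, by \autoref{cor:balancing} and \autoref{thm:flatleftproper} it suffices to pick a cofibrant replacement $\tilde A\acycfib A$ in $\underalg{\ground\langle S\rangle}$ and compute the homology of the strict free product $\tilde A\ast_{\ground\langle S\rangle}\ground\langle S,S^{-1}\rangle$. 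Here I would want $\tilde A$ chosen with some care: using the bigraded-resolution idea underlying the Propositions on bigraded resolutions of algebras, I would construct $\tilde A$ so that it carries an extra non-negative \emph{resolution} grading $j$, with $\tilde A_{*,0}$ a free $\ground\langle S\rangle$--algebra mapping onto $A$, $\tilde A_{*,0}$ concentrated on generators of internal degree, and the differential raising $j$ by one; in other words $\tilde A$ is built degreewise as a semifree resolution of $A$ over $\ground\langle S\rangle$ whose associated graded (for the $j$--filtration) has homology $H(A)$ as a $\ground\langle S\rangle$--algebra with zero differential.

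The key step is then to filter $P:=\tilde A\ast_{\ground\langle S\rangle}\ground\langle S,S^{-1}\rangle$ by the resolution grading $j$ inherited from $\tilde A$ (the second factor sits in $j=0$), obtaining an exhaustive, bounded-below filtration $F_0P\subset F_1P\subset\cdots$. The associated spectral sequence converges to $H(P)=H(L_S^{\dgalg}(A))$. On the $E^0$/$E^1$ page the differential is the internal differential of $\tilde A$ with the $j$--part killed, so $E^1$ is the homology of $\operatorname{gr}\tilde A$ free-producted with $\ground\langle S,S^{-1}\rangle$ over $\ground\langle S\rangle$; because free product with a flat algebra over a free algebra is exact on the relevant associated graded pieces (this is exactly the filtration-and-tensor argument used in the proof of \autoref{thm:flatleftproper}, where $F_i/F_{i-1}$ of a free product is a sum of iterated tensor products), one gets $E^1\cong H(A)\ast_{\ground\langle S\rangle}\ground\langle S,S^{-1}\rangle=H(A)[S^{-1}]$ in the strict sense, with $E^1$ bigraded by $(i+j)$ internal and $j$ resolution degree. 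The next differential $d^1$ on $E^1$ is induced by the part of $d_{\tilde A}$ that raises $j$ by one, which by construction is precisely a differential realising the \emph{derived} free product $H(A)\ast^{\mathbb L}_{\ground\langle S\rangle}\ground\langle S,S^{-1}\rangle=L_S^{\dgalg}(H(A))$; taking its homology yields $E^2\cong H_{ij}(L_S^{\dgalg}(H(A)))$, where the bigrading on the right is the one from \autoref{cor:bigraded}. A regrading/reindexing then puts the spectral sequence in the stated form, converging to $H_{i+j}(L_S^{\dgalg}(A))$.

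The main obstacle I expect is the identification of the $E^2$ page, i.e.~checking that the differential $d^1$ on $E^1$ really is (a representative of) the differential computing the derived free product of $H(A)$ with $\ground\langle S,S^{-1}\rangle$, rather than merely some chain complex with the right homology. This requires that the resolution $\tilde A$ be built compatibly: its associated graded must be not just quasi-isomorphic to $H(A)$ but an actual bigraded $\ground\langle S\rangle$--cofibrant resolution of $H(A)$ in the sense of the bigraded-resolution proposition, so that $\operatorname{gr}(\tilde A\ast_{\ground\langle S\rangle}\ground\langle S,S^{-1}\rangle)$ \emph{is} a model for $L_S^{\dgalg}(H(A))$ on the nose. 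The comparison of two such resolutions up to (homotopy-commutative) bigraded quasi-isomorphism, via the Remark following the bigraded-resolution proposition, is what makes $E^2$ independent of choices and naturally isomorphic to $H(L_S^{\dgalg}(H(A)))$. Convergence is then routine since the filtration is bounded below and exhaustive in each internal degree (the resolution grading $j$ is non-negative and $H(A)$ is degreewise involved only finitely, after passing to the bigraded model with each $\tilde A_{i,j}$ of bounded total degree in each bidegree).
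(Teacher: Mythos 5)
Your overall strategy --- filter a (semi-)free model for the derived free product by a resolution grading and examine the resulting spectral sequence --- is the right one, and in that broad sense matches the paper's approach. But two substantive gaps appear.

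First, the identification $E^1\cong H(A)\ast_{\ground\langle S\rangle}\ground\langle S,S^{-1}\rangle = H(A)[S^{-1}]$ is incorrect, and the argument offered for it (that ``free product with a flat algebra over a free algebra is exact'' on associated graded pieces) does not hold. The filtration argument in \autoref{thm:flatleftproper} applies to pushouts along the generating cofibrations $S^{n-1}\cof D^n$; the map $\ground\langle S\rangle\to\ground\langle S,S^{-1}\rangle$ is not a cofibration, and indeed if the functor $(-)\ast_{\ground\langle S\rangle}\ground\langle S,S^{-1}\rangle$ commuted with homology there would be no distinction between derived and non-derived localisation and the theorem would be vacuous. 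Moreover, if $E^1$ really were the strict localisation $H(A)[S^{-1}]$, which you place in resolution degree~$0$, then $d^1$ would necessarily vanish and $E^2$ would again be $H(A)[S^{-1}]$, not $H(L_S^{\dgalg}(H(A)))$; the higher derived terms of $L_S^{\dgalg}(H(A))$ cannot live in such an $E^1$. The correct statement (which the paper records) is that the $E^1$ term should be a chain-level model of the derived localisation $L_S^{\dgalg}(H(A))$ --- namely a bigraded $\ground\langle S\rangle$-cofibrant resolution $\widehat{H(A)}$ of $H(A)$ free-producted with a bigraded cofibrant replacement $\widetilde{\ground\langle S,S^{-1}\rangle}$ --- whose $d^1$-homology gives $E^2\cong H_{ij}(L_S^{\dgalg}(H(A)))$.

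Second, the construction of the required filtered resolution of $A$ is not supplied. You appeal to the ``bigraded resolution'' proposition of the paper, but that proposition is stated only for graded algebras with \emph{vanishing} differential; it does not directly hand you, for an arbitrary dg algebra $A$, a filtered $\ground$-projective dg algebra resolution $\widehat{A}\acycfib A$ whose associated graded is a $\ground$-projective resolution of $H(A)$. This is precisely what the paper brings in from Sagave's work on derived $A_\infty$-algebras \cite{sagave:derivedainfinity}, and it is the essential technical input. Without something of this form, the filtered complex whose spectral sequence has the claimed $E^2$ page is not in hand; the degreewise-attachment description you give does not by itself ensure that the associated graded computes $H(A)$ compatibly with the $\ground\langle S\rangle$-algebra structure and with the bigrading needed to match \autoref{cor:bigraded}.
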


\begin{proof}
According to \cite[Theorem 3.4]{sagave:derivedainfinity}  there exists a $\ground$--projective (and hence proper) dg algebra resolution $\widehat{A}$ of $A$ that is $\mathbb N\times \mathbb Z$-graded and $E_2$-quasi-isomorphic to $A$. That means, in particular, that it is filtered: $0=F_{-1}(\widehat{A})\subset F_0(\widehat{A})\subset\ldots $ with $\bigcup_{i} F_i(\widehat{A})=\widehat{A}$ and that associated graded is also a $\ground$--projective resolution of $H(A)$. It follows that any homology class in $H(A)$ has a representative cocycle in $\widehat{A}$ having filtration degree 0. In particular, considering $\ground\langle S\rangle$ to be trivially filtered, the map $\ground\langle S\rangle\to \widehat{A}$ corresponding to choosing representatives of homology classes of $S\subset H(A)$, respects the filtration, i.e. its image lies in $F_0(\widehat{A})$.

 Now choose a cofibrant \emph{bigraded} resolution of $\ground\langle S, S^{-1}\rangle$ in the under category of $\ground\langle S\rangle$; denote it by $\widetilde{\ground \langle S, S^{-1}\rangle}$. Then $L_S^\dgalg (A)\simeq \widehat{A}*_{\ground\langle S\rangle}\widetilde{\ground \langle S, S^{-1}\rangle}:=\tilde{A}$. Moreover, since $\widetilde{\ground \langle S, S^{-1}\rangle}$ is obtained from $\ground\langle S\rangle$ by adjoining cells $\{x^{\alpha_n}\}$ (as in the proof of \autoref{prop:bigraded}), we have an isomorphism of $\ground\langle S\rangle$-modules (disregarding the differential):
 \[\tilde{A}=\widehat{A}*_{\ground\langle S\rangle}\widetilde{\ground \langle S, S^{-1}\rangle}\cong \widehat{A}\langle \{x^{\alpha_0}\},\{x^{\alpha_1}\},\ldots\rangle\]
Under this isomorphism the differential on $\widehat{A}$ is the given one while $d(x^{\alpha_n})\subset\widehat{A}\langle \{x^{\alpha_0}\},\ldots,\{x^{\alpha_{n-1}}\}\rangle$. Let us say that  $x^{\alpha_n}$ has weight $n$; this gives $\tilde{A}$ an increasing weight filtration $W_0\subset W_1\subset\ldots$ where $W_n$ is spanned by those monomials whose weight is $\leq n$.  Thus, $\tilde{A}$ has two filtrations: by weight and the one coming from $\hat{A}$. We consider the diagonal filtration and the corresponding spectral sequence.

Since the differential on every generator $x^{\alpha_n}$ lowers the weight degree, all these generators survive to the $E_1$ term. Remembering that $\widetilde{H(A)}$ is $\ground$-projective, we conclude that the $E_1$ term of the associated spectral sequence is isomorphic to
$\widehat{H(A)}\langle \{x^{\alpha_0}\},\{x^{\alpha_{1}}\},\ldots\rangle$ and it follows that the $E_2$-term is as claimed.
\end{proof}

When the derived localisation of $H(A)$ is the same as the non-derived localisation of $H(A)$, the situation simplifies.

\begin{theorem}\label{thm:homologycollapse}
Let $A$ be a dg algebra. If $L_S^{\dgalg}(H(A))\simeq H(A)[S^{-1}]\cong H(A)\ast_{\ground \langle S \rangle } \ground \langle S, S^{-1} \rangle$ then $H(L_S^{\dgalg}(A))\cong H(A)[S^{-1}]$.
\end{theorem}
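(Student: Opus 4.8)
The plan is to feed the hypothesis into the spectral sequence of \autoref{thm:spectralseq}, whose $E^{2}$-term is $H_{ij}(L_S^{\dgalg}(H(A)))$ and which converges to $H_{i+j}(L_S^{\dgalg}(A))$, and to show that under the hypothesis this $E^{2}$-term is concentrated on the single weight line $j=0$, where it is $H(A)[S^{-1}]$; the spectral sequence then degenerates for degree reasons and leaves no room for an extension problem.

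The one substantial step is to see that $H_{ij}(L_S^{\dgalg}(H(A)))$ vanishes for $j\neq 0$. Since $H(A)$ has vanishing differential, every element of $S$ is a cycle in $H(A)$, so the non-derived localisation $H(A)[S^{-1}]$ is strictly $S$--inverting, and the universal property of $L_S^{\dgalg}(H(A))$ supplies a canonical comparison map $u\co L_S^{\dgalg}(H(A))\to H(A)[S^{-1}]$ of $H(A)$--algebras; the hypothesis is precisely that $u$ is a quasi-isomorphism, so $H(u)$ is an isomorphism. On the other hand, by the remark following \autoref{cor:bigraded} the weight-zero bigraded summand of $H(L_S^{\dgalg}(H(A)))$ is $H(A)\ast_{\ground\langle S\rangle}\ground\langle S,S^{-1}\rangle=H(A)[S^{-1}]$, and the composite of its inclusion $\iota$ into $H(L_S^{\dgalg}(H(A)))$ with $H(u)$ is an $H(A)$--algebra endomorphism of $H(A)[S^{-1}]$ sending each $s^{-1}$ to $s^{-1}$, hence the identity. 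Thus $H(u)\circ\iota=\id$ with $H(u)$ an isomorphism, which forces $\iota$ to be an isomorphism as well; that is, $H(L_S^{\dgalg}(H(A)))$ is concentrated in weight $0$.

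Granting this, the $E^{2}$-term of \autoref{thm:spectralseq} is concentrated on the line $j=0$, where it equals $H(A)[S^{-1}]$. Every higher differential $d^{r}$ ($r\ge 2$) shifts the weight grading nontrivially, since it changes the filtration degree, and therefore vanishes; hence $E^{\infty}=E^{2}$, still supported in weight $0$. The convergent filtration on $H(L_S^{\dgalg}(A))$ whose associated graded is $E^{\infty}$ is then concentrated in a single filtration degree, so there is no extension problem and $H_{n}(L_S^{\dgalg}(A))\cong E^{\infty}_{n,0}=H(A)[S^{-1}]_{n}$. Because the spectral sequence arises from a filtered dg algebra it is multiplicative, so this is an isomorphism of graded algebras; and one checks, using the universal properties of the derived and non-derived localisations, that it is compatible with the canonical maps out of $H(A)$, i.e.~it identifies $H(A)\to H(L_S^{\dgalg}(A))$ with the localisation map $H(A)\to H(A)[S^{-1}]$.

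The main obstacle is the middle step: a quasi-isomorphism $L_S^{\dgalg}(H(A))\simeq H(A)[S^{-1}]$ of dg algebras by itself only controls the total homology, whereas we need the finer bigraded homology of \autoref{cor:bigraded} to be concentrated in weight zero. The device above — noting that the weight-zero summand is canonically $H(A)[S^{-1}]$ and is split off by the comparison map $u$, so that $u$ being a quasi-isomorphism pins it down exactly — is what handles this; everything afterwards is the routine argument that a spectral sequence supported on a single line degenerates with no extension problem.
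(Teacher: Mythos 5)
Your proof is correct and follows the same route as the paper: feed the hypothesis into the spectral sequence of \autoref{thm:spectralseq}, observe the $E^2$ page vanishes off the line $j=0$, and conclude by degeneration. The paper's proof is a single sentence asserting the vanishing for $j>0$; your argument via the comparison map $u$ and the weight-zero splitting $\iota$ (with $H(u)\circ\iota=\id$ forcing $\iota$ to be an isomorphism) is a genuinely useful justification of that assertion, which the paper leaves implicit.
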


\begin{proof}
Applying the spectral sequence in \autoref{thm:spectralseq} we see that its $E^2$~term vanishes for values of $j>0$ from which the conclusion follows.
\end{proof}

\begin{corollary}\label{prop:homologywhenfree}
If $H(A)$ is a cofibrant dg $\ground\langle S \rangle$--algebra, then $H(L_S^{\dgalg}(A)) \cong H(A)[S^{-1}]\simeq L_S^{\dgalg}(H(A))$.
\end{corollary}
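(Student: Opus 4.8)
The plan is to reduce the statement to \autoref{thm:homologycollapse} by showing that the hypothesis ``$H(A)$ is a cofibrant dg $\ground\langle S\rangle$--algebra'' implies the hypothesis of that theorem, namely $L_S^{\dgalg}(H(A))\simeq H(A)[S^{-1}]$. First I would observe that, for \emph{any} left proper dg algebra $B$ equipped with a map $\ground\langle S\rangle\to B$, \autoref{cor:alglocalisation} gives $L_S^{\dgalg}(B)\simeq B\ast^{\mathbb L}_{\ground\langle S\rangle}\ground\langle S,S^{-1}\rangle$, and by \autoref{cor:balancing} together with \autoref{thm:flatleftproper} this derived free product may be computed by cofibrantly replacing just one of the two factors over $\ground\langle S\rangle$ (the factor $\ground\langle S,S^{-1}\rangle$ being flat over $\ground$, hence left proper). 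If $B$ itself happens to be cofibrant as a dg $\ground\langle S\rangle$--algebra, then no replacement at all is needed: the derived free product is computed by the \emph{strict} pushout $B\ast_{\ground\langle S\rangle}\ground\langle S,S^{-1}\rangle$, which is precisely the non-derived localisation $B[S^{-1}]$. Applying this with $B=H(A)$ (viewed with zero differential, which is cofibrant over $\ground\langle S\rangle$ by hypothesis) yields $L_S^{\dgalg}(H(A))\simeq H(A)\ast_{\ground\langle S\rangle}\ground\langle S,S^{-1}\rangle = H(A)[S^{-1}]$.

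Having established that identification, the remaining half is immediate: the hypothesis of \autoref{thm:homologycollapse} is now satisfied, so that theorem gives $H(L_S^{\dgalg}(A))\cong H(A)[S^{-1}]$. Combining the two displays produces the full chain $H(L_S^{\dgalg}(A))\cong H(A)[S^{-1}]\simeq L_S^{\dgalg}(H(A))$ asserted in the corollary.

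The only point requiring a little care — and the step I expect to be the main obstacle — is the assertion that a \emph{strict} pushout along a map out of $\ground\langle S\rangle$ computes the derived pushout when the relevant factor is cofibrant over $\ground\langle S\rangle$. This is exactly the content of \autoref{cor:balancing}: balancing lets us cofibrantly replace either factor, and cofibrancy of $H(A)$ over $\ground\langle S\rangle$ means the identity serves as its own cofibrant replacement. One should also note that $\ground\langle S\rangle$ is itself flat (being free) over $\ground$, so it is left proper and the derived under category machinery applies without further hypotheses on $\ground$; and that $H(A)$ being cofibrant over $\ground\langle S\rangle$ makes it in particular flat over $\ground$, hence left proper, so that \autoref{cor:alglocalisation} is applicable to it. With these observations in place the argument is a two-line concatenation of results already proved.
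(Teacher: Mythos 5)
Your proof is correct and follows essentially the same route as the paper's: use \autoref{cor:balancing} (with the cofibrancy of $H(A)$ over $\ground\langle S\rangle$) to identify $L_S^{\dgalg}(H(A))$ with the strict pushout $H(A)[S^{-1}]$, then apply \autoref{thm:homologycollapse}. The paper's proof is a one-liner that leaves the left-properness bookkeeping implicit; you have merely spelled it out.
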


\begin{proof}
Since $H(A)$ is cofibrant, by \autoref{cor:balancing} the derived localisation of the homology is $H(A)\ast_{\ground \langle S\rangle } \ground \langle S,S^{-1}\rangle\cong H(A)[S^{-1}]$.
\end{proof}

The condition that $H(A)$ is cofibrant over $\ground\langle S \rangle$ means that $H(A)$ is a free algebra, or a retract of such. Later on, we will also see that we can compute the homology of the localisation of $A$ at the other extreme, when $S$ satisfies some centrality condition (such as being an Ore set) in $H(A)$.

\section{Derived localisation of modules}
Let $A$ be a dg algebra and let $S\subset H(A)$ be an arbitrary subset of homogeneous homology classes.

\begin{remark}\label{rem:leftright}
In this section we will choose to work with localisations in the category of \emph{left} $A$--modules. Of course, the same theory holds, with obvious modifications, for \emph{right} $A$--modules.
\end{remark}

\begin{definition}\label{def:local}
A left $A$--module $M$ will be called \emph{$S$--local} if for all $s\in S$ the map $l_s\co H(M) \to H(M)$ given by $l_s(m)=sm$ is an isomorphism.
\end{definition}

A dg $A$--algebra is in particular an $A$--module. The notions of $S$--inverting and $S$--local are equivalent:

\begin{proposition}\label{prop:localinverting}
Let $Y$ be a dg $A$--algebra. Then $Y$ is $S$--inverting if and only if it is $S$--local.
\end{proposition}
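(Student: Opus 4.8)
The plan is to establish both implications directly from the definitions, using the fact that $S$--locality is detected by the action of elements of $S$ on homology, which is multiplicative.

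First I would prove the easy direction: if $Y$ is $S$--inverting, then it is $S$--local. Suppose $s\in S$ acts invertibly in $H(Y)$, with inverse $u\in H(Y)$ satisfying $su = us = 1$. Then the map $l_s\co H(Y)\to H(Y)$ has a two-sided inverse given by $l_u$, since $l_s l_u(y) = s(uy) = (su)y = y$ and similarly $l_u l_s = \id$, using associativity of the multiplication on $H(Y)$. Hence $l_s$ is bijective, so $Y$ is $S$--local. (Here one should note that because the degree of $s$ may be nonzero, $l_s$ shifts degree, but it is still a bijection of graded $\ground$--modules, which is all that is required.)

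For the converse, suppose $Y$ is $S$--local, so for each $s\in S$ the map $l_s\co H(Y)\to H(Y)$ is a bijection. Applying $l_s^{-1}$ to the unit $1\in H(Y)$ produces an element $u := l_s^{-1}(1)$ with $su = 1$. It remains to check that $us = 1$ as well, i.e.~that $u$ is a genuine two-sided inverse. For this I would observe that $l_s(us) = s(us) = (su)s = 1\cdot s = s = l_s(1)$, and since $l_s$ is injective we conclude $us = 1$. Thus $s$ is invertible in $H(Y)$ with inverse $u$, so $Y$ is $S$--inverting.

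I do not expect any serious obstacle here: the statement is essentially the elementary observation that in any unital associative (graded) ring, an element acts invertibly by left multiplication if and only if it is a unit. The only point requiring a little care is bookkeeping with the grading shifts introduced by multiplication by homogeneous elements of nonzero degree, but this does not affect bijectivity. One should also make sure to invoke \autoref{def:local} and \autoref{def:S-inverting} so that the equivalence is phrased at the level of $H(Y)$ as an algebra, which is exactly where both conditions live.
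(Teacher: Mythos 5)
Your proof is correct and uses essentially the same approach as the paper: both reduce the statement to the elementary fact that in a unital associative graded ring, an element is a unit if and only if left multiplication by it is bijective. The only difference is in the last step of the converse direction, where you deduce $us=1$ from the injectivity of $l_s$ via $l_s(us)=(su)s=s=l_s(1)$, whereas the paper instead notes that $l_s$ (hence its inverse $f_s=l_s^{-1}$) is a map of right $H(Y)$--modules and computes $f_s(1)s=f_s(s)=f_s(l_s(1))=1$; both variants are equally valid.
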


\begin{proof}
Note that since $Y$ is a dg algebra, for any $s\in S$ the map $l_s$ is just left multiplication by the homology class of $s$ in $H(Y)$. Assume $Y$ is $S$--inverting. Then left multiplication by the multiplicative inverse of the homology class of $s$ is an inverse to the map $l_s$.

Conversely, assume $Y$ is $S$--local as a left $A$--module so that for any $s\in S$ the map $l_s$ has an inverse $f_s\co H(Y)\to H(Y)$. Then $s f_s(1) = (l_s\circ f_s)(1) = 1$ so $f_s(1)$ is a right multiplicative inverse for $s$. Since $l_s$ is a map of right $H(Y)$--modules so is its inverse $f_s$ and therefore $f_s(1)s = f_s(s) = (f_s\circ l_s)(1) = 1$ so $f_s(1)$ is also a left multiplicative inverse for $s$, hence $s$ is invertible in $H(Y)$.
\end{proof}

\begin{definition}\label{def:localisation}
A localisation of a module $M$ is an $S$--local $A$--module $Y$ with a map $f\co M\to Y$ such that for any $S$--local module $N$, the map $f^*\co \RHom_A(Y,N)\to \RHom_A(M,N)$ is a quasi-isomorphism.
\end{definition}

Two localisations are clearly quasi-isomorphic as $A$--modules.

\begin{proposition}\label{prop:twodefs}
An $A$--module $Y$ with a map $f\co M\to Y$ is a localisation of $M$ if and only it it is the initial $S$--local object in the homotopy category of $\undercat{M}{\amod}$.
\end{proposition}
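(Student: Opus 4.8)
The plan is to unwind both sides of the claimed equivalence and show they match. Recall the statement: an $A$--module $Y$ with a map $f\co M\to Y$ is a localisation of $M$ (in the sense of \autoref{def:localisation}) if and only if it is the initial $S$--local object in the homotopy category $\Ho(\undercat{M}{\amod})$. First I would fix a cofibrant replacement for $M$ and work in the model category $\undercat{M}{\amod}$ of $A$--modules under $M$, whose homotopy category is $\Ho(\undercat{M}{\amod})$; note that here the objects are $A$--modules equipped with a map from $M$, and morphisms in the homotopy category between $N_1$ and $N_2$ are computed as connected components of the derived mapping space, which, when $M$ is cofibrant and $N_1$ is fibrant, is the homotopy fibre of $\RHom_A(N_1, N_2)\to \RHom_A(M, N_2)$ over the class of the structure map $M\to N_2$.

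The forward direction is the easy one. Suppose $f\co M\to Y$ is a localisation in the sense of \autoref{def:localisation}. Given any $S$--local $N$ under $M$ via $g\co M\to N$, I want to produce a unique morphism $Y\to N$ in $\Ho(\undercat{M}{\amod})$ commuting with the structure maps. Since $f^*\co \RHom_A(Y,N)\to \RHom_A(M,N)$ is a quasi-isomorphism, it is in particular a bijection on $H_0$, so the class $[g]\in H_0\RHom_A(M,N)$ lifts uniquely to a class in $H_0\RHom_A(Y,N)$, i.e.\ to a homotopy class of $A$--module maps $Y\to N$ whose composite with $f$ is homotopic to $g$; homotoping appropriately this is exactly a morphism in $\undercat{M}{\amod}$ up to homotopy. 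Uniqueness of the morphism in the under category's homotopy category likewise follows because $f^*$ is an isomorphism on homotopy classes. Hence $Y$ is initial among $S$--local objects of $\Ho(\undercat{M}{\amod})$.

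For the converse, suppose $Y$ is initial among $S$--local objects of $\Ho(\undercat{M}{\amod})$; I must show $f^*\co \RHom_A(Y,N)\to \RHom_A(M,N)$ is a quasi-isomorphism for every $S$--local $N$. The key point is to upgrade initiality in the homotopy category to contractibility of a derived mapping space, and this is where I expect the main obstacle to lie --- it is exactly the same phenomenon handled for algebras in the homotopy-coherence theorem earlier in the paper. The strategy mirrors that proof: given $S$--local $N$ with structure map $M\to N$, form the iterated homotopy pullbacks $N_0 = N$, and $N_{k}$ the homotopy pullback of $N_{k-1}\to N \leftarrow N$ — no wait, more precisely one takes $N_k$ to be the homotopy pullback of the diagonal-type square built from the (homotopy-unique) map $Y\to N_{k-1}$; each $N_k$ remains $S$--local because $S$--locality is detected by $l_s$ being a quasi-isomorphism on underlying complexes and homotopy pullbacks of $A$--modules are computed in complexes, a property visibly stable under pullback. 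Then, since the category of $A$--modules under $M$ has its derived mapping spaces preserving homotopy limits in the target variable, $\Map_{\undercat{M}{\amod}}(Y, N_k)$ is the $k$--fold based loop space of $\Map_{\undercat{M}{\amod}}(Y,N)$, and each of these has trivial $\pi_0$ by the assumed initiality in the homotopy category; hence $\pi_k \Map_{\undercat{M}{\amod}}(Y,N) = \pi_0\Map_{\undercat{M}{\amod}}(Y,N_k) = \ast$ for all $k$, so $\Map_{\undercat{M}{\amod}}(Y,N)\simeq \pt$.

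Finally I would translate this contractibility back into the statement about $\RHom$. The derived mapping space $\Map_{\undercat{M}{\amod}}(Y,N)$ sits in a homotopy fibre sequence $\Map_{\undercat{M}{\amod}}(Y,N) \to \RHom_A(Y,N) \xrightarrow{f^*} \RHom_A(M,N)$, the fibre taken over the structure class of $M\to N$; the same holds with $Y$ replaced by $M$, where the base is a point and so $\Map_{\undercat{M}{\amod}}(M,N)\simeq \RHom_A(M,N)$ trivially. Running this fibre sequence over \emph{all} choices of basepoint — equivalently, noting that $f^*$ is a map of $H_0\RHom_A(M,M)$-shaped diagrams or simply that the relevant mapping spaces assemble — one deduces that $f^*$ is a quasi-isomorphism: it induces a bijection on $\pi_0$ (both compute homotopy classes of maps, and initiality gives the bijection) and its homotopy fibre over every point is contractible by the loop-space argument above applied with the correspondingly twisted structure map. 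Therefore $f^*\co\RHom_A(Y,N)\to\RHom_A(M,N)$ is a quasi-isomorphism for all $S$--local $N$, i.e.\ $Y$ is a localisation of $M$ in the sense of \autoref{def:localisation}, completing the proof.
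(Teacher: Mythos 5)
Your forward direction is essentially the paper's and is fine. The converse is where you take a genuinely different route, and it has a real gap.

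You try to mimic the proof of the homotopy-coherence theorem for algebras by forming iterated homotopy pullbacks $N_k$ of $Y\to N_{k-1}\leftarrow Y$ and claiming $\Map_{\undercat{M}{\amod}}(Y,N_k)\simeq\Omega^k\Map_{\undercat{M}{\amod}}(Y,N)$. But applying $\Map_{\undercat{M}{\amod}}(Y,-)$ to that pullback square gives the homotopy pullback of $\Map(Y,Y)\to\Map(Y,N_{k-1})\leftarrow\Map(Y,Y)$, and identifying this with the based loop space requires $\Map_{\undercat{M}{\amod}}(Y,Y)\simeq\pt$. In the algebra case that is established \emph{first}, by a trick that has no module analogue: $L_S^{\dgalg}(A)\to L_S^{\dgalg}(A)\ast_A^{\mathbb{L}}L_S^{\dgalg}(A)$ is a quasi-isomorphism because localisation commutes with the coproduct $\ast_A$, forcing the projection $\Map\times\Map\to\Map$ to be an equivalence and hence $\Map\simeq\pt$. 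The coproduct in $\undercat{M}{\amod}$ is the pushout $Y\oplus_M Y$, and the inclusion $Y\to Y\oplus_M Y$ has cofibre $\mathrm{coker}(f)$, so it is a quasi-isomorphism only when $f$ itself is — never in any interesting case. Initiality only hands you $\pi_0\Map_{\undercat{M}{\amod}}(Y,Y)=\ast$, which gives no control on $\pi_{\geq 1}$, so the loop-space ladder never gets started; and trying to recover $\pi_{\geq 1}\Map_{\undercat{M}{\amod}}(Y,Y)$ from the long exact sequence of the fibration $\Map_{\undercat{M}{\amod}}(Y,Y)\to\Map_A(Y,Y)\to\Map_A(M,Y)$ just asks for the very isomorphism you are trying to prove.

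The paper's own converse is much more elementary and avoids mapping spaces entirely. It exploits stability of $\amod$: since $H_n\RHom_A(Y,N)\cong H_0\RHom_A(Y,\Sigma^{-n}N)$ and $\Sigma^{-n}N$ is again $S$--local, the full quasi-isomorphism statement reduces to showing $f^*\co [Y,N']\to [M,N']$ is a bijection for every $S$--local $N'$. With $M$ cofibrant and $f$ a cofibration, surjectivity and injectivity then come directly from initiality and a homotopy-extension argument. Note also that even if the mapping-space contractibility were available, it only detects $H_n\RHom$ for $n\geq 0$ (the simplicial mapping space sees just the connective truncation of $\RHom$), so the shift trick is unavoidable at the end of your argument in any case; this is not addressed in your last paragraph.
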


\begin{proof}
Let $N$ be an arbitrary $S$--local $A$--module with a map $f'\co M \to N$. There is a homotopy pullback of simplicial sets
\[
\xymatrix{
\Map_{\undercat{M}{\amod}}(Y,N) \ar[r]\ar[d] & \ast\ar[d]\\
\Map_{\amod}(Y,N)\ar[r]^{f^*} & \Map_{\amod}(M,N)
}
\]
where the right vertical map is given by $\ast\mapsto f'$. But then $Y$ is a localisation of $M$ if and only if the bottom horizontal map is a quasi-isomorphism which holds if and only if $\Map_{\undercat{M}{\amod}}(Y,N)\simeq *$, which is true if and only if $Y$ is the initial $S$--local object in the homotopy category of $\undercat{M}{\amod}$ (since suspension/desuspension preserves the notion of being $S$--local).
\end{proof}

\begin{remark}
\autoref{prop:twodefs} should be compared to the definition of localisation for dg algebras. Note that left properness of $\amod$ is only required in order that we may consider the usual under category of $M$, instead of the derived under category.
\end{remark}

It follows that two localisations are not only quasi-isomorphic as $A$--modules, but also as $A$--modules \emph{under $M$}:

\begin{corollary}
Any two localisations of $M$ are isomorphic in the homotopy category of $\undercat{M}{\amod}$.\qed
\end{corollary}

\autoref{prop:twodefs} makes the following definition sensible.

\begin{definition}
We denote by $L_S^{\amod}(M)$ the initial $S$--local $A$--module in the homotopy category of $\undercat{M}{\amod}$. We will refer to it as \emph{the} localisation of $M$.
\end{definition}

\begin{remark}\mbox{}
\begin{itemize}
\item In light of \autoref{prop:twodefs} we will often abuse terminology and not distinguish between the different, but equivalent, notions of localisation of a module.
\item The reader may recognise that the localisation of a module is precisely a fibrant replacement in the Bousfield localisation of the model category of $A$--modules at the morphisms $\{r_s\co \Sigma^{\degree{s}}A\to A\}_{s\in S}$ given by right multiplications by the elements of $S$. Consequently, we can deduce existence of the localisation of a module from the general existence theorems for Bousfield localisation \cite[Chapter 4]{hirschhorn:localisation}.
\end{itemize}
\end{remark}

\begin{remark}\label{nonderivedmodulelocalisation}
Non-derived module localisations may be defined in analogy to non-derived algebra localisations, cf.~\autoref{nonderivedalgebralocalisation}. Let $A$ be a dg algebra and $S\subset Z(A)$ an arbitrary set of homogeneous cycles in $A$. An $A$--module is \emph{strictly $S$--local} if any $s\in S$ acts strictly invertibly on it. The \emph{non-derived localisation} of an $A$--module $M$ is the initial strictly $S$--local $A$--module in $\undercat{M}{\amod}$.
\end{remark}

\begin{convention}
Let $A'\acycfib A$ be a cofibrant replacement for $A$. Since restriction and extension of scalars along the quasi-isomorphism $A'\acycfib A$ induces a Quillen equivalence between the category of $A'$--modules and $A$--modules, then $L_S^{\amod}(A)$ is also the localisation of $A'$ as an $A'$--module. Therefore, without loss of any generality, we will from now on assume $A$ is always first cofibrantly replaced as a differential graded algebra, whenever it is necessary.
\end{convention}

The following theorem, essentially due to Dwyer \cite{dwyer:localisationinhomotopytheory}, says that the localisation of $A$ in the category of $A$--modules is, up to quasi-isomorphism, a dg algebra.

\begin{theorem}[{\cite[Proposition 2.5]{dwyer:localisationinhomotopytheory}}]\label{thm:dwyer}
There is a dg algebra $X\in \derunderalg{A}$ such that as $A$--modules $X\simeq L_S^{\amod}(A)$.
\end{theorem}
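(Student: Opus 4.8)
The plan is to realise the module localisation $L_S^{\amod}(A)$ as a derived endomorphism dg algebra, exploiting the machinery of \autoref{thm:endhomotopy}. First I would observe that since we have already agreed (by the standing convention) that $A$ is cofibrant, we may take the algebra localisation $A \cof L_S^{\dgalg}(A)$ to be a cofibration of cofibrant dg algebras, with $B := L_S^{\dgalg}(A)$ an $S$--inverting cofibrant dg $A$--algebra. By \autoref{prop:localinverting}, $B$ is $S$--local as a left $A$--module. The key idea is that $B$ is in fact \emph{the} module localisation of $A$, and the cleanest way to see this is to identify $L_S^{\amod}(A)$ with $\REnd_B(B)$ viewed via the canonical map $A \to \REnd_B(B)$: by the Remark following \autoref{thm:endhomotopy} we have $\REnd_B(B) \simeq B$ as dg $B$--algebras, hence as dg $A$--algebras, and in particular as $A$--modules. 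So the substance of the proof is to show $B \simeq L_S^{\amod}(A)$ as $A$--modules.

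To do this I would verify the universal property of \autoref{def:localisation} directly for the map $f\co A \to B$. We already know $B$ is $S$--local, so it remains to show that for every $S$--local $A$--module $N$ the map $f^*\co\RHom_A(B,N)\to\RHom_A(A,N)\simeq N$ is a quasi-isomorphism. The crucial step is the following: if $N$ is an $S$--local $A$--module, then the $A$--action on $N$ extends, up to coherent homotopy, to an action of $B = L_S^{\dgalg}(A)$. Concretely, $\REnd_A(N)$ is a dg $A$--algebra (here one uses that $A$ is cofibrant as a $\ground$--module, so the bimodule constructions of the previous section apply, after replacing $N$ by a cofibrant $A$--module); since $N$ is $S$--local, each $s\in S$ acts on $N$ by a quasi-isomorphism, so the image of $s$ in $H(\REnd_A(N))$ is invertible, i.e.\ $\REnd_A(N)$ is an $S$--inverting dg $A$--algebra. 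By the universal property of $B = L_S^{\dgalg}(A)$ (\autoref{def:alglocalisation}) there is a map of dg $A$--algebras $B \to \REnd_A(N)$, unique up to homotopy, which exhibits $N$ as a module over $B$ extending its $A$--module structure. It follows that $\RHom_A(B, N) \simeq \RHom_B(B, N) \simeq N$, and one checks that under this chain of equivalences the restriction map $f^*$ becomes the identity on $N$; hence $f^*$ is a quasi-isomorphism.

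The main obstacle is the last compatibility check: one must be careful that the $B$--module structure on $N$ produced via $\REnd_A(N)$ is genuinely the \emph{same}, in the homotopy category, as any a priori $B$--module structure used when writing $\RHom_A(B,N)\simeq\RHom_B(B,N)$, and that the induced map on derived Hom-complexes is really the restriction along $f$. This is essentially a matter of tracking the adjunctions between $\amod$ and $\modcat{B}$ (extension and restriction of scalars along $f$) and noting that extension of scalars $B\otimes_A^{\mathbb{L}}(-)$ sends $A$ to $B$; the left properness of $\amod$ (established earlier) ensures these derived functors behave well. Once this bookkeeping is in place, combining it with $\REnd_B(B)\simeq B$ yields a dg algebra $X := B \in \derunderalg{A}$ with $X \simeq L_S^{\amod}(A)$ as $A$--modules, as required. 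I expect no serious difficulty beyond organising this homotopy-coherence argument; indeed the simplest route may be to bypass $\REnd$ entirely and argue that $A \to B$ literally satisfies \autoref{def:localisation} by the $\REnd_A(N)$ argument above, then invoke uniqueness of module localisations to conclude $L_S^{\amod}(A) \simeq B$, which is manifestly a dg algebra.
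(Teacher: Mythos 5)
Your proposal has two genuine gaps, and they are not incidental bookkeeping issues.

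The first is circularity. You invoke the existence and universal property of the algebra localisation $B := L_S^{\dgalg}(A)$, but in the paper's development the existence of $L_S^{\dgalg}(A)$ is established only \emph{after} and \emph{via} the present theorem: the chain of dependencies runs \autoref{thm:dwyer} $\Rightarrow$ \autoref{lem:dwyermap} $\Rightarrow$ \autoref{lem:splitting} $\Rightarrow$ proof of \autoref{lem:simplelocalisation} $\Rightarrow$ \autoref{cor:alglocalisation}. The paper explicitly flags that existence of $L_S^{\dgalg}$ is ``simply assumed'' only provisionally, with ``care taken to avoid circularity of arguments''; the present theorem is precisely a place where one cannot help oneself to $L_S^{\dgalg}(A)$.

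The second gap is independent of the first and would bite even if existence of $L_S^{\dgalg}(A)$ were granted. The reduction $\RHom_A(B,N)\simeq \RHom_B(B,N)\simeq N$ is unjustified. The extension--restriction adjunction gives $\RHom_A(B,N)\simeq \RHom_B(B\otimes_A^{\mathbb{L}}B,\,N)$, so to reach $\simeq N$ one needs $B\otimes_A^{\mathbb{L}}B\simeq B$, i.e.\ that $A\to B$ is a homological epimorphism. Establishing this for the algebra localisation is essentially the content of \autoref{thm:localisationscoincide} together with \autoref{lem:selftensor} and \autoref{lem:selftensorgeneral}, which the paper works quite hard to prove; you are implicitly assuming the conclusion in the step you describe as mere ``organising this homotopy-coherence argument.'' (There is also a small slip: $\REnd_A(N)$ for a plain left $A$--module $N$ is not a dg $A$--algebra, since left multiplication by $a\in A$ is not a map of left $A$--modules unless $a$ is central; the object you want for that step is $\REnd_\ground(N)$.)

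The paper's actual proof avoids both issues by a different device: take the \emph{bimodule} localisation $Y:=L_{S\otimes 1}^{\modcat{A\otimes A^{\op}}}(A)$, check by a short adjunction calculation that $Y$ is also the left-module localisation of $A$ (the only input being that $\RHom_\ground(A,N)$ is $(S\otimes 1)$--local whenever $N$ is $S$--local), and then set $X:=\REnd_A(Y)$, which inherits a dg $A$--algebra structure from the bimodule structure on $Y$ and satisfies $X\simeq\RHom_A(Y,Y)\simeq\RHom_A(A,Y)\simeq Y$ as $A$--modules. This uses only the module-level Bousfield localisation and the derived endomorphism machinery, making no appeal to $L_S^{\dgalg}$ or to the homological epimorphism property, and is therefore free of circularity.
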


We give here a slightly different proof to Dwyer's, although the main idea is the same.

\begin{proof}
Denote by $S\otimes 1$ the subset of $H(A\otimes A^{\op})$ of elements of the form $s\otimes 1$ with $s\in S$ and let $Y = L_{S\otimes 1}^{\modcat{A\otimes A^{\op}}}(A)$ be the localisation of $A$ as an $A\otimes A^{\op}$--module (i.e.~an $A$--bimodule). Let $N$ be an $S$--local $A$--module. Then, observing that $\RHom_k(A,N)$ is $(S\otimes 1)$--local,
\begin{align*}
\RHom_A(Y,N) &\simeq \RHom_{A\otimes A^{\op}}(Y,\RHom_\ground(A,N))\\
& \simeq \RHom_{A\otimes A^{\op}}(A,\RHom_\ground(A,N))\simeq \RHom_A(A,N)
\end{align*}
and so $Y$ is also a localisation of $A$ as an $A$--module. Set $X$ to be the dg algebra $\REnd_{A^\op}(Y)$. Then we have a string of quasi-isomorphisms of $A$-modules: $X\simeq \RHom_{A^\op}(Y,Y)\simeq \RHom_{A^\op}(A,Y)\simeq Y$ as required.
\end{proof}

\begin{remark}
The ability to cofibrantly replace $A$ in the above proof was used where it was understood that $A\otimes A^{\op}$ computes the \emph{derived} tensor product and that a cofibrant $A\otimes A^{\op}$--module is also a cofibrant right $A$--module by restriction. If $A$ is not cofibrant, or at least left proper, we cannot conclude that there is a dg algebra map from $A$ itself into its module localisation.
\end{remark}

From now on, when we write $L_S^{\amod}(A)$ it will be assumed to be a dg algebra model for the localisation of $A$ as an $A$--module. We know there is at least one such model; soon we will show that it is unique in $\derunderalg{A}$. The following proposition says that this is a \emph{smashing} localisation.

\begin{proposition}\label{prop:smashing}
Let $M$ be an $A$--module. Then $M\simeq A\otimes_A^{\mathbb{L}} M \to L_S^{\amod}(A)\otimes_A^{\mathbb{L}} M$ is the localisation $L_S^{\amod}(M)$ of $M$.
\end{proposition}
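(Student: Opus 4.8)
The plan is to reduce the statement to the known facts about the module localisation $L_S^{\amod}(A)$, which we have already identified with a dg algebra $X = \REnd_A(Y)$ via \autoref{thm:dwyer}, together with the characterisation of localisations of modules as initial $S$--local objects under $M$ from \autoref{prop:twodefs}. First I would check that $L_S^{\amod}(A)\otimes_A^{\mathbb{L}} M$ is $S$--local as an $A$--module. Since $L_S^{\amod}(A)$ is a dg $A$--algebra that is $S$--inverting (by \autoref{prop:localinverting}, being $S$--local), each $s\in S$ acts on $L_S^{\amod}(A)$ by a quasi-isomorphism; extending scalars, $s$ acts on $L_S^{\amod}(A)\otimes_A^{\mathbb{L}} M$ by a quasi-isomorphism as well, so this derived tensor product is $S$--local. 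The natural map $M\simeq A\otimes_A^{\mathbb{L}} M\to L_S^{\amod}(A)\otimes_A^{\mathbb{L}} M$ is obtained by applying $-\otimes_A^{\mathbb{L}} M$ to the localisation map $A\to L_S^{\amod}(A)$, so it lands in the under category of $M$.

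Next I would verify the universal property. Let $N$ be an $S$--local $A$--module. One computes, using adjunction and the algebra structure on $L := L_S^{\amod}(A)$,
\[
\RHom_A(L\otimes_A^{\mathbb{L}} M, N)\simeq \RHom_A(M, \RHom_A(L, N)),
\]
so it suffices to show that $\RHom_A(L,N)$ is quasi-isomorphic to $N$ via restriction along $A\to L$, since then the displayed complex becomes $\RHom_A(M,N)$ and the induced map is exactly precomposition with $M\to L\otimes_A^{\mathbb{L}} M$. But $\RHom_A(L,N)\simeq \RHom_A(L_S^{\amod}(A), N)\simeq \RHom_A(A, N)\simeq N$ is precisely the defining property of the module localisation $L_S^{\amod}(A)$ applied to the $S$--local module $N$ (\autoref{def:localisation}), and one checks that the resulting quasi-isomorphism $\RHom_A(L,N)\xrightarrow{\sim} N$ is restriction along $A\to L$. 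Combining these identifications shows $f^*\co\RHom_A(L\otimes_A^{\mathbb{L}} M, N)\to \RHom_A(M,N)$ is a quasi-isomorphism, which is exactly \autoref{def:localisation} for $M\to L\otimes_A^{\mathbb{L}} M$.

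The one point requiring genuine care is the compatibility of the various quasi-isomorphisms with the map $M\to L\otimes_A^{\mathbb{L}} M$ — that is, checking that the chain of equivalences $\RHom_A(L\otimes_A^{\mathbb{L}} M, N)\simeq\RHom_A(M,\RHom_A(L,N))\simeq\RHom_A(M,N)$ really does identify the map $f^*$ induced by $f\co M\to L\otimes_A^{\mathbb{L}} M$ with the identity, rather than merely showing that the two complexes are abstractly quasi-isomorphic. I expect this to be the main obstacle: it is bookkeeping about units and the module-algebra structure, most cleanly handled by working with explicit cofibrant models (replace $A$ cofibrantly, take $L$ to be a cofibrant $S$--inverting dg $A$--algebra as in \autoref{thm:dwyer}, and $M$ cofibrant) so that all derived functors become point-set level functors and the relevant diagram of $A$--modules commutes on the nose. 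Once that naturality is pinned down, the universal property follows formally, and by uniqueness of module localisations (\autoref{prop:twodefs}) we conclude $L\otimes_A^{\mathbb{L}} M\simeq L_S^{\amod}(M)$ as objects of $\undercat{M}{\amod}$.
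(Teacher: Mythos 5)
Your proof is correct and follows essentially the same argument as the paper: first observe that the derived tensor $L_S^{\amod}(A)\otimes_A^{\mathbb{L}} M$ is $S$--local because $l_s$ is a quasi-isomorphism of right $A$--modules and $-\otimes_A^{\mathbb{L}} M$ preserves quasi-isomorphisms, then use the adjunction $\RHom_A(L\otimes_A^{\mathbb{L}} M, N)\simeq\RHom_A(M,\RHom_A(L,N))\simeq\RHom_A(M,N)$ for $S$--local $N$. The extra care you devote to checking that the chain of equivalences matches the precomposition map is sound but is not something the paper belabours; a minor simplification is that you could invoke the definition of $S$--local directly rather than detouring through \autoref{prop:localinverting}.
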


\begin{proof}
Let $N$ be an $S$--local $A$--module. Since
\begin{align*}
\RHom_A(L_S^{\amod}(A)\otimes_A^{\mathbb{L}} M, N) &\simeq \RHom_A(M,\RHom_A(L_S^{\amod}(A),N))\\&\simeq \RHom_A(M,\RHom_A(A,N)) \simeq \RHom_A(M,N)
\end{align*}
it is sufficient to prove that $L_S^{\amod}(A)\otimes_A^{\mathbb{L}} M$ is $S$--local. Left multiplication by any cycle representing a homology class $s\in S$ gives a (graded) quasi-isomorphism $l_s\co L_S^{\amod}(A)\to L_S^{\amod}(A)$ of right $A$--modules and the derived tensor product with $M$ preserves quasi-isomorphisms, so $L_S^{\amod}(A)\otimes_A^{\mathbb{L}} M$ is indeed $S$--local.
\end{proof}

\begin{corollary}\label{cor:adj}
The Quillen adjunction $\amod\rightleftarrows \modcat{L_S^{\amod}(A)}$ with left adjoint given by extension of scalars $M\mapsto L_S^{\amod}(A)\otimes_A M$ and right adjoint given by restriction along $A\to L_S^{\amod}(A)$ induces an equivalence between $\Ho(\modcat{L_S^{\amod}(A)})$ and the full subcategory of $\Ho(\amod)$ of $S$--local modules.\qed
\end{corollary}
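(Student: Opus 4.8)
The plan is to show that the derived Quillen adjunction $\mathbb{L}F\dashv\mathbb{R}U$ obtained from the stated Quillen adjunction --- writing $L=L_S^{\amod}(A)$ for the dg algebra model of the module localisation, $F=L\otimes_A-$, and $U$ for restriction along $A\to L$ --- restricts to an adjoint equivalence between $\Ho(\modcat{L})$ and the full subcategory $\mathcal{L}\subseteq\Ho(\amod)$ of $S$--local modules. Concretely I will verify two things: (i) the counit $\epsilon_N\co L\otimes_A^{\mathbb{L}}U(N)\to N$ is a quasi-isomorphism for every $L$--module $N$, so that $\mathbb{R}U$ is fully faithful; and (ii) the unit $\eta_M\co M\to U(L\otimes_A^{\mathbb{L}}M)$ is a quasi-isomorphism precisely when $M$ is $S$--local, so that the essential image of $\mathbb{R}U$ --- which is the class of $M$ for which $\eta_M$ is invertible --- is exactly $\mathcal{L}$. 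Since $U$ does not alter underlying complexes, $\mathbb{R}U=U$ and $U$ both preserves and reflects quasi-isomorphisms, and $\mathbb{L}F=L\otimes_A^{\mathbb{L}}-$.

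For (ii), first note that $U$ takes values in $S$--local modules: for an $L$--module $N$ and $s\in S$, left multiplication $l_s$ on $N$ is left multiplication by the image of $s$ in $H(L)$, which is invertible because $L$ is $S$--inverting, so $l_s$ is an isomorphism on $H(N)$ --- this is the same computation as in the proof of \autoref{prop:localinverting}. Next I identify the unit: for an $A$--module $M$, $\eta_M$ is exactly the map $M\simeq A\otimes_A^{\mathbb{L}}M\to L\otimes_A^{\mathbb{L}}M$ of \autoref{prop:smashing}, which that proposition identifies with the localisation map $M\to L_S^{\amod}(M)$. Hence if $M$ is $S$--local then $\id_M$ already satisfies the defining property of a localisation (\autoref{def:localisation}), so by uniqueness of localisations $\eta_M$ is a quasi-isomorphism; conversely, if $\eta_M$ is a quasi-isomorphism then $M$ is quasi-isomorphic to $U(L\otimes_A^{\mathbb{L}}M)$, which is $S$--local by the previous sentence. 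Thus the essential image of $\mathbb{R}U$ is precisely $\mathcal{L}$.

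For (i) I would avoid a cell-by-cell argument and instead use the triangle identities for the derived adjunction. Given an $L$--module $N$, the $A$--module $U(N)$ is $S$--local, so by (ii) the unit $\eta_{U(N)}\co U(N)\to U(L\otimes_A^{\mathbb{L}}U(N))$ is an isomorphism in $\Ho(\amod)$. The triangle identity $U(\epsilon_N)\circ\eta_{U(N)}=\id_{U(N)}$ in $\Ho(\amod)$ then forces $U(\epsilon_N)$ to be an isomorphism there, i.e.~$\epsilon_N$ is a quasi-isomorphism since $U$ reflects quasi-isomorphisms. Combining (i) and (ii), the derived adjunction is an adjoint equivalence between $\Ho(\modcat{L})$ and $\mathcal{L}$, which is the claim.

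The one place where real content enters is the identification in (ii) of the unit of the derived extension-of-scalars adjunction with the module localisation map: this is exactly the smashing statement \autoref{prop:smashing}, and it is what makes everything else formal --- in particular it lets me derive the fully-faithfulness of $\mathbb{R}U$ cheaply from the triangle identities rather than by an induction over cells of an $L$--module. The remaining checks --- that extension of scalars is left Quillen, since restriction along $A\to L$ preserves surjections and acyclic surjections, and that restriction reflects quasi-isomorphisms --- are immediate.
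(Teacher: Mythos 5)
Your proof is correct and is exactly the argument the paper leaves implicit behind the \qed: everything hinges on the smashing statement (\autoref{prop:smashing}) identifying the derived unit $M\to L_S^{\amod}(A)\otimes_A^{\mathbb{L}}M$ with the module localisation map, plus the observation that restriction lands in $S$--local modules; the rest is standard adjunction formalism. Your use of the triangle identity $U(\epsilon_N)\circ\eta_{U(N)}=\id$ to deduce that the counit is a quasi-isomorphism (rather than, say, a cell induction on $L$--modules) is clean and is the natural way to fill in the gap. One small wording point: the parenthetical assertion that the essential image of $\mathbb{R}U$ equals $\{M:\eta_M\text{ iso}\}$ in general presupposes full faithfulness of $\mathbb{R}U$, which you establish only afterward; but your actual argument does not rely on this --- you prove directly that the essential image is contained in $\mathcal{L}$ (since $U$ takes values in $S$--local modules) and that $\mathcal{L}\subseteq\{\,M:\eta_M\text{ iso}\,\}\subseteq$ essential image --- so nothing is circular.
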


\begin{proof}
In the previous proposition we saw that $L_S^{\amod}(A)\otimes_A^{\mathbb{L}} M$ is the localisation of any $A$--module $M$ and so if $M$ is an $S$--local $A$--module then $M\to L_S^{\amod}(A)\otimes_A^{\mathbb{L}} M$ is a quasi-isomorphism. Moreover, an $L_S^{\amod}(A)$--module is indeed an $S$--local $A$--module since its homology is an $H(L_S^{\amod}(A))$--module and any $s\in S\subset H(L_S^{\amod}(A))$ has an inverse. Then the proposition above implies that $L_S^{\amod}(A)\otimes_A^{\mathbb{L}} M\to M$ is a quasi-isomorphism (it has a right inverse which is a quasi-isomorphism) for any $L_S^{\amod}(A)$--module $M$.
\end{proof}

\begin{remark}
The corollary above identifies $\modcat{L_S^{\amod}(A)}$ as (Quillen equivalent to) the Bousfield localisation of $\amod$.
\end{remark}

We can in fact elevate \autoref{thm:dwyer} further to show that not only is the localisation of $A$ in $A$--modules a dg algebra up to quasi-isomorphism, but the universal map from this localisation to an $S$--inverting dg algebra, which is a priori only a map of modules, is up to homotopy a map of dg algebras, in a precise sense.

\begin{lemma}\label{lem:dwyermap}
Let $C\in \derunderalg{A}$ be $S$--inverting. Then there exists a map $L_S^{\amod}(A)\to C$ in $\derunderalg{A}$.
\end{lemma}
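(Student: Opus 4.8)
The plan is to build the map $L_S^{\amod}(A)\to C$ in $\derunderalg{A}$ by exploiting the fact, established in \autoref{thm:dwyer}, that $L_S^{\amod}(A)$ can be taken to be the dg algebra $\REnd_A(Y)$ where $Y=L_{S\otimes 1}^{\modcat{A\otimes A^{\op}}}(A)$ is the bimodule localisation of $A$. Since $C$ is an $S$--inverting dg $A$--algebra, \autoref{prop:localinverting} tells us it is $S$--local as a left $A$--module, and hence (by the right-handed analogue, cf.~\autoref{rem:leftright}) the bimodule $C$, regarded as an $(A,A)$--bimodule via the algebra map $A\to C$ composed with left and right multiplication, is $(S\otimes 1)$--local. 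By the universal property of the bimodule localisation $Y$, there is then a map of $(A,A)$--bimodules $Y\to C$, unique up to homotopy in $\undercat{A}{\modcat{A\otimes A^{\op}}}$.

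Next I would turn this bimodule map into the desired algebra map. The key observation is that $C$, being a dg $A$--algebra which is $S$--local, is its own module localisation: the map $A\to C$ exhibits $C$ as an $A$--module with an action of $C$ making the structure map $A\to C$ a map of algebras, and more precisely $C\simeq \REnd_C(C)$, while the bimodule map $Y\to C$ induces a zig-zag, via \autoref{thm:endhomotopy} and the endomorphism functoriality, between $\REnd_A(Y)$ and $\REnd_A(C)$. One then needs the natural map $\REnd_A(C)\to \REnd_C(C)\simeq C$ of dg $A$--algebras coming from restriction of scalars along $A\to C$; composing gives a chain
\[
L_S^{\amod}(A)\simeq \REnd_A(Y)\longrightarrow \REnd_A(C)\longrightarrow \REnd_C(C)\simeq C
\]
of dg $A$--algebra maps (some of them only zig-zags of quasi-isomorphisms of dg $A$--algebras, which is harmless in $\derunderalg{A}$). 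Concretely, after cofibrantly replacing everything, one picks a cofibrant bimodule replacement $P\acycfib A$, represents the bimodule map $Y\to C$ by an honest map $P\to C'$ for a suitable model $C'$ of $C$, and then post-composition $g\mapsto (\text{that map})\circ g$ together with the algebra structure on $C$ defines the map on endomorphism dg algebras; that it is a map of dg $A$--algebras is immediate from the fact that the left $A$--action on $\REnd_A(-)$ is by left multiplication and the bimodule map is left $A$--linear.

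The main obstacle I anticipate is bookkeeping rather than conceptual: making sure that at each stage the maps are genuinely maps of dg $A$--\emph{algebras} (not merely of dg algebras or of modules), and that the zig-zags assembling $\REnd_A(Y)\simeq L_S^{\amod}(A)$ from \autoref{thm:dwyer} and the zig-zag $\REnd_C(C)\simeq C$ can be concatenated coherently in $\derunderalg{A}$. In particular one must check that the bimodule $C$ really is cofibrant enough (or can be replaced as an $(A,A)$--bimodule without destroying its algebra structure on the nose up to homotopy) so that $\REnd_A(C)$ computes the derived endomorphism dg $A$--algebra and the comparison map $\REnd_A(C)\to C$ is defined; this uses that $A$ is cofibrant, hence flat over $\ground$, so that a cofibrant $(A,A)$--bimodule is cofibrant as a one-sided module, exactly as in the proof of \autoref{thm:dwyer}. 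Once the $A$--algebra structure is tracked correctly through \autoref{lem:endzigzag}, the existence of the map $L_S^{\amod}(A)\to C$ in $\derunderalg{A}$ falls out.
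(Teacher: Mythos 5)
Your approach diverges from the paper's at the crucial step and, as written, has a genuine gap. You start correctly: $C$ is $(S\otimes 1)$--local as an $(A,A)$--bimodule, so the universal property of the bimodule localisation $Y$ gives a bimodule map $Y\to C$. But this map is \emph{not} a quasi-isomorphism -- $Y$ is the localisation of $A$, whereas $C$ is an arbitrary $S$--inverting $A$--algebra and can be much larger (e.g.~$C = L_S^{\amod}(A)\langle x\rangle$). Consequently \autoref{thm:endhomotopy} does not apply, and there is no ``endomorphism functoriality'' that would carry a bimodule map $Y\to C$ to a dg algebra map $\REnd_A(Y)\to\REnd_A(C)$: the endomorphism construction is neither covariant nor contravariant in the module. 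Your chain also contains the arrow $\REnd_A(C)\to\REnd_C(C)$, which points the wrong way; restriction of scalars along $A\to C$ gives the inclusion $\REnd_C(C)\hookrightarrow\REnd_A(C)$ ($C$--linear maps are $A$--linear), not a map onto the smaller object.

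The paper's proof circumvents both problems by a different device. It works with the $(L_S^{\amod}(A),C)$--bimodule $L_S^{\amod}(A)\otimes_A C$ rather than with $Y$. The map $C\to L_S^{\amod}(A)\otimes_A C$ of $(A,C)$--bimodules \emph{is} a quasi-isomorphism, precisely because $C$ is already $S$--local (so localising it does nothing). Then $C' := \REnd_C(L_S^{\amod}(A)\otimes_A C)$ is, via the left $L_S^{\amod}(A)$--module structure, naturally a dg $L_S^{\amod}(A)$--algebra -- this is where the map out of $L_S^{\amod}(A)$ is manufactured -- while \autoref{thm:endhomotopy}, applied to the genuine quasi-isomorphism of $(A,C)$--bimodules $C\simeq L_S^{\amod}(A)\otimes_A C$, identifies $C'\simeq\REnd_C(C)\simeq C$ as dg $A$--algebras. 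You would need to replace the middle of your argument by this tensoring-up construction; the universal bimodule map $Y\to C$ alone does not produce an algebra map out of $\REnd_A(Y)$.
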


\begin{proof}
We may assume $A\cof L_S^{\amod}(A)$ is a cofibrant, in fact cell, dg $A$--algebra by replacing if necessary. Since $A$ is assumed to be cofibrant, then we may also assume that $L_S^{\amod}(A)$ is also cofibrant as a right $A$--module. To see this note that it can be seen directly, using the fact that $A$ is a cofibrant $k$--module, that a cell dg $A$--algebra can be built as a right $A$--module by attaching cells in $\modcat{A^\op}$; indeed the $k$--cofibrancy of $A$ means that the underlying right $A$--module of a cell dg $A$--algebra is free.

Let $C$ be an $S$--inverting dg $A$--algebra which again, without loss of generality, we assume is cofibrant.

The map $A\to L_S^{\amod}(A)$ is a map of $(A,A)$--bimodules and so consider the map $C\to L_S^{\amod}(A)\otimes_{A} C$ of $(A,C)$--bimodules given by tensoring on the right with the $(A,C)$--bimodule $C$. It is a quasi-isomorphism since $L_s^{\amod}(A)$ is cofibrant as a right $A$--module and so by \autoref{prop:smashing} $L_S^{\amod}(A)\otimes_{A}C$ is the localisation of $C$, yet $C$ is already $S$--local. Now set $C'=\REnd_C(L_S^{\amod}(A)\otimes_{A} C)$. By \autoref{thm:endhomotopy} $C'$ is a dg $L_S^{\amod}(A)$--algebra. Moreover, $C'$ is a dg $A$--algebra by restricting along the map $A\to L_S^{\amod}(A)$ and since $C\simeq L_S^{\amod}(A)\otimes_{A} C$ as $(A,C)$--bimodules then, again by \autoref{thm:endhomotopy}, $C\simeq\REnd_C(C)\simeq C'$ as dg $A$--algebras.
\end{proof}

Note that this lemma does not show that this map is unique in the derived under category of $A$, although it is of course unique in the homotopy category of left $A$--modules under $A$. It turns out that this map is indeed unique, and hence the localisation of $A$ in $A$--modules is also the localisation of $A$ as a dg algebra, but this is a more difficult fact to show, the proof of which is essentially the purpose of the remainder of this section.

A version of the following theorem was proved in the case of an ungraded ring in \cite{dwyer:localisationinhomotopytheory}.

\begin{theorem}\label{lem:selftensor}
Let $A$ be a graded algebra with zero differential. The following are equivalent:
\begin{enumerate}
\item $A[S^{-1}]\otimes_A^{\mathbb{L}} A[S^{-1}]\we A[S^{-1}]\otimes_A A[S^{-1}]$
\item $A[S^{-1}]\otimes_A^{\mathbb{L}} A[S^{-1}]\we A[S^{-1}]$
\item $L_S^{\amod}(A)\we A[S^{-1}].$
\end{enumerate}
Here, these maps are maps of (left) $A$--modules. The first and third maps are induced by the map from a cofibrant replacement of $A$ to $A$, and the second one is the composition of the first and the canonical isomorphism $A[S^{-1}]\otimes_A A[S^{-1}]\cong A[S^{-1}]$.
\end{theorem}

\begin{proof}
Since $A[S^{-1}]$ is the non-derived localisation, by the non-derived analogue of \autoref{prop:smashing} it always holds that $A[S^{-1}]\otimes_A A[S^{-1}]\cong A[S^{-1}]$. Therefore (1) and (2) are equivalent. By \autoref{prop:smashing}, (3) clearly implies (2).

To see (2) implies (3), let $M$ be a right $A[S^{-1}]$--module. Then
\[M\otimes_A^{\mathbb{L}}A[S^{-1}]\simeq \left (M\otimes_{A[S^{-1}]}^{\mathbb{L}}A[S^{-1}] \right )\otimes_A^{\mathbb{L}}A[S^{-1}] \simeq M \otimes_{A[S^{-1}]}^{\mathbb{L}}A[S^{-1}] \simeq M.\]
By \autoref{prop:smashing}, $L_S^{\amod}(A) \otimes_A^{\mathbb{L}} A[S^{-1}] \simeq A[S^{-1}]$. Consider now the K\"unneth spectral sequence associated to this derived tensor product: the $E^2$~term is given by the homology of $H(L_S^{\amod}(A))\otimes_A^{\mathbb{L}} A[S^{-1}]$. Observing that $H(L_S^{\amod}(A))$ is a right $A[S^{-1}]$--module we see that the spectral sequence collapses and the homology is $H(L_S^{\amod}(A))$ from which the result follows.
\end{proof}

\begin{remark}\label{rem:homological_epimorphism}
Recall that a dg algebra map $A\to B$ is a \emph{homological epimorphism} if and only if the map $B\otimes_A^{\mathbb{L}} B\to B$, induced by the multiplication on $B$, is a quasi-isomorphism. If we only have $B\otimes_A^{\mathbb{L}} B\to B\otimes_AB$ a quasi-isomorphism then the map $A\to B$ is called \emph{stably flat}; the latter notion is mostly used in the non-differential case. Then \autoref{lem:selftensor} can be understood as saying that if $A$ is a graded algebra with zero differential then for the map $A\to A[S^{-1}]$ the notions of homological epimorphism and stable flatness coincide and moreover $A\to A[S^{-1}]$ is the localisation of $A$ as an $A$--module if and only if it is a homological epimorphism/stably flat.
\end{remark}

Suppose now that $A$ is a graded algebra with vanishing differential that is \emph{hereditary}, i.e.~whose global dimension is $0$ or $1$. For example, such is a free algebra over a field. Then the conditions of \autoref{lem:selftensor} are always satisfied:

\begin{corollary}\label{cor:schofield}
For a hereditary graded algebra $A$ the map $A\to A[S^{-1}]$ is a homological epimorphism. Therefore, there is a natural equivalence
of dg algebras $L_S^{\amod}(A)\simeq A[S^{-1}]$ in $\derunderalg{A}$.
\end{corollary}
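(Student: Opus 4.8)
The plan is to deduce this from \autoref{lem:selftensor}, so the only thing that needs to be shown is that for a hereditary graded algebra $A$ and any $S\subset A$ of homogeneous elements, the map $A\to A[S^{-1}]$ is a homological epimorphism, i.e.\ condition (2) of \autoref{lem:selftensor} holds: $A[S^{-1}]\otimes_A^{\mathbb L}A[S^{-1}]\simeq A[S^{-1}]$. Equivalently, by the equivalence of (1) and (2) already established in that lemma, it suffices to verify the stable flatness statement $\Tor_n^A(A[S^{-1}],A[S^{-1}])=0$ for $n>0$, together with $\Tor_0^A(A[S^{-1}],A[S^{-1}])=A[S^{-1}]\otimes_AA[S^{-1}]\cong A[S^{-1}]$, the last isomorphism being the non-derived smashing statement referred to in the proof of \autoref{lem:selftensor}.

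First I would recall that when $A$ has global dimension at most $1$, every submodule of a projective (or flat) module is again flat, and more to the point $\Tor_n^A(-,-)$ vanishes identically for $n\geq 2$. So the only potentially nonzero higher Tor is $\Tor_1^A(A[S^{-1}],A[S^{-1}])$. The heart of the argument is therefore to show this group vanishes. The cleanest route is to use that the non-derived localisation map $A\to A[S^{-1}]$ is, as a map of right $A$--modules, described by \autoref{prop:smashing}'s non-derived analogue combined with the explicit generators-and-relations presentation: $A[S^{-1}]\cong A\ast_{\ground\langle S\rangle}\ground\langle S,S^{-1}\rangle$. One computes $A[S^{-1}]\otimes_A^{\mathbb L}A[S^{-1}]$ by picking a projective resolution of the right $A$--module $A[S^{-1}]$; because $A$ is hereditary, a resolution of the shape $0\to P_1\to P_0\to A[S^{-1}]\to 0$ exists with $P_0,P_1$ projective right $A$--modules. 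Tensoring with $A[S^{-1}]$ over $A$ on the left and taking homology, the only possible obstruction is the kernel of $P_1\otimes_AA[S^{-1}]\to P_0\otimes_AA[S^{-1}]$, which we must show is zero; equivalently $\Tor_1^A(A[S^{-1}],A[S^{-1}])=0$.

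To kill $\Tor_1$, I would argue that the map $A\to A[S^{-1}]$ is a \emph{flat} ring epimorphism up to the homological degree that matters — more precisely, that $A[S^{-1}]$ is flat as a right $A$--module, or at least that $\Tor_1^A(A[S^{-1}],A[S^{-1}])$ vanishes. For a hereditary ring this is a classical fact about universal localisations: Schofield's results (whence the label \texttt{cor:schofield}) show that universal localisation of a hereditary ring is stably flat, essentially because the matrices being inverted have cokernels of projective dimension $\leq 1$, so the universal localisation can be built as a filtered colimit of ``one-step'' operations each of which is stably flat, and stable flatness is closed under such colimits and composition. Concretely: $\Tor_1^A(A[S^{-1}],A[S^{-1}])$ is computed from the resolution above, and using that $A[S^{-1}]\otimes_A-$ applied to the inclusion $P_1\hookrightarrow P_0$ of projective modules over a hereditary ring stays injective because projectives over $A$ stay flat after localisation — this is exactly where heredity is used, since a submodule of a projective is projective hence flat. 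Once $\Tor_1$ vanishes, $\Tor_0$ gives $A[S^{-1}]\otimes_AA[S^{-1}]\cong A[S^{-1}]$ by the non-derived smashing property, condition (2) of \autoref{lem:selftensor} holds, and \autoref{lem:selftensor} then yields condition (3), namely $L_S^{\amod}(A)\simeq A[S^{-1}]$ as dg algebras in $\derunderalg{A}$.

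The main obstacle I anticipate is making the vanishing of $\Tor_1^A(A[S^{-1}],A[S^{-1}])$ genuinely rigorous rather than invoking it as folklore: one needs either to cite Schofield's theorem on stable flatness of universal localisations of hereditary rings directly, or to reconstruct the argument via the explicit presentation $A[S^{-1}]=A\ast_{\ground\langle S\rangle}\ground\langle S,S^{-1}\rangle$ and a careful bookkeeping of the bimodule resolution — possibly re-using the filtration and spectral-sequence techniques from the proof of \autoref{thm:flatleftproper}, whose $E^1$--terms are sums of tensor powers and which are exactly the kind of computation that detects Tor. Everything else is formal: it is just feeding the conclusion into \autoref{lem:selftensor}.
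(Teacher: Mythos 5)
Your overall strategy is the same as the paper's: feed the conclusion through \autoref{lem:selftensor}, observe that heredity kills $\Tor^A_n$ for $n\geq 2$, and handle $\Tor^A_1(A[S^{-1}],A[S^{-1}])$ separately via a citation to Schofield. The paper's proof is a three-line version of precisely this.

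However, there is a real confusion in your treatment of $\Tor_1$, which you correctly flag as the crux but then misdescribe in two ways. First, the improvised argument you sketch — that the map $P_1\otimes_A A[S^{-1}]\to P_0\otimes_A A[S^{-1}]$ stays injective ``because projectives over $A$ stay flat after localisation'' and ``a submodule of a projective is projective hence flat'' — does not work. Flatness of $P_1$ or $P_0$ as $A$--modules is irrelevant here; to conclude injectivity after tensoring you would need $A[S^{-1}]$ to be flat as a left $A$--module, and that is exactly what is \emph{not} available in general (indeed the entire point of the paper is that non-derived localisation can fail to be flat, cf.~\autoref{oreexample}). The argument as written is circular. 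Second, the reference to Schofield you should be invoking is \emph{not} a statement about hereditary rings. Schofield's Theorems 4.7--4.8 (as cited in the paper, and again in the proof of \autoref{prop:freeloc}) give the vanishing $\Tor_1^A(A[S^{-1}],A[S^{-1}])=0$ for a universal localisation of an \emph{arbitrary} ring; heredity plays no role there. Heredity is used \emph{only} to dispose of $\Tor_n$ for $n\geq 2$, which you handle correctly. So the division of labour in the paper is: Schofield (general) for $\Tor_1$, and global dimension $\leq 1$ for the rest. Your plan lands in the right place if you simply cite Schofield correctly and drop the flawed direct argument.
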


\begin{proof}
We need to show that $\Tor^{A}_i(A[S^{-1}],A[S^{-1}])=0$ for $i=1,2,\dots$. The vanishing of $\Tor^{A}_1(A[S^{-1}],A[S^{-1}])$ is a general result on non-derived localisation, cf.~\cite[Theorem 4.7, Theorem 4.8]{Schofield}; the proof in op.~cit is given for ungraded rings but applies verbatim for graded ones. Since $A$ is hereditary, all higher torsion products automatically vanish.
\end{proof}

\subsection{Localisations of free algebras}
We will now compute the localisations of free algebras. In particular, we will prove \autoref{lem:simplelocalisation}. Recall that we denote the
non-derived localisation by $A[S^{-1}]$, which is given explicitly by $A\ast_{\ground \langle S \rangle} \ground \langle S, S^{-1} \rangle$.
We start by proving that dg modules over cell algebras have a particularly simple cofibrant replacement. Let $A$ be a cell dg
$\ground$-algebra; that means that \begin{itemize}
\item forgetting the differential, $A$ is a free graded algebra $\ground\langle T\rangle$ on a set $T$ of homogeneous generators;
\item the set $T$ is a union of a nested system
$\emptyset=T_0 \subset\ldots \subset T_n \subset \ldots$, and $d(T_n)\subset k\langle T_{n-1}\rangle$.
\end{itemize}
Note that for every $n$ the set $T_n$ is not necessarily countable and so $T$ can likewise be uncountable. Denoting by $A_n$ the subalgebra in $A$ generated by $T_n$, we see that $A_n$ is closed with respect to the differential and that $A=\bigcup_{n=0}^\infty A_n$. The dg algebra $A_n$ plays the role of an $n$-skeleton of $A$ (even though, of course, it contains cells of arbitrary dimension).
\begin{lemma}\label{lem:2-res}
Let $A=\ground\langle T\rangle$ be a cell dg algebra as above and $M$ be a dg $A$-module that is cofibrant as a $\ground$-module. Then $M$ has a cofibrant replacement as the totalisation of the double complex with two columns having the following form; here $\langle T\rangle$ stands for the free $k$--module generated by $T$.
\begin{equation}\label{two_term_res}
\ground\langle T\rangle\otimes\langle T\rangle\otimes M \to \ground\langle T\rangle\otimes M.
\end{equation}
\end{lemma}
\begin{proof}
Consider the standard two-term free resolution of  $\ground\langle T\rangle$ as a bimodule over itself:
\begin{equation}\label{res_free}
\ground\langle T\rangle\otimes\langle T\rangle\otimes \ground\langle T\rangle\to \ground\langle T\rangle\otimes \ground\langle T\rangle\to \ground\langle T\rangle.
\end{equation}
Here the map $\ground\langle T\rangle\otimes \ground\langle T\rangle\to \ground\langle T\rangle$ is the multiplication map and the other arrow is the map that takes $t\in T$ to $t\otimes 1-1\otimes t\in \ground\langle T\rangle\otimes \ground\langle T\rangle$. Remembering that $A=\ground\langle T\rangle$ has its own differential, we can view this resolution as a double complex with three columns. The vertical differential on the $A$-bimodule $\ground\langle T\rangle\otimes\langle T\rangle\otimes \ground\langle T\rangle$ is the one  making this bimodule the kernel of the multiplication map of $A$. Consider a generator
$1\otimes t \otimes 1\in \ground\langle T\rangle\otimes\langle T_n \rangle\otimes \ground\langle T\rangle$.
 Because of the cellular nature of the differential in $\ground\langle T\rangle$ it holds, inside $\ground\langle T\rangle\otimes \ground\langle T\rangle$:
\[
d(t\otimes 1-1\otimes t)\subset \ground\langle T_{n-1}\rangle \otimes \ground\langle T_{n-1}\rangle.
\]
Therefore, it holds, inside $\ground\langle T\rangle\otimes\langle T\rangle\otimes \ground\langle T\rangle$:
\[d(1\otimes t \otimes 1)\subset\ground\langle T_{n-1}\rangle\otimes \langle T_{n-1}\rangle \otimes \ground\langle T_{n-1}\rangle \subset\ground\langle T \rangle\otimes \langle T_{n-1}\rangle \otimes \ground\langle T \rangle   .\]
Thus, the dg $A$-bimodule $\ground\langle T\rangle\otimes\langle T\rangle\otimes \ground\langle T\rangle$ is cellular and so is the totalisation of the double complex $\ground\langle T\rangle\otimes\langle T\rangle\otimes \ground\langle T\rangle\to \ground\langle T\rangle\otimes \ground\langle T\rangle$.

Tensoring \autoref{res_free} with $M$ we obtain the following double complex:
\begin{equation}
\ground\langle T\rangle\otimes\langle T\rangle\otimes M\to \ground\langle T\rangle\otimes M\to M.
\end{equation}
The horizontal differential in it is clearly exact since its homology in degrees $i>0$ is simply $\Tor^{A}_i(A, M)=0$. Therefore the total complex
of $\ground\langle T\rangle\otimes\langle T\rangle\otimes M\to \ground\langle T\rangle\otimes M$ is quasi-isomorphic to $M$; it is also clearly a cofibrant (left) $A$-module as a tensor product over $A$ of a cofibrant $A$-bimodule and a cofibrant $\ground$-module $M$.
\end{proof}

\begin{proposition}\label{prop:freeloc}
Let $S\subset T$. Then $L_S^{\modcat{\ground \langle T \rangle}}(\ground\langle T\rangle) \simeq \ground \langle T \rangle [S^{-1}] \cong \ground \langle T, S^{-1} \rangle$.
\end{proposition}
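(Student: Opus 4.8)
The plan is to deduce the statement from \autoref{lem:selftensor}, which applies because $\ground\langle T\rangle$ is a graded algebra with zero differential. By that result it is enough to verify condition~(2) there, namely that $\ground\langle T\rangle[S^{-1}]\otimes^{\mathbb{L}}_{\ground\langle T\rangle}\ground\langle T\rangle[S^{-1}]\simeq\ground\langle T\rangle[S^{-1}]$; since the non-derived analogue of \autoref{prop:smashing} already gives $\ground\langle T\rangle[S^{-1}]\otimes_{\ground\langle T\rangle}\ground\langle T\rangle[S^{-1}]\cong\ground\langle T\rangle[S^{-1}]$, this is the same as the vanishing of $\Tor^{\ground\langle T\rangle}_i(\ground\langle T\rangle[S^{-1}],\ground\langle T\rangle[S^{-1}])$ for all $i>0$. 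Once that is known, condition~(3) of \autoref{lem:selftensor} is exactly the first assertion $L_S^{\modcat{\ground\langle T\rangle}}(\ground\langle T\rangle)\simeq\ground\langle T\rangle[S^{-1}]$, and the remaining identification $\ground\langle T\rangle[S^{-1}]\cong\ground\langle T,S^{-1}\rangle$ is immediate from the presentation $\ground\langle T\rangle[S^{-1}]=\ground\langle T\rangle*_{\ground\langle S\rangle}\ground\langle S,S^{-1}\rangle$, i.e.\ adjoining formal inverses to the free generators lying in $S$.

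For the Tor computation I would feed in \autoref{lem:2-res}: the algebra $\ground\langle T\rangle[S^{-1}]$ has a free resolution of length two as a left $\ground\langle T\rangle$-module, explicitly the two-column complex of \autoref{rem:2-res}. Tensoring this resolution over $\ground\langle T\rangle$ with $\ground\langle T\rangle[S^{-1}]$, one sees for dimension reasons that $\Tor_i=0$ for $i\geq 2$, so the entire content reduces to showing $\Tor_1=0$, equivalently that the map $\ground\langle T\rangle[S^{-1}]\otimes\langle T\rangle\otimes\ground\langle T\rangle[S^{-1}]\to\ground\langle T\rangle[S^{-1}]\otimes\ground\langle T\rangle[S^{-1}]$ induced by $t\mapsto t\otimes 1-1\otimes t$ (that is, $x\otimes t\otimes y\mapsto xt\otimes y-x\otimes ty$) is injective.

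This injectivity is the one genuinely non-formal step and the step I expect to be the main obstacle. My preferred route is to invoke the classical result on non-derived localisation \cite[Theorem 4.7, Theorem 4.8]{Schofield} --- the same input used in \autoref{cor:schofield} --- which yields $\Tor^{\ground\langle T\rangle}_1(\ground\langle T\rangle[S^{-1}],\ground\langle T\rangle[S^{-1}])=0$; the proof there is written for ungraded rings but goes through verbatim in the graded case. If a self-contained argument is wanted, the injectivity can instead be checked directly using the explicit $\ground$-basis of $\ground\langle T\rangle[S^{-1}]\cong\ground\langle T\setminus S\rangle*_\ground\ground\langle S,S^{-1}\rangle$ by reduced words, at the cost of a longer combinatorial verification. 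It is worth emphasising that one cannot simply cite \autoref{cor:schofield} here: $\ground\langle T\rangle$ need not be hereditary when the base ring $\ground$ is not a field, and it is precisely \autoref{lem:2-res} that compensates by bounding the higher Tor groups.
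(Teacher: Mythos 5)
Your proposal is correct and follows essentially the same route as the paper: apply \autoref{lem:selftensor}, establish $\Tor_1^{\ground\langle T\rangle}(\ground\langle T\rangle[S^{-1}],\ground\langle T\rangle[S^{-1}])=0$ via the graded version of Schofield's theorem, and kill all higher $\Tor$ using the length-two free resolution from \autoref{lem:2-res}. Your closing remark --- that one cannot simply cite \autoref{cor:schofield} because $\ground\langle T\rangle$ need not be hereditary over a general base $\ground$, and that \autoref{lem:2-res} is what compensates --- is an accurate and useful clarification of why the two-term resolution is needed, which the paper's terse proof leaves implicit.
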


\begin{proof}
We will use \autoref{lem:selftensor}, applied to $A=\ground \langle T \rangle$. It follows from (the graded version of) \cite[Theorem 4.7, Theorem 4.8]{Schofield} that $\Tor_1^A(A[S^{-1}],A[S^{-1}])\cong 0$. By \autoref{lem:2-res}, the $A$--module $A[S^{-1}]\cong \ground \langle T, S^{-1}\rangle$, being $\ground$-free, thus $\ground$-cofibrant, admits a two term free resolution over $\ground \langle T \rangle$. Therefore, there are no higher $\Tor$ terms.
\end{proof}

\begin{lemma}\label{lem:selftensorgeneral}
Let $\ground\langle S \rangle \cof A$ be a relative cell dg algebra (so that the underlying graded algebra of $A$ is free over $\ground \langle S \rangle$). Then the map induced by the multiplication map $A[S^{-1}]\otimes^{\mathbb{L}}_A A[S^{-1}]\to A[S^{-1}]$ is a quasi-isomorphism.
\end{lemma}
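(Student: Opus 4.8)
The plan is to reduce the statement to \autoref{lem:selftensor} by establishing conditions (1) or (2) for $A$. The key observation is that a relative cell dg $\ground\langle S\rangle$-algebra $\ground\langle S\rangle \cof A$ has, by definition, underlying graded algebra a free algebra on some generating set $T \supseteq S$ (with $S$ among the degree-zero cells, with vanishing differential on $S$), but the differential $d$ need not vanish on all of $T$. Thus $A$ is \emph{not} itself a free algebra with zero differential, so \autoref{prop:freeloc} does not apply directly; instead we need to carry the bicomplex resolution of \autoref{rem:2-res} through the argument. First I would set $A_0 = \ground\langle T\rangle$, the free \emph{graded} algebra underlying $A$ with zero differential, so that $A$ is obtained from $A_0$ by turning on a differential compatible with the cell filtration. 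By \autoref{rem:2-res}, the two-column bicomplex \ref{eq:bicomplex} (now with the internal differential of $A$ incorporated, making it a genuine double complex) is an $A$-cofibrant replacement of $A[S^{-1}]$ as a left $A$-module.

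Next I would compute $A[S^{-1}]\otimes_A^{\mathbb{L}} A[S^{-1}]$ using this cofibrant replacement: it is the total complex of the double complex obtained by tensoring \ref{eq:bicomplex} over $A$ on the right with $A[S^{-1}]$, namely
\[
A[S^{-1}]\otimes\langle T\rangle\otimes A[S^{-1}] \longrightarrow A[S^{-1}]\otimes A[S^{-1}],
\]
with the total differential assembled from the horizontal "$t\mapsto t\otimes 1 - 1\otimes t$" map and the internal differentials of the two tensor factors $A[S^{-1}]$. The multiplication map to $A[S^{-1}]$ is the augmentation of this complex, so it suffices to show the augmented complex is acyclic. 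The strategy is to filter by the cell filtration on $A$ (equivalently, by the number of generators from $T\setminus S$ applied, or by an exhaustive increasing filtration compatible with the differential), passing to the associated graded. On the associated graded level the differential of $A$ is killed, so we are reduced to the purely free, zero-differential case: the associated graded complex is exactly the complex appearing in the proof of \autoref{prop:freeloc} / \autoref{lem:2-res} for $\ground\langle T\rangle$, and its acyclicity there (it computes $\Tor^{\ground\langle T\rangle}_*(\ground\langle T\rangle[S^{-1}],\ground\langle T\rangle[S^{-1}])$, which vanishes in positive degrees by \cite[Theorems 4.7, 4.8]{Schofield} together with the length-two resolution) gives acyclicity of the associated graded of our complex. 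A standard spectral sequence / exhaustive-filtration comparison then upgrades this to acyclicity of the total complex itself, proving condition (2) of \autoref{lem:selftensor} and hence the lemma.

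I expect the main obstacle to be the bookkeeping around the filtration: one must check that the cell filtration on $A$ (which exists by hypothesis) induces a well-behaved, exhaustive, bounded-below filtration on the derived tensor product whose associated graded genuinely strips the differential down to the zero-differential free case, and that passing $\langle T\rangle$ (the free $\ground$-module on \emph{all} generators, not just $S$) through the tensor product is compatible with this. A secondary subtlety is that $A[S^{-1}]$ as a right $A$-module need not be flat, so one cannot naïvely claim the tensored bicomplex computes the derived tensor product; this is precisely why it is important that \ref{eq:bicomplex} is an honest \emph{cofibrant} replacement over $A$ (as asserted in \autoref{rem:2-res}), and I would make that point explicit. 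Once these filtration issues are handled, the collapse onto the already-known free case is immediate.
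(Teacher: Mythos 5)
Your approach is essentially the same as the paper's: tensor the two-column cofibrant replacement of \autoref{rem:2-res} with $A[S^{-1}]$ over $A$, and run a filtration spectral sequence that kills the internal differential, reducing to the free zero-differential case where \autoref{prop:freeloc} and \autoref{lem:2-res} (via Schofield) give acyclicity of the augmented complex. One correction to the filtration bookkeeping: the parenthetical ``(equivalently, by the number of generators from $T\setminus S$ applied)'' is not a legitimate choice, because if $z$ is a cell whose boundary $d(z)$ is a product of two lower cells then that count \emph{increases} under $d$, so it is not an increasing filtration compatible with the differential; one instead needs a genuine weight filtration in which each cell is assigned a weight strictly larger than the weight of its boundary (possible by transfinite induction over the cell attachments), or --- what is simpler and is presumably what the paper's terse ``the associated spectral sequence of the resulting double complex collapses'' is gesturing at --- filter the double complex by internal degree, so that the $E^0$ page carries only the resolution differential and is verbatim the free zero-differential complex, with no reference to the cell structure needed beyond the freeness of the underlying graded algebra. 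A final small point: \autoref{lem:selftensor} is stated for graded algebras with zero differential, so it cannot be invoked for $A$ itself; but this is harmless, since your argument directly establishes acyclicity of the augmented complex containing the multiplication map, which is exactly the assertion of \autoref{lem:selftensorgeneral}.
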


\begin{proof} Consider the cofibrant standard replacement of the $A$-module $A[S^{-1}]$ constructed in \autoref{lem:2-res}:
\[
\ground\langle T\rangle\otimes\langle T\rangle\otimes\ground\langle T\rangle[S^{-1}]\to \ground\langle T\rangle\otimes\ground\langle T\rangle[S^{-1}]
\]
Tensoring it on the left with $k\langle T \rangle [S^{-1}]$ over $A$ we obtain the following (double) complex $C$ computing $A[S^{-1}]\otimes^{\mathbb{L}}_A A[S^{-1}]$:
\[
C:\ground\langle T\rangle[S^{-1}]\otimes\langle T\rangle\otimes\ground\langle T\rangle[S^{-1}]\to \ground\langle T\rangle[S^{-1}]\otimes\ground\langle T\rangle[S^{-1}]
\]
(formally $C$ is the cofibre of the above map).
Let us disregard the differential in $A$ for a moment; in other words, assume that $C$ has vanishing vertical differential. Then
it follows from \autoref{prop:freeloc} and \autoref{lem:selftensor} that it is quasi-isomorphic to $A[S^{-1}]\otimes_A A[S^{-1}]\cong A[S^{-1}]$. In other words, the following is a short exact sequence:
\begin{equation}\label{disregard}
0\to \ground\langle T\rangle[S^{-1}]\otimes\langle T\rangle\otimes\ground\langle T\rangle[S^{-1}]\to \ground\langle T\rangle[S^{-1}]\otimes\ground\langle T\rangle[S^{-1}]\to A[S^{-1}]\to 0.
\end{equation}
Remembering that $A$ has a nontrivial differential, we still have \ref{disregard}, except viewed as a short exact sequence of complexes. It follows that $C$ is quasi-isomorphic to $A[S^{-1}]$ as required.
\end{proof}

\begin{lemma}\label{lem:splitting}
Let $\ground\langle S \rangle \cof A$ be a relative cell dg algebra which is $S$--inverting. Then the localisation map $A\to A[S^{-1}]$ admits a splitting in the homotopy category of dg $A$--algebras.
\end{lemma}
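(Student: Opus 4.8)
The plan is to prove more than is asked: under these hypotheses the localisation map $A\to A[S^{-1}]$ is itself a quasi-isomorphism, hence an isomorphism in $\Ho(\underalg{A})$, and any inverse provides the splitting. (Observe that once a dg $A$-algebra map $A[S^{-1}]\to A$ exists in $\Ho(\underalg{A})$ the composite $A\to A[S^{-1}]\to A$ is automatically the identity, since $A$ is the initial object of $\underalg{A}$; so the real content is just that $A\to A[S^{-1}]$ is a quasi-isomorphism.) I would establish this in two steps.

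The first step is to identify $A[S^{-1}]$ with the derived localisation of $A$. Since $\ground\langle S\rangle\cof A$ is a relative cell dg algebra, $A$ is cofibrant as an object of $\underalg{\ground\langle S\rangle}$, and the composite $\ground\cof\ground\langle S\rangle\cof A$ exhibits $A$ as a cofibrant dg algebra, hence flat over $\ground$ and therefore left proper by \autoref{thm:flatleftproper}. Because $\ground\langle S\rangle$ and $\ground\langle S,S^{-1}\rangle$ are free, hence flat, over $\ground$, \autoref{cor:balancing} lets us compute the derived free product $A\ast^{\mathbb{L}}_{\ground\langle S\rangle}\ground\langle S,S^{-1}\rangle$ by cofibrantly replacing only the factor $A$ in $\underalg{\ground\langle S\rangle}$; but $A$ is already cofibrant there, so
\[
L_S^{\dgalg}(A)\;\simeq\;A\ast^{\mathbb{L}}_{\ground\langle S\rangle}\ground\langle S,S^{-1}\rangle\;=\;A\ast_{\ground\langle S\rangle}\ground\langle S,S^{-1}\rangle\;=\;A[S^{-1}],
\]
the first equivalence being \autoref{cor:alglocalisation}. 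This identification is one of dg $A$-algebras, and it carries the localisation map $A\to A[S^{-1}]$ to the universal map $A\to L_S^{\dgalg}(A)$.

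The second step uses that $A$ is $S$-inverting. By \autoref{def:alglocalisation}, $L_S^{\dgalg}(A)$ is the initial object of the full subcategory of $S$-inverting dg algebras in $\derunderalg{A}$; but $A$ (being cofibrant, it is its own cofibrant replacement) is the initial object of all of $\derunderalg{A}$, and it lies in that subcategory precisely because it is $S$-inverting, hence it is initial there as well. Therefore $L_S^{\dgalg}(A)\simeq A$ and the universal map $A\to L_S^{\dgalg}(A)$ is an isomorphism in $\derunderalg{A}$, i.e.\ a quasi-isomorphism. Combining the two steps, $A\to A[S^{-1}]$ is a quasi-isomorphism of dg $A$-algebras and thus invertible in $\Ho(\underalg{A})$, giving the splitting. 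The only step demanding care — and the one genuine use of the cell hypothesis — is the first: it rests on $A$ being cofibrant over $\ground\langle S\rangle$ together with the flatness of $\ground\langle S\rangle$ and $\ground\langle S,S^{-1}\rangle$ over $\ground$, which is what forces the strict and derived free products to agree. I anticipate no serious obstacle beyond that.
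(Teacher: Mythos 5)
Your proposal is circular. Both of your steps lean, implicitly or explicitly, on \autoref{cor:alglocalisation}: Step~1 invokes it directly to identify $A\ast^{\mathbb{L}}_{\ground\langle S\rangle}\ground\langle S,S^{-1}\rangle$ with $L_S^{\dgalg}(A)$, and Step~2 presumes that $L_S^{\dgalg}(A)$ exists, which the paper only establishes via that same theorem. But \autoref{cor:alglocalisation} is deduced from \autoref{lem:simplelocalisation}, and \autoref{lem:simplelocalisation} is precisely the result whose proof (in the subsection ``Localisations of free algebras'') quotes \autoref{lem:splitting} as one of its three ingredients. So you have used, as the main lever, a statement that is downstream of the lemma you are trying to prove. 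The authors flag exactly this danger in the remark after \autoref{def:alglocalisation}: existence of $L_S^{\dgalg}(A)$ is assumed provisionally, ``of course care will be taken to avoid circularity of arguments''. Your argument does not take that care.

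What you have actually done, modulo the circularity, is give the (correct) derivation of ``\autoref{lem:splitting} follows from \autoref{lem:simplelocalisation}''; but the logical flow in the paper runs in the opposite direction. The splitting is the \emph{weaker} statement that the authors can reach independently, and it is then combined with \autoref{lem:selftensorgeneral} inside the proof of \autoref{lem:simplelocalisation} to show that the complementary summand $M$ vanishes — that last step, not the splitting, is where the quasi-isomorphism $A\simeq A[S^{-1}]$ is finally obtained. The paper's own proof of \autoref{lem:splitting} instead works entirely on the module side, which has been developed independently: it identifies $\ground\langle S,S^{-1}\rangle\simeq L_S^{\modcat{\ground\langle S\rangle}}(\ground\langle S\rangle)$ by the direct computation of \autoref{prop:freeloc}, invokes \autoref{lem:dwyermap} (a module-localisation statement resting on \autoref{thm:dwyer} and \autoref{thm:endhomotopy}, none of which uses \autoref{lem:simplelocalisation}) to produce a dg $\ground\langle S\rangle$-algebra map $\ground\langle S,S^{-1}\rangle\to A$, and then applies $A\ast^{\mathbb L}_{\ground\langle S\rangle}(-)$ to this map to manufacture the splitting. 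Your observation that $A$ cofibrant over $\ground\langle S\rangle$ forces strict and derived free products to agree is correct and is used in the paper's proof as well, but it cannot carry the argument on its own without the universal-property input that you are not yet entitled to.
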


\begin{proof}
Since $A$ is cofibrant under $\ground \langle S \rangle$, then $A[S^{-1}] \cong A\ast_{\ground \langle S \rangle } \ground \langle S , S^{-1} \rangle \simeq A\ast^{\mathbb{L}}_{\ground \langle S \rangle } \ground \langle S , S^{-1} \rangle$ as dg $A$--algebras. It is sufficient to construct a map in the homotopy category of dg $\ground \langle S \rangle$--algebras from $\ground \langle S, S^{-1} \rangle$ to $A$ since taking the derived free product over $\ground \langle S \rangle$ of this map with the identity map on $A$ will yield the desired splitting.

But since $\ground \langle S, S^{-1} \rangle\simeq L_S^{\modcat{\ground \langle S \rangle}}(\ground \langle S \rangle)$ (\autoref{prop:freeloc}) such a map exists by \autoref{lem:dwyermap}.
\end{proof}

We will use the following lemma which characterises the localisation of a dg algebra in terms of derived free products with $S$--inverting dg algebras.

\begin{lemma}\label{lem:freeprodsufficient}
Let $B\in\derunderalg{A}$ be an $S$--inverting dg algebra such that for any other $S$--inverting dg algebra $C\in\derunderalg{A}$ it holds that $C\to C\ast_A^{\mathbb{L}} B$ is a quasi-isomorphism in $\derunderalg{A}$. Then $B\simeq L_S^{\dgalg}(A)$, in other words $B$ is the localisation of $A$.
\end{lemma}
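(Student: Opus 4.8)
The plan is to identify $B$ with $L_S^{\dgalg}(A)$ inside $\derunderalg{A}$ by comparing \emph{both} of them to the single object $B\ast_A^{\mathbb{L}} L_S^{\dgalg}(A)$. Throughout I work in $\derunderalg{A}$ and freely use that the localisation $L_S^{\dgalg}(A)$ exists (assumed for the moment), that the derived free product over $A$ is symmetric up to canonical isomorphism, and \autoref{lem:pushoutlemma}, which says that for any dg $A$-algebra $C$ the canonical map $C\to C\ast_A^{\mathbb{L}} L_S^{\dgalg}(A)$ realises the localisation of $C$.

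First I would dispose of $B$. Being $S$-inverting, $B$ is $S$-local by \autoref{prop:localinverting}; since $B$ with the identity map is the initial object of $\derunderalg{B}$, it is therefore also the initial $S$-inverting object there (it is $S$-inverting as a dg $B$-algebra because, by definition, that means $f(S)$-inverting for the structure map $f\co A\to B$, and the elements of $f(S)$ are already invertible in $H(B)$). Thus $B$ is its own localisation and the structure map $B\to L_S^{\dgalg}(B)$ is a quasi-isomorphism. On the other hand \autoref{lem:pushoutlemma} identifies $L_S^{\dgalg}(B)$ with $B\ast_A^{\mathbb{L}} L_S^{\dgalg}(A)$ compatibly with the maps out of $B$. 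Hence the coprojection $B\to B\ast_A^{\mathbb{L}} L_S^{\dgalg}(A)$ is a quasi-isomorphism of dg $A$-algebras.

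Next I would bring in the hypothesis. The object $L_S^{\dgalg}(A)$ is itself an $S$-inverting dg algebra in $\derunderalg{A}$, so the hypothesis of the lemma, applied with $C=L_S^{\dgalg}(A)$, gives that $L_S^{\dgalg}(A)\to L_S^{\dgalg}(A)\ast_A^{\mathbb{L}} B$ is a quasi-isomorphism. Combining this with the symmetry isomorphism $L_S^{\dgalg}(A)\ast_A^{\mathbb{L}} B\cong B\ast_A^{\mathbb{L}} L_S^{\dgalg}(A)$ and the previous paragraph yields a chain of quasi-isomorphisms of dg $A$-algebras
\[
L_S^{\dgalg}(A)\;\we\; L_S^{\dgalg}(A)\ast_A^{\mathbb{L}} B\;\cong\; B\ast_A^{\mathbb{L}} L_S^{\dgalg}(A)\;\xleftarrow{\ \sim\ }\; B,
\]
so $B\simeq L_S^{\dgalg}(A)$ in $\derunderalg{A}$; because every map in this chain is a map of dg $A$-algebras, the identification is compatible with the structure maps from $A$, and therefore $B$ is the localisation of $A$.

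The argument is essentially formal once these objects are in place, so I do not expect a substantive obstacle. The points needing care are bookkeeping ones: checking that the three comparison maps above genuinely live in $\derunderalg{A}$ (rather than merely in $\Ho(\dgalg)$), and that $L_S^{\dgalg}(A)$ qualifies as an $S$-inverting dg $A$-algebra in the precise sense of \autoref{def:S-inverting}, so that the hypothesis may be applied to it. One should also keep in mind a potential circularity: since this lemma presupposes the existence of $L_S^{\dgalg}(A)$, any later use of it (for instance in proving \autoref{lem:simplelocalisation}) must ensure that existence has already been established independently.
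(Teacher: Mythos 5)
Your argument takes a genuinely different route from the paper's, and while it is formally valid \emph{given} its stated assumption, that assumption undercuts the role the lemma plays in the paper's logical chain. You presuppose the existence of $L_S^{\dgalg}(A)$ and then identify $B$ with it via \autoref{lem:pushoutlemma} and the hypothesis applied to $C=L_S^{\dgalg}(A)$. The circularity concern you flag at the end is not merely potential --- it is fatal in context. In the paper, \autoref{lem:freeprodsufficient} is precisely the tool used to prove \autoref{lem:simplelocalisation}, which together with \autoref{lem:pushoutlemma} constitutes the paper's existence proof for derived localisation (\autoref{cor:alglocalisation}). If \autoref{lem:freeprodsufficient} already assumes that $L_S^{\dgalg}(A)$ exists and satisfies the pushout formula of \autoref{lem:pushoutlemma}, it cannot then be fed into \autoref{lem:simplelocalisation} to establish that very existence; the chain of deduction collapses.

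The paper's proof is carefully crafted to avoid this by never invoking $L_S^{\dgalg}(A)$ at all. It verifies the universal property of $B$ directly: for an arbitrary $S$--inverting $C\in\derunderalg{A}$ one computes
\[
\Hom_{\derunderalg{A}}(B,C)\;\cong\;\Hom_{\derunderalg{A}}(B,\,B\ast_A^{\mathbb{L}} C)\;\cong\;\Hom_{\derunderalg{B}}(B\ast_A^{\mathbb{L}} B,\,B\ast_A^{\mathbb{L}} C)\;\cong\;\Hom_{\derunderalg{B}}(B,\,B\ast_A^{\mathbb{L}} C),
\]
using, in turn, the hypothesis applied to $C$, the free-product/restriction adjunction $\derunderalg{A}\rightleftarrows\derunderalg{B}$, and the hypothesis applied to $C=B$ (giving $B\simeq B\ast_A^{\mathbb{L}} B$). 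The right-hand side is a singleton because $B$ is initial in $\derunderalg{B}$, so $B$ is the initial $S$--inverting object in $\derunderalg{A}$. This simultaneously proves existence of $L_S^{\dgalg}(A)$ and identifies it with $B$. To salvage your own route you would first need an independent proof that $L_S^{\dgalg}(A)$ exists, but no such argument is available at this point in the paper, so you should replace your comparison-through-$L_S^{\dgalg}(A)$ with the direct adjunction computation.
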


\begin{proof}
Since $B$ itself is $S$--inverting, its localisation as a dg algebra is itself and so $B\simeq B\ast_A^{\mathbb{L}} B$ by \autoref{lem:pushoutlemma}. Let $C\in\derunderalg{A}$ be an $S$--inverting dg algebra. Using the adjunction $\derunderalg{A}\rightleftarrows \derunderalg{B}$ we have:
\begin{align*}
\Hom_{\derunderalg{B}}(B, B\ast_A^{\mathbb{L}} C)&\cong \Hom_{\derunderalg{B}}(B\ast_A^{\mathbb{L}} B, B\ast_A^{\mathbb{L}} C)\\
&\cong \Hom_{\derunderalg{A}}(B, B\ast_A^{\mathbb{L}} C) \cong \Hom_{\derunderalg{A}}(B,C)
\end{align*}
But now the result follows since $B$ is initial in $\derunderalg{B}$.
\end{proof}

\begin{theorem*}[\autoref{lem:simplelocalisation}]
The localisation of $\ground \langle S \rangle$ is $L_S^\dgalg(\ground \langle S \rangle)\simeq \ground \langle S, S^{-1} \rangle$.
\end{theorem*}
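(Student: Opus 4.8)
My plan is to deduce \autoref{lem:simplelocalisation} from \autoref{lem:freeprodsufficient} applied with $A=\ground\langle S\rangle$ and $B=\ground\langle S,S^{-1}\rangle$. The candidate $B$ has zero differential and each $s\in S$ is a unit of $B=H(B)$, so $B$ is $S$--inverting, and $B$ is moreover the non-derived localisation $\ground\langle S\rangle[S^{-1}]=\ground\langle S\rangle\ast_{\ground\langle S\rangle}\ground\langle S,S^{-1}\rangle$. Thus it suffices to check that for every $S$--inverting dg algebra $C\in\derunderalg{\ground\langle S\rangle}$ the canonical map $C\to C\ast^{\mathbb{L}}_{\ground\langle S\rangle}\ground\langle S,S^{-1}\rangle$ is a quasi-isomorphism. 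Since $\ground\langle S\rangle$ is cofibrant I may replace $C$ by a relative cell dg $\ground\langle S\rangle$--algebra, which is again $S$--inverting; then, because $\ground\langle S\rangle$ and $\ground\langle S,S^{-1}\rangle$ are free, hence flat over $\ground$, hence left proper, \autoref{cor:balancing} lets me replace the derived free product by the ordinary one, so the map under consideration is just the localisation map $\iota\co C\to C[S^{-1}]$. Everything therefore reduces to the claim: \emph{if $\ground\langle S\rangle\cof C$ is a relative cell dg algebra which is $S$--inverting, then $\iota\co C\to C[S^{-1}]$ is a quasi-isomorphism.}

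For this claim I would bring in exactly the two lemmas that precede the statement. By \autoref{lem:selftensorgeneral} the multiplication $C[S^{-1}]\otimes_C^{\mathbb{L}}C[S^{-1}]\to C[S^{-1}]$ is a quasi-isomorphism, i.e.\ $\iota$ is a homological epimorphism; since the composite $C[S^{-1}]=C[S^{-1}]\otimes_C^{\mathbb{L}}C\to C[S^{-1}]\otimes_C^{\mathbb{L}}C[S^{-1}]\to C[S^{-1}]$ is the identity, the first arrow is also a quasi-isomorphism, whence $C[S^{-1}]\otimes_C^{\mathbb{L}}K\simeq 0$, where $K$ is the homotopy cofibre of $\iota$ as a map of left $C$--modules. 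By \autoref{lem:splitting} there is a dg $C$--algebra retraction $\rho\co C[S^{-1}]\to C$ with $\rho\iota\simeq\id_C$, so $\iota$ is a split monomorphism of left $C$--modules and $K$ is equivalent to the homotopy fibre of $\rho$. The key observation is that $\rho$ is an \emph{algebra} map: this makes $C$ a left $C[S^{-1}]$--module via $\rho$, makes $\rho$ itself $C[S^{-1}]$--linear, and therefore endows its homotopy fibre $K$ with a left $C[S^{-1}]$--module structure refining its $C$--module structure.

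The two facts now collide. On one hand $C[S^{-1}]\otimes_C^{\mathbb{L}}K\simeq 0$; on the other hand, using the $C[S^{-1}]$--module structure on $K$ together with the homological epimorphism property,
\[
C[S^{-1}]\otimes_C^{\mathbb{L}}K\;\simeq\;\bigl(C[S^{-1}]\otimes_C^{\mathbb{L}}C[S^{-1}]\bigr)\otimes_{C[S^{-1}]}^{\mathbb{L}}K\;\simeq\;C[S^{-1}]\otimes_{C[S^{-1}]}^{\mathbb{L}}K\;\simeq\;K.
\]
Hence $K\simeq 0$, so $\iota$ is a quasi-isomorphism, which completes the reduction and the proof.

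The main obstacle is this last circle of ideas: one must use both lemmas simultaneously, and the delicate point is the left/right--module bookkeeping, in particular recognising that the homotopy cofibre of $\iota$ --- a priori only a left $C$--module --- acquires a $C[S^{-1}]$--module structure through its identification with the homotopy fibre of the \emph{algebra} map $\rho$, which is precisely what lets the homological epimorphism property act on it. Note finally that the $S$--inverting hypothesis on $C$ is essential: a homological epimorphism need not be a quasi-isomorphism (e.g.\ $\ground[x]\to\ground[x,x^{-1}]$), and it is this hypothesis that makes \autoref{lem:splitting} available.
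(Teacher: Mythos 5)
Your argument is correct and rests on the same lemmas and overall strategy as the paper's: reduce via \autoref{lem:freeprodsufficient} to showing that $\iota\co C\to C[S^{-1}]$ is a quasi-isomorphism for $C$ a relative cell $S$-inverting dg $\ground\langle S\rangle$-algebra, and then combine \autoref{lem:selftensorgeneral} with \autoref{lem:splitting}. Where you differ is the final deduction. The paper writes $A[S^{-1}]\simeq A\oplus M$ in the homotopy category of $A$-modules and expands the multiplication quasi-isomorphism to read off $M\oplus M\otimes_A^{\mathbb{L}}M\simeq 0$, whence $M\simeq 0$ since a direct summand of a trivial object vanishes. You instead identify the cofibre $K$ of $\iota$ with the fibre of the dg algebra retraction $\rho$, which equips $K$ with a left $C[S^{-1}]$-module structure refining its $C$-module structure, so that the homological epimorphism identity gives $K\simeq C[S^{-1}]\otimes_C^{\mathbb{L}}K\simeq 0$ directly. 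The two are equivalent in substance, but yours is somewhat more structural: it foregrounds precisely why it matters that $\rho$ is an \emph{algebra} map and not merely a module retraction, which is indeed the delicate point you rightly flag. Your closing remark that $\ground[x]\to\ground[x,x^{-1}]$ shows the $S$-inverting hypothesis on $C$ cannot be dropped is a useful sanity check that the paper does not spell out.
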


\begin{proof}
Let $\ground\langle S \rangle \cof A$ be a relative cell dg algebra which is $S$--inverting. By \autoref{lem:freeprodsufficient}, it is sufficient to prove that $ A\ast_{\ground \langle S \rangle }^{\mathbb{L}} \ground \langle S, S^{-1} \rangle$ is quasi-isomorphic to $A$. Note that by \autoref{thm:flatleftproper},  $\ground\langle S\rangle$ and $\ground\langle S, S^{-1}\rangle$ are left proper and since $A$ is cofibrant, by \autoref{cor:balancing}$, A\ast_{\ground \langle S \rangle }^{\mathbb{L}} \ground \langle S, S^{-1} \rangle$  is quasi-isomorphic to $A[S^{-1}]$. By \autoref{lem:splitting} above, which says $A\to A[S^{-1}]$ admits a splitting in the homotopy category of dg $A$--algebras and hence in particular in the homotopy category of $A$--bimodules, we have a quasi-isomorphism of $A$--bimodules $A[S^{-1}]\simeq A\oplus M$ for some $A$--bimodule $M$, so it suffices to prove that $M$ is quasi-isomorphic to zero. By \autoref{lem:selftensorgeneral}, the multiplication map $A[S^{-1}]\otimes_A^{\mathbb{L}} A[S^{-1}] \simeq (A\oplus M \oplus M \oplus M\otimes_A^{\mathbb{L}} M) \to A \oplus M$ is a quasi-isomorphism. This map is defined by the identity on $M$ on the two copies of $M$ in the source and so can only be a quasi-isomorphism if $M\simeq 0$.
\end{proof}

\subsection{Comparison of localisations}

The following central theorem, upon which most of our results rely, shows that $L_S^{\amod}(A)$ is in fact also the localisation of $A$ as a dg algebra. One corollary is that $L_S^{\amod}(A)$ is a well-defined object in $\derunderalg{A}$ (as opposed to just well-defined up to quasi-isomorphism of modules). More importantly, it implies that the localisation of $A$ as an $A$--module and the localisation of $A$ as a dg algebra are really equivalent in a precise sense.

\begin{theorem}\label{thm:localisationscoincide}
If $L_S^{\amod}(A)$ is a dg algebra in $\derunderalg{A}$ which is the localisation of $A$ as an $A$--module then it is also the localisation of $A$ as a dg algebra.
\end{theorem}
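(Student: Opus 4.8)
The plan is to show that $L:=L_S^{\amod}(A)$ --- which, assuming as we may that $A$ is cofibrant, we take (using \autoref{thm:dwyer} and the hypothesis) to be a cofibrant dg $A$--algebra --- is equivalent, in $\derunderalg{A}$, to the dg--algebra localisation $L^{\mathrm{alg}}:=L_S^{\dgalg}(A)$, whose existence is guaranteed by \autoref{cor:alglocalisation} together with \autoref{lem:simplelocalisation}. Since $L$ is a dg $A$--algebra which is $S$--local (being a module localisation), \autoref{prop:localinverting} tells us $L$ is $S$--inverting; so $L$ and $L^{\mathrm{alg}}$ are both $S$--inverting dg $A$--algebras, and it suffices to produce mutually inverse maps between them in $\derunderalg{A}$.

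First I would obtain the two maps. \autoref{lem:dwyermap}, applied to the $S$--inverting dg algebra $L^{\mathrm{alg}}$, furnishes a map $f\co L\to L^{\mathrm{alg}}$ in $\derunderalg{A}$. In the other direction, since $L^{\mathrm{alg}}$ is by definition initial among $S$--inverting dg $A$--algebras (\autoref{def:alglocalisation}) and $L$ is $S$--inverting, there is a unique map $g\co L^{\mathrm{alg}}\to L$ in $\derunderalg{A}$; uniqueness of endomorphisms of $L^{\mathrm{alg}}$ among $S$--inverting dg algebras also forces $f\circ g=\id_{L^{\mathrm{alg}}}$. Consequently $e:=g\circ f\co L\to L$ is an idempotent in $\derunderalg{A}$, and the whole theorem comes down to checking $e=\id_L$: then $f$ and $g$ are mutually inverse, so $L\simeq L^{\mathrm{alg}}=L_S^{\dgalg}(A)$ in $\derunderalg{A}$, and $L$ is the localisation of $A$ as a dg algebra.

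The step $e=\id_L$ is the heart of the matter, and I would argue it by passing to modules. Forgetting multiplication gives a functor $\derunderalg{A}=\Ho(\underalg{A})\to\Ho(\undercat{A}{\amod})$, well defined since forgetting multiplication preserves quasi-isomorphisms. Now $L$ is, by definition, the \emph{initial} $S$--local object of $\Ho(\undercat{A}{\amod})$ and is itself $S$--local, so $\Hom_{\Ho(\undercat{A}{\amod})}(L,L)$ is a singleton; hence the image of $e$ under this functor is $\id_L$. Representing $e$ by an honest endomorphism $\widetilde{e}\co L\to L$ of dg $A$--algebras (legitimate, as $L$ is cofibrant --- and automatically fibrant --- in $\underalg{A}$), its underlying map of $A$--modules becomes an isomorphism in $\Ho(\undercat{A}{\amod})$, hence is a quasi-isomorphism, since a morphism inverted by the localisation functor of a model category is a weak equivalence. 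A quasi-isomorphism of $A$--modules is a quasi-isomorphism of dg $A$--algebras, so $e=[\widetilde{e}]$ is an isomorphism in $\derunderalg{A}$; an idempotent isomorphism is the identity. I expect the main obstacle to be precisely this last passage --- deducing that $e$ is invertible in $\derunderalg{A}$ from the fact that it is the identity in the homotopy category of $A$--modules under $A$ --- which hinges on the forgetful functor reflecting quasi-isomorphisms and on the saturation of the weak equivalences; note that only left-properness of $\amod$ (already established), not of $\dgalg$, is used, which is what allows us to work with $\Ho(\undercat{A}{\amod})$ rather than a derived under category, exactly as in \autoref{prop:twodefs}.
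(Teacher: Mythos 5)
Your proof is correct and follows essentially the same route as the paper's: produce $f\co L_S^{\amod}(A)\to L_S^{\dgalg}(A)$ via \autoref{lem:dwyermap} and $g$ in the other direction via the universal property of $L_S^{\dgalg}(A)$, observe $f\circ g=\id$ by initiality of $L_S^{\dgalg}(A)$, and observe $g\circ f$ is the identity upon forgetting to modules by initiality of $L_S^{\amod}(A)$, concluding $f$ is a quasi-isomorphism. Your final bookkeeping via the idempotent $e=g\circ f$ (rather than deducing directly that $f$ is a homology isomorphism) is a minor rephrasing, not a genuinely different argument.
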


\begin{proof}
By \autoref{lem:dwyermap}, there is a map $L_S^{\amod}(A)\to L_S^{\dgalg}(A)$ in $\derunderalg{A}$. Then by the universal property of the dg algebra localisation, the composition of maps $L_S^{\dgalg}(A) \to L_S^{\amod}(A)\to L_S^{\dgalg}(A)$ in $\derunderalg{A}$ must therefore be the identity. But similarly, the composition of maps $L_S^{\amod}(A)\to L_S^{\dgalg}(A) \to L_S^{\amod}(A)$ must be homotopic (as a map of modules) to the identity. Therefore, the dg algebra map $L_S^{\amod}(A)\to L_S^{\dgalg}(A)$ is a quasi-isomorphism.
\end{proof}

The main computational use of this theorem is the following corollary that allows us to understand the homotopy type of the localisation of $A$ as a dg algebra by instead calculating the homotopy type of the localisation of $A$ as an $A$--module, which is often easier to understand, being more amenable to elementary techniques from homological algebra.

\begin{corollary}
Let $Y$ be a localisation of $A$ as an $A$--module. Then as $A$--modules $L_S^{\dgalg}(A)\simeq Y$.\qed
\end{corollary}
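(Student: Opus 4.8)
The plan is to assemble this from \autoref{thm:dwyer} and \autoref{thm:localisationscoincide}, the substantive work already having been done. First I would invoke \autoref{thm:dwyer} to produce a dg algebra $X\in\derunderalg{A}$ which, viewed merely as an $A$--module, is a localisation of $A$ in $\amod$; such an $X$ is precisely a dg algebra model for $L_S^{\amod}(A)$ and hence satisfies the hypothesis of \autoref{thm:localisationscoincide}. Applying that theorem, $X$ is also the localisation of $A$ as a dg algebra, so $X\simeq L_S^{\dgalg}(A)$ in $\derunderalg{A}$; in particular $X\simeq L_S^{\dgalg}(A)$ as $A$--modules (since a quasi-isomorphism in the derived under category is in particular a quasi-isomorphism of underlying modules).

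Next I would use uniqueness of module localisations: any two localisations of $A$ as an $A$--module are quasi-isomorphic as $A$--modules — indeed as $A$--modules under $A$ — as observed immediately after \autoref{def:localisation}. Hence for an arbitrary module localisation $Y$ of $A$ we have $Y\simeq X$ as $A$--modules, and composing this with the quasi-isomorphism $X\simeq L_S^{\dgalg}(A)$ yields $Y\simeq L_S^{\dgalg}(A)$ as $A$--modules, which is the assertion.

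There is no genuine obstacle here; the content resides entirely in \autoref{thm:dwyer} (existence of a dg algebra representative for the module localisation) and \autoref{thm:localisationscoincide} (that such a representative is also the algebra localisation). The one point worth noting is that \autoref{thm:localisationscoincide} is stated conditionally — it presupposes a dg algebra modelling $L_S^{\amod}(A)$ — and \autoref{thm:dwyer} supplies exactly such a model, so the two results dovetail and nothing further needs to be verified.
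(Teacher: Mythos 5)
Your proof is correct and is essentially the paper's own argument: the corollary is presented with a \qed precisely because it follows immediately from \autoref{thm:dwyer} (supplying a dg algebra model $X$ for the module localisation) and \autoref{thm:localisationscoincide} (identifying $X$ with $L_S^{\dgalg}(A)$), together with the uniqueness of module localisations noted after \autoref{def:localisation}. You have spelled out exactly these three ingredients in the right order.
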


\begin{remark}
Although $L_S^\dgalg(A)$ is, a priori, just an $A'$--module for some $A'\acycfib A$ a cofibrant replacement for $A$, the categories of modules of $A$ and $A'$ are of course Quillen equivalent (by restriction and extension of scalars) so it makes sense to say that the equivalence in the corollary above is an equivalence of $A$--modules.
\end{remark}

Another important consequence (which also takes into account \autoref{cor:adj}) is that the derived category of local modules is equivalent to the derived category of $L_S^{\dgalg}(A)$--modules:

\begin{corollary}\label{cor:localcat}
The full subcategory in $\Ho(\amod)$ consisting of $S$--local $A$--modules is equivalent to $\Ho(\modcat{L_S^\dgalg(A)})$.\qed
\end{corollary}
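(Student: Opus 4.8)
The plan is to obtain this as a formal consequence of \autoref{thm:localisationscoincide} together with the equivalence established in the corollary following \autoref{prop:smashing}. First I would invoke \autoref{thm:localisationscoincide}: the dg algebra $L_S^{\amod}(A)$ (carrying the algebra structure of \autoref{thm:dwyer}) is the localisation of $A$ as a dg algebra, hence is isomorphic to $L_S^{\dgalg}(A)$ in $\derunderalg{A}$; in particular the two are connected by a zig-zag of quasi-isomorphisms of dg algebras.

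Next I would use the standard fact, already exploited several times in this section for cofibrant replacements, that a quasi-isomorphism of dg algebras $f\co B\we B'$ induces a Quillen equivalence $\modcat{B}\rightleftarrows\modcat{B'}$ by restriction and extension of scalars, and hence an equivalence $\Ho(\modcat{B})\simeq\Ho(\modcat{B'})$. Applying this along the zig-zag connecting $L_S^{\amod}(A)$ to $L_S^{\dgalg}(A)$ yields an equivalence $\Ho(\modcat{L_S^{\amod}(A)})\simeq\Ho(\modcat{L_S^{\dgalg}(A)})$. Composing with the equivalence from the corollary following \autoref{prop:smashing}, which identifies $\Ho(\modcat{L_S^{\amod}(A)})$ with the full subcategory of $\Ho(\amod)$ on the $S$--local modules, gives exactly the asserted equivalence.

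There is no real obstacle here: the only point needing a little care is the bookkeeping about cofibrant replacements, since $L_S^{\dgalg}(A)$ is a priori only a module over some cofibrant replacement $A'\acycfib A$. But $\modcat{A'}$ and $\modcat{A}$ are Quillen equivalent, and the homotopy category of modules over a dg algebra depends on it only through its quasi-isomorphism type up to equivalence, so the statement is insensitive to these choices. No ideas beyond those already present in the section are required.
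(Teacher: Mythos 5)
Your proposal is correct and reconstructs exactly the argument the paper leaves implicit: combine \autoref{thm:localisationscoincide} (identifying $L_S^{\amod}(A)$ with $L_S^{\dgalg}(A)$ up to quasi-isomorphism of dg algebras) with the corollary following \autoref{prop:smashing} (identifying $\Ho(\modcat{L_S^{\amod}(A)})$ with the full subcategory of $S$--local modules), using the Quillen equivalence of module categories along a quasi-isomorphism to bridge them. This is precisely why the corollary carries no written proof in the paper.
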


\begin{remark}
All told, \autoref{thm:localisationscoincide} can be regarded as showing that the Bousfield localisation of $\amod$ at the maps $\{r_s\co \Sigma^{\degree{s}}A\to A \}_{s\in S}$ is Quillen equivalent to $\modcat{L_S^{\dgalg}(A)}$.
\end{remark}

\begin{convention}
From now on, where it is not ambiguous, we will omit the notational difference between $L_S^{\amod}(A)$ and $L_S^\dgalg(A)$ and just refer instead to the dg algebra $L_S(A)$, which we regard as an object in both $\Ho(\amod)$ and $\derunderalg{A}$ and which we call simply \emph{the} localisation of $A$.
\end{convention}

\subsection{Derived matrix localisation}
The main results of our paper can be further generalised to include (the derived version of) the universal Cohn localisation \cite{Cohn, Schofield}. Here we outline how this is done; the proofs will be omitted since they are very similar to the ones given for the ordinary derived localisation and do not involve any new ideas. Let $S=\{S_{\alpha}\}\co L_{\alpha}\to N_{\alpha}$ be a collection of maps in the homotopy category of $A$--modules.

\begin{definition}\label{def:localmod}
An $A$--module $M$ is called \emph{$S$--local} if all induced maps
\[
\RHom_A(N_{\alpha},M)\to\RHom_A(L_{\alpha},M)
\]
are quasi-isomorphisms.
\end{definition}

\begin{remark}
Any collection of elements $S\in H(A)$ can be viewed as a collection of homotopy classes of maps of $A$--modules $A\to A$ and being $S$--local in the sense of \autoref{def:local} is the same as being $S$--local in the sense of \autoref{def:localmod}.
\end{remark}

We define a \emph{localisation} $L_S^{\amod}(M)$ of an $A$--module $M$ precisely as in \autoref{def:localisation}. Then \autoref{thm:dwyer} makes sense and its proof extends with obvious modifications to the present context. It is \emph{not true}, in general, that the functor $M\to L_S^{\amod}(M)$ is smashing, i.e.~that $L_S^{\amod}(M)\simeq L_S^{\amod}(A)\otimes^{\mathbb L}_AM$. It is, however, true if the homotopy cofibres of all the maps $S_{\alpha}$ are perfect $A$--modules, cf.~\cite[Proposition 2.10]{dwyer:localisationinhomotopytheory}.

The following is an analogue of \autoref{def:S-inverting}.

\begin{definition}
A dg algebra $f\co A\to Y$ is called \emph{$S$--inverting} if the induced maps
\[
f_*(S_{\alpha})\co L_{\alpha}\otimes^{\mathbb L}_AY\to N_{\alpha}\otimes^{\mathbb L}_AY
\]
are quasi-isomorphisms.
\end{definition}

This gives rise to the general notion of $S$--localisation (\autoref{def:alglocalisation}). In other words, the functor $A\mapsto L_S^{\dgalg}(A)$ is the initial $S$--inverting dg algebra in $\derunderalg{A}$.

\begin{definition}
Let $S=\{S_{\alpha}\co N_{\alpha}\to L_{\alpha}\}$ be a collection of homotopy classes of maps between perfect $A$--modules. Then the $L_S^{\dgalg}(A)\in\derunderalg{A}$ is called the derived localisation of $A$ with respect to $S$.
\end{definition}

Let us now outline the existence and an explicit form of the derived Cohn localisation, restricting ourselves to the case when the modules $L_{\alpha}$ and $N_{\alpha}$ are $A$--modules which are free of finite rank $l$ and $n$ respectively. We will in that case refer to it as (derived) \emph{matrix localisation}.   We further simplify by assuming that $S$ consists of only one map $L\to N$; the general case being a simple iteration of this construction.

Since the $A$--modules $L$ and $N$ are free, a homotopy class of maps $N\to L$ corresponds to an $n\times l$ matrix $(s_{ij})$ with entries in $H(A)$. A choice of representatives of homology classes then determines a map $\ground\langle(s_{ij})\rangle\to A$ from the free $\ground$--algebra on the symbols $s_{ij}$ to $A$; this is a map of dg algebras where $\ground\langle(s_{ij})\rangle$ is given the trivial differential.

Denote by $\ground\langle (s_{ij}),(s_{ij})^{-1}\rangle$ the algebra obtained from $\ground\langle(s_{ij})\rangle$ by formally inverting the matrix $ (s_{ij})$. In other words, we introduce an $l\times n$ matrix worth of symbols $s^\prime_{ij}$; then $\ground\langle (s_{ij}),(s_{ij})^{-1}\rangle$ has generators $\{s_{ij}\}$ and $\{s^{\prime}_{ij}\}$ and relations written in a matrix form as $(s_{ij})\cdot(s^{\prime}_{ij})=I_{l}$ and $(s^{\prime}_{ij})\cdot(s_{ij})=I_{n}$ where $I_l$ and $I_n$ are the identity matrices of sizes $l$ and $n$ respectively.

We have the following result whose proof is similar to that of \autoref{cor:alglocalisation}. As in the case of the ordinary derived localisation, it reduces to computing the derived localisation of the free algebra.

\begin{theorem}
The derived matrix localisation $L_S^{\dgalg}(A)$ of $A$ is given by $A*^{\mathbb{L}}_{\ground\langle(s_{ij})\rangle}\ground\langle (s_{ij}),(s_{ij})^{-1}\rangle$.
\end{theorem}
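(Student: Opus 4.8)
The plan is to transcribe the proof of \autoref{cor:alglocalisation} --- the combination of \autoref{lem:pushoutlemma} with \autoref{lem:simplelocalisation} --- to the matrix setting, checking that each ingredient survives. First, \autoref{prop:sinvertingadjunction} and \autoref{lem:pushoutlemma} hold verbatim for the present notion of $S$--inverting dg algebra: restriction along $A\to B$ preserves $S$--invertibility via the base-change identity $L_\alpha\otimes^{\mathbb{L}}_AY\simeq(L_\alpha\otimes^{\mathbb{L}}_AB)\otimes^{\mathbb{L}}_BY$, and for any dg $A$--algebra $C$ the canonical map $C\to B\ast_A^{\mathbb{L}}C$ in $\derunderalg{A}$ together with $L_\alpha\otimes^{\mathbb{L}}_A(B\ast_A^{\mathbb{L}}C)\simeq(L_\alpha\otimes^{\mathbb{L}}_AC)\otimes^{\mathbb{L}}_C(B\ast_A^{\mathbb{L}}C)$ shows the derived free product preserves it. Applying this to the map $\ground\langle(s_{ij})\rangle\to A$ determined by the chosen cycle representatives reduces everything to computing $L_S^{\dgalg}(F)$ for $F:=\ground\langle(s_{ij})\rangle$, which we claim is $\ground\langle(s_{ij}),(s_{ij})^{-1}\rangle$.

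For the reduced statement I would run the argument of \autoref{lem:simplelocalisation}, whose backbone needs three module-theoretic inputs, each a matrix version of a lemma in the paper. (i) The matrix analogue of \autoref{prop:freeloc}: the non-derived matrix localisation $F[S^{-1}]=\ground\langle(s_{ij}),(s_{ij})^{-1}\rangle$ is also the localisation of $F$ as an $F$--module. (ii) The matrix analogue of \autoref{lem:splitting}: for a relative cell dg algebra $F\cof B$ which is $S$--inverting, the map $B\to B[S^{-1}]:=B\ast_F\ground\langle(s_{ij}),(s_{ij})^{-1}\rangle$ splits in $\Ho(\underalg{B})$; this uses the matrix versions of \autoref{thm:dwyer}, \autoref{prop:smashing} and \autoref{lem:dwyermap}, which are available since the homotopy cofibre of the map $L\to N$ between finite-rank free modules is perfect, so the matrix localisation is smashing. (iii) The matrix analogue of \autoref{lem:selftensorgeneral}: the multiplication $B[S^{-1}]\otimes_B^{\mathbb{L}}B[S^{-1}]\to B[S^{-1}]$ is a quasi-isomorphism, which follows from (i) by the matrix version of \autoref{lem:selftensor} (its proof is unchanged) together with the bicomplex replacement of \autoref{rem:2-res} to pass from zero to nonzero differential. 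Granting these, write $B[S^{-1}]\simeq B\oplus M$ as $B$--modules by (ii); expanding the left side of (iii) as $B\oplus M\oplus M\oplus(M\otimes_B^{\mathbb{L}}M)$ and comparing with $B\oplus M$ forces $M\simeq0$, so $B[S^{-1}]\simeq B$, and the matrix analogue of \autoref{lem:freeprodsufficient} then identifies $L_S^{\dgalg}(F)$ with $F[S^{-1}]$.

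The heart of the matter is input (i), equivalently --- by the matrix version of \autoref{lem:selftensor} --- the statement that $F\to F[S^{-1}]$ is a homological epimorphism, i.e.\ $\Tor^F_i(F[S^{-1}],F[S^{-1}])=0$ for all $i\geq1$. Vanishing of $\Tor^F_1$ is Schofield's theorem on universal localisation \cite[Theorem 4.7, Theorem 4.8]{Schofield}. For the higher vanishing I would construct a finite free resolution of $F[S^{-1}]$ over $F$, as in \autoref{lem:2-res}: tensor the standard two-term $(F,F)$--bimodule resolution $0\to F\otimes\langle(s_{ij})\rangle\otimes F\to F\otimes F\to F$ with $F[S^{-1}]$ over $F$; the resulting two-term complex of free $F$--modules has only $F[S^{-1}]$ as homology, since the rest computes $\Tor^F_{>0}(F,F[S^{-1}])=0$, provided $F[S^{-1}]$ is flat over $\ground$. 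This last proviso is the main obstacle I anticipate: unlike the inversion of generators $S\subset T$ in \autoref{lem:2-res}, where the localisation visibly has a monomial normal form and is $\ground$--free, for a genuine matrix localisation one must invoke the structure theory of universal localisations of free ideal rings to know that $\ground\langle(s_{ij}),(s_{ij})^{-1}\rangle$ remains $\ground$--flat and that the displayed complex is a resolution --- automatic over a field, and in general exactly where the work of Cohn and Schofield \cite{Cohn,Schofield} is needed. Once (i) holds, the remaining steps are routine transcriptions, and the theorem follows.
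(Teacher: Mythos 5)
Your proposal reconstructs exactly the proof the paper intends but omits: the authors state explicitly that the argument is a transcription of the case of inverting a set of elements, and your reduction via the matrix analogue of \autoref{lem:pushoutlemma} to computing $L_S^{\dgalg}$ of the free algebra $F=\ground\langle(s_{ij})\rangle$, followed by the module-theoretic machinery of \autoref{prop:freeloc}, \autoref{lem:splitting}, \autoref{lem:selftensorgeneral} and \autoref{lem:freeprodsufficient}, is precisely that transcription. Your observation that the matrix versions of \autoref{thm:dwyer}, \autoref{prop:smashing} and \autoref{lem:dwyermap} are available because the cofibres of the $S_\alpha$ are perfect is correct and matches what the paper itself notes when it says the Cohn localisation functor is smashing in this setting.

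One remark on the point you flag as the ``main obstacle.'' You are right that the argument of \autoref{lem:2-res} --- obtaining a length-two free resolution of $F[S^{-1}]=\ground\langle(s_{ij}),(s_{ij})^{-1}\rangle$ over $F$ by tensoring the short bimodule resolution of $F$ with $F[S^{-1}]$ --- does genuinely use that $F[S^{-1}]$ is free (or at least flat) over $\ground$, and that in the ordinary case this is manifest from the monomial normal form $\ground\langle T\setminus S\rangle\ast_\ground\ground\langle S,S^{-1}\rangle$, while for a genuine matrix localisation it is not visible from the presentation. Over a field this is automatic, and for general $\ground$ it is indeed a fact that requires the structure theory of universal localisations (Cohn--Bergman coproduct theorems), contrary to the paper's claim that the proofs ``do not involve any new ideas.'' This is a fair criticism of the paper's level of detail rather than a gap in your argument; everything else in your outline goes through verbatim, and the Schofield $\Tor_1$--vanishing is correctly cited since his theorems are stated for universal (i.e.\ matrix) localisation, not merely for inverting elements.
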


\begin{example} Let $\ground$ be a field and consider $A=\ground\langle X\rangle$, the free algebra over $\ground$ on some set of generators $X$. Then for $A$ there exists its universal (skew)-field of fractions $\ground(X)$, the so-called \emph{free field} on $X$ over $\ground$, \cite{Cohn}. The skew-field $\ground(X)$ is obtained from $\ground\langle X\rangle$ by inverting all square full matrices (i.e.~such that the endomorphisms of free modules they represent do not factor through free modules of lower rank). Thus, $\ground(X)$ is a derived, as well as a non-derived, localisation of $\ground\langle X\rangle$.
\end{example}

We have an analogue of \autoref{thm:localisationscoincide} with the same proof:

\begin{theorem}
The derived Cohn localisation $L_S^{\dgalg}(A)$ of $A$ as a dg algebra coincides in the homotopy category of $A$--modules with $L_S^{\amod}(A)$, the $S$--localisation of $A$ as an $A$--module.
\end{theorem}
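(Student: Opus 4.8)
The plan is to run the proof of \autoref{thm:localisationscoincide} essentially verbatim, after first recording the Cohn analogues of the auxiliary facts it uses. The only non-formal input is the comparison map $L_S^{\amod}(A)\to L_S^{\dgalg}(A)$ coming from \autoref{lem:dwyermap}, and the proof of \autoref{lem:dwyermap} rests on the module localisation being smashing (\autoref{prop:smashing}). In the Cohn setting $S=\{S_\alpha\co N_\alpha\to L_\alpha\}$ consists of maps between \emph{perfect} $A$--modules, so the homotopy cofibres of the $S_\alpha$ are perfect and hence, by \cite[Proposition 2.10]{dwyer:localisationinhomotopytheory}, $L_S^{\amod}(-)$ is smashing. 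With this, the proof of \autoref{lem:dwyermap} goes through unchanged: for any $S$--inverting dg $A$--algebra $C$ the map $C\to L_S^{\amod}(A)\otimes_A^{\mathbb L}C$ is a quasi-isomorphism of $(A,C)$--bimodules, and passing to $\REnd_C(-)$ produces a dg $A$--algebra quasi-isomorphic to $C$ which receives a dg $A$--algebra map from $L_S^{\amod}(A)$; hence there is a map $L_S^{\amod}(A)\to C$ in $\derunderalg{A}$.

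Two further ingredients are needed. First, the Cohn analogue of \autoref{thm:dwyer}, that $L_S^{\amod}(A)$ admits a model which is a dg algebra object of $\derunderalg{A}$; this is obtained, as already noted, by replacing $A$ by $A\otimes A^{\op}$ and $S$ by the collection of maps obtained from the $S_\alpha$ by tensoring with the identity, and then taking $\REnd$ of the resulting bimodule localisation. Second, the Cohn analogue of \autoref{prop:localinverting}: a dg $A$--algebra $Y$ is $S$--local as an $A$--module if and only if it is $S$--inverting as a dg algebra. This uses perfectness of the $L_\alpha,N_\alpha$: for a perfect $A$--module $P$ one has $\RHom_A(P,Y)\simeq\RHom_A(P,A)\otimes_A^{\mathbb L}Y$, so $\RHom_A(N_\alpha,Y)\to\RHom_A(L_\alpha,Y)$ is a quasi-isomorphism exactly when $L_\alpha\otimes_A^{\mathbb L}Y\to N_\alpha\otimes_A^{\mathbb L}Y$ is, which is the defining condition for $Y$ to be $S$--inverting. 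In particular the dg algebra model of $L_S^{\amod}(A)$ is $S$--inverting.

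The comparison is then formal and identical to \autoref{thm:localisationscoincide}. Fixing such a model, its being $S$--inverting gives, by the universal property of the dg algebra localisation, a map $\psi\co L_S^{\dgalg}(A)\to L_S^{\amod}(A)$ in $\derunderalg{A}$; the analogue of \autoref{lem:dwyermap} above gives a map $\varphi\co L_S^{\amod}(A)\to L_S^{\dgalg}(A)$ in $\derunderalg{A}$. The composite $\varphi\psi$ is the identity of $L_S^{\dgalg}(A)$ by initiality of $L_S^{\dgalg}(A)$ among $S$--inverting dg algebras, and the composite $\psi\varphi$ is a self-map of $L_S^{\amod}(A)$ in the homotopy category of $A$--modules under $A$, hence the identity since $L_S^{\amod}(A)$ is the initial $S$--local object there. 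Therefore the dg algebra map $\varphi$ is a quasi-isomorphism, so $L_S^{\dgalg}(A)$ and $L_S^{\amod}(A)$ agree in $\Ho(\amod)$ (indeed in $\derunderalg{A}$), as claimed.

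The main obstacle is not the final comparison, which is purely formal, but checking the two inputs in Cohn generality: that smashing survives — this is precisely where the perfectness hypothesis on $L_\alpha,N_\alpha$ is genuinely needed, both for the smashing statement and for the equivalence of "$S$--local module" with "$S$--inverting dg algebra" via the duality $\RHom_A(P,-)\simeq P^\vee\otimes_A^{\mathbb L}(-)$ for perfect $P$. Once these are established the remainder is bookkeeping identical to the case of localisation at a set of homology classes.
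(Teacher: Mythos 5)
Your proposal is correct and follows exactly the route the paper intends when it says the theorem holds ``with the same proof'' as \autoref{thm:localisationscoincide}: you supply the comparison map from the Cohn analogue of \autoref{lem:dwyermap} (using that the localisation is smashing because the $S_\alpha$ have perfect cofibres), check the Cohn analogues of \autoref{thm:dwyer} and \autoref{prop:localinverting}, and then run the formal two-sided universal-property argument verbatim. The only place you compress slightly is the passage from $\RHom_A(N_\alpha,Y)\to\RHom_A(L_\alpha,Y)$ being a quasi-isomorphism to $L_\alpha\otimes^{\mathbb L}_AY\to N_\alpha\otimes^{\mathbb L}_AY$ being one: the perfect-module identity $\RHom_A(P,Y)\simeq\RHom_A(P,A)\otimes^{\mathbb L}_AY$ gives the map $N_\alpha^\vee\otimes^{\mathbb L}_AY\to L_\alpha^\vee\otimes^{\mathbb L}_AY$, and one further (routine) $\RHom_Y(-,Y)$-duality of perfect $Y$--modules is needed to obtain the stated form.
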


From now on we will not distinguish between $L_S^{\dgalg}(A)$ and $L_S^{\amod}(A)$ and use the notation $L_S(A)$ for both. As in the case of localising at a set of homology classes in $H(A)$, we have a localisation map $A\to L_S(A)$, well-defined in the homotopy category of dg algebras (which means that $A$ may have to be cofibrantly replaced for this map to exists on the nose).

Cohn localisation is closely related to the notion of a homological epimorphism, cf.~\autoref{rem:homological_epimorphism}. It is not hard to see that a map $A\to B$ is a homological epimorphism if and only if the functor $M\mapsto B\otimes^{\mathbb{L}}_AM$ is a smashing localisation on the derived category of (left) $A$--modules; see \cite[Theorem 3.9 (6)]{Pauksztello} for an equivalent statement.

On the other hand we know that derived Cohn localisation is also smashing. It is proved in \cite[Theorem 6.1]{Krause_Stovicek} that any homological epimorphism from a hereditary ring is actually a Cohn localisation. Because of that, one may be led to conjecture that a homological epimorphism and \emph{derived} Cohn localisation are equivalent notions for any ring, not necessarily hereditary, as well as for any dg algebra. This turns out not to be true, but the difference is measured by the failure of the so-called \emph{telescope conjecture}, originally formulated in the context of stable homotopy theory \cite{Ravenel}. Abstractly, it claims that any smashing localisation of a triangulated category is \emph{finite}, i.e.~its kernel is generated by compact (or perfect) objects. Then we have the following result.

\begin{theorem}
Let $A$ be a dg algebra and $A\to L_S(A)$ be the derived Cohn localisation with respect to some set of maps $S\co L_{\alpha}\to N_{\alpha}$ between perfect $A$--modules. Then it is a homological epimorphism. Conversely, any homological epimorphism $A\to B$ is a derived Cohn localisation precisely when the telescope conjecture holds in $\Ho(\amod)$, the derived category of $A$.
\end{theorem}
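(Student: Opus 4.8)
The plan is to recast the statement in terms of smashing localisations of the compactly generated triangulated category $\mathcal{D} := \Ho(\amod)$, which has $A$ as a compact generator and the perfect $A$--modules as its compact objects, and then to run everything through two dictionaries. The first, which is the content of \cite[Theorem 3.9 (6)]{Pauksztello} together with the representability technique behind \autoref{thm:dwyer}, identifies homological epimorphisms out of $A$ with smashing localisations of $\mathcal{D}$: a homological epimorphism $A\to B$ yields the smashing localisation $M\mapsto B\otimes_A^{\mathbb L}M$, and conversely every smashing localisation $L$ of $\mathcal D$ has this form with $B=L(A)$ --- indeed $L$ is exact and commutes with coproducts, so comparing the localisation triangle of $A$ with its derived tensor product with an arbitrary $M$ gives $LM\simeq L(A)\otimes_A^{\mathbb L}M$ naturally (using that both the $L$--acyclic and the $L$--local objects form localising subcategories, the latter precisely because $L$ is smashing), and $L(A)$ inherits a dg algebra structure making $A\to L(A)$ a homological epimorphism by the bimodule argument of \autoref{thm:dwyer}. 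The second dictionary is that, for a set $S=\{S_\alpha\co L_\alpha\to N_\alpha\}$ of maps between perfect modules, the derived Cohn localisation $A\to L_S(A)$ is the Bousfield localisation of $\amod$ whose local objects are those right orthogonal to the homotopy cofibres $C_\alpha=\mathrm{cofib}(S_\alpha)$; since the $C_\alpha$ are perfect this localisation is smashing (as already used above, following \cite[Proposition 2.10]{dwyer:localisationinhomotopytheory}) and its kernel is $\mathrm{Loc}(\{C_\alpha\})$, the localising subcategory generated by these compact objects --- so derived Cohn localisations are exactly the \emph{finite} smashing localisations of $\mathcal D$.

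Granting this, the first assertion is immediate: since the derived Cohn localisation is smashing, $L_S(A)\otimes_A^{\mathbb L}L_S(A)\simeq L_S^{\amod}(L_S(A))$, and $L_S(A)$ is already $S$--local, hence is its own localisation, so $L_S(A)\otimes_A^{\mathbb L}L_S(A)\simeq L_S(A)$, i.e.\ $A\to L_S(A)$ is a homological epimorphism.

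For the equivalence I argue both directions through the dictionaries. Suppose the telescope conjecture holds in $\mathcal D$ and $A\to B$ is a homological epimorphism; its associated smashing localisation has kernel $\mathrm{Loc}(\mathcal S)$ for some set $\mathcal S$ of compact objects. Consider the derived Cohn localisation at the maps $0\to C$, one for each $C\in\mathcal S$ --- these are maps between perfect modules, the zero module being free of rank zero, with homotopy cofibres the $C\in\mathcal S$. Its local objects are $\mathcal S^\perp=\mathrm{Loc}(\mathcal S)^\perp$, which is exactly the class of $B$--local modules; since a Bousfield localisation is determined by its class of local objects, this derived Cohn localisation agrees with $B\otimes_A^{\mathbb L}(-)$, whence $B\simeq L_S(A)$ under $A$ (using the algebra/module comparison, the analogue of \autoref{thm:localisationscoincide}), so $A\to B$ is a derived Cohn localisation. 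Conversely, if every homological epimorphism out of $A$ is a derived Cohn localisation, then by the first dictionary every smashing localisation of $\mathcal D$ is a derived Cohn localisation, hence finite by the second; as the smashing localisation was arbitrary, the telescope conjecture holds in $\mathcal D$.

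The main obstacle is the rigorous setup of the two dictionaries rather than the formal manipulations above. Concretely, one must establish that an arbitrary smashing localisation of $\mathcal D$ is represented by a dg algebra $B$ with $A\to B$ a homological epimorphism --- that is, upgrade the object-level identity $L(A)\otimes_A^{\mathbb L}L(A)\simeq L(A)$ to a genuine dg (or $A_\infty$) algebra structure, which is exactly the enhancement performed in \autoref{thm:dwyer} but now for a Bousfield localisation of $\amod$ not given a priori by inverting a set of module maps --- and one must correctly invoke the classical finite-localisation theorem (Miller--Neeman; see also \cite{Krause_Stovicek}) to identify ${}^\perp(\{C_\alpha\}^\perp)$ with $\mathrm{Loc}(\{C_\alpha\})$ and thereby equate ``derived Cohn localisation'' with ``compactly generated smashing localisation''. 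Once these are in place, everything else is bookkeeping with $\perp$ and $\mathrm{Loc}$.
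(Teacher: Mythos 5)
Your proposal is correct and takes essentially the same route as the paper: both prove the result by identifying homological epimorphisms with smashing localisations (via the Pauksztello reference and the bimodule argument behind Dwyer's theorem), identifying derived Cohn localisations with finite smashing localisations, and then observing that the equivalence of the two classes is precisely the telescope conjecture. The paper is merely more terse, stating these identifications without the $\perp$/\textrm{Loc} bookkeeping you spell out, and inverting maps $A_\alpha\to 0$ where you use $0\to C$ (an immaterial difference since the relevant cofibres agree up to shift).
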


\begin{proof}
The first statement is simply a restatement of the fact that a derived Cohn localisation is smashing.

Next, any Cohn localisation functor $M\to L_S(M)$ is finite, because its kernel is generated by homotopy kernels between perfect $A$--modules and thus, are themselves perfect, i.e.~compact. Conversely, any finite localisation functor $F$ of the derived category $\Ho(\amod)$ is a suitable Cohn localisation. Indeed, the kernel of $F$ is a triangulated subcategory in $\Ho(\amod)$ generated by some compact objects $A_{\alpha}$ and then $F$ is the derived Cohn localisation with respect to the collection of maps $A_\alpha\to 0$.
The second claim follows.
\end{proof}

Note that if $A$ is a hereditary ring then the telescope conjecture does hold in $\Ho(\amod)$, as proved in \cite{Krause_Stovicek}.

\section{Computing localisations}
Combining \autoref{thm:localisationscoincide}, \autoref{thm:homologycollapse} and \autoref{lem:selftensor} we obtain the following answer to the question of when the homology of the derived localisation is the non-derived localisation of the homology.

\begin{theorem}\label{thm:stableflat}
Let $A$ be a dg algebra and let $S\subset H(A)$ be an arbitrary subset. Then $H(A)[S^{-1}]\simeq L_S(H(A))$ if and only if $H(A)\to H(A)[S^{-1}]$ is a homological epimorphism/stably flat. In this case, $H(L_S(A))\cong H(A)[S^{-1}]$.\qed
\end{theorem}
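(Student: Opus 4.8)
The plan is to assemble the statement from the three results the problem tells us we may use. First I would unpack what needs to be shown: the claim $H(A)[S^{-1}]\simeq L_S(H(A))$ is an equivalence of dg algebras, and the side condition is that $H(A)\to H(A)[S^{-1}]$ is a homological epimorphism (equivalently stably flat, by \autoref{rem:homological_epimorphism}, since $H(A)$ is a graded algebra with zero differential). So the logical backbone is a chain of equivalences: \autoref{lem:selftensor} applied to the graded algebra $B=H(A)$ (with zero differential) tells us that the conditions ``$B[S^{-1}]\otimes_B^{\mathbb{L}}B[S^{-1}]\simeq B[S^{-1}]$'', ``$\dots\simeq B[S^{-1}]\otimes_B B[S^{-1}]$'' and ``$L_S^{\amod}(B)\simeq B[S^{-1}]$'' are all equivalent; the first of these is precisely the statement that $B\to B[S^{-1}]$ is a homological epimorphism.

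The key steps, in order, are as follows. Step one: observe that by \autoref{thm:localisationscoincide} applied to the dg algebra $H(A)$ — which is legitimate since $L_S^{\amod}(H(A))$ is always a dg algebra in $\derunderalg{H(A)}$ by \autoref{thm:dwyer} — the module localisation $L_S^{\amod}(H(A))$ coincides with the dg-algebra localisation $L_S^{\dgalg}(H(A)) = L_S(H(A))$. Step two: invoke \autoref{lem:selftensor} to get that $L_S(H(A)) = L_S^{\amod}(H(A)) \simeq H(A)[S^{-1}]$ if and only if $H(A)\to H(A)[S^{-1}]$ is a homological epimorphism; this establishes the ``if and only if'' of the theorem. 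Step three: for the final sentence, assume the homological epimorphism condition holds; then $L_S(H(A))\simeq H(A)[S^{-1}]$, which in particular means $L_S^{\dgalg}(H(A))\simeq H(A)[S^{-1}]\cong H(A)\ast_{\ground\langle S\rangle}\ground\langle S,S^{-1}\rangle$, so the hypothesis of \autoref{thm:homologycollapse} is met and we conclude $H(L_S(A))\cong H(A)[S^{-1}]$.

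I do not expect any serious obstacle here, since the theorem is explicitly flagged as a combination of three earlier results; the only thing requiring a little care is bookkeeping about which algebra plays the role of ``$A$'' in each cited statement. In particular one must be careful that \autoref{lem:selftensor} is stated for a graded algebra with zero differential, and it is $H(A)$ — not $A$ — to which it is applied; likewise \autoref{thm:localisationscoincide} is applied to $H(A)$ viewed as a dg algebra concentrated in the sense of having zero differential. The passage from ``$L_S(H(A))\simeq H(A)[S^{-1}]$ as $H(A)$-modules'' to ``as dg algebras'' is exactly what \autoref{thm:localisationscoincide} provides, and the final homology computation is an immediate application of \autoref{thm:homologycollapse}; no spectral sequence manipulation beyond citing that collapse result is needed.
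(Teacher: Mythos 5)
Your proof is correct and follows exactly the intended route: it is precisely the advertised combination of \autoref{thm:localisationscoincide} (to identify the module and algebra localisations of $H(A)$), \autoref{lem:selftensor} (to get the equivalence with the homological-epimorphism/stable-flatness condition), and \autoref{thm:homologycollapse} (to compute $H(L_S(A))$), with due care about applying the first two to $H(A)$ rather than $A$. This matches the paper's implicit `\qed' proof.
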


Stable flatness is not always so easy to verify. But flatness of course implies stable flatness:

\begin{corollary}\label{thm:dgflat}
Let $A$ be a dg algebra. If $H(A) [S^{-1}] $ is flat as either a left or right $H(A)$--module then $H(L_S(A))\cong H(A)[S^{-1}]\simeq L_S(H(A))$.
\qed
\end{corollary}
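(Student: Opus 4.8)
The plan is to deduce this directly from \autoref{thm:stableflat} by verifying that its hypothesis --- stable flatness of the localisation map $H(A)\to H(A)[S^{-1}]$ --- is implied by the flatness assumption. Write $R := H(A)$ and $B := H(A)[S^{-1}]$; both are graded algebras with vanishing differential, so every derived tensor product over $R$ that appears below is computed by an ordinary (graded) projective resolution, with no $K$-flatness subtleties. Recall (cf.~\autoref{rem:homological_epimorphism} and \autoref{lem:selftensor}) that ``stably flat'' for the map $R\to B$ means exactly that the canonical map $B\otimes_R^{\mathbb{L}} B\to B\otimes_R B$ is a quasi-isomorphism, equivalently that $\Tor_i^R(B,B)=0$ for all $i>0$.

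First I would treat the case where $B$ is flat as a \emph{right} $R$-module. Then $\Tor_i^R(B,M)=0$ for every left $R$-module $M$ and every $i>0$, so the natural map $B\otimes_R^{\mathbb{L}} M\to B\otimes_R M$ is a quasi-isomorphism for all such $M$; taking $M=B$ gives $B\otimes_R^{\mathbb{L}} B\simeq B\otimes_R B$, i.e.~$R\to B$ is stably flat. If instead $B$ is flat as a \emph{left} $R$-module, I would run the symmetric argument: now $N\otimes_R^{\mathbb{L}} B\to N\otimes_R B$ is a quasi-isomorphism for every right $R$-module $N$, and taking $N=B$ again yields $B\otimes_R^{\mathbb{L}} B\simeq B\otimes_R B$.

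In either case the hypothesis of \autoref{thm:stableflat} is satisfied, so $H(A)[S^{-1}]\simeq L_S(H(A))$, and by the final sentence of that theorem $H(L_S(A))\cong H(A)[S^{-1}]$, which is exactly the assertion. There is no real obstacle here: the corollary is essentially a one-line consequence of \autoref{thm:stableflat}, the only --- entirely standard --- point being the implication ``$B$ flat over $R$'' $\Rightarrow$ ``all higher $\Tor_i^R(B,B)$ vanish'', which is classical precisely because $R=H(A)$ carries the zero differential.
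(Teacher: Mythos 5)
Your proposal is correct and takes the same route the paper does: the paper presents this as an immediate consequence of \autoref{thm:stableflat} (hence the bare \texttt{\textbackslash qed}), the only input being the standard fact that flatness of $H(A)[S^{-1}]$ on either side forces all higher $\Tor^{H(A)}_i(H(A)[S^{-1}],H(A)[S^{-1}])$ to vanish, i.e.~stable flatness. You have simply spelled out the step the paper leaves implicit.
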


In the case that every $s\in S$ is in the centre of $H(A)$ we now see that the homology of the derived localisation of $A$ at $S$ recovers the familiar localisation from commutative algebra.

\begin{theorem}\label{cor:central}
Let $A$ be a dg algebra. If $S$ is a central subset in $H(A)$ then there is an isomorphism of graded algebras $H(A \ast_{\ground \langle S \rangle}^{\mathbb{L}}\ground \langle S, S^{-1} \rangle )\cong H(A)[S^{-1}]\cong H(A) \otimes_{\ground [ S ]} \ground [ S, S^{-1}] $.
\end{theorem}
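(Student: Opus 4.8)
The plan is to deduce this as a direct consequence of \autoref{thm:dgflat} together with the classical fact that localisation at a central set is flat. First I would recall the relevant ring theory. Since every element of $S$ is central in the graded algebra $H(A)$, the elements of $S$ commute with one another without Koszul signs, so the structure map $\ground\langle S\rangle\to H(A)$ factors through the polynomial algebra $\ground[S]$ on the set $S$, and likewise $\ground\langle S,S^{-1}\rangle\to H(A)[S^{-1}]$ factors through $\ground[S,S^{-1}]$. The non-derived localisation $H(A)[S^{-1}]=H(A)\ast_{\ground\langle S\rangle}\ground\langle S,S^{-1}\rangle$ then coincides with the ordinary central (Ore) localisation, which admits the familiar description as a module of fractions; in particular one has the algebra isomorphism $H(A)[S^{-1}]\cong H(A)\otimes_{\ground[S]}\ground[S,S^{-1}]$, which is the second isomorphism claimed in the statement.

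Next I would record that this central localisation is flat over $H(A)$: it is the filtered colimit of copies of $H(A)$ indexed by the multiplicative monoid generated by $S$, with structure maps given by multiplication by elements of $S$, and a filtered colimit of free modules is flat. Because $S$ is central this argument works equally on the left and on the right, so $H(A)[S^{-1}]$ is flat as a left (and right) $H(A)$--module.

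With flatness in hand, \autoref{thm:dgflat} applies verbatim and yields an isomorphism of graded algebras $H(L_S(A))\cong H(A)[S^{-1}]\simeq L_S(H(A))$. Finally, by \autoref{cor:alglocalisation} the dg algebra localisation $L_S(A)=L_S^{\dgalg}(A)$ is modelled by $A\ast_{\ground\langle S\rangle}^{\mathbb{L}}\ground\langle S,S^{-1}\rangle$, so combining the two identifications gives
\[
H\bigl(A\ast_{\ground\langle S\rangle}^{\mathbb{L}}\ground\langle S,S^{-1}\rangle\bigr)\cong H(A)[S^{-1}]\cong H(A)\otimes_{\ground[S]}\ground[S,S^{-1}],
\]
as required.

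There is no serious obstacle here: the theorem is essentially a corollary of \autoref{thm:dgflat}. The only points needing (routine) care are the bookkeeping with the graded polynomial algebra $\ground[S]$ — one should check that centrality of homogeneous elements forces them to commute without Koszul signs, so that $\ground[S]$ is genuinely commutative and the factorisations above are legitimate — and the verification, standard since Ore, that the module of fractions is flat over $H(A)$ and agrees with the tensor product $H(A)\otimes_{\ground[S]}\ground[S,S^{-1}]$.
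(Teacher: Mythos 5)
Your proposal is correct, and the overall structure matches the paper's: identify $H(A)[S^{-1}]$ with $H(A)\otimes_{\ground[S]}\ground[S,S^{-1}]$ using centrality, establish that this is flat as a left $H(A)$--module, and then invoke \autoref{thm:dgflat}. The one place where you diverge is the flatness argument. You appeal to the classical description of a central (Ore) localisation as a filtered colimit of free $H(A)$--modules indexed by the multiplicative monoid generated by $S$, so that flatness follows from flatness of filtered colimits of flats. The paper argues more locally: since $\ground[S,S^{-1}]$ is the ordinary commutative localisation of $\ground[S]$ it is flat over $\ground[S]$; then for any right $H(A)$--module $M$ with cofibrant replacement $P\acycfib M$ one has
\[
M\otimes^{\mathbb{L}}_{H(A)} \bigl(H(A)\otimes_{\ground[S]} \ground [S,S^{-1}]\bigr)\simeq P\otimes_{\ground[S]} \ground [S,S^{-1}] \simeq M\otimes_{\ground[S]} \ground [S,S^{-1}],
\]
which is exactly the non-derived tensor product, so $H(A)\otimes_{\ground[S]}\ground[S,S^{-1}]$ is flat. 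The paper's route avoids the Ore-set/module-of-fractions machinery (which appears later in the paper) and only needs the flatness of a commutative localisation of a polynomial ring; your route instead front-loads the classical fraction description. Both are valid, and both arrive at \autoref{thm:dgflat} in the same way. One small remark on your side note about Koszul signs: centrality in the sense used here means $sa=as$ on the nose for all homogeneous $a\in H(A)$, so $st=ts$ for $s,t\in S$ by centrality of $s$ applied to $a=t$; no sign issue arises and $\ground\langle S\rangle\to H(A)$ factors through the honest (ungraded-commutative) polynomial algebra $\ground[S]$, as both you and the paper use.
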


\begin{proof}
Since $S$ is central in $H(A)$, then $H(A)\ast_{\ground \langle S \rangle } \ground \langle S, S^{-1} \rangle \cong H(A)\otimes_{\ground[S]} \ground [S,S^{-1}]$. Let $P\acycfib M$ be a $H(A)$--module cofibrant replacement for a right $H(A)$--module $M$. Then
\begin{align*}
M\otimes^{\mathbb{L}}_{H(A)} H(A)\otimes_{\ground[S]} \ground [S,S^{-1}] &\simeq P\otimes_{H(A)} H(A)\otimes_{\ground[S]} \ground [S,S^{-1}] \cong P\otimes_{\ground[S]} \ground [S,S^{-1}]\\
 &\simeq M\otimes_{\ground[S]} \ground [S,S^{-1}]\cong M\otimes_{H(A)} H(A)\otimes_{\ground[S]} \ground [S,S^{-1}]
\end{align*}
using the fact that $\ground [S, S^{-1}]$ is flat over $\ground [S]$ (since it's the usual commutative localisation of $\ground [S]$ at $S$) and hence tensoring with it over $\ground [S]$ preserves quasi-isomorphisms. Therefore $H(A)\otimes_{\ground[S]} \ground [S,S^{-1}]$ is a flat left $H(A)$--module. The result now follows from \autoref{thm:dgflat}.
\end{proof}

\begin{remark}
If $A$ is a dg commutative algebra then it is not immediate that the derived localisation of $A$ in the category of dg associative algebras is the same as the derived localisation of $A$ in the category of dg commutative algebras. The latter is clearly the same as the non-derived localisation since in the commutative world the free product is just the tensor product and so $A\ast_{\ground [S]}^{\mathbb{L}} \ground [ S,S^{-1}] = A\otimes_{\ground [S]}^{\mathbb{L}}\ground [S,S^{-1}]\simeq A\otimes_{\ground [S]} \ground [S,S^{-1}]$. However, the results above imply that this is also still the localisation when regarding a dg commutative algebras as an object in the category of dg associative algebras.

By contrast, derived localisation of equivariant ring spectra need not preserve commutativity, c.f.\ \cite[Theorem 2]{McClure} and \cite[Proposition 6.1]{HopkinsHillB}. This explains why even though (an analogue of)
\autoref{thm:localisationscoincide} may hold for equivariant $A_\infty$ ring spectra, it certainly does not for equivariant $E_\infty$ ring spectra, c.f.\ \cite{HopkinsHill}.
\end{remark}

The results above give conditions for the derived localisation of the homology to be the same as the non-derived localisation of the homology, which in turn causes the spectral sequence in \autoref{thm:spectralseq} to collapse at the $E^2$~term. Of course, in general the derived localisation of the homology will be different from the non-derived localisation.

\begin{example}[Higher derived terms]
Let $\ground$ be a field of characteristic zero. Let $A$ be the associative algebra
\[
A = \ground\langle s,t,u \rangle / (st, us)
\]
concentrated in degree zero. Let $C$ be the dg algebra
\[
C = \ground\langle s,t,u, z,w,a \rangle
\]
with $\degree{s}=\degree{t}=\degree{u}=0$, $\degree{z}=\degree{w}=1$, $\degree{a}=2$ and differential defined by $dz=st$, $dw=us$ and $da=uz-wt$. Then $C$ is a cofibrant dg $\ground\langle s\rangle$--algebra and is a cofibrant replacement for $A$ (as above, $A$ is Koszul and $C$ is the cobar construction of $A^!$).

The derived localisation $C\ast_{\ground \langle s \rangle} \ground \langle s,s^{-1}\rangle \simeq A\ast_{\ground \langle s \rangle}^{\mathbb{L}}\ground \langle s,s^{-1}\rangle$ is freely generated over $\ground\langle s,s^{-1}\rangle$ by the degree $0$ elements $t$ and $u$, the degree $1$ elements $\tilde{z}=s^{-1}z$ and $\tilde{w}=ws^{-1}$ and the degree $2$ element $\tilde{a}=a-ws^{-1}z$. The differential on the modified generators is given by $d\tilde{z}=t$, $d\tilde{w}= u$ and $d\tilde{a} = 0$. Therefore $A\ast_{\ground \langle s \rangle}^{\mathbb{L}}\ground \langle s,s^{-1}\rangle$ may be realised as the graded algebra $\ground\langle \tilde{a}, s, s^{-1} \rangle$ with $\degree{s}=0$, $\degree{\tilde{a}}=2$ and vanishing differential.
\end{example}

In the above example it is still the case that the homology of the derived localisation is the same as that of the derived localisation of the homology, albeit tautologically. The next example shows that, in general, the derived localisation of a dg algebra may be quite different from the derived localisation of the homology. In particular, the spectral sequence in \autoref{thm:spectralseq} may not stabilise at the $E^2$~term.

\begin{example}[Spectral sequence does not stabilise at the $E^2$~term]
Let $A$ be the dg algebra
\[
A = \ground\langle s,t,u,z,w\rangle
\]
with $\degree{s}=\degree{t}=\degree{u}=0$, $\degree{z}=\degree{w}=1$ and differential defined by $dz=st$ and $dw=us$. Then $A$ is a cofibrant dg $\ground\langle s\rangle$--algebra. It is not hard to see that the space of degree $1$ cycles in $A$ is generated, as a module over $A_0$, by cycles of the form $us^nz-ws^nt$. But since $d(ws^nz) = us^{n+1}z-ws^{n+1}t$ then the only non-exact of these generating cycles is $uz-wt$. However, as a cycle in $A\ast_{\ground \langle s \rangle}^{\mathbb{L}}\ground \langle s,s^{-1}\rangle \simeq A\ast_{\ground \langle s \rangle}\ground \langle s,s^{-1}\rangle$ it is exact. Therefore, we see that $H_1(A\ast_{\ground \langle s \rangle}^{\mathbb{L}}\ground \langle s,s^{-1}\rangle)=0$.

On the other hand, the non-trivial homology class $[uz-wt]\in H(A)$ is still non-zero in $H(A)\ast_{\ground \langle s \rangle}\ground \langle s,s^{-1}\rangle$. Indeed it would only become zero after inverting $s$ if either $s^n(uz-wt)$ or $(uz-wt)s^n$ were exact in $A$ for some $n\in \mathbb{N}$, which, it is not hard to see, is not the case. Therefore, the non-derived localisation of $H(A)$ has a non-trivial degree $1$ element. But this means the same is true for the derived localisation. Indeed, compute $H(A)\ast_{\ground \langle s \rangle}^{\mathbb{L}}\ground\langle s,s^{-1}\rangle$ by cofibrantly replacing $\ground \langle s,s^{-1}\rangle$ and consider the grading with respect to the grading on the cofibrant replacement of $\ground \langle s,s^{-1}\rangle$. Then, as one would expect, the degree $0$ part with respect to this grading is just the non-derived localisation of $H(A)$.

Therefore, we conclude that $H(L_s(A))\ncong H(L_s(H(A)))$.
\end{example}

\begin{remark}
These examples highlight the stark contrast with the more familiar localisation of dg commutative algebras. In the commutative case, the derived localisation never has non-trivial higher derived terms and is therefore obtained by simply adjoining an inverse. In addition, the homology of the localisation of a dg commutative algebra is always just the localisation of the homology. On the other hand, the localisation of an arbitrary dg associative algebra seems to be rather less accessible.
\end{remark}

\subsection{Localisation of degree zero homology}
If $A$ is non-negatively graded and we are localising at degree zero homology classes then the degree zero homology of the localisation behaves as one would expect of a derived functor.

\begin{theorem}
Let $A$ be a differential non-negatively graded algebra with $S\subset H_0(A)$. Then $H_0(L_S(A))\cong H_0(A)[S^{-1}]\cong H_0(A)\ast_{\ground \langle S \rangle} \ground \langle S, S^{-1} \rangle$.
\end{theorem}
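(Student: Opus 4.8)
The plan is to exhibit $H_0$ as a left adjoint and push it through a strict model of the derived free product. The key formal fact is that, restricted to \emph{non-negatively graded} dg algebras, the functor $C\mapsto H_0(C)=C_0/d(C_1)$ is left adjoint to the inclusion of ordinary algebras, viewed as dg algebras concentrated in degree zero: any dg algebra map from a non-negatively graded $C$ to an ordinary algebra $R$ annihilates $C_{>0}$ and the two-sided ideal $d(C_1)\subseteq C_0$, hence factors uniquely through the quotient $H_0(C)=C_0/d(C_1)$. (This genuinely uses connectivity of $C$, not merely of $H_*(C)$: for a non-connective dg algebra the left adjoint would be $C_0/d(C_1)$, which need not be $H_0(C)$.) In particular $H_0$ preserves all colimits, in particular pushouts, hence free products.

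First I would fix a strict model for the localisation. Since each $s\in S$ lies in $H_0(A)$, the free algebra $\ground\langle S\rangle$ is concentrated in degree zero, so by \autoref{cor:alglocalisation} together with the remark following it — $\ground\langle S\rangle$ and $\ground\langle S,S^{-1}\rangle$ are free, hence flat, over $\ground$ — we have $L_S(A)\simeq A'\ast_{\ground\langle S\rangle}\ground\langle S,S^{-1}\rangle$ as an honest pushout of dg algebras, where $A'\acycfib A$ is a cofibrant replacement of $A$ in $\underalg{\ground\langle S\rangle}$. The point to check here is that, since $A$ is non-negatively graded, $A'$ may be taken non-negatively graded as well: building $A'$ cell by cell over $\ground\langle S\rangle$ — adjoining a generator in degree $\degree{\alpha}\ge 0$ for each homogeneous $\alpha\in A$, then adjoining cells in degrees $\ge 1$ to kill the resulting relations, and iterating exactly as in the bigraded resolution construction — never forces a generator in negative degree, because $A$ vanishes in negative degrees; the outcome is a relative cell dg $\ground\langle S\rangle$-algebra. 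It follows that $A'\ast_{\ground\langle S\rangle}\ground\langle S,S^{-1}\rangle$ is again non-negatively graded, being a free product of connective dg algebras over a connective one.

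Then I would simply apply $H_0$ to the pushout square
\[
\xymatrix{
\ground\langle S\rangle \ar[r]\ar[d] & A'\ar[d]\\
\ground\langle S,S^{-1}\rangle \ar[r] & A'\ast_{\ground\langle S\rangle}\ground\langle S,S^{-1}\rangle
}
\]
inside non-negatively graded dg algebras. Using $H_0(\ground\langle S\rangle)=\ground\langle S\rangle$, $H_0(\ground\langle S,S^{-1}\rangle)=\ground\langle S,S^{-1}\rangle$ and $H_0(A')\cong H_0(A)$ (as $A'\acycfib A$ is a quasi-isomorphism), together with the fact that $H_0$ preserves pushouts, we get
\[
H_0(L_S(A))\cong H_0\bigl(A'\ast_{\ground\langle S\rangle}\ground\langle S,S^{-1}\rangle\bigr)\cong H_0(A)\ast_{\ground\langle S\rangle}\ground\langle S,S^{-1}\rangle .
\]
Finally, the right-hand side is precisely $H_0(A)[S^{-1}]$ by the explicit description of the non-derived localisation of an algebra recalled in \autoref{nonderivedalgebralocalisation}, applied to the ordinary algebra $H_0(A)$ and the subset $S\subseteq H_0(A)$ (with structure map $\ground\langle S\rangle\to H_0(A)$ the one induced by the chosen cycle representatives).

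The main obstacle is the second paragraph: checking that $A$ admits a non-negatively graded cofibrant replacement over $\ground\langle S\rangle$, and that its free product with $\ground\langle S,S^{-1}\rangle$ remains connective, so that the adjunction argument is actually available. Once that is in place, the remainder is formal.
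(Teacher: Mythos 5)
Your argument is correct and takes a genuinely different, more concrete route than the paper's. Both proofs rest on the same pivotal observation — that $H_0\co\dgalg_{\geq 0}\to\alg$ is left adjoint to the inclusion of ordinary algebras in degree zero — but they exploit it differently. You fix an explicit strict pushout model $A'\ast_{\ground\langle S\rangle}\ground\langle S,S^{-1}\rangle$ for $L_S(A)$, carefully check that the cofibrant replacement $A'$ can be built with all cells in non-negative degree (so that the free product stays connective, and the pushout square lies entirely in $\dgalg_{\geq 0}$), and then apply $H_0$ as a colimit-preserving functor to read off $H_0(L_S(A))\cong H_0(A)\ast_{\ground\langle S\rangle}\ground\langle S,S^{-1}\rangle$. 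The paper instead works abstractly at the level of universal properties: it observes $H_0$ is a \emph{left Quillen} functor (with $\alg$ given the trivial model structure) preserving the $S$-inverting property on both sides of the adjunction, so it carries the initial $S$-inverting object of $A\downarrow^{\mathbb{L}}\dgalg_{\geq 0}$ to that of $\undercat{H_0(A)}{\alg}$; and then it proves in a separate step — via the Quillen adjunction $\dgalg_{\geq 0}\rightleftarrows\dgalg$ — that the localisation in $\dgalg_{\geq 0}$ coincides with the one in $\dgalg$. Your approach sidesteps that second step entirely by simply constructing a connective model of the $\dgalg$-localisation from the outset, at the modest cost of having to verify connectivity of the cofibrant replacement and of the free product. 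Both are valid; the paper's version is shorter once the model-categorical machinery is in place, while yours is more explicit and requires fewer abstract prerequisites.
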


\begin{proof}
Write $\dgalg_{\geq 0}$ and $\alg$ for the categories of differential non-negatively graded algebras and algebras concentrated in degree $0$ respectively. The functor $A\mapsto H_0(A)$ is a left adjoint $\dgalg_{\geq 0}\to \alg$. Moreover, this is a left Quillen functor when equipping $\alg$ with the trivial model structure and it obviously preserves the property of being $S$--inverting, as does the right adjoint. It follows that the localisation of $A$ in $\dgalg_{\geq 0}$, being the initial $S$--inverting object in $A\downarrow^{\mathbb{L}}\dgalg_{\geq 0}$, is sent via this functor to the initial $S$--inverting object in $\undercat{H_0(A)}{\alg}$, which is $H_0(A)[S^{-1}]$.

Note that the localisation of $A$ in $\dgalg_{\geq 0}$ coincides with the localisation of $A$ in $\dgalg$ by a similar argument: the functor $\dgalg_{\geq 0}\to \dgalg$ is the left adjoint of a Quillen adjunction preserving the property of being $S$--inverting.
\end{proof}

However, in general, this theorem does not quite hold as the following example illustrates.

\begin{example}
Let $A$ be the cofibrant dg algebra
\[
A = \ground \langle t,u,s,z,w,h \rangle
\]
where $s,z,w$ have degree $0$, $t,u$ have degree $-1$ and $h$ has degree $1$. The differential on $A$ is defined by $d(s)=d(h)=d(t)=d(u)=0$, $d(z)=st$ and $d(w)=us$.

The degree $0$ cycle $h(uz+wt)$ gives a non-trivial homology class in $H_0(A)$ and is also non-trivial in $H_0(A)\ast_{\ground \langle s\rangle } \ground \langle s ,s ^{-1} \rangle$. However in $L_s(A)\simeq A\ast_{\ground \langle s \rangle} \ground \langle s, s^{-1} \rangle$ it is the boundary of $hws^{-1}z$.
\end{example}

\subsection{Ore localisation}
Let $A$ be a dg algebra and let $S\subset H(A)$ be an arbitrary set of homogeneous homology classes of $A$. \autoref{thm:dgflat} states that the homology $H(L_S(A))$ of the localisation of $A$ is isomorphic to $H(A)[S^{-1}]$, the non-derived localisation of the homology of $A$, if the latter is flat as a left or right $H(A)$--module. It is well known that if $S$ is a so-called \emph{Ore set} in $H(A)$, the flatness condition holds and further that $H(L_S(A))\cong H(A)[S^{-1}]$ may be realised as an Ore localisation, i.e.~as an algebra of fractions.
In this situation the derived localisation $L_S(A)$ itself cannot, in general, be constructed as a dg algebra of fractions in a directly analogous fashion. We will show in this subsection that it is nevertheless possible to identify $L_S(A)$ in $\Ho(\modcat{A})$ as a homotopy module of fractions.

We remark that a version of the main result of this paper, \autoref{thm:localisationscoincide}, in the context of $\mathbb{E}_1$--algebras, is proved in \cite[\S7.2.4]{lurie} under the assumption that $S$ is an Ore set.
Our approach, by contrast, is to establish \autoref{thm:localisationscoincide}
in full generality, without any restrictions on $S$, and then appeal to the theorem in developing the theory of Ore localisation in case $S$ is an Ore set.

We begin with an account of Ore localisation in the non-differential context. We follow the approach of Quillen \cite[Appendix Q.1]{quillen}, which makes a clear (initial) distinction between algebras and modules of fractions, much in keeping with our overall point of view.  Let $B$ be a graded algebra (with zero differential), and let $S$ be a set of homogeneous elements of $B$. We assume that $S$ is multiplicatively closed, i.e.~$1\in S$ and $s,t\in S\implies st\in S$. We form the left $B$--module $B\frc S$ of right fractions of $B$ with denominators in $S$. It is defined as the free graded $B$--module on symbols $\frc s$, one for each $s\in S$, with the degree of $\frc s$ being the same as $s$, modulo the $B$--submodule generated by $x\frc sx - 1\frc x$, for $s\in S$ and homogeneous $x\in B$ such that $sx\in S$. We may understand $B\frc S$ as a colimit as follows. Let $\mathcal{S}$ be the category with objects $s\in S$, a morphism $s\xrightarrow{x} t$ for every $s,t \in S$ and homogeneous $x\in B$ such that $sx=t$, and composition given by $\left( t\xrightarrow{y} u\right)\circ\left( s\xrightarrow{x} t\right) = s\xrightarrow{xy} u$. Then $B\frc S$ is the colimit of the functor $\mathcal{S}\to\modcat{B}$ sending $s$ to the shifted free module $B\frc s:=\Sigma^{-\degree{s}}B$ of rank one, and $s\xrightarrow{x} sx$ to right multiplication $B\frc s\xrightarrow{\cdot x} B\frc sx$ by $x$. There is a canonical $B$--module homomorphism $f\co B\to B\frc S$ sending $b$ to $b\frc 1$.

\begin{lemma}\label{orelemma1}
Let $B$ be a graded algebra, and let $S$ be a multiplicatively closed subset of $B$. Then $f\co B\to B\frc S$ is the non-derived module localisation of $B$ if and only if $B\frc S$ is $S$--local.
\end{lemma}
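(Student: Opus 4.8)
The plan is to prove the two implications separately. The forward direction is immediate: if $f\co B\to B\frc S$ is the non-derived module localisation of $B$ then, by definition (cf.\ \autoref{nonderivedmodulelocalisation}), it is the initial \emph{strictly} $S$--local object of $\undercat{B}{\modcat{B}}$, which in this differential-free setting is the same as the initial $S$--local one; in particular $B\frc S$ is $S$--local. For the converse I shall show that, \emph{as soon as} $B\frc S$ is $S$--local, the map $f$ has the required universal property: for every $S$--local $B$--module $N$ together with a $B$--module map $g\co B\to N$ --- equivalently, an element $n=g(1)\in N$ --- there is a unique $B$--module map $\phi\co B\frc S\to N$ with $\phi f=g$, that is $\phi(1\frc 1)=n$. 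Writing $l_s\co N\to N$ for left multiplication by (a cycle representing) $s$, the $S$--locality of $N$ says precisely that each $l_s$ is bijective.

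For existence I use the colimit presentation $B\frc S=\colim_{s\in\mathcal{S}}B\frc s$. A $B$--module map out of this colimit is the same as a family of $B$--module maps $\phi_s\co B\frc s\to N$ compatible with the structure maps $B\frc s\xrightarrow{\cdot x}B\frc{sx}$, and since each $B\frc s$ is free of rank one on $\frc s$ such a $\phi_s$ is determined by the single element $\phi_s(\frc s)\in N$, which I set to be $l_s^{-1}(n)$. The compatibility condition then reads $l_s^{-1}(n)=x\,l_{sx}^{-1}(n)$ for $sx\in S$, and this follows by applying the injective map $l_s$ and using $l_s(xm)=l_{sx}(m)$, both sides becoming $n$. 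The resulting map $\phi\co B\frc S\to N$ satisfies $\phi(1\frc 1)=l_1^{-1}(n)=n$, so $\phi f=g$ as required.

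For uniqueness, observe that the colimit presentation yields, for each $s\in S$, the relation $1\frc 1=s\cdot\frc s$ in $B\frc S$ (apply the structure map $B\frc 1\xrightarrow{\cdot s}B\frc s$). Hence any $B$--module map $\phi\co B\frc S\to N$ with $\phi(1\frc 1)=n$ must satisfy $n=\phi(s\cdot\frc s)=l_s(\phi(\frc s))$, and since $l_s$ is injective this forces $\phi(\frc s)=l_s^{-1}(n)$; as $B\frc S$ is generated as a $B$--module by the elements $\frc s$, the map $\phi$ is thereby completely determined. This proves that $f\co B\to B\frc S$ is the non-derived module localisation of $B$. The only point above that goes beyond unwinding definitions is the compatibility check that makes $\phi$ well defined, and even that is a two-line computation; it is worth noting that $S$--locality enters only through the \emph{target} modules (to invert the $l_s$, and, for uniqueness, to know they are injective), which is exactly why the lemma reduces to the sole hypothesis that $B\frc S$ itself be $S$--local.
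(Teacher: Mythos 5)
Your proof is correct and follows essentially the same route as the paper's: both derive the forced formula $\phi(\frc s)=l_s^{-1}(n)$ from a hypothetical solution, then verify that this formula is well-defined. The one place where you go further is in actually carrying out the well-definedness check (via the colimit compatibility condition $l_s^{-1}(n)=x\,l_{sx}^{-1}(n)$, verified by applying the injective $l_s$), whereas the paper merely asserts it is ``easy to check'' that the formula respects the defining relations of $B\frc S$; the colimit formulation you use is just a repackaging of the generators-and-relations presentation, so these are the same check.
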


\begin{proof}
It suffices to show that $f^*\co \Hom_B(B\frc S,N)\to\Hom_B(B,N)\cong N$ is an isomorphism for any $S$--local graded $B$--module $N$. Fix $y\in N_0$, and let $f_y\co B \to N$ be the $B$--module homomorphism such that $f_y(1)=y$. We must show that there is a unique $B$--module homomorphism $g\co B\frc S\to N$ such that $g\circ f = f_y$, i.e.~such that $g(1\frc 1)=y$. If such a $g$ exists, then for all $s\in S$ and $b\in B$, we have $sg(1\frc s)=g(s\frc s)=g(1\frc 1)=y$ and therefore $g(b\frc s)=bl_s^{-1}(y)$. It is easy to check that the latter formula indeed determines a well-defined $B$--module homomorphism from $B\frc S$ to $N$.
\end{proof}

The following lemma gives necessary and sufficient conditions for $B\frc S$ to be $S$--local, under the assumption that $\mathcal{S}$ is filtered, i.e.
\begin{itemize}
\item for all $s,t\in S$, there exist $x,y\in B$ such that $sx=ty\in S$;
\item for all $s\in S$ and $x,y\in B$ such that $sx=sy\in S$, there exists $z\in B$ such that $xz=yz$ and $sxz\in S$.
\end{itemize}

\begin{lemma}\label{orelemma2}
Assume that $\mathcal{S}$ is filtered. Let $s\in S$.
\begin{enumerate}
\item The map $l_s\co B\frc S\to B\frc S$ is surjective if and only if for all $b\in B$, we have $sB\cap bS\neq \emptyset$;
\item The map $l_s\co B\frc S\to B\frc S$ is injective if and only if for all $b\in B$, we have $sb=0 \implies 0\in bS$.
\end{enumerate}
\end{lemma}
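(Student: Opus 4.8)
The plan is to argue directly from the colimit presentation of $B\frc S$ recorded above. Since $\mathcal{S}$ is filtered, every element of $B\frc S$ can be written as a single fraction $b\frc t$ with $t\in S$ and $b\in B$ (amalgamate a finite sum $\sum_i b_i\frc{t_i}$ into one term by choosing a common target in $\mathcal{S}$), and $b\frc t=0$ if and only if there is $x\in B$ with $tx\in S$ and $bx=0$; more generally $b\frc t=b'\frc{t'}$ precisely when there are $x,x'\in B$ with $tx=t'x'\in S$ and $bx=b'x'$. We also use $l_s(b\frc t)=(sb)\frc t$ and $c\frc t=(c\sigma)\frc{t\sigma}$ for $\sigma\in S$.

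For the ``if'' direction of (1): given $c\frc t\in B\frc S$, the hypothesis $sB\cap cS\neq\emptyset$ supplies $b'\in B$ and $\sigma\in S$ with $sb'=c\sigma$, so $l_s(b'\frc{t\sigma})=(sb')\frc{t\sigma}=(c\sigma)\frc{t\sigma}=c\frc t$ and $l_s$ is surjective. For ``only if'': apply surjectivity to the element $b\frc 1$ (legitimate since $1\in S$) to obtain $c\in B$, $t\in S$ with $(sc)\frc t=b\frc 1$; the equality criterion then gives $x\in B$ with $tx\in S$ and $s(cx)=b(tx)$, so $b(tx)$ lies in $sB\cap bS$.

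Part (2) runs in parallel. ``If'': if $l_s(b\frc t)=(sb)\frc t=0$ there is $x\in B$ with $tx\in S$ and $s(bx)=0$; applying the hypothesis to $bx$ yields $\sigma\in S$ with $(bx)\sigma=0$, whence $b\frc t=\bigl(b(x\sigma)\bigr)\frc{t(x\sigma)}=0$, using $tx\sigma\in S$. ``Only if'': for $b$ with $sb=0$ we have $l_s(b\frc 1)=0$, so injectivity provides $x\in S$ with $bx=0$, i.e.\ $0\in bS$.

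I do not anticipate a genuine obstacle: the substance is getting the colimit description right --- in particular invoking filteredness both for ``every element is a single fraction'' and for the vanishing and equality criteria for fractions --- together with the small device of probing $l_s$ at the fraction $b\frc 1$, valid because $1\in S$, which is what dissolves the apparent mismatch between the one-sided conditions in the statement and the two-sided-looking colimit. Equivalently, one may phrase the argument homologically: viewing $t\mapsto B\frc t$ as a diagram of graded $\ground$--modules with transition maps given by right multiplications, $l_s$ is the colimit of left multiplication by $s$ on the copies of $B$ (with which the transition maps commute), so exactness of filtered colimits gives $\mathrm{coker}(l_s)\cong\colim_{\mathcal{S}}B/sB$ and $\ker(l_s)\cong\colim_{\mathcal{S}}\{b\in B:sb=0\}$ with transition maps induced by right multiplication, and these colimits vanish exactly under the stated conditions by the same bookkeeping.
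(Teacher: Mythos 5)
Your proof is correct and follows essentially the same route as the paper's: you use filteredness to reduce to single fractions $b\frc t$ and to the vanishing/equality criteria, then probe $l_s$ at $b\frc 1$ for the ``only if'' directions in both parts, exactly as in the published argument. The closing reformulation via exactness of filtered colimits is a pleasant extra, but the core argument coincides with the paper's.
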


\begin{proof}
The assumption that $\mathcal{S}$ is filtered implies every element of $B\frc S$ can be written in the form $b\frc t$ for some $b\in B$ and $t\in S$, and that $b\frc t = 0$ if and only if there exists $x\in B$ such that $bx=0$ and $tx\in S$.
\begin{enumerate}
\item Suppose the condition holds, and let $b\frc t \in B\frc S$. Then there exists $c\in B$ and $u\in S$ such that $sc=bu$, and then $b\frc t=bu\frc tu=sc\frc tu = l_s(c\frc tu)$. Conversely, suppose that $l_s$ is surjective. Given any $b\in B$, we have $l_s(c\frc t)=b\frc 1$ for some $c \in B$ and $t\in S$. Hence $sc\frc t=bt\frc t$, and therefore $scx=btx$ for some $x\in B$ such that $tx\in S$. Thus $sB\cap bS$ is nonempty.
\item Suppose the condition holds, and let $b\frc t \in B\frc S$ such that $l_s(b\frc t)=0$. Then $sbx=0$ for some $x\in B$ such that $tx\in S$. So $bxu=0$ for some $u\in S$, and since then $txu\in S$, we deduce $b\frc t=0$. Conversely, suppose that $l_s$ is injective. Given $b\in B$ such that $sb=0$, we have $l_s(b\frc 1)=0$. Hence $b\frc 1=0$, and therefore $bt=0$ for some $t\in S$.
\end{enumerate}
\end{proof}

We are thus led to consider the following standard conditions.
\begin{definition}
Let $B$ be a graded algebra and $S$ a multiplicatively closed subset of homogeneous elements of $B$. We say that $S$ is a \emph{right Ore set} in $B$ if the following two conditions hold.
\begin{enumerate}
\item for all $b\in B$ and $s\in S$, we have $bS\cap sB\neq \emptyset$;
\item for all $b\in B$ and $s\in S$, we have $sb=0 \implies 0\in bS$.
\end{enumerate}
\end{definition}

We remark that some authors require only the first condition, calling a subset satisfying both a \emph{right denominator set} instead. For a subset $S$ satisfying the first condition alone, it is not necessarily true that $B[S^{-1}]$ is flat as a left $B$--module, cf.~\autoref{oreexample} below, and therefore such $S$ are not sufficiently well-behaved for our purposes.

\begin{proposition}[Ore localisation]\label{prop:ore}
Let $B$ be a graded algebra, and let $S$ be a multiplicatively closed subset of $B$. Then $S$ is a right Ore set in $B$ if and only if $\mathcal{S}$ is filtered and $B\frc S$ is $S$--local. If $S$ is a right Ore set in $B$, then we have an isomorphism $B\frc S \cong B[S^{-1}]$ of $B$--modules given by $b\frc s\mapsto bs^{-1}$. The product induced on $B\frc S$ is given, for $b,c\in B$ and $s,t\in S$, by $(b\frc s)(c\frc t)=ba\frc tu$, where $a\in B$ and $u\in S$ are chosen so that $cu=sa$.
\end{proposition}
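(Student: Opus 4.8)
The plan is to derive the proposition from \autoref{orelemma1} and \autoref{orelemma2} together with the universal property of the non-derived algebra localisation $B[S^{-1}]\cong B\ast_{\ground\langle S\rangle}\ground\langle S,S^{-1}\rangle$. Observe first that $B$, and hence $B\frc S$ (a colimit of free $B$--modules), carries the zero differential, so the condition that $B\frc S$ be $S$--local simply says that $l_s\co B\frc S\to B\frc S$ is bijective for every $s\in S$. For the biconditional, suppose first that $S$ is a right Ore set. Then $\mathcal S$ is filtered: given $s,t\in S$, the first Ore condition applied to $s$ and $b=t$ yields $u\in S$ and $x\in B$ with $sx=tu$, where $tu\in S$ since $S$ is multiplicatively closed, which is the first filteredness axiom; and given $s\in S$ and $x,y\in B$ with $sx=sy\in S$, the second Ore condition applied to $b=x-y$ yields $w\in S$ with $xw=yw$ and $(sx)w\in S$, which is the second filteredness axiom. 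With $\mathcal S$ filtered, \autoref{orelemma2} applies, and its two clauses say precisely that all the $l_s$ are surjective iff the first Ore condition holds and all the $l_s$ are injective iff the second does; hence $B\frc S$ is $S$--local. Conversely, if $\mathcal S$ is filtered and $B\frc S$ is $S$--local, then each $l_s$ is bijective and the same two clauses of \autoref{orelemma2} return the two Ore conditions.

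Now suppose $S$ is a right Ore set, so that by \autoref{orelemma1} and the biconditional the map $f\co B\to B\frc S$ is the non-derived module localisation of $B$ (\autoref{nonderivedmodulelocalisation}), i.e.~the initial strictly $S$--local $B$--module under $B$. I would then equip $B\frc S$ with the product $(b\frc s)(c\frc t)=ba\frc{tu}$, where $a\in B$ and $u\in S$ are chosen via the first Ore condition so that $cu=sa$, and verify---this is the classical Ore bookkeeping, which I would carry out or cite from \cite[Appendix Q.1]{quillen}---that it is independent of all the choices, associative and unital with unit $1\frc 1$, that $f$ is a ring homomorphism, and that each $s\frc 1$ is invertible with inverse $1\frc s$. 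Thus $f\co B\to B\frc S$ is a strictly $S$--inverting $B$--algebra, so the universal property of $B[S^{-1}]$ furnishes a unique $B$--algebra map $\beta\co B[S^{-1}]\to B\frc S$ under $B$. On the other hand, the universal property of $B\frc S$ as a module localisation furnishes a unique $B$--module map $\alpha\co B\frc S\to B[S^{-1}]$ under $B$; since $\alpha(1\frc 1)=1$ and $s\,\alpha(1\frc s)=\alpha(1\frc 1)$, we get $\alpha(b\frc s)=bs^{-1}$, and a short computation with the identity $s^{-1}c=au^{-1}$ shows $\alpha$ is multiplicative, so that $\alpha$ too is a $B$--algebra map under $B$.

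Finally I would apply uniqueness twice: $\beta\circ\alpha$ is a $B$--module endomorphism of $B\frc S$ under $B$, hence equals $\id$ by the module-localisation universal property, and $\alpha\circ\beta$ is a $B$--algebra endomorphism of $B[S^{-1}]$ under $B$, hence equals $\id$ by the algebra-localisation universal property. Therefore $\alpha$ is a $B$--module (indeed $B$--algebra) isomorphism $B\frc S\we B[S^{-1}]$, $b\frc s\mapsto bs^{-1}$, and the ring structure it transports from $B[S^{-1}]$ is exactly the product used to construct it. The main obstacle is the middle step---the purely computational verification that the Ore product is well defined and associative; the rest is formal manipulation of the two universal properties already set up in this section.
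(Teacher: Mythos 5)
Your proposal is correct and reaches the same conclusion via the same lemmas, but it takes a genuinely different route in one key respect: how the algebra structure on $B\frc S$ is obtained. You equip $B\frc S$ with the Ore product by hand and then, as you note, must carry out (or cite from Quillen) the classical bookkeeping that the product is independent of choices, associative and unital. The paper sidesteps this entirely: once \autoref{orelemma1} gives that $B\frc S$ is the module localisation of $B$, the natural map $B\frc S\cong \Hom_B^{\bullet}(B,B\frc S)\to\Hom_B^\bullet(B\frc S,B\frc S)$ is an isomorphism, and the target is manifestly a well-defined associative graded algebra, so well-definedness and associativity of the product come for free; the explicit formula is then read off by tracing the isomorphisms. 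Your proof is also structured differently at the end: you build two maps $\alpha$ and $\beta$ from the two universal properties and show they are mutually inverse by two applications of uniqueness, whereas the paper verifies directly that the $B$--module map $B\frc S\to C$ to any $S$--inverting $B$--algebra $C$ is an algebra map, so that $B\frc S$ itself realises the universal property of $B[S^{-1}]$. Both versions are sound; the paper's buys you associativity without computation (and mirrors the proof strategy of \autoref{thm:localisationscoincide}), while yours is more elementary and self-contained but pays for it with the classical Ore verification. Your spelled-out argument that $S$ being Ore implies $\mathcal S$ is filtered fills in a step the paper declares ``clear,'' and it is correct, including the point that the $w\in S$ produced by the second Ore condition serves as the required $z$ in the filteredness axiom by multiplicative closure.
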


\begin{proof}
It is clear that $\mathcal{S}$ is filtered when $S$ is a right Ore set in $B$. The first claim in the theorem is thus a restatement of \autoref{orelemma2}.
	
Let $S$ be a right Ore set in $B$. By \autoref{orelemma1}, $B\frc S$ is the non-derived module localisation of $B$. The isomorphism $B\frc S \cong B[S^{-1}]$ is then an immediate consequence of the non-derived analogue of \autoref{thm:localisationscoincide}; here we supply a more direct argument. We have $B\frc S\cong \Hom_B^{\bullet}(B,B\frc S)\cong \Hom_B^\bullet(B\frc S,B\frc S)$, so $B\frc S$ is a graded algebra. Tracing through the isomorphisms, we arrive at the stated formula for the product, and the map $B\to B\frc S$, $b\mapsto b\frc 1$, makes $B\frc S$ an $S$--inverting graded $B$--algebra. Given an arbitrary $S$--inverting graded $B$--algebra $g\co B\to C$, the unique $B$--module homomorphism $B\frc S\to C$ over $B$, given by $b\frc s\mapsto bg(s)^{-1}$ (see the proof of \autoref{orelemma1}), is in fact an algebra homomorphism. This means that $B\frc S$ satisfies the universal property that defines the non-derived algebra localisation $B[S^{-1}]$.
\end{proof}

Now we return to our main concern, the derived localisations of dg algebras.

\begin{proposition}\label{thm:ore}
Let $A$ be a dg algebra, and let $S$ be a right Ore set in $H(A)$.
\begin{enumerate}
\item We have
\[
H(L_S(A))\cong H(A)[S^{-1}] \simeq L_S(H(A))
\]
as $H(A)$--algebras.	
\item Let $y_1, y_2,\dots$ be a sequence of homology classes of $A$ such that
\begin{enumerate}
\item for all $n\geq 1$, we have $y_1 y_2 \cdots y_n\in S$.
\item for all $m\geq 1$ and $s\in S$, there exists $n\geq m$ such that $s$ is a left factor of $y_m y_{m+1} \cdots y_n$ in $H(A)$.
\end{enumerate}
Then
\[
L_S(A) \simeq \hocolim A\xrightarrow{\cdot x_1} \Sigma^{-\degree{x_1}}A\xrightarrow{\cdot x_2}\dots
\]
in $\Ho(\modcat{A})$, for any cycles $x_1, x_2,\dots$ in $A$ representing $y_1, y_2,\dots$. Moreover, if $S$ is countable, then such a sequence $y_1, y_2,\dots$ always exists.
\end{enumerate}
\end{proposition}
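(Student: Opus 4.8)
The plan is to dispose of part~(1) using the machinery already developed, and to prove part~(2) directly by verifying that the mapping telescope
\[
T:=\hocolim\bigl(A\xrightarrow{\cdot x_1}\Sigma^{-\degree{x_1}}A\xrightarrow{\cdot x_2}\cdots\bigr),
\]
together with the canonical map $A\to T$, satisfies the defining property of a module localisation in the sense of \autoref{def:localisation}; this yields $T\simeq L_S^{\amod}(A)\simeq L_S(A)$ in $\Ho(\modcat A)$, the last identification being the content of the comparison theorem. Note that $T$ depends, in $\Ho(\modcat A)$, only on the homology classes $y_i$, so the choice of cycle representatives $x_i$ is immaterial.

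For part~(1): since $S$ is a right Ore set, \autoref{prop:ore} exhibits $H(A)[S^{-1}]\cong H(A)\frc S$ as a filtered colimit of free rank one $H(A)$--modules, hence it is flat (even as a left module), and \autoref{thm:dgflat} then gives $H(L_S(A))\cong H(A)[S^{-1}]\simeq L_S(H(A))$ as $H(A)$--algebras.

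Now for part~(2). Write $s_n:=y_1\cdots y_n$, which lies in $S$ by condition~(a) (here $s_0=1$), and set $T_n:=\Sigma^{-\degree{s_n}}A$, so that $T=\hocolim_n T_n$ with $n$-th structure map right multiplication by $x_{n+1}$. I would first show that $\RHom_A(T,N)\to\RHom_A(A,N)\simeq N$ is a quasi-isomorphism for every $S$--local $N$. Applying $\RHom_A(-,N)$ turns the telescope into the homotopy limit $\holim_n\RHom_A(T_n,N)$; each term is $\Sigma^{\degree{s_n}}N$ and the transition map is, up to the shift, left multiplication $l_{y_{n+1}}$. Since $s_n,s_{n+1}\in S$ and $l_{s_{n+1}}=l_{s_n}\circ l_{y_{n+1}}$ with $l_{s_n}$ and $l_{s_{n+1}}$ acting invertibly on $H(N)$, each $l_{y_{n+1}}$ is an isomorphism on $H(N)$; hence the tower has quasi-isomorphism transition maps, so the homotopy limit is quasi-isomorphic to its initial term $N$, compatibly with the map induced by $A=T_0\hookrightarrow T$. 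Secondly, I would show $T$ is $S$--local. As homology commutes with sequential homotopy colimits, $H(T)=\colim_n\Sigma^{-\degree{s_n}}H(A)$ with transition maps right multiplication by $y_{n+1}$, and it suffices to check $l_s$ is bijective on $H(T)$ for each $s\in S$. For surjectivity, a class in the $n$-th term is represented by some $a\in H(A)$; the first Ore axiom gives $t\in S$ and $a'\in H(A)$ with $at=sa'$, and condition~(b) with $m=n+1$ gives $N\geq n+1$ and $u$ with $y_{n+1}\cdots y_N=tu$, so $a\,y_{n+1}\cdots y_N=s(a'u)$ realises the original class as $l_s$ applied to the class of $a'u$ in the $N$-th term. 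For injectivity, if $l_s$ kills the class of $a$ then $sb=0$ where $b:=a\,y_{n+1}\cdots y_N$ for suitable $N$; the second Ore axiom produces $t\in S$ with $bt=0$, and condition~(b) with $m=N+1$ lets $t$ be absorbed into a longer tail product, so $a\,y_{n+1}\cdots y_{N'}=0$ for some $N'$ and the class of $a$ vanishes. With both properties established, \autoref{def:localisation} identifies $A\to T$ as the localisation of $A$ as an $A$--module.

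For the concluding assertion, when $S$ is countable I would construct $y_1,y_2,\dots$ by a diagonal argument over the countable set of pairs $(m,s)\in\mathbb{N}_{\geq 1}\times S$: each $y_i$ is chosen in $S$, so that condition~(a) is automatic from multiplicative closure, and the unit $1\in S$ is used as padding, so that whenever a pair $(m,s)$ is to be treated the sequence already has length $N\geq m-1$; applying the first Ore axiom to the tail product $q:=y_m\cdots y_N\in S$ and to $s$ then produces $u\in S$ with $qu\in S$ and $s$ a left factor of $qu$, and setting $y_{N+1}:=u$ settles that pair. The main obstacle is the $S$--locality of $T$: reconciling the Ore axioms for $H(A)$ with the combinatorics of the tail products $y_m\cdots y_n$ — and, in particular, seeing why condition~(b) is genuinely needed in the form ``for all $m$'' — is the delicate point, whereas the $\RHom$-computation and the diagonal construction are comparatively routine.
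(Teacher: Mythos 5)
Your proof is correct, and both halves of part (2) take genuinely different routes from the paper's. For the identification $L_S(A)\simeq\hocolim(A\xrightarrow{\cdot x_1}\Sigma^{-\degree{x_1}}A\to\dots)$, the paper observes that conditions (a) and (b) make the functor $\mathbb{N}\to\mathcal{S}$, $n\mapsto y_1\cdots y_n$, cofinal, hence $H(\overline{A}_\infty)\cong H(A)\frc S$; then \autoref{prop:ore} gives $S$--locality and the equality with $H(A)[S^{-1}]$, and the universal map $L_S(A)\to\overline{A}_\infty$ is seen to be a quasi--isomorphism by appealing to part (1). You instead verify the defining universal property of the module localisation from scratch: the $\RHom$ side becomes a tower whose transition maps $l_{y_{n+1}}=l_{s_n}^{-1}\circ l_{s_{n+1}}$ are quasi--isomorphisms on an $S$--local target (so the holim is the initial term, with no $\lim^1$ problem because all transitions are isomorphisms on homology), and you then check bijectivity of $l_s$ on $H(T)=\colim_n\Sigma^{-\degree{s_n}}H(A)$ directly from the Ore axioms together with condition (b) for varying $m$. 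This is more elementary, makes no appeal to cofinality or to \autoref{prop:ore}, and notably does not rely on part (1) to prove part (2); the paper's route is shorter because it re--uses the machinery already set up around $B\frc S$. Your argument is, in effect, an in-line re-derivation of the content of \autoref{orelemma2} for the $\mathbb{N}$-indexed subcolimit, so the combinatorial core is the same; only the packaging differs.

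For the ``moreover'' clause the divergence is larger. The paper's inductive construction produces a sequence with the stronger uniform property that $s_r$ is a left factor of \emph{every} $r$-term window $y_m\cdots y_{m+r-1}$, achieved by an $n$-fold application of the right Ore axiom at each inductive step. Your diagonal argument over pairs $(m,s)\in\mathbb{N}_{\ge1}\times S$, padding with the unit as needed so that the required tail product $q=y_m\cdots y_N$ already lies in $S$ and applying the first Ore axiom once to produce $y_{N+1}=u$ with $s$ a left factor of $qu$, is lighter and settles exactly the existential statement required by (b). Since condition (b) is existential in $n$, later insertions cannot invalidate an already-settled pair, and multiplicative closedness with all $y_i\in S$ gives (a) for free, so this works. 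Both constructions are valid; yours is arguably the more economical of the two.
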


\begin{proof}
By \autoref{prop:ore}, $H(A)[S^{-1}]\cong H(A)\frc S$ is a filtered colimit of free $A$--modules, hence flat, and (1) follows by \autoref{thm:dgflat}. Let $y_1,y_2,\dots$ be a sequence of homology classes of $A$ satisfying (a) and (b). The functor from $\mathbb{N}$ to $\mathcal{S}$ sending $n$ to $y_1\cdots y_n$ and $n\rightarrow n+1$ to $y_1\cdots y_n\xrightarrow{\cdot y_{n+1}} y_1\cdots y_{n+1}$ is cofinal, and therefore
\[
H(A)\frc S\cong\colim H(A)\xrightarrow{\cdot y_1} \Sigma^{-\degree{y_1}}H(A)\xrightarrow{\cdot y_2}\dots.
\]
Now let
\[
\overline{A}_\infty:=\hocolim A\xrightarrow{\cdot x_1}  \Sigma^{-\degree{x_1}}A \xrightarrow{\cdot x_2}\dots,
\]
where $x_1, x_2,\dots$ are cycles in $A$ representing $y_1, y_2,\dots$. Then $H(\overline{A}_\infty)\cong H(A)\frc S$, and in particular $\overline{A}_\infty$ is an $S$--local $A$--module. The induced map $L_S(A)\to \overline{A}_\infty$ is a quasi-isomorphism, by (1).
	
Finally, suppose $S$ is countable. Fix a sequence $s_1, s_2,\dots$ in $S$ containing each element of $S$ at least once. We inductively construct a sequence $y_1,y_2,\dots$ of homology classes of $A$ such that, for all $r\geq 1$, $s_r$ is a left factor in $H(A)$ of the product $y_m y_{m+1} \cdots y_{m+r-1}$ of any $r$ consecutive terms, and the product of the first $r$ terms is in $S$. We take $y_1=s_1$. Suppose we are given $y_1,\dots, y_{n}\in H(A)$ satisfying the desired properties for $r\leq n$. Using the Ore condition $s_{n+1}S\cap y_1\cdots y_n H(A)\neq \emptyset$, we find homogeneous $y\in H(A)$ such that $s_{n+1}$ is a left factor of $y_1\cdots y_n y$ in $S$. Next, since $y_2\cdots y_n y S\cap s_n H(A)\neq \emptyset$, there exists $t_1\in S$ such that $s_n$ is a left factor of $y_2\cdots y_n y t_1$. Continuing in this way, we obtain $t_1,\dots t_{n-1}\in S$ so that $t_r$ is a left factor of $y_{n-r+2}\cdots y_n y t_1 \cdots t_{n-r+1}$, for $r=1,\dots, n+1$.  Put $y_{n+1}=yt_1\cdots t_n$. Then $y_1\cdots y_{n+1}=(y_1\cdots y_n y)t_1\cdots t_n \in S$, and $s_r$ is a left factor of $y_{n-r+2}\cdots y_{n+1}$ for $r=1,\dots, n+1$. Hence the sequence $y_1,\dots, y_{n+1}$ satisfies the desired conditions for $r\leq n+1$.	
\end{proof}

The second part of \autoref{thm:ore} shows that if $S$ is a right Ore set in $H(A)$, then the module localisation $L_S(A)$ can be realised as a homotopy direct limit of free $A$--modules, provided $S$ is countable. We now establish a converse statement, for which the assumption on the cardinality of $S$ is not required.

Observe that in localising a dg algebra $A$ at an arbitrary set $S$ of homology classes, there is no harm in replacing $S$ by the larger set $\hat{S}$ of homology classes that become invertible in $H(L_S(A))$, in the sense that $L_{\hat{S}}(A)\simeq L_S(A)$.

\begin{proposition}\label{thm:ore2}
Let $A$ be a dg algebra, and let $S\subset H(A)$ be an arbitrary set of homology classes of $A$. Assume that there exists a sequence
$y_1,y_2,\dots$ in $\hat{S}$ with representing cycles $x_1, x_2,\ldots$ in $A$ such that the homotopy direct limit
\[
\overline{A}_\infty=\hocolim A\xrightarrow{\cdot x_1} \Sigma^{-\degree{x_1}}A\xrightarrow{\cdot x_2}\dots
\]
is an $S$--local $A$--module. Then $L_S(A)\simeq \overline{A}_\infty$. Moreover any multiplicatively closed subset $S'$ of $\hat{S}$ containing $y_1, y_1y_2,\dots$, is a right Ore set in $H(A)$ such that $L_{S'}(A)\simeq L_S(A)$.
\end{proposition}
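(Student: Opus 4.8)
The plan is to obtain $L_S(A)\simeq\overline{A}_\infty$ from the smashing property of derived localisation, then to read off the right Ore conditions for $S'$ from the resulting description of $H(L_S(A))$ as a sequential colimit, and finally to recover $L_{S'}(A)\simeq L_S(A)$ by applying the first part again with $S'$ in place of $S$.

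For the equivalence $L_S(A)\simeq\overline{A}_\infty$ I would argue as follows. Each cycle $x_i$ represents $y_i\in\hat S$, whose homology class is invertible in the algebra $H(L_S(A))$, so right multiplication by $x_i$ is a quasi-isomorphism $L_S(A)\xrightarrow{\sim}\Sigma^{-\degree{x_i}}L_S(A)$. Since $-\otimes_A^{\mathbb{L}}-$ preserves homotopy colimits, $L_S(A)\otimes_A^{\mathbb{L}}\overline{A}_\infty$ is the homotopy colimit of the telescope $L_S(A)\xrightarrow{\cdot x_1}\Sigma^{-\degree{x_1}}L_S(A)\xrightarrow{\cdot x_2}\cdots$ of quasi-isomorphisms, hence quasi-isomorphic to $L_S(A)$ via the inclusion of the first term. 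On the other hand $\overline{A}_\infty$ is $S$--local by hypothesis, so \autoref{prop:smashing} makes the canonical map $\overline{A}_\infty\to L_S(A)\otimes_A^{\mathbb{L}}\overline{A}_\infty$ a quasi-isomorphism. Combining the two gives $L_S(A)\simeq\overline{A}_\infty$, and naturality of the localisation map in \autoref{prop:smashing}, applied to $A\to\overline{A}_\infty$, shows the equivalence respects the maps from $A$. In particular $H(L_S(A))\cong H(\overline{A}_\infty)\cong\colim\bigl(H(A)\xrightarrow{\cdot y_1}\Sigma^{-\degree{y_1}}H(A)\xrightarrow{\cdot y_2}\cdots\bigr)$.

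Next I would use this to check that $S'$ is a right Ore set in $B:=H(A)$. Writing $p_n=y_1\cdots y_n\in S'$ and $\iota\co B\to B_\infty:=H(L_S(A))$ for the induced algebra map, the colimit description says precisely that every element of $B_\infty$ has the form $\iota(b)\iota(p_n)^{-1}$ with $b\in B$, $n\geq 0$, and that $\iota(b)=\iota(b')$ in $B_\infty$ iff $bp_n=b'p_n$ in $B$ for some $n$; moreover $\iota(s)$ is invertible for $s\in S'\subseteq\hat S$. Given $b\in B$ and $s\in S'$, I would write $\iota(s)^{-1}\iota(b)=\iota(c)\iota(p_M)^{-1}$, deduce $\iota(bp_M)=\iota(sc)$, hence $b(p_Mp_K)=s(cp_K)$ in $B$ for some $K$ with $p_Mp_K\in S'$, which witnesses $bS'\cap sB\neq\emptyset$; and if $sb=0$ then $\iota(b)=0$, so $bp_K=0$ for some $K$, giving $0\in bS'$. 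This yields both Ore conditions. I expect this translation to be the main obstacle: one must track carefully which products of the $y_i$, and which denominators cleared en route, stay inside $S'$; everything else is formal manipulation with smashing localisations and mapping telescopes.

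Finally, knowing $S'$ is right Ore, \autoref{thm:ore}(1) gives $H(L_{S'}(A))\cong H(A)[S'^{-1}]$, in which every $y_i=p_{i-1}^{-1}p_i$ is invertible, so $y_i\in\widehat{S'}$. Moreover $\overline{A}_\infty$ is $S$--local, hence $\hat S$--local (by \autoref{prop:smashing} its homology is a module over $H(L_S(A))$, on which $\hat S$ acts invertibly), hence $S'$--local since $S'\subseteq\hat S$. Applying the first part with $S'$ in place of $S$ then gives $L_{S'}(A)\simeq\overline{A}_\infty\simeq L_S(A)$, which completes the argument.
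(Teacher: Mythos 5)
Your proof is correct and follows essentially the same strategy as the paper's: establish $\overline{A}_\infty\simeq L_S(A)$ via the telescope, read off the two right Ore conditions for $S'$ from the resulting colimit presentation of $H(\overline{A}_\infty)$, and then rerun the first step with $S'$ in place of $S$. The only real variation is in the first step, where the paper verifies the universal property directly by computing $\RHom_A(\overline{A}_\infty,N)$ as a homotopy limit for each $S$--local $N$, whereas you tensor $L_S(A)$ against the defining telescope and invoke the smashing \autoref{prop:smashing} — a dual view of the same calculation; the Ore-condition bookkeeping and the concluding rerun with $S'$ match the paper almost verbatim (and you helpfully make explicit the checks that each $y_i\in\widehat{S'}$ and that $\overline{A}_\infty$ is $S'$--local, which the paper subsumes under the phrase that the same argument applies).
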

	
Note that the minimal choice for $S'$, the multiplicatively closed subset generated by $y_1, y_1y_2,\dots$, is countable.

\begin{proof}
Let $N$ be an $S$--local $A$--module. Then
\[
\RHom_A(\overline{A}_\infty, N)\simeq \holim (\dots \xrightarrow{l_{x_2}} \Sigma^{\degree{x_1}}N \xrightarrow{l_{x_1}} N) \simeq N,
\]
since by assumption each $x_n$ acts as a quasi-isomorphism on $N\simeq L_S(A)\otimes_A^{\mathbb{L}} N$. Thus $\overline{A}_\infty$ is an $S$--local $A$--module satisfying the universal
property defining the module localisation $L_S(A)$. The same argument applies with $S$ replaced by $S'$, so we have $L_S(A)\simeq \overline{A}_\infty\simeq L_{S'}(A)$.
	
Observe that the homology of $\overline{A}_\infty$ is isomorphic to the direct limit
\[
\overline{H(A)}_\infty:=\colim H(A)\xrightarrow{\cdot y_1} \Sigma^{-\degree{y_1}}H(A)\xrightarrow{\cdot y_2}\dots,
\]
which can be realised as the free $H(A)$--module generated by symbols $\frc y_1y_2\cdots y_n$ of the same degree as $y_1y_2\cdots y_n$, modulo the $H(A)$--submodule
generated by $y_{n+1}\frc y_1y_2\cdots y_{n+1} - 1\frc y_1y_2\cdots y_n$. 	
	
We will use the fact that $\overline{H(A)}_\infty\cong H(L_{S'}(A))$ is an $S'$--local $H(A)$--module to establish that $S'$ is a right Ore set in $H(A)$. Let $b\in H(A)$ and $s\in S'$. Since $l_s\co \overline{H(A)}_\infty\to\overline{H(A)}_\infty$ is surjective, we have $sc\frc y_1y_2\cdots y_n = b\frc 1$ in $\overline{H(A)}_\infty$ for some $c\in H(A)$ and some $n$. Hence $scy_{n+1}y_{n+2}\cdots y_m = by_1y_2\cdots y_m$ for some $m$, and therefore $sH(A)\cap bS\neq\emptyset$. Next, suppose that $sb=0$ for some $s\in S'$ and $b\in H(A)$. Then $sb\frc 1=0$ in $\overline{H(A)}_\infty$. By the injectivity of $l_s\co \overline{H(A)}_\infty\to\overline{H(A)}_\infty$, we deduce that $b\frc 1=0$ in $\overline{H(A)}_\infty$, and therefore that $by_1y_2\cdots y_n=0$ for some $n$. Hence $0\in bS$, as required.
\end{proof}

When $S=\{s\}$ contains a single homology class, we can take $s=y_1=y_2=\dots$ in \autoref{thm:ore} and \autoref{thm:ore2} to arrive at the following result.

\begin{corollary}\label{oneelementore}
Let $A$ be a dg algebra, and let $s\in H(A)$ be a homology class in $A$. The following statements are equivalent.
\begin{enumerate}
\item $A\otimes_{\ground\langle s\rangle} \ground\langle s,s^{-1}\rangle$ is an $s$--local $A$--module.
\item $L_s(A)\simeq A\otimes_{\ground\langle s\rangle} \ground\langle s,s^{-1}\rangle$ as $A$--modules.
\item $H(A)[s^{-1}]\cong H(A)\otimes_{\ground\langle s\rangle} \ground\langle s,s^{-1}\rangle$ as $H(A)$--modules
\item $\{1,s,s^2,\dots\}$ is a right Ore set in $H(A)$.
\end{enumerate}	
If any of these statements holds, then $H(L_s(A)) \cong H(A)[s^{-1}] \simeq L_s(H(A))$ as
$H(A)$--algebras.
\end{corollary}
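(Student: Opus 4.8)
The plan is to derive all four statements, and the closing isomorphism, from \autoref{thm:ore} and \autoref{thm:ore2} applied to the constant sequence $y_1=y_2=\dots=s$, after one elementary identification. Since $\ground\langle s,s^{-1}\rangle$ is the commutative Laurent ring $\ground[s,s^{-1}]$, it is flat over $\ground\langle s\rangle=\ground[s]$; hence for any dg algebra $B$ with a chosen cycle $x$ representing $s$, the base change $B\otimes_{\ground\langle s\rangle}\ground\langle s,s^{-1}\rangle$ is the sequential colimit of left $B$--modules $\colim\bigl(B\xrightarrow{\cdot x}\Sigma^{-\degree{s}}B\xrightarrow{\cdot x}\dots\bigr)$, its homology is $H(B)\otimes_{\ground\langle s\rangle}\ground\langle s,s^{-1}\rangle$ (flatness of $\ground\langle s,s^{-1}\rangle$ over $\ground\langle s\rangle$ lets base change commute with homology), and the canonical map from the mapping telescope to this colimit is a quasi-isomorphism. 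Therefore
\[
B\otimes_{\ground\langle s\rangle}\ground\langle s,s^{-1}\rangle\;\simeq\;\overline{B}_\infty:=\hocolim\bigl(B\xrightarrow{\cdot x}\Sigma^{-\degree{s}}B\xrightarrow{\cdot x}\dots\bigr)
\]
as left $B$--modules, so that $A\otimes_{\ground\langle s\rangle}\ground\langle s,s^{-1}\rangle$ is exactly the homotopy module of fractions $\overline{A}_\infty$ occurring in \autoref{thm:ore} and \autoref{thm:ore2} for the sequence $y_i=s$. I would also record that $s\in\hat S$ (it is invertible in $H(L_s(A))$), that a dg algebra is $\{s\}$--inverting if and only if it is $\{1,s,s^2,\dots\}$--inverting, whence $L_s(A)\simeq L_{\{1,s,s^2,\dots\}}(A)$, and that $\{1,s,s^2,\dots\}$ is precisely the multiplicatively closed set generated by the partial products $y_1y_2\cdots y_n=s^n$.

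With these identifications in hand I would argue as follows. Statement $(2)$ implies $(1)$ because $L_s(A)$ is $S$--local and $S$--locality of a module is a quasi-isomorphism invariant. For $(1)\Rightarrow(4)$: statement $(1)$ says exactly that $\overline{A}_\infty$ is $S$--local, so \autoref{thm:ore2}, applied to the sequence $y_i=s$, yields that $\{1,s,s^2,\dots\}$ is a right Ore set in $H(A)$ (this is $(4)$) and at the same time that $L_s(A)\simeq\overline{A}_\infty\simeq A\otimes_{\ground\langle s\rangle}\ground\langle s,s^{-1}\rangle$ (this is $(2)$). Conversely, assuming $(4)$, the first paragraph identifies $H(A)\otimes_{\ground\langle s\rangle}\ground\langle s,s^{-1}\rangle$ with $H(A)\frc\{1,s,s^2,\dots\}$, which by \autoref{prop:ore} is isomorphic as an $H(A)$--module to $H(A)[s^{-1}]$; this is $(3)$. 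Moreover \autoref{thm:ore}(1), applied to the Ore set $\{1,s,s^2,\dots\}$, gives the closing isomorphism $H(L_s(A))\cong H(A)[s^{-1}]\simeq L_s(H(A))$ of $H(A)$--algebras (one may also re-derive $(2)$ via \autoref{thm:ore}(2), since the constant sequence $y_i=s$ satisfies its conditions (a), (b): $s^k$ is a left factor of $s^{\,n-m+1}$ as soon as $n\geq m+k-1$). Finally, $(3)\Rightarrow(1)$: by the first paragraph $H\bigl(A\otimes_{\ground\langle s\rangle}\ground\langle s,s^{-1}\rangle\bigr)\cong H(A)\otimes_{\ground\langle s\rangle}\ground\langle s,s^{-1}\rangle$, which by $(3)$ is isomorphic as a left $H(A)$--module to $H(A)[s^{-1}]$; on the latter, left multiplication by $s$ is invertible (its inverse is left multiplication by $s^{-1}$), so $l_s$ acts invertibly on $H\bigl(A\otimes_{\ground\langle s\rangle}\ground\langle s,s^{-1}\rangle\bigr)$, i.e.~$A\otimes_{\ground\langle s\rangle}\ground\langle s,s^{-1}\rangle$ is $S$--local. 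This establishes $(1)\Leftrightarrow(2)$ and $(1)\Rightarrow(4)\Rightarrow(3)\Rightarrow(1)$, hence all four statements are equivalent, and the final displayed isomorphism was obtained en route.

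Modulo \autoref{thm:ore} and \autoref{thm:ore2} the argument is pure bookkeeping; the only point that genuinely needs care — and the one I would set up first — is the identification in the first paragraph of $A\otimes_{\ground\langle s\rangle}\ground\langle s,s^{-1}\rangle$ with the homotopy module of fractions $\overline{A}_\infty$, compatibly with the left $A$--module structure and the degree shifts, together with the elementary but necessary observations that the constant sequence $y_i=s$ is an admissible input to those two results and that localising at $s$ coincides with localising at the multiplicatively closed set it generates. I do not anticipate any real obstacle beyond this.
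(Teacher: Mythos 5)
Your proposal is correct and follows the paper's own (very terse) route: the paper states the corollary is obtained by taking $s=y_1=y_2=\dots$ in \autoref{thm:ore} and \autoref{thm:ore2}, and your argument is precisely a careful unwinding of that specialisation, with the necessary bookkeeping (identification of $A\otimes_{\ground\langle s\rangle}\ground\langle s,s^{-1}\rangle$ with the mapping telescope $\overline A_\infty$, the replacement of $\{s\}$ by the multiplicatively closed set $\{1,s,s^2,\dots\}$, and the observation $s\in\hat S$) made explicit.
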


\begin{remark}
It is of course possible to define what it means for a subset $S$ of homogeneous elements of a graded algebra $B$ to be a \emph{left Ore set} and to relate the property to certain right modules of fractions. All of the definitions and results above go through with the obvious modifications.
\end{remark}

\begin{example}\label{oreexample}
Let $\ground$ be a field, and consider the algebra $B=\ground\langle s,t \rangle \frc  (st-1)$, with $s$ and $t$ in degree $0$. The set $S=\{1,s,s^2,\ldots\}$ is not a right Ore set in $H(B)=B$, since $s(1-ts)=0$ but $(1-ts)s^n\neq 0$ for all $n$. On the other hand, the element $t$ clearly becomes invertible in $L_S(A)$, and $S'=\{1,t,t^2,\ldots\}$ \emph{is} a right Ore set in $B$. We have $L_S(B)\simeq L_{S'}(B) \simeq B[S'^{-1}] \cong B[S^{-1}]\cong \ground\langle s, s^{-1}\rangle$.
	
Next, let $A=\ground\langle s,t,y \rangle \frc  (st-1)\cong B \ast_{\ground}\ground\langle y \rangle$. The set $S=\{1,s,s^2,\ldots\}$ is not a right Ore set in $H(A)=A$; as in the previous example, it satisfies the first Ore condition but not the second. This time we cannot replace $S$ by an Ore set $S'$ such that $L_S(A)\simeq L_{S'}(A)$, because $A[S^{-1}]\cong \ground \langle s,s^{-1},y\rangle$ is not flat as a left $A$--module. Indeed the map $l_x\co A\to A$ of right $A$--modules given by left multiplication by $x=(1-ts)y$ is injective, but $l_x\otimes_A A[S^{-1}]\co A[S^{-1}]\to A[S^{-1}]$, being left multiplication by $0$, the image of $x$ in $A[S^{-1}]$, is not. Incidentally, a similar argument interchanging the roles of $s$ and $t$ shows that $A[S^{-1}]$ is not flat as a right $A$--module either. On the other hand, $L_S(A)\simeq L_S(B)\ast_{\ground}\ground\langle y \rangle\cong B[S^{-1}]\ast_{\ground}\ground\langle y \rangle\cong \ground \langle s,s^{-1},y\rangle \cong A[S^{-1}]$, and therefore $A[S^{-1}]$ is stably flat over $A$, by \autoref{thm:stableflat}.
\end{example}

\section{Hochschild homology and cohomology}
In this section we will show that derived localisation preserves Hochschild homology and cohomology. This can be seen as generalising analogous results \cite{brylinskihochschild} about of the Hochschild homology/cohomology of the non-derived localisation under the assumption that the localising set $S$ is central.

Without loss of generality, we may assume $A$ and $B$ are cofibrant dg algebras throughout this section. Moreover, we may also assume $L_S(A)$ and $L_S(B)$ are cofibrant over $A$ and $B$ respectively. Given subsets $S\subset H_0(A)$ and $S'\subset H_0(B)$ denote by $S\otimes S'$ the subset $\{\,s\otimes 1 : s\in S\, \} \cup \{\, 1\otimes s' : s\in S'\, \}\subset H_0(A\otimes B)$.

\begin{proposition}
For any $S\subset H_0(A)$, $S'\subset H_0(B)$ then $L_{S\otimes S'}(A\otimes B) \simeq L_S(A)\otimes L_{S'}(B)$.
\end{proposition}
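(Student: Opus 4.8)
The plan is to reduce to the one-variable case by an extension-of-scalars argument and then to feed in the universal property of the module localisation. Throughout I would assume, without loss of generality, that $A$ and $B$ are cofibrant dg algebras and that $L_S(A)$, $L_{S'}(B)$ are chosen cofibrant as dg $A$-- resp.\ $B$--algebras; then $A$, $B$, $L_S(A)$, $L_{S'}(B)$ are all flat over $\ground$ (so in particular left proper), which means the derived tensor products below may be computed without further cofibrant replacement --- concretely, $A\otimes C$ is flat as a right $A$--module whenever $C$ is flat over $\ground$.

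The key step is to show that for any $\ground$--flat dg algebra $C$ one has $L_{S\otimes 1}(A\otimes C)\simeq L_S(A)\otimes C$ in $\derunderalg{A\otimes C}$, where $S\otimes 1\subset H_0(A\otimes C)$ denotes the image of $S$ under $A\to A\otimes C$, $a\mapsto a\otimes 1$. The point is the identification of left $A\otimes C$--modules $L_S(A)\otimes C\cong (A\otimes C)\otimes_A L_S(A)\simeq (A\otimes C)\otimes_A^{\mathbb L} L_S(A)$: that is, $L_S(A)\otimes C$ is the extension of scalars of the $A$--module $L_S(A)$ along $A\to A\otimes C$, and the canonical map $A\otimes C\to L_S(A)\otimes C$ is $f\otimes\id_C$ with $f\co A\to L_S(A)$. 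I would then check two things. First, this module is $(S\otimes 1)$--local: each $s\otimes 1$ acts by $l_s\otimes\id_C$, and $l_s\co L_S(A)\to L_S(A)$ is a quasi-isomorphism of complexes (as $L_S(A)$ is $S$--inverting) while $C$ is $\ground$--flat. Second, for any $(S\otimes 1)$--local $A\otimes C$--module $N$, its restriction $N|_A$ is $S$--local, so the derived tensor--Hom (extension/restriction) adjunction together with the universal property of the module localisation $L_S(A)$ (\autoref{def:localisation}) give
\[
\RHom_{A\otimes C}(L_S(A)\otimes C, N)\simeq \RHom_A(L_S(A), N|_A)\simeq \RHom_A(A, N|_A)\simeq \RHom_{A\otimes C}(A\otimes C, N),
\]
the composite being induced by $f\otimes\id_C$. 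Hence $L_S(A)\otimes C$ is the module localisation of $A\otimes C$ at $S\otimes 1$; being a dg algebra lying in $\derunderalg{A\otimes C}$, it is therefore also the dg algebra localisation, by \autoref{thm:localisationscoincide}.

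To finish, I would run the mirror-image argument to obtain $L_{1\otimes S'}(D\otimes B)\simeq D\otimes L_{S'}(B)$ for any $\ground$--flat dg algebra $D$, and then use that localisation may be performed in stages. Since $S\otimes S'$ is the union of $S\otimes 1$ and $1\otimes S'$ inside $H_0(A\otimes B)$, a dg $A\otimes B$--algebra is $(S\otimes S')$--inverting precisely when it is both $(S\otimes 1)$-- and $(1\otimes S')$--inverting; comparing the resulting universal properties (equivalently, using that the $(S\otimes S')$--local $A\otimes B$--modules are exactly the $(1\otimes S')$--local modules over $L_{S\otimes 1}(A\otimes B)$, cf.\ \autoref{cor:localcat}) yields $L_{S\otimes S'}(A\otimes B)\simeq L_{1\otimes S'}\bigl(L_{S\otimes 1}(A\otimes B)\bigr)$. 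Applying the key step with $C=B$ and then its mirror with $D=L_S(A)$ (which is $\ground$--flat, being a cofibrant dg algebra) gives $L_{S\otimes S'}(A\otimes B)\simeq L_{1\otimes S'}\bigl(L_S(A)\otimes B\bigr)\simeq L_S(A)\otimes L_{S'}(B)$, as required.

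The main obstacle is concentrated entirely in the key step, and really in the single observation that tensoring the module localisation $L_S(A)$ with a $\ground$--flat dg algebra is the same as extending scalars along $A\to A\otimes C$; once that is recognised, both the $(S\otimes 1)$--locality and the universal property drop out mechanically from the corresponding facts about $L_S(A)$ and the tensor--Hom adjunction. The remaining ingredients --- iterating localisation over a union of sets, and the flatness bookkeeping that lets one use $\otimes$ in place of $\otimes^{\mathbb L}$ --- are routine, but they are exactly what the cofibrancy assumptions made at the outset are there to supply.
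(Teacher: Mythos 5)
Your proof is correct, and it factors through a genuinely different intermediate step from the paper's. The paper verifies directly, in one shot, that $L_S(A)\otimes L_{S'}(B)$ is the module localisation of $A\otimes B$ at $S\otimes S'$: for any $(S\otimes S')$--local $N$, the tensor--Hom adjunction rewrites $\RHom_{A\otimes B}(L_S(A)\otimes L_{S'}(B), N)$ as $\RHom_A(L_S(A),\RHom_B(L_{S'}(B),N))$, after which the universal properties of the two factor localisations collapse it to $N$; \autoref{thm:localisationscoincide} then performs the same upgrade to a dg algebra statement that you invoke. Your version instead isolates a one--variable base--change lemma, $L_{S\otimes 1}(A\otimes C)\simeq L_S(A)\otimes C$ for $C$ a $\ground$--flat dg algebra, and combines two applications of it with the localisation--in--stages principle $L_{S\otimes S'}\simeq L_{1\otimes S'}\circ L_{S\otimes 1}$. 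The paper's argument is shorter; yours is a touch longer but extracts a reusable lemma (localisation commutes with flat base change along $A\to A\otimes C$), makes the requisite locality check fully explicit at the one--variable stage, and uses a single extension--restriction adjunction per step rather than the nested double $\RHom$. Both arguments rest on the same three pillars: flatness bookkeeping to replace derived by plain tensor products, the universal property of module localisation, and \autoref{thm:localisationscoincide}.
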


\begin{proof}
It is sufficient to show $L_S(A)\otimes L_{S'}(B)$ is the localisation of $A\otimes B$ as an $A\otimes B$--module. But this is clear since for $N$ any $S\otimes S'$--local module:
\begin{align*}
\RHom_{A\otimes B}(L_S(A)\otimes L_{S'}(B), N) &\simeq \RHom_A(L_S(A),\RHom_B(L_{S'}(B),N))\\
&\simeq \RHom_A(L_S(A), N) \simeq N
\end{align*}
\end{proof}

\begin{theorem}
Let $A$ be a dg algebra and let $M$ be an $L_S(A)$--bimodule. Then $\HH_*(L_S(A),M)\simeq \HH_*(A,M)$ and $\HH^*(L_S(A),M)\simeq \HH^*(A,M)$.
\end{theorem}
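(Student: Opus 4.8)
The plan is to deduce both statements from the single fact that $A\to L_S(A)$ is a homological epimorphism, together with one base-change computation for Hochschild (co)homology; everything beyond that fact is formal. Write $L:=L_S(A)$ and, following the conventions of this section, assume $A$ is cofibrant and $L$ is cofibrant both as a dg $A$--algebra and as a one-sided $A$--module, so that $A$ and $L$ are flat over $\ground$ and hence $A^{\mathrm e}:=A\otimes A^{\op}$ and $L^{\mathrm e}:=L\otimes L^{\op}$ compute the respective derived enveloping algebras; in particular $\HH_*(A,M)\simeq A\otimes^{\mathbb L}_{A^{\mathrm e}}M$ and $\HH^*(A,M)\simeq\RHom_{A^{\mathrm e}}(A,M)$, and likewise for $L$. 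First I would record that $A\to L$ is a homological epimorphism, i.e.~$L\otimes^{\mathbb L}_A L\simeq L$: this is immediate from \autoref{prop:smashing} applied to the $A$--module $L$, since $L$ is already $S$--local and therefore $L_S(L)\simeq L$ (compare \autoref{rem:homological_epimorphism}).

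The heart of the argument is the claim that the multiplication map gives a quasi-isomorphism of $L$--bimodules
\[
L^{\mathrm e}\otimes^{\mathbb L}_{A^{\mathrm e}}A\;\simeq\;L .
\]
To establish this I would resolve $A$ over $A^{\mathrm e}$ by the two-sided bar complex $\mathrm B(A,A,A)$, which is degreewise $\ground$--flat and hence computes $L^{\mathrm e}\otimes^{\mathbb L}_{A^{\mathrm e}}(-)$; then $L^{\mathrm e}\otimes_{A^{\mathrm e}}\mathrm B(A,A,A)$ is canonically identified, as an $L$--bimodule, with the two-sided bar complex $\mathrm B(L,A,L)$, the bimodule structure being supplied by the outer two tensor factors. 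Since $L$ is $\ground$--flat, $\mathrm B(L,A,L)$ computes $L\otimes^{\mathbb L}_A L$, which is $\simeq L$ by the homological epimorphism property, and this equivalence respects the $L$--bimodule structures.

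Granting the claim, the two assertions follow formally. For the cohomology statement I would use the derived restriction--extension adjunction along $A^{\mathrm e}\to L^{\mathrm e}$ — where restriction is simply the operation of viewing an $L$--bimodule as an $A$--bimodule — to obtain
\[
\HH^*(L,M)=\RHom_{L^{\mathrm e}}(L,M)\simeq\RHom_{L^{\mathrm e}}\!\bigl(L^{\mathrm e}\otimes^{\mathbb L}_{A^{\mathrm e}}A,\,M\bigr)\simeq\RHom_{A^{\mathrm e}}(A,M)=\HH^*(A,M).
\]
For the homology statement I would use the claim together with associativity and unitality of the derived tensor product:
\[
\HH_*(L,M)=L\otimes^{\mathbb L}_{L^{\mathrm e}}M\simeq\bigl(L^{\mathrm e}\otimes^{\mathbb L}_{A^{\mathrm e}}A\bigr)\otimes^{\mathbb L}_{L^{\mathrm e}}M\simeq A\otimes^{\mathbb L}_{A^{\mathrm e}}M=\HH_*(A,M).
\]

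The main obstacle, and the only step requiring genuine care, is the bimodule claim: one has to verify that the bar-complex identification is an equivalence of $L$--\emph{bimodules}, not merely of chain complexes or of one-sided modules, since both displays above feed $L^{\mathrm e}\otimes^{\mathbb L}_{A^{\mathrm e}}A$ back into an adjunction (respectively a tensor cancellation) over $L^{\mathrm e}$; and one must keep the $\ground$--flatness bookkeeping straight so that $A^{\mathrm e}$ and $L^{\mathrm e}$ really do stand in for the derived enveloping algebras. Once the homological epimorphism property established in the earlier sections is in hand, there are no further difficulties.
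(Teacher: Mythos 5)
Your proof is correct, but it takes a genuinely different route from the paper's. The paper never isolates the equivalence $L^{\mathrm e}\otimes^{\mathbb L}_{A^{\mathrm e}}A\simeq L$; instead it works directly with the bimodule localisation theory built up earlier: it uses the identification $L_{S\otimes S}(A\otimes A^{\op})\simeq L_S(A)\otimes L_S(A)^{\op}$ (from the proposition immediately preceding), so that $M$, being an $L^{\mathrm e}$--module, is automatically $S\otimes S$--local over $A^{\mathrm e}$, i.e.~$M\simeq L^{\mathrm e}\otimes^{\mathbb L}_{A^{\mathrm e}}M$ and $M\simeq(L\otimes A^{\op})\otimes^{\mathbb L}_{A^{\mathrm e}}M$. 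Substituting these into the Hochschild complexes and using $L_{S\otimes 1}^{\modcat{A\otimes A^{\op}}}(A)\simeq L$ (which comes out of the proof of \autoref{thm:dwyer}), the paper shows that \emph{both} $\HH_*(L,M)$ and $\HH_*(A,M)$ are isomorphic to the same intermediate object $L\otimes^{\mathbb L}_{A\otimes A^{\op}}M$, and similarly in cohomology. Your approach factors everything through a single portable lemma — that $A\to L$ being a homological epimorphism forces $L^{\mathrm e}\otimes^{\mathbb L}_{A^{\mathrm e}}A\simeq L$ as $L$--bimodules — and then both Hochschild statements are obtained by pure base-change formalism. This is cleaner and more modular, and it makes the role of the homological epimorphism property explicit, whereas in the paper it is only implicit. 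It does require the bar-complex bookkeeping you flag; your phrase ``degreewise $\ground$--flat'' isn't quite the right justification for $\mathrm B(A,A,A)$ computing the derived tensor (what matters is that each term is $A^{\mathrm e}$--flat, because $A$ is $\ground$--flat, and the complex is bounded below in bar degree, so the associated spectral sequence converges), but the conclusion is correct. Both approaches rely on essentially the same underlying inputs (\autoref{prop:smashing} and/or \autoref{thm:localisationscoincide}); the paper trades your bar-complex argument for the bimodule-localisation identifications proved just above.
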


\begin{proof}
Since $M$ is $S\otimes S$--local as an $A\otimes A^{\op}$--module then, since $L_{S\otimes S}(A\otimes A^{\op})\simeq L_S(A)\otimes L_S(A)^{\op}$, by \autoref{prop:smashing} $M\simeq (L_S(A)\otimes L_S(A)^{op})\otimes_{A\otimes A^{\op}}^{\mathbb{L}} M$. The Hochschild homology $\HH_*(L_S(A),M)$ is the homology of
\begin{align*}
L_S(A)\otimes_{L_S(A)\otimes L_S(A)^{\op}}^{\mathbb{L}} M &\simeq L_S(A)\otimes_{L_S(A)\otimes L_S(A)^{\op}}^{\mathbb{L}} (L_S(A)\otimes L_S(A)^{\op})\otimes_{A\otimes A^{\op}}^{\mathbb{L}} M\\
& \simeq L_S(A)\otimes_{A\otimes A^{\op}}^{\mathbb{L}} M
\end{align*}
Similarly, $M$ is $S\otimes 1$--local so we also have $M\simeq (L_S(A)\otimes A^{\op})\otimes_{A\otimes A^{\op}}^{\mathbb{L}} M$. The Hochschild homology $\HH_*(A,M)$ is the homology of:
\begin{align*}
A\otimes_{A\otimes A^{\op}}^{\mathbb{L}} M &\simeq A\otimes_{A\otimes A^{\op}}^{\mathbb{L}} (L_S(A)\otimes A^{\op})\otimes_{A\otimes A^{\op}}^{\mathbb{L}} M\\
&\simeq L_{S\otimes 1}^{\modcat{A\otimes A^{\op}}}(A)\otimes_{A\otimes A^{\op}}^{\mathbb{L}} M \simeq L_S(A)\otimes_{A\otimes A^{\op}}^{\mathbb{L}} M
\end{align*}
using the fact that $L_{S\otimes 1}^{\modcat{A\otimes A^{\op}}}(A) \simeq L_S(A)$ as an $A$--bimodule (see the proof of \autoref{thm:dwyer}).

The corresponding statement for Hochschild cohomology is similar. Indeed:
\begin{align*}
\RHom_{L_S(A)\otimes L_S(A)^{\op}}(L_S(A), M) &\simeq \RHom_{L_S(A)\otimes L_S(A)^{\op}}(L_S(A)\otimes_{A\otimes A^{\op}}^{\mathbb{L}}(L_S(A)\otimes L_S(A)^{\op}), M)\\
&\simeq \RHom_{A\otimes A^{\op}}(L_S(A), \RHom_{L_S(A)\otimes L_S(A)^{\op}}(L_S(A)\otimes L_S(A)^{\op}, M))\\
&\simeq \RHom_{A\otimes A^{\op}}(L_S(A),M)\\
& \simeq \RHom_{A\otimes A^{\op}}(L_{S\otimes 1}^{\modcat{A\otimes A^{\op}}}(A),M)\\
& \simeq \RHom_{A\otimes A^{\op}}(A, M)\qedhere
\end{align*}
\end{proof}

\section{Torsion modules and the localisation exact sequence}
There is also the corresponding theory of (derived) colocalisation.

\begin{definition}
An $A$--module $M$ is called \emph{$S$--torsion}, or \emph{$S$--colocal} if $\RHom_A(M,N)=0$ for any $S$--local module $N$. A colocalisation of a module $M$ is an $S$--colocal module $L^S(M)$ with a map $f\co L^S(M)\to M$ such that for any $S$--colocal $A$--module $N$ and any map $f^\prime\co N\to M$ there is a map $g\co N\to L^S(M)$ such that $f\circ g=f^\prime$ and furthermore $g$ is unique in the homotopy category of $\amod$. The category of $S$--colocal $A$--modules will be denoted by $(\amod)^S$
\end{definition}

Thus, a colocalisation functor is right adjoint (in the homotopy category) to the inclusion of colocal $A$--modules into all $A$--modules (just as the localisation functor is left adjoint to the inclusion of local $A$--modules into $A$--modules). Note that the categories of $S$--local and $S$--colocal $A$--modules are naturally pretriangulated (so that their homotopy categories are triangulated).

The following result is standard and easy, cf.~for example \cite{hoveypalmieristrickland:homotopy} regarding such statements.

\begin{proposition}\label{prop:verdierloc}
An $S$--colocalisation functor exists. Moreover, for any $A$--module $M$ there is a homotopy fibre sequence of $A$--modules
\[
L^S(M)\to M\to L_S(M).
\]
\end{proposition}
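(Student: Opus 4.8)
The plan is to construct the colocalisation directly as the homotopy fibre of the localisation map and then verify the required universal property entirely inside the triangulated category $\Ho(\amod)$. Since $L_S(M)$ exists for every $A$--module $M$ (by the general existence theorems for Bousfield localisation already invoked), for such an $M$ we let $\eta\co M\to L_S(M)$ be the localisation map and define $L^S(M)$ to be its homotopy fibre, so that by construction we have a homotopy fibre sequence, i.e.\ a distinguished triangle
\[
L^S(M)\xrightarrow{f} M\xrightarrow{\eta} L_S(M)\xrightarrow{\partial}\Sigma L^S(M)
\]
in $\Ho(\amod)$. This is exactly the asserted triangle; it remains to see that $L^S(M)$ is $S$--colocal and that it has the stated universal property.

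First I would check $S$--colocality. Apply the cohomological functor $\RHom_A(-,N)$, for an arbitrary $S$--local module $N$, to the triangle above, obtaining a triangle
\[
\RHom_A(L_S(M),N)\xrightarrow{\eta^*}\RHom_A(M,N)\to\RHom_A(L^S(M),N)\to\Sigma\RHom_A(L_S(M),N).
\]
By the defining property of the module localisation (\autoref{def:localisation}) the map $\eta^*$ is a quasi-isomorphism, hence $\RHom_A(L^S(M),N)\simeq 0$; as $N$ was an arbitrary $S$--local module this says precisely that $L^S(M)$ is $S$--colocal.

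Next I would verify the universal property. Let $N$ be an $S$--colocal module and $f'\co N\to M$ a map. The composite $\eta\circ f'$ is an element of $H_0\RHom_A(N,L_S(M))$, which vanishes since $N$ is $S$--colocal and $L_S(M)$ is $S$--local; thus $\eta\circ f'=0$ in $\Ho(\amod)$, and by the axioms of a triangulated category $f'$ factors as $f\circ g$ for some $g\co N\to L^S(M)$. For uniqueness, if $g_1,g_2$ both satisfy $f\circ g_i=f'$, then $f\circ(g_1-g_2)=0$, so, rotating the triangle, $g_1-g_2$ factors through the map $\Sigma^{-1}L_S(M)\to L^S(M)$; but $\RHom_A(N,\Sigma^{-1}L_S(M))\simeq 0$ because desuspensions of $S$--local modules are $S$--local, so $g_1=g_2$ in $\Ho(\amod)$. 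This establishes the universal property of $L^S(M)$, and functoriality of $M\mapsto L^S(M)$ — equivalently, that it is right adjoint at the level of homotopy categories to the inclusion $(\amod)^S\hookrightarrow\amod$ — then follows formally from the uniqueness just proved.

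The only real subtlety, and hence the step to be most careful about, is to run the whole argument through the triangulated structure of $\Ho(\amod)$ rather than at the chain level: one must use the cohomological long exact sequences attached to the triangle, invoke the standard "a map factors through $X$ iff its composite to $Z$ vanishes" fact in a triangle $X\to Y\to Z$, and apply at the right points both the vanishing $\RHom_A(X,Y)\simeq 0$ for $X$ $S$--colocal and $Y$ $S$--local and the stability of $S$--locality under (de)suspension. Everything else is formal bookkeeping.
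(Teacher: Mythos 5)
Your proof is correct and is essentially the paper's own argument: both define $L^S(M)$ as the homotopy fibre of $M\to L_S(M)$, establish colocality by applying $\RHom_A(-,N)$ to the triangle and using that $\eta^*$ is a quasi-isomorphism, and verify the universal property by applying $\RHom_A(N,-)$ to the same triangle and using the vanishing $\RHom_A(N,L_S(M))\simeq 0$ for $N$ colocal. The only cosmetic difference is that the paper simply records the resulting quasi-isomorphism $\RHom_A(N,L^S(M))\simeq\RHom_A(N,M)$ and reads off the universal property from $H_0$, whereas you unwind the same long exact sequence into separate existence (factoring through the fibre because the composite to $L_S(M)$ vanishes) and uniqueness (vanishing of $\RHom_A(N,\Sigma^{-1}L_S(M))$) steps.
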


\begin{proof}
Given an $A$--module $M$ define $L^S(M)$ to be the homotopy fibre of the localisation map $M\to L_S(M)=L_S(A)\otimes_A M$ where $L_S(A)$ is a cofibrant model for a derived localisation of $A$. Given an $S$--local module $N$ we have a cofibre sequence in the derived category of $\ground$--modules:
\[
\RHom_A(L^S(M), N)\to\RHom_A(M,N)\to\RHom_A(L_S(M),N),
\]
and since $L_S(M)$ is a localisation of $M$, the $\ground$--modules $\RHom_A(M,N)$ and $\RHom_A(L_S(M),N)$ are quasi-isomorphic, so $\RHom_A(L^S(M), N)$ is quasi-isomorphic to zero, so $L^S(M)$ is $S$--torsion. A similar argument shows that for any $S$--torsion module $L$ there is a quasi-isomorphism
\[
\RHom_A(L, M)\cong\RHom_A(L,L^S(M)).
\]
\end{proof}

Recall \cite{keller:ondgcat,tabuada:hmo} that the $2$--category of dg categories admits the structure of a closed model category in which a weak equivalence between two dg categories $\mathcal C$ and $\mathcal B$ is a dg functor $F\co\mathcal A\to\mathcal B$ such that the induced triangulated functor on the corresponding derived categories $\mathbb{L}F_!\co\mathcal{\Ho(\amod)\mapsto \Ho(\modcat{B}})$ is an equivalence. The corresponding homotopy category will be denoted by $\Hmo$.

A sequence
\begin{equation}\label{eq:exactseq}
\xymatrix{\mathcal{A}\ar^F[r]&\mathcal B\ar^G[r]&\mathcal C }
\end{equation}
of triangulated categories is \emph{exact} \cite{neeman1992} if $F$ is is full and faithful, the composition $G\circ F$ is zero and the induced functor $\mathcal {B/A}\mapsto \mathcal C$ is cofinal, i.e.~it is induces an equivalence on idempotent completions of $\mathcal {B/A}$ and $ \mathcal C$. If \ref{eq:exactseq} is a diagram of dg categories then it is called exact in $\Hmo$ if the corresponding sequence of derived categories is exact.

Furthermore, a dg category $\mathcal A$ is \emph{pretriangulated} if it contains shifts and cones of objects. For example, the categories $\amod, \amod_S$ and $\amod^S$ are pretriangulated. For a pretriangulated category $\mathcal A$ we denote by $[\mathcal A]^c$ its full subcategory consisting of those objects whose images in the homotopy category $H^0(\mathcal A)$ are compact. For example if $A$ is an ungraded algebra then the category $[\amod]^c$ consists of bounded complexes of finitely generated projective $A$--modules (and complexes quasi-isomorphic to such things).

The following result is a straightforward corollary of \autoref{thm:localisationscoincide}.

\begin{theorem}\label{thm:exactsequence}
The sequence of dg categories and functors \[[\amod^S]^c\mapsto [\amod]^c\mapsto [\modcat{L_S(A)}]^c\]
is exact in $\Hmo$.
\end{theorem}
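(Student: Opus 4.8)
The plan is to realise the displayed sequence as the dg enhancement of the Verdier localisation sequence produced by Neeman's form of the Thomason--Trobaugh--Yao localisation theorem \cite{neeman1992}, the identification of the quotient term being supplied directly by \autoref{thm:localisationscoincide}. As usual I would assume $A$ cofibrant, so that $\Ho(\amod)$ is compactly generated by $A$; then $[\amod]^c$ is the dg category $\mathrm{Perf}(A)$ of perfect $A$--modules with $\mathcal{D}([\amod]^c)\simeq\Ho(\amod)$, and similarly $\mathcal{D}([\modcat{L_S(A)}]^c)\simeq\Ho(\modcat{L_S(A)})$.

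The first step is to pin down the kernel of the localisation. For $s\in S$ put $K_s=\mathrm{cone}\bigl(r_s\co\Sigma^{\degree{s}}A\to A\bigr)$, a perfect $A$--module (in the matrix/Cohn setting one takes instead the perfect homotopy cofibre of each structure map $L_\alpha\to N_\alpha$). Unwinding \autoref{def:local}, an $A$--module $N$ is $S$--local exactly when $\RHom_A(K_s,N)\simeq 0$ for all $s$; that is, the $S$--local modules are the right orthogonal $\{K_s\}^{\perp}$ in $\Ho(\amod)$. Hence the $S$--torsion modules of \autoref{prop:verdierloc} are ${}^{\perp}\bigl(\{K_s\}^{\perp}\bigr)$, which --- the $K_s$ being \emph{compact} --- is precisely the localising subcategory $\mathrm{Loc}(\{K_s\})$. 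So $\amod^S$ is a localising subcategory of $\Ho(\amod)$ generated by a set of compact objects; by \cite{neeman1992} it is then compactly generated, $[\amod^S]^c=\amod^S\cap\mathrm{Perf}(A)$ is the thick subcategory generated by the $K_s$, and $\mathcal{D}([\amod^S]^c)\simeq\amod^S$.

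The second step is to invoke \autoref{thm:localisationscoincide}, \autoref{cor:localcat}, \autoref{prop:smashing} and \autoref{prop:verdierloc}: together these identify $\modcat{L_S(A)}$ with the smashing Bousfield localisation of $\amod$ at $\{r_s\}$, the localisation functor with $L_S(A)\otimes_A^{\mathbb{L}}(-)$ (which on perfect complexes is extension of scalars along $A\to L_S(A)$), and $\Ho(\modcat{L_S(A)})$ with the Verdier quotient $\Ho(\amod)/\amod^S$, under which $A\mapsto L_S(A)$. Applying the localisation theorem \cite{neeman1992} to the finite localisation $\amod^S=\mathrm{Loc}(\{K_s\})\subset\Ho(\amod)$ then shows that the induced functor $\mathrm{Perf}(A)/[\amod^S]^c\to\bigl(\Ho(\amod)/\amod^S\bigr)^c=\mathrm{Perf}(L_S(A))$ is fully faithful with idempotent-dense image, hence cofinal. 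Putting it together: $[\amod^S]^c\to[\amod]^c$ is the inclusion of a full dg subcategory, which induces a fully faithful functor $\amod^S\hookrightarrow\Ho(\amod)$ on derived categories; the composite $[\amod^S]^c\to[\modcat{L_S(A)}]^c$ is zero, since $S$--torsion modules are killed by $L_S(A)\otimes_A^{\mathbb{L}}(-)$; and the induced functor on Verdier quotients is cofinal. This is exactly exactness in $\Hmo$, the passage from the quotient statement about compact objects to an exact sequence of dg categories being the standard dg-quotient formalism \cite{keller:ondgcat}.

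The real content is the first step --- that the $S$--torsion subcategory is generated, as a localising subcategory, by \emph{compact} objects, i.e.\ that the localisation is finite and not merely smashing. This is what ensures both that $\mathcal{D}([\amod^S]^c)\simeq\amod^S$ and that \cite{neeman1992} applies; without it $\mathcal{D}([\amod^S]^c)$ could be strictly smaller than $\amod^S$ and the quotient functor would fail to be cofinal --- precisely the pathology governed by the failure of the telescope conjecture discussed earlier. Here it holds trivially, because we invert maps between finitely generated free (respectively perfect) modules, so the cofibres $K_s$ are perfect. A secondary point needing a little care is that the resulting sequence of compact objects genuinely underlies an exact sequence of dg categories in $\Hmo$; but this is routine once the triangulated statement is in hand.
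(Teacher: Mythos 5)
Your proof is correct and follows essentially the same route as the paper: establish that the sequence of big module categories $\amod^S\to\amod\to\modcat{L_S(A)}$ is exact using the Bousfield (co)localisation formalism, then descend to compact objects via the Neeman--Thomason localisation theorem \cite[Theorem 2.1]{neeman1992}. The only difference is that you make explicit the verification that $\amod^S$ is the localising subcategory generated by the perfect cones $K_s$ of $r_s\co\Sigma^{\degree{s}}A\to A$ --- a hypothesis of Neeman's theorem that the paper's terser proof leaves implicit (and which it only spells out later, in \autoref{prop:torsion}).
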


\begin{proof}
Using \autoref{prop:verdierloc} and \autoref{cor:localcat} we conclude that there is an exact sequence in $\Hmo$:
\[
\amod^S\mapsto \amod\mapsto \modcat{L_S(A)}
\]
Now the result follows from the Neeman--Thomason localisation theorem \cite[Theorem 2.1]{neeman1992}.
\end{proof}

\begin{remark}
Since for a dg algebra $A$ the dg category $[\amod]^c$ is Morita equivalent to $A$, the above result could be viewed as the identification of the homotopy fibre of the localisation map $A\to L_S(A)$ in $\Hmo$.
\end{remark}

\begin{corollary}\label{cor:exactsequence}
There is a cofibre sequence of $K$--theory spectra
\[
\itbbk([\amod^S]^c)\to \itbbk(A)\to \itbbk(L_S(A))
\]
where $\itbbk(?)$ stands for the non-connective $K$--theory spectrum \cite{schlichting:hak}.
\end{corollary}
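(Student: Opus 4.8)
The plan is to obtain this by applying non-connective algebraic $K$--theory to the exact sequence of dg categories furnished by \autoref{thm:exactsequence}. The essential input is that non-connective $K$--theory, in the form constructed by Schlichting \cite{schlichting:hak}, is a \emph{localising invariant}: it carries an exact sequence of pretriangulated dg categories in $\Hmo$ to a homotopy fibre sequence of spectra, which, being a statement about (connective-delooped) spectra, is the same as a cofibre sequence.

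First I would invoke Morita invariance of non-connective $K$--theory together with the fact, recorded in the remark following \autoref{thm:exactsequence}, that $[\amod]^c$ is Morita equivalent to $A$ and $[\modcat{L_S(A)}]^c$ is Morita equivalent to $L_S(A)$; this identifies the outer two terms of the desired cofibre sequence with $\itbbk(A)$ and $\itbbk(L_S(A))$ respectively. Then I would feed the sequence
\[
[\amod^S]^c\to [\amod]^c\to [\modcat{L_S(A)}]^c
\]
of \autoref{thm:exactsequence} into Schlichting's localisation theorem for non-connective $K$--theory. The only point requiring a moment's care is that exactness of a sequence in $\Hmo$ only demands that the comparison functor $[\amod]^c/[\amod^S]^c\to[\modcat{L_S(A)}]^c$ be an equivalence \emph{after idempotent completion}; but this is precisely the situation the non-connective variant is built to handle, since it is insensitive to idempotent completion, so the induced map of spectra is a weak equivalence regardless. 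Assembling these observations yields the cofibre sequence
\[
\itbbk([\amod^S]^c)\to \itbbk(A)\to \itbbk(L_S(A))
\]
as asserted.

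There is no real obstacle here beyond locating the precise formulations of Morita invariance and the localisation theorem for non-connective $K$--theory of dg categories and checking that they match the hypotheses at hand; all the substantive work has already been carried out in establishing \autoref{thm:exactsequence}, and hence ultimately in \autoref{thm:localisationscoincide}.
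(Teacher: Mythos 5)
Your proposal is correct and is exactly the reasoning the paper leaves implicit: the corollary follows from \autoref{thm:exactsequence} by applying Schlichting's localisation theorem for non-connective $K$--theory (which is insensitive to idempotent completion) together with Morita invariance to identify the outer terms with $\itbbk(A)$ and $\itbbk(L_S(A))$.
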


The last corollary is a generalisation of the result of Neeman--Ranicki \cite{ranickineeman} obtained under the assumption of stable flatness. Similar results hold for other localising invariants, such as the Hochschild and cyclic homology \cite{Keller_exact, Keller_cyclic}, topological Hochschild and cyclic homology \cite{blumbergmandell:localisation}.

It is natural to ask whether the homotopy category of $\amod^S$ is a derived category of some dg algebra. This turns out to be true, at least when the set $S$ is finite. For $s\in S$ denote by $A/s$ the cofibre of the right multiplication by $s$ so there is a homotopy cofibre sequence of $A$--modules: $\xymatrix{A\ar^{\cdot s}[r]& A\ar[r]& A/s}$. Next, set $A/S:=\bigoplus_{s\in S}A/s$; it is a compact $A$--module since each $A/s$ is. Then we have the following result.

\begin{proposition}\label{prop:torsion}
If the set $S$ is finite, then the category $\Ho(\amod^S)$ is equivalent to
the derived category $\Ho(\modcat{\REnd_A(A/S)})$.
\end{proposition}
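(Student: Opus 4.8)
The plan is to exhibit the $A$--module $A/S$ as a \emph{compact generator} of the triangulated category $\Ho(\amod^S)$, and then to invoke the dg Morita theorem of Keller \cite{keller:ondgcat}: a pretriangulated dg category with arbitrary coproducts whose homotopy category is compactly generated by a single object $G$ has homotopy category equivalent to $\Ho(\modcat{\REnd(G)})$, the equivalence being realised by $\RHom(G,-)$ with quasi-inverse $-\otimes^{\mathbb{L}}_{\REnd(G)}G$.

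First I would dispose of the structural preliminaries. The dg category $\amod^S$ is pretriangulated and closed under arbitrary coproducts, since $\RHom_A\bigl(\bigoplus_i M_i, N\bigr)\simeq\prod_i\RHom_A(M_i,N)$ vanishes whenever each $M_i$ is $S$--colocal and $N$ is $S$--local. Hence $\Ho(\amod^S)$ has coproducts, computed as in $\Ho(\amod)$, and the inclusion $\Ho(\amod^S)\hookrightarrow\Ho(\amod)$ preserves them (it is left adjoint to the colocalisation functor $L^S$ of \autoref{prop:verdierloc}); in particular any object of $\Ho(\amod^S)$ which is compact in $\Ho(\amod)$ is automatically compact in $\Ho(\amod^S)$.

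Next I would check the two key properties of $A/S$. For $S$--colocality: applying the smashing functor $L_S(A)\otimes^{\mathbb{L}}_A-$ to the homotopy cofibre sequence $A\xrightarrow{\cdot s}A\to A/s$ produces a homotopy cofibre sequence whose first map is a quasi-isomorphism, because $s$ acts invertibly on $H(L_S(A))$ by \autoref{prop:localinverting}; thus $L_S(A)\otimes^{\mathbb{L}}_A(A/s)\simeq 0$, so each $A/s$, hence $A/S=\bigoplus_{s\in S}A/s$, is $S$--colocal. For compactness: since $S$ is finite, $A/S$ is a finite iterated extension of shifts of the compact module $A$, hence compact in $\Ho(\amod)$, hence compact in $\Ho(\amod^S)$ by the previous paragraph. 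This is the single point at which finiteness of $S$ enters: for infinite $S$ one is forced to use the set of generators $\{A/s\}_{s\in S}$ and lands in the derived category of a dg \emph{category} rather than a dg algebra.

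The heart of the argument, and the step I expect to require the most care, is that $A/S$ generates $\Ho(\amod^S)$. Given $N\in\amod^S$ with $\RHom_A(A/S,N)\simeq 0$, applying $\RHom_A(-,N)$ to the sequence $A\xrightarrow{\cdot s}A\to A/s$ identifies $\RHom_A(A/s,N)$ with the homotopy fibre of $l_s\co N\to N$; the hypothesis forces $l_s$ to be a quasi-isomorphism for every $s\in S$, so $N$ is $S$--local as well as $S$--colocal, whence $\RHom_A(N,N)\simeq 0$ and $N\simeq 0$. Since $A/S$ is compact, the standard Bousfield-colocalisation argument (form the $\operatorname{Loc}(A/S)$--colocalisation triangle of an arbitrary $N\in\Ho(\amod^S)$; its third term lies in $\Ho(\amod^S)$ and is right-orthogonal to $A/S$, hence zero) upgrades this to $\operatorname{Loc}(A/S)=\Ho(\amod^S)$, i.e.\ $A/S$ is a compact generator. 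Keller's theorem then yields the equivalence $\Ho(\amod^S)\simeq\Ho(\modcat{\REnd_A(A/S)})$; the residual work is bookkeeping, namely that $\REnd_A(A/S)$ is modelled by $\Hom_A(P,P)$ for a cofibrant replacement $P$ of $A/S$ (compare \autoref{thm:endhomotopy}) and the matching of left/right module conventions over $\REnd_A(A/S)$.
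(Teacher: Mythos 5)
Your proof is correct and takes essentially the same route as the paper: both identify $A/S$ as a compact generator of the $S$--colocal category and apply a dg Morita-type equivalence. The paper gets there more quickly by noting that $S$--local modules are exactly the $\RHom_A(A/S,-)$--acyclic ones and then citing Dwyer--Greenlees (Theorem 2.1 of that paper), whereas you unpack that theorem by hand and invoke Keller's dg Morita theorem instead; the mathematical content is the same.
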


\begin{proof}
Let us say that $A$--module $M$ is $A/S$--trivial if $\RHom_A(A/S, M)=0$. Then $M$ is $A/S$--trivial if an only if $\RHom_A(A/s, M)=0$ for all $s\in S$ and this is, in turn, equivalent to the condition that $M$ is $s$--local for all $s\in S$ or that $M$ is $S$--local. Thus, being $A/S$--trivial is the same as being $S$--local. Therefore, an $A$--module $N$ is $S$--colocal if and only if it is $A/S$--torsion in the sense of \cite{Dwyer_Greenlees}. The result now follows from op.cit., Theorem 2.1.
\end{proof}

\begin{remark}
The paper of Dwyer--Greenlees used in the proof of the proposition above was written in the language of homotopy categories, however it is easy to check that the equivalence of homotopy categories proved in it, actually came from a Quillen equivalence of the underlying closed model categories and the same statement holds for the categories $\amod^S$ and $\modcat{\REnd_A(A/S)}$.
\end{remark}

\begin{remark}
Note that \autoref{thm:exactsequence}, \autoref{cor:exactsequence} and \autoref{prop:torsion} continue to be valid for the more general notion of derived Cohn localisation.
\end{remark}

\begin{remark}\label{recollement}
The restriction functor $\modcat{L_S(A)}\to\amod$ has both a left adjoint $L_S(A)\otimes_{A}-$ and a right adjoint $\Hom_A(L_S(A),-)$. Passing to the homotopy categories, the three functors induce half of the arrows of a recollement
\begin{equation*}
\Ho(\modcat{L_S(A)})\quad \substack{\longleftarrow\\
	 \longrightarrow \\
	  \longleftarrow}\quad
\Ho(\amod)\quad \substack{\longleftarrow\\
	\longrightarrow \\
	\longleftarrow}\quad
\Ho(\amod^S)
\end{equation*}
(see, e.g.~\cite[\S5.5]{krause}). Hence, to any $A$--module $M$ is associated two canonical fibre sequences of $A$--modules depending on $S$, that in \autoref{prop:verdierloc}, and another $K_S(M)\to M \to K^S(M)$, in which $K_S(M)$ is $S$--local and $K^S(M)$ is \emph{$S$--complete}, i.e.~$\RHom_A(N,K^S(M))=0$ for any $S$--local module $N$. Note that the functor $M\mapsto K^S(M)$ could be viewed as a (derived) completion of $M$ at $S$. It can also be obtained as a Bousfield localisation with respect to a homology theory on $A$--modules given by the $A$--module $\RHom_A(A/S, A)$, at least when $S$ is a finite set (\cite[Proposition 4.8]{Dwyer_Greenlees}).
\end{remark}
\section{Strictification of homotopy unital dg algebras}
Let $A$ be a non-unital dg algebra but such that its homology $H(A)$ is a unital graded algebra. We say that $A$ is \emph{homotopy unital}. As a curious application of our derived localisation techniques we show that $A$ could always be strictified to a unital dg algebra. More precisely:

\begin{theorem}
Let $A$ be a homotopy unital dg algebra. Then there exists a (unital) dg algebra $B$ and a quasi-isomorphism of non-unital dg algebras $A\to B$. The dg algebra $B$ is determined uniquely up to a quasi-isomorphism of (unital) dg algebras.
\end{theorem}

\begin{proof}
Let $A_+$ be the (unital) dg algebra obtained by adjoining a unit to $A$; thus $A_+\cong A\oplus\ground\cdot 1$; there is an embedding of non-unital dg algebras $A\hookrightarrow A_+$. Let $e$ be a cycle in $A$ representing the unit in $H(A)$; we will view it also as a cycle in $A_+$, clearly its homology class is in the centre of $H(A_+)$. It follows, from \autoref{cor:central} that the composite map $A\to A_+\to L_eA_+$ is a multiplicative quasi-isomorphism and so the existence part of the theorem is proved.

We claim that in the case $A$ is already unital, the dg algebra $L_eA_+$ is quasi-isomorphic to $A$ as a unital dg algebra. Indeed, $e$ could be chosen to be the strict unit; then $f(e)$ is in the centre of $A_+$  and it follows that $L_eA_+\simeq A_+[e^{-1}]\cong A$.

Now assume that there is a homotopy unital dg algebra $A$ and two multiplicative quasi-isomorphisms $g\co A\to B$ and $h\co A\to C$ where both $B$ and $C$ are unital. Applying the above construction to the maps $g$ and $h$ we obtain quasi-isomorphisms of unital dg algebras $L_eA_+\to L_{g(e)}B_+$ and $L_eA_+\to L_{h(e)}C_+$. But we have already established that $L_{g(e)}B_+$ and $L_{h(e)}C_+$ are quasi-isomorphic as unital dg algebras to
$B$ and $C$ respectively. Therefore $B$ and $C$ were quasi-isomorphic to begin with, proving uniqueness.
\end{proof}

\begin{remark}
The above result is not difficult to prove in the case $\ground$ is a field, but it was pointed out to the authors by F.~Muro that the general case is substantially harder; a version of it was proved in \cite[\S5.4.3]{lurie}, using sophisticated machinery of $\infty$--categories. The essentially trivial proof above, of course, relies on the techniques of derived localisation, specifically, \autoref{cor:central}.
\end{remark}

\section{Idempotent ideals and derived quotients}
By analogy with derived localisation, one can ask whether there exists a homotopy invariant way to quotient out an ideal in the homology of a dg algebra and which is characterised by a suitable universal property. Of course, the answer is negative in general, since the homotopy category of dg algebras does not have colimits. Nevertheless, there is one interesting special case when such a construction is possible, and it then reduces to derived localisation.

Let $x\in H(A)$. We say that an $A$--algebra $B$ is \emph{$x$--killing} if the image of $x$ in $H(B)$ is zero.

\begin{definition}
The derived quotient of $A$, by an idempotent $e\in H_0(A)$, denoted by $A/^{\mathbb L}AeA$, is the initial object in the full
subcategory on $e$--killing dg algebras of $\derunderalg{A}$.
\end{definition}

It is clear that the derived quotient is unique up to homotopy, if it exists.

\begin{proposition}
For any idempotent $e\in H_0(A)$ the derived quotient $A/^{\mathbb L}AeA$ exists and is quasi-isomorphic as an $A$--algebra to $L_{1-e}A$.
\end{proposition}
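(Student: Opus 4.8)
The plan is to identify the class of $e$--killing dg $A$--algebras with the class of $(1-e)$--inverting dg $A$--algebras, and then to invoke the already-established existence of derived localisation.

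First I would isolate the elementary observation that an invertible idempotent in a unital ring equals $1$: if $u^2=u$ and $v$ is a right inverse of $u$, then $u=u(uv)=u^2v=uv=1$. Next, let $B$ be a dg $A$--algebra and write $\bar e,\overline{1-e}\in H(B)$ for the images of $e$ and $1-e$; these are complementary idempotents with $\bar e+\overline{1-e}=1$. If $B$ is $e$--killing then $\bar e=0$, so $\overline{1-e}=1$ is invertible, i.e.\ $B$ is $(1-e)$--inverting. Conversely, if $B$ is $(1-e)$--inverting then $\overline{1-e}$ is an invertible idempotent, hence equals $1$ by the observation just made, so $\bar e=0$ and $B$ is $e$--killing. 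Therefore the full subcategory of $\derunderalg{A}$ on $e$--killing dg algebras coincides, on the nose, with the full subcategory on $(1-e)$--inverting dg algebras.

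It then remains only to note that this subcategory has an initial object. By \autoref{cor:alglocalisation} (together with \autoref{lem:simplelocalisation}) the derived localisation $L_{1-e}(A)$ exists and is, by definition, the initial $(1-e)$--inverting object of $\derunderalg{A}$; by the previous paragraph it is equally the initial $e$--killing object. Hence $A/^{\mathbb L}AeA$ exists and is canonically isomorphic in $\derunderalg{A}$, in particular quasi-isomorphic as an $A$--algebra, to $L_{1-e}(A)$.

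I do not expect a serious obstacle here: the argument is a short reduction once the invertible--idempotent remark is available, and the only step that needs a moment's thought is the direction showing that being $(1-e)$--inverting forces being $e$--killing, which is exactly where that remark is used.
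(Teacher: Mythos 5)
Your proof is correct and follows exactly the same route as the paper's, which simply states that $e$--killing $A$--algebras coincide with $(1-e)$--inverting $A$--algebras and leaves the verification implicit. You have merely supplied the short verification (via the observation that an invertible idempotent in a unital ring equals $1$) that the paper leaves to the reader.
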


\begin{proof}
One only has to observe that $e$--killing $A$--algebras are precisely $(1-e)$--inverting $A$--algebras.
\end{proof}

Our notation for the derived quotient is justified by the analogous construction in the non-derived context: if $e$ is an idempotent $0$--cycle in $A$, then the initial \emph{strictly} $e$--killing $A$--algebra (equivalently, the non-derived localisation
$A[(1-e)^{-1}]$) is $A/AeA$. The following result might look slightly surprising although its proof is very simple.

\begin{lemma}\label{lem:strict_idempotent}
Let $e$ be an idempotent in $H_0(A)$. Then there is a dg algebra $A^\prime$ quasi-isomorphic to $A$ such that $e$ has a representative in $A^\prime$ that is a strict idempotent (i.e.~not just up to homotopy).
\end{lemma}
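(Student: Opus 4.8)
The plan is to realise $A'$ as a (derived) endomorphism dg algebra of a right $A$-module quasi-isomorphic to $A$ that carries an honest direct sum decomposition witnessing the Peirce decomposition attached to $e$. Since the assertion is invariant under quasi-isomorphism we may assume $A$ is a cofibrant dg algebra, hence cofibrant as a $\ground$-module. Choose a cycle $\epsilon\in A_0$ with $[\epsilon]=e$ and an element $\eta\in A_1$ with $d\eta=\epsilon^2-\epsilon$. Regarding $A$ as a right module over itself, left multiplication $\ell_\epsilon\colon A\to A$, $a\mapsto\epsilon a$, is a chain map of right $A$-modules with $\ell_\epsilon\circ\ell_\epsilon=\ell_{\epsilon^2}$, and $\ell_{\epsilon^2}-\ell_\epsilon=\ell_{d\eta}=d\circ\ell_\eta+\ell_\eta\circ d$ is null-homotopic; thus $[\ell_\epsilon]$ is an \emph{idempotent} endomorphism of $A$ in the derived category of right $A$-modules.

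This idempotent splits by the usual telescope construction: putting $N_1:=\hocolim\bigl(A\xrightarrow{\ell_\epsilon}A\xrightarrow{\ell_\epsilon}\cdots\bigr)$ and $N_2:=\hocolim\bigl(A\xrightarrow{\ell_{1-\epsilon}}A\xrightarrow{\ell_{1-\epsilon}}\cdots\bigr)$ one gets $A\simeq N_1\oplus N_2$ in the derived category of right $A$-modules, with $[\ell_\epsilon]$ corresponding to the projection onto $N_1$ (this uses only that right $A$-modules admit countable coproducts). Replacing $N_1,N_2$ by cofibrant models $\tilde N_1,\tilde N_2$ and setting $\tilde N:=\tilde N_1\oplus\tilde N_2$, we obtain a cofibrant right $A$-module with $\tilde N\simeq A$, and we define $A':=\REnd_A(\tilde N)=\Hom_A(\tilde N,\tilde N)$. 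By \autoref{thm:endhomotopy}, applied with $\ground$ in the role of the left algebra and $A$ in the role of the right one (both are cofibrant as $\ground$-modules since $A$ is a cofibrant dg algebra), $A'\simeq\REnd_A(A)$ as dg algebras, and $\REnd_A(A)\simeq A$ by the remark following \autoref{thm:endhomotopy} — in fact $\Hom_A(A,A)\cong A$ already on the nose, via $a\mapsto\ell_a$. Hence $A'$ is quasi-isomorphic to $A$ as a dg algebra, and the element $\epsilon':=\mathrm{pr}_{\tilde N_1}\in A'_0$ is a $0$-cycle with $(\epsilon')^2=\epsilon'$, i.e.\ a strict idempotent.

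It remains to check that $[\epsilon']$ is carried to $e$ under the ring isomorphism $H_0(A')\cong H_0(A)$ induced by the zig-zag of dg algebras above; this is the only delicate point. On $H_0$ this isomorphism is the composite of the isomorphism of endomorphism rings in the derived category of right $A$-modules induced by the chosen quasi-isomorphism $\tilde N\to A$ — which is what the zig-zag of \autoref{lem:endzigzag} does on $H_0$, since there the comparison of Hom-complexes is $g\mapsto f\circ g\circ f^l$, acting on homotopy classes by conjugation by $[f]$ — with the isomorphism $\mathrm{End}(A)\cong H_0(A)$ sending $[\ell_a]$ to $[a]$. By the choice of splitting, $[\mathrm{pr}_{\tilde N_1}]$ is conjugate to $[\ell_\epsilon]$, which is then sent to $[\epsilon]=e$. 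Thus $\epsilon'\in A'$ is a strict idempotent representing $e$. I expect the bookkeeping in this final step — matching the abstract ``quasi-isomorphic to $A$'' with the concrete ``$\epsilon'$ represents $e$'' through the two endomorphism-dg-algebra zig-zags and the telescope splitting — to be the main obstacle; everything else is routine given \autoref{thm:endhomotopy}.
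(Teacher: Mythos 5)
Your proof is correct but follows a genuinely different path from the paper's. The paper's argument is a quick free-product manoeuvre: it takes the explicit cofibrant replacement $C=\ground\langle x,y\rangle$ with $d(y)=x^2-x$ of $\ground\times\ground$, lifts the pair $(e,1-e)$ to a dg algebra map $C\to A$ by choosing a cycle $\epsilon$ and a bounding chain $\eta$, cofibrantly replaces $A$ over $C$, and sets $A':=A\ast_C(\ground\times\ground)$; left properness of $\ground\times\ground$ (via \autoref{thm:flatleftproper}) makes $A\to A'$ a quasi-isomorphism, and in $A'$ the image of $x$ is literally $(1,0)$, a strict idempotent, which by the commutativity of the pushout square is the image of $\epsilon$ and hence represents $e$. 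You instead pass to the derived category of right $A$-modules, split the homotopy idempotent $[\ell_\epsilon]$ via the Bökstedt--Neeman telescope construction (which is where you silently invoke their theorem that idempotents split in triangulated categories with countable coproducts, slightly more than just ``having countable coproducts''), and recover a dg algebra model via $\REnd_A(\tilde N)$ and the zig-zag of \autoref{lem:endzigzag}. Both routes work. What the paper's approach buys is brevity and coherence with the rest of the paper --- it is exactly the same free-product-along-a-cofibrant-replacement technique used to construct the derived localisation itself, and the identification of the strict idempotent with $e$ is immediate from the pushout square, with no bookkeeping. What your approach buys is generality of flavour --- it is a derived-Morita argument that would work in any stable setting where endomorphism algebras of equivalent objects are identified --- but at the cost of more machinery and a delicate final comparison (which you correctly identify as the weak point, and which does go through: on $H_0$ the zig-zag of \autoref{lem:endzigzag} is conjugation by the class of the quasi-isomorphism, and the telescope splitting by construction sends $[\mathrm{pr}_{\tilde N_1}]$ to $[\ell_\epsilon]$, hence to $e$ under $\mathrm{End}_{D(A)}(A)\cong H_0(A)$).
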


\begin{proof}
Consider the algebra $\ground\times \ground$, the direct product of two copies of $\ground$ and let $C$ be its cofibrant replacement $C:=\ground\langle x,y\rangle$ with $d(y)=x^2-x$. It is then clear that the inclusion $\ground\times \ground\to H(A)$ corresponding to the idempotents $e,1-e\in H(A)$ lifts to a map of dg algebras $C\to A$, and without loss of generality we may assume that $A$ is a cofibrant $C$--algebra. Then setting $A^\prime:=A*_C(\ground\times\ground)$, we are done, since $\ground\times\ground$ is left proper, cf.~\autoref{thm:flatleftproper}.
\end{proof}

Let us investigate what the category of $(1-e)$--torsion $A$--modules looks like. If $e$ is a strict idempotent $0$--cycle in $A$ we can form the dg algebra $eAe\cong \Hom_A(Ae,Ae)$ whose elements are products $eae$ where $a\in A$. If $f$ is an idempotent homologous to $e$, then $fAf$ and $eAe$ are quasi-isomorphic dg algebras, since right multiplication by $e$ and by $f$ give maps $Af\to Ae$ and $Ae\to Af$ inverse to each other in $\Ho(\amod)$. Now if $e$ is an idempotent in $H_0(A)$ we will be abusing notation and still write $eAe$ for $eA^\prime e$, where $A'$ is as in \autoref{lem:strict_idempotent}. Thus, the dg algebra $eAe$ is well-defined up to quasi-isomorphism.

We can now give a characterisation of torsion $(1-e)$--modules.

\begin{proposition}
Let $e\in H_0(A)$ be an idempotent. Then the full subcategory of the derived category $\Ho(\modcat{A})$ of $A$ consisting of the $(1-e)$--torsion modules is equivalent to the derived category $\Ho(\modcat{eAe})$ of $eAe$.
\end{proposition}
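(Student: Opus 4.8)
The plan is to identify the category of $(1-e)$--torsion $A$--modules with the derived category of $eAe$ by realising the latter as a derived endomorphism algebra. First I would reduce to the case that $e$ is a strict idempotent $0$--cycle in $A$, using \autoref{lem:strict_idempotent}: replace $A$ by the quasi-isomorphic dg algebra $A'$ in which $e$ is represented by an honest idempotent, which does not change any of the homotopy categories involved. Then $Ae$ is a well-defined $A$--module (a direct summand of $A$, hence compact), and $eAe\cong\Hom_A(Ae,Ae)=\REnd_A(Ae)$ since $Ae$ is already cofibrant (it is a retract of the free module $A$).

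The key step is to recognise that $(1-e)$--torsion is the same as being built from $Ae$. Concretely, $A/(1-e)$, the cofibre of right multiplication by $1-e$ on $A$, is quasi-isomorphic to $Ae$: the map $\cdot(1-e)\co A\to A$ is (split) injective on the nose with cokernel $Ae$, so its homotopy cofibre is $Ae$. By the argument in the proof of \autoref{prop:torsion}, an $A$--module $M$ is $(1-e)$--local if and only if $\RHom_A(A/(1-e),M)=0$, i.e.\ $\RHom_A(Ae,M)=0$; hence an $A$--module is $(1-e)$--colocal (torsion) precisely when it lies in the localising subcategory generated by $Ae$, in the sense of \cite{Dwyer_Greenlees}. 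Now I would invoke the standard Morita-type theorem (e.g.\ \cite[Theorem 2.1]{Dwyer_Greenlees}, or the dg-categorical version of Keller's theorem on compact generators): the subcategory of $\Ho(\amod)$ generated under shifts, cones, coproducts and retracts by the compact object $Ae$ is equivalent, via $N\mapsto \RHom_A(Ae,N)$, to $\Ho(\modcat{\REnd_A(Ae)})=\Ho(\modcat{eAe})$, with quasi-inverse $P\mapsto P\otimes^{\mathbb L}_{eAe}Ae$. Since this generated subcategory is exactly the category of $(1-e)$--torsion modules by the previous step, this gives the desired equivalence.

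I would then note that the resulting $eAe$ is well-defined up to quasi-isomorphism independently of the choices (the strictification $A'$, the idempotent representative of $e$), as observed in the paragraph preceding the statement: different idempotent representatives $e,f$ give $Ae\simeq Af$ in $\Ho(\amod)$ via mutually inverse right multiplications, hence quasi-isomorphic derived endomorphism dg algebras by \autoref{thm:endhomotopy}. The main obstacle is the citation-level input: one must be slightly careful that the Dwyer--Greenlees / Keller machinery applies to $\mathbb Z$--graded dg algebras over an arbitrary commutative ground ring $\ground$ and to the compact (but not necessarily a compact \emph{generator} of everything) object $Ae$; this is why the statement is phrased as an equivalence onto the torsion subcategory rather than onto all of $\Ho(\amod)$. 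Everything else — the identification of $A/(1-e)$ with $Ae$, and of $(1-e)$--locality with $Ae$--triviality — is an elementary unwinding entirely parallel to \autoref{prop:torsion}.
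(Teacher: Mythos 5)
Your overall approach coincides with the paper's: strictify the idempotent via \autoref{lem:strict_idempotent}, identify $(1-e)$--torsion with torsion with respect to the compact module $Ae$, and then appeal to \cite[Theorem 2.1]{Dwyer_Greenlees} together with the identification $\REnd_A(Ae)\simeq\End_A(Ae)\cong eAe$, which is valid because $Ae$, being a direct summand of $A$, is a cofibrant $A$--module. So the route is the same.

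However, there is a genuine computational slip in the step identifying $A/(1-e)$ with $Ae$. Right multiplication $r_{1-e}\co A\to A$ by the strict idempotent $1-e$ is \emph{not} injective: its kernel is exactly $Ae$ (since $a(1-e)=0$ iff $a=ae$), and with respect to the decomposition $A=Ae\oplus A(1-e)$ it is the projection onto the summand $A(1-e)$. Consequently the homotopy cofibre $A/(1-e)$ is the cone of this projection, which splits as
\[
\mathrm{Cone}\bigl(0\co Ae\to Ae\bigr)\oplus\mathrm{Cone}\bigl(\id\co A(1-e)\to A(1-e)\bigr)\simeq Ae\oplus\Sigma Ae,
\]
exactly as stated in the paper's proof, not $Ae$. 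Your claim that $\cdot(1-e)$ is split injective with cokernel $Ae$ is false. The error is harmless to the conclusion, since $Ae\oplus\Sigma Ae$ and $Ae$ generate the same localising subcategory and $\RHom_A(Ae\oplus\Sigma Ae,M)$ vanishes if and only if $\RHom_A(Ae,M)$ does, so $(1-e)$--torsion still equals $Ae$--torsion; but the justification has to be the explicit cone computation rather than an appeal to injectivity.
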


\begin{proof}
An $A$--module is $(1-e)$--torsion if and only if it is $A/(1-e)$--torsion, cf. the proof of \autoref{prop:torsion}. We will assume, without loss of generality, that $1-e$ is a strict idempotent. It is then clear that the left $A$--module $A/(1-e)$ (which is, by definition, the homotopy cofibre of the right multiplication map by $1-e$ on $A$) is quasi-isomorphic to $Ae\oplus\Sigma Ae$. Thus, the category of $A/(1-e)$--torsion modules coincides with the category of $Ae$--torsion modules. Clearly $Ae$ is a compact left $A$--module. By \cite[Theorem 2.1]{Dwyer_Greenlees} the homotopy categories of $Ae$--torsion modules and of $\REnd_A(Ae, Ae)$--modules are equivalent. Since $Ae$ is a cofibrant left $A$--module, $\REnd_A(Ae, Ae)\simeq \End_A(Ae, Ae)\cong eAe$.
\end{proof}

\begin{remark}
Let $A$ be a dg algebra and $e\in H_0(A)$ an idempotent. With $s=1-e$, the recollement of \autoref{recollement} is
\begin{equation*}
\Ho(\modcat{A/^\mathbb{L}AeA})\quad \substack{\longleftarrow\\
	\longrightarrow \\
	\longleftarrow}\quad
\Ho(\amod)\quad \substack{\longleftarrow\\
	\longrightarrow \\
	\longleftarrow}\quad
\Ho(\modcat{eAe}).
\end{equation*}

This is a generalisation of a recollement established by Cline, Parshall and Scott which plays an important role in their theory of quasi-hereditary algebras. Suppose that $A$ has vanishing differential. Then $A/^{\mathbb{L}}AeA\simeq A/AeA$ if and only if $A\to A/AeA$ is a homological epimorphism, c.f.~\autoref{lem:selftensor} and \autoref{rem:homological_epimorphism}. In \cite{CPS1,CPS2} it is shown that the latter is indeed equivalent to the existence of the above recollement with $A/AeA$ in place of $A/^{\mathbb{L}}AeA$, and also equivalent to $AeA$ being a \emph{stratifying ideal} of $A$, i.e. to $Ae\otimes_{eAe}^{\mathbb{L}}eA \simeq AeA$.
\end{remark}

\subsection{Drinfeld's quotient}
A derived quotient by an idempotent ideal, as a special case of derived localisation, may be computed as a certain derived coproduct, c.f.~\autoref{cor:alglocalisation}. We will now describe another, more economical way to form a derived quotient. We will call it the Drinfeld quotient since it is a dg algebra version of Drinfeld's categorical construction \cite{Drinfeld_dg}.

We will assume that the idempotent $e\in H_0(A)$ admits a representative in $A$ that is strict; by \autoref{lem:strict_idempotent} this results in no loss of generality. Consider the algebra $A_e:=A\langle h\rangle/(he=eh=h)$, obtained by freely attaching the generator $h$ in homological degree $1$ to $A$ and quotienting out by the relations specified. The differential $d$ in $A_e$ extends that on $A$ and $d(h)=e$.

\begin{proposition}
Assume that $A$ is left proper. The algebra $A_e$ is quasi-isomorphic to the derived quotient $A/^{\mathbb L}AeA$.
\end{proposition}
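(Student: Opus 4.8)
The plan is to realise $A_e$ as a strict pushout over the subring $\ground\times\ground$ generated by $e$ and $1-e$, to observe that this strict pushout already computes the corresponding derived pushout, and then to conclude by base change for localisation. Concretely, write $e$ also for the given strict idempotent representative in $A$; since $e^2=e$ and $de=0$ the assignment $(1,0)\mapsto e$ defines a dg algebra map $\ground\times\ground\to A$, making $A$ a dg $(\ground\times\ground)$-algebra. Set $B_0:=\ground\langle e,h\rangle/(eh=he=h)$ with $\degree h=1$ and $d(h)=e$; the relation $e^2=e$ is automatically forced in $B_0$ (it lies in the dg ideal generated by $eh-h$ and $he-h$), so $B_0$ is a dg $(\ground\times\ground)$-algebra too, it has $\ground$-basis $\{1,e,h,h^2,h^3,\dots\}$ --- hence is $\ground$-flat and left proper by \autoref{thm:flatleftproper} --- and the pushout $A*_{\ground\times\ground}B_0$, being $A$ with a degree-$1$ generator $h$ adjoined subject to $eh=he=h$ and $dh=e$, is exactly $A_e$.

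First I would show $B_0$ is cofibrant as a dg $(\ground\times\ground)$-algebra. The algebra $\hat B_0:=(\ground\times\ground)\langle h\rangle$, free on a degree-$1$ generator $h$ with $d(h)=e$, is obtained from $\ground\times\ground$ by a single cell attachment $S^0\cof D^1$ (identifying $dz$ with $e$), hence is cofibrant over $\ground\times\ground$; and the quotient $\hat B_0\to B_0$ splits in $\underalg{\ground\times\ground}$ via $h\mapsto ehe$, which one checks respects both the differential and the relations, so $B_0$ is a retract of $\hat B_0$ and is therefore cofibrant. Since $\ground\times\ground$ and $A$ are left proper, \autoref{cor:balancing} and the discussion of left-proper objects preceding it identify the derived free product with the total left derived functor of $A*_{\ground\times\ground}(-)$, whose value on the cofibrant object $B_0$ is the strict pushout; thus $A_e\simeq A*^{\mathbb L}_{\ground\times\ground}B_0$.

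Next I would pin down $B_0$ inside $\derunderalg{\ground\times\ground}$. A short calculation ($d(h)=e$, $d(h^{2m})=0$ and $d(h^{2m+1})=h^{2m}$ for $m\geq 1$) gives $H(B_0)\cong\ground$ concentrated in degree $0$, with the structure map $\ground\times\ground\to B_0\to H_0(B_0)$ sending $e$ to $0$; that is, $B_0\simeq(\ground\times\ground)[(1-e)^{-1}]\cong\ground$ as dg $(\ground\times\ground)$-algebras. On the other hand $(\ground\times\ground)[(1-e)^{-1}]\cong\ground$ is a two-sided direct summand of $\ground\times\ground$, hence projective, so $(\ground\times\ground)[(1-e)^{-1}]\otimes^{\mathbb L}_{\ground\times\ground}(\ground\times\ground)[(1-e)^{-1}]\simeq(\ground\times\ground)[(1-e)^{-1}]$, and by \autoref{lem:selftensor} and \autoref{thm:localisationscoincide} this forces $L_{1-e}(\ground\times\ground)\simeq(\ground\times\ground)[(1-e)^{-1}]\simeq B_0$ in $\derunderalg{\ground\times\ground}$. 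Applying base change (\autoref{lem:pushoutlemma}) along $\ground\times\ground\to A$ then gives
\[
A_e\ \simeq\ A*^{\mathbb L}_{\ground\times\ground}B_0\ \simeq\ A*^{\mathbb L}_{\ground\times\ground}L_{1-e}(\ground\times\ground)\ \simeq\ L_{1-e}(A)\ \simeq\ A/^{\mathbb L}AeA ,
\]
the last equivalence being the identification of the derived quotient by $e$ with $L_{1-e}(A)$ proved above, and every step is an equivalence of dg $A$-algebras.

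The step I expect to be the main obstacle is the claim that the \emph{strict} pushout $A*_{\ground\times\ground}B_0$ already models the homotopy pushout: this is exactly what the cofibrancy of $B_0$ over $\ground\times\ground$ (the retract argument) and the left properness of $A$ and $\ground\times\ground$ are there to guarantee, and it is where most care is needed. A conceptually cleaner but technically equivalent alternative is to verify the universal property of the derived quotient for $A_e$ directly --- for an $e$-killing dg $A$-algebra $C$ one chooses $\eta$ with $d\eta=e$ and sends $h\mapsto e\eta e$, uniqueness up to homotopy following by constructing the homotopy inside the path object $C\otimes\ground[t,dt]$ and using that $H(eCe)=0$ (since $[e]=0$ in $H(C)$) --- but that route still requires the pushout description in order to know that $A_e$ has the correct homotopy type in $\underalg A$.
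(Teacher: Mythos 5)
Your proof is correct and follows essentially the same route as the paper: realise $A_e$ as the strict pushout $A*_{\ground\times\ground}(\ground\times\ground)_e$, identify $(\ground\times\ground)_e\simeq L_{1-e}(\ground\times\ground)$, invoke left properness to promote the strict pushout to the derived one, and finish by base change (\autoref{lem:pushoutlemma}). The retract of $(\ground\times\ground)\langle h\rangle$ via $h\mapsto ehe$ is a nice explicit justification of the cofibrancy of $\ground\times\ground\hookrightarrow(\ground\times\ground)_e$, which the paper asserts without proof; your substitution of the projective-summand/\autoref{lem:selftensor} argument for the paper's appeal to \autoref{cor:central} is a minor, equally valid, variation.
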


\begin{proof}
Consider the algebra $\ground\times \ground$ spanned by two idempotents $e$ and $1-e$; then $A$ is a $\ground\times \ground$--algebra (since we assumed that $e$ is strict in $A$). Then the dg algebra $(\ground\times\ground)_e$ is easily seen to be quasi-isomorphic to $\ground\cong (\ground\times\ground)[(1-e)^{-1}]\simeq L_{1-e}(\ground\times\ground)$ as $\ground\times \ground$--algebras; here the derived localisation is quasi-isomorphic to the non-derived one, since $\ground\times\ground$ is commutative, c.f.~\autoref{cor:central}. Furthermore the inclusion $\ground\times\ground\hookrightarrow(\ground\times\ground)_e$ is a cofibration (although not a cell inclusion), in contrast to the inclusion $\ground\langle s \rangle \hookrightarrow \ground \langle s,s^{-1}\rangle$, which is not. Then we have the following string of quasi-isomorphisms of dg algebras
\begin{align*}
A/^{\mathbb L}AeA&\simeq L_{1-e}A\\
&\simeq A*^{\mathbb L}_{\ground\times \ground}L_{1-e}(\ground\times \ground)\\
&\simeq A*^{\mathbb L}_{\ground\times \ground} \ground\\
&\simeq  A*_{\ground\times \ground} (\ground\times\ground)_e\\
&\cong A_e
\end{align*}
as required. We are making use of \autoref{lem:pushoutlemma} for the second quasi-isomorphism and \autoref{cor:balancing} for the fourth.
\end{proof}

\begin{remark}
The construction of $A_e$ is indeed a version of Drinfeld's quotient. Note that a set $e_1,\ldots,e_n$ of strict orthogonal idempotents in a dg algebra $A$  with $\sum_{i=1}^ne_n=1$ determines a dg category with $n$ objects (let us call them $e_1,\ldots,e_n$) so that the space of morphisms from $e_i$ to $e_j$ is $e_iAe_j$. Conversely, any dg category with $n$ objects gives rise to a dg algebra, called its category algebra, whose elements are morphisms and the product corresponds to their composition. Then for a single idempotent $e$, the dg algebra $A_e$ is the category algebra of the Drinfeld quotient of the corresponding category with two objects $e$ and $1-e$ by the object $e$.
\end{remark}

\section{The group completion theorem}
Let $M\in\SMon$ be a simplicial monoid. Taking the monoid algebra degree-wise, we obtain a simplicial algebra $\ground[M]\in\SAlg$. Further, taking the Dold--Kan normalisation functor $N$, we get a chain complex $C(M):=N\ground[M]$. Note that the $X\mapsto C(X)$ from the category of simplicial sets to chain complexes is a lax monoidal functor. In particular, there is a natural transformation (Eilenberg--Zilber map); $C(X)\otimes C(Y)\to C(X\times Y)$ satisfying an appropriate associativity condition; in addition $C$ is a \emph{strictly unital} functor meaning that the simplicial monoidal unit (the one-point simplicial set) gets sent to the monoidal unit in $\ground$--modules, i.e.~$\ground$. It follows that $C(M)$ is a monoid in $\ground$--modules, i.e.~a dg algebra, so the functor $X\mapsto C(X)$ is a functor from $\SMon\to \dgalg$. We can regard $\pi_0(M)$ as a subset of $H_0(M)\subset H(M)=H(C(M))$.

\subsection{Simplicial localisation}
Recall that the category of simplicial monoids is a closed model category and the \emph{Dwyer--Kan simplicial localisation} of $M$ at $S\subset \pi_0(M)$, which we denote $L_S^{\mathrm{DK}}(M)$, is constructed in \cite{dk:simploc} by first choosing a zero simplex of $M$ in each connected component in $S$ to obtain a map $\tilde{S}\to M$ from the free monoid $\tilde{S}$ generated by $S$ to $M$. Then replace this by a weakly equivalent cofibration $\tilde{S}\cof \tilde M$ and form the pushout along the map $\tilde{S}\to \tilde{S}[\tilde{S}^{-1}]$, where $\tilde{S}[\tilde{S}^{-1}]$ is the monoid obtained by freely adjoining inverses to all the zero simplices of $\tilde{S}$. Note that, since the category of simplicial monoids is left proper, this is a homotopy pushout.

\begin{theorem}\label{thm:simploc}
Let $M$ be a simplicial monoid and let $S\subset \pi_0(M)$ be an arbitrary subset. The dg algebra $L_{S}(C(M))$ is quasi-isomorphic to $C(L_S^{\mathrm{DK}}(M))$.
\end{theorem}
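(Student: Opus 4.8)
The plan is to realise both sides as homotopy pushouts and match them under $C$. On the algebra side, \autoref{cor:alglocalisation} identifies $L_S(C(M))$ with the derived free product $C(M)\ast^{\mathbb L}_{\ground\langle S\rangle}\ground\langle S, S^{-1}\rangle$, that is, with the homotopy pushout in $\dgalg$ of $\ground\langle S, S^{-1}\rangle\leftarrow\ground\langle S\rangle\to C(M)$, where the right-hand map is determined by a choice of degree-$0$ cycles representing the classes of $S\subset\pi_0 M\subset H_0(C(M))$. On the simplicial side, $L_S^{\mathrm{DK}}(M)$ is by construction the homotopy pushout in $\SMon$ of $\tilde S[\tilde S^{-1}]\leftarrow\tilde S\cof\tilde M$, the displayed map $\tilde S\to M$ (and hence $\tilde S\to\tilde M$) being given by the chosen $0$-simplices in the components of $S$, and this pushout being a homotopy pushout by left properness of $\SMon$ as noted in the construction. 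So it suffices to show that $C$ carries the second diagram to the first and preserves this homotopy pushout.

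The identification of the diagrams is straightforward. The monoid $\tilde S$ is the free monoid on the set $S$, viewed as a constant simplicial monoid, so $C(\tilde S)=N\ground[\tilde S]=\ground\langle S\rangle$ concentrated in degree $0$. Freely inverting every element of the free monoid $\tilde S$ produces the free group on $S$; hence $C(\tilde S[\tilde S^{-1}])$ is its group algebra, which is exactly $\ground\langle S, S^{-1}\rangle$ (generators $s,s^{-1}$ with $ss^{-1}=s^{-1}s=1$) concentrated in degree $0$, and $C$ of the map $\tilde S\to\tilde S[\tilde S^{-1}]$ is the canonical inclusion $\ground\langle S\rangle\hookrightarrow\ground\langle S, S^{-1}\rangle$. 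Finally the chosen $0$-simplices of $M$ are, in particular, degree-$0$ cycles of $C(M)$ representing the classes of $S$, so $C$ applied to $\tilde S\to M$ is precisely the structure map $\ground\langle S\rangle\to C(M)$ appearing in \autoref{cor:alglocalisation}; and $C(\tilde M)\xrightarrow{\sim}C(M)$ since $\tilde M\simeq M$, which does not change the derived free product.

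The technical heart is that $C$ preserves this homotopy pushout. Factor $C$ as $\SMon\xrightarrow{\ground[-]}\SAlg\xrightarrow{N}\dgalg$, with $\SAlg$ the category of simplicial $\ground$-algebras. The monoid-algebra functor $\ground[-]$ is left adjoint to the forgetful functor and is left Quillen (its underlying functor $\SSet\to$ simplicial $\ground$-modules is left Quillen), so it preserves weak equivalences and homotopy pushouts. The normalisation functor $N$, equipped with its Eilenberg--Zilber (shuffle) lax monoidal structure, realises the monoidal form of the Dold--Kan correspondence, under which $\SAlg$ is Quillen equivalent to the category of connective dg algebras; being one half of a Quillen equivalence, $N$ preserves weak equivalences (on underlying modules it is the Dold--Kan equivalence) and homotopy pushouts — the (derived) functor of either adjoint in a Quillen equivalence preserves homotopy colimits, since the inverse of a homotopy-colimit-preserving equivalence preserves them too. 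As homotopy pushouts of connective dg algebras stay connective (by the standard filtration by iterated tensor powers, cf.\ the proof of \autoref{thm:flatleftproper}) and agree with those computed in $\dgalg$, we conclude that $C$ preserves weak equivalences and homotopy pushouts. Therefore
\[
C(L_S^{\mathrm{DK}}(M))\;\simeq\;C(M)\ast^{\mathbb L}_{\ground\langle S\rangle}\ground\langle S, S^{-1}\rangle\;\simeq\;L_S(C(M)),
\]
the last equivalence being \autoref{cor:alglocalisation}. I expect the homotopy-pushout invariance of the normalisation functor on algebras to be the only genuinely delicate point. A reader wishing to avoid invoking the monoidal Dold--Kan equivalence can argue more directly: since $C(M)$ is degreewise a retract of a free $\ground$-module it is $\ground$-flat, hence left proper by \autoref{thm:flatleftproper}, so by \autoref{cor:balancing} $L_S(C(M))$ is the \emph{strict} pushout $C(M)\ast_{\ground\langle S\rangle}\widetilde{\ground\langle S, S^{-1}\rangle}$ for a cofibrant replacement of $\ground\langle S, S^{-1}\rangle$ over $\ground\langle S\rangle$; one then compares this with $N$ applied to the strict pushout $\ground[\tilde M]\ast_{\ground[\tilde S]}\ground[\tilde S[\tilde S^{-1}]]$ of simplicial algebras, using that $\ground[-]$ preserves pushouts.
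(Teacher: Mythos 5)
Your proposal is correct and follows essentially the same route as the paper: factor $C$ through simplicial $\ground$-algebras and connective dg algebras, show each factor preserves homotopy pushouts (left Quillen for $\ground[-]$ and the inclusion of connective into all dg algebras, Quillen equivalence for $N$), identify $C(\tilde S)\simeq\ground\langle S\rangle$ and $C(\tilde S[\tilde S^{-1}])\simeq\ground\langle S,S^{-1}\rangle$, and invoke \autoref{cor:alglocalisation}. Two small points of comparison: where the paper simply records that the inclusion $i\co\dgalg_{\geq 0}\to\dgalg$ is left Quillen, you instead argue directly that homotopy pushouts of connective dg algebras stay connective and coincide with those computed in $\dgalg$ — this works but is a slightly longer route to the same fact; and your closing remark about the direction of the monoidal Dold--Kan Quillen equivalence ($N$ is the \emph{right} adjoint, as in Schwede--Shipley, so one should know that it still detects and preserves homotopy colimits because the derived adjoints of a Quillen equivalence are mutually inverse equivalences of $\infty$-categories) is an accurate and useful clarification of a step the paper leaves compressed. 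Your sketched alternative at the end, using $\ground$-flatness of $C(M)$, \autoref{thm:flatleftproper} and \autoref{cor:balancing} to reduce to strict pushouts and compare term by term, is a perfectly reasonable hands-on variant, though you should be aware that comparing $N$ of a strict pushout of simplicial algebras to the strict pushout in $\dgalg$ still requires the lax monoidal structure of $N$ to give a map, and a spectral sequence (or filtration) argument to see it is a quasi-isomorphism — so it trades the monoidal Dold--Kan theorem for a more explicit but not shorter computation.
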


\begin{proof}
The functor $X\mapsto C(X)$ factors as
\[
X\mapsto \ground[X] \mapsto N\ground [X]\mapsto i\circ N \ground [X] \simeq C(X),
\]
where $i$ is the inclusion of non-negatively graded $\ground$--modules into $\mathbb{Z}$--graded $\ground$--modules.

The functor $X\mapsto \ground [X]$ is a strongly monoidal left Quillen functor (strongly monoidal meaning that the natural transformation $\ground[X]\otimes\ground[Y]\to\ground[X\times Y]$ is an isomorphism for all simplicial sets $X$ and $Y$). Therefore it induces a left Quillen functor $\SMon\to\SAlg$ from simplicial monoids to monoids in simplicial $\ground$--modules, i.e.~simplicial $\ground$--algebras (recall that the category of simplicial algebras is a closed model category with weak equivalences being quasi-isomorphisms of the associated chain complexes and fibrations being level-wise surjective maps). Moreover, the functor $N$ is part of a Quillen equivalence between $\SAlg$ and $\dgalg_{\geq 0}$, see \cite{schwedeshipley:equivs}. Finally the functor $i$ is a left Quillen functor $\dgalg_{\geq 0}\to \dgalg$. Since Quillen equivalences and left Quillen functors preserve homotopy pushouts up to weak equivalence, the functor $X \mapsto C(X)$ from $\SMon$ to $\dgalg$ preserves homotopy pushouts.

Since $C(\tilde{S}) \simeq \ground\langle S \rangle$ and $C(\tilde{S}[\tilde{S}^{-1}])\simeq \ground \langle S, S^{-1} \rangle$, then $C(L_S^{\mathrm{DK}}(M))\simeq C(M)\ast_{\ground \langle S \rangle}^{\mathbb{L}} \ground \langle S, S^{-1} \rangle\simeq L_S(C(M))$.
\end{proof}

\subsection{Group completion}
We can use the theory of derived localisation to give a very natural proof of the group completion theorem. Many variations and proofs of this theorem have been given (see, for example, \cite{mcduffsegal:gct,jardine:gct,moerdijk:gct,tillmann:stablemapping,ps:gct,pm:gct}). However, in addition to a new proof, our approach also offers a conceptual explanation of the group completion theorem by showing it is really just a topological restatement of our results concerning the vanishing of higher derived terms of the derived localisation.

Recall the following fact, which says that simplicial localisation at all of $\pi_0(M)$ is the \emph{group completion} of $M$.

\begin{proposition}[{\cite[5.5]{dk:simploc}}]
Let $M$ be a simplicial monoid. The simplicial localisation $L_{\pi_0(M)}^{\mathrm{DK}}(M)$ is homotopy equivalent to $\Omega B M$, the based loop space of the classifying space of $M$.
\end{proposition}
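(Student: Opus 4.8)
The plan is to identify $L_{\pi_0(M)}^{\mathrm{DK}}(M)$ with $\Omega BM$ by means of the classifying space functor. First I would observe that localising at \emph{all} of $\pi_0(M)$ produces a group-like simplicial monoid: in the homotopy pushout $\tilde{M}\ast_{\tilde{S}}\tilde{S}[\tilde{S}^{-1}]$ defining $L_{\pi_0(M)}^{\mathrm{DK}}(M)$ every component coming from $S=\pi_0(M)$ is inverted, so $\pi_0$ of the result is a group. I would then invoke the classical fact that for a group-like simplicial monoid $G$ the canonical map $G\to\Omega BG$ is a weak equivalence (equivalently, $B$ and $\Omega$ set up an equivalence between the homotopy theory of group-like simplicial monoids and that of connected pointed spaces). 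Applied to $G=L_{\pi_0(M)}^{\mathrm{DK}}(M)$, this reduces the problem to producing a natural weak equivalence $BL_{\pi_0(M)}^{\mathrm{DK}}(M)\simeq BM$.

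To obtain the latter, recall from the construction above that $L_{S}^{\mathrm{DK}}(M)$ is the homotopy pushout in simplicial monoids of $\tilde{M}\leftarrow\tilde{S}\to\tilde{S}[\tilde{S}^{-1}]$ along the cofibration $\tilde{S}\cof\tilde{M}$, where $\tilde{S}$ is the free simplicial monoid on the set $S=\pi_0(M)$ and $\tilde{S}[\tilde{S}^{-1}]$ the corresponding free simplicial group. The key input is a van Kampen type glueing theorem for classifying spaces, to the effect that $B$ carries this homotopy pushout to the homotopy pushout of $B\tilde{M}\leftarrow B\tilde{S}\to B(\tilde{S}[\tilde{S}^{-1}])$ in spaces. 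Now $\tilde{S}$ is the free product $\ast_{s\in S}\mathbb{N}$ and $\tilde{S}[\tilde{S}^{-1}]$ the free product $\ast_{s\in S}\mathbb{Z}$, and since $B$ sends free products of monoids to wedges of classifying spaces one has $B\tilde{S}\simeq\bigvee_{S}S^1\simeq B(\tilde{S}[\tilde{S}^{-1}])$, the natural map between them being a weak equivalence (on each wedge summand it is $B\mathbb{N}\xrightarrow{\sim}B\mathbb{Z}$). Hence the homotopy pushout collapses onto $B\tilde{M}\simeq BM$, as required.

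Combining the two steps yields $L_{\pi_0(M)}^{\mathrm{DK}}(M)\simeq\Omega BL_{\pi_0(M)}^{\mathrm{DK}}(M)\simeq\Omega BM$, compatibly with the canonical map $M\to\Omega BM$. The step I expect to be the main obstacle is the glueing theorem for $B$ in the second paragraph, since the classifying space functor does not obviously commute with homotopy colimits; one proves it from left properness of the category of simplicial monoids together with the fact that $B$ is modelled by the bar construction, which commutes with the relevant colimits, or quotes it. An alternative that avoids this is to check directly that $M\to\Omega BM$ has the universal property defining $L_{\pi_0(M)}^{\mathrm{DK}}(M)$: any map from $M$ to a group-like simplicial monoid $N$ induces $BM\to BN$ and, after looping and using $N\simeq\Omega BN$, a map $\Omega BM\to N$ under $M$ which is unique up to homotopy --- this is exactly the universal property, because $L_{\pi_0(M)}^{\mathrm{DK}}(M)$, being the above homotopy pushout in the left proper category $\SMon$, is the initial group-like simplicial monoid under $M$.
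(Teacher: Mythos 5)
The paper gives no internal proof of this proposition; it cites it directly from Dwyer--Kan \cite[5.5]{dk:simploc}, so there is no paper argument to compare against. Your sketch has the right architecture, and both routes you propose could in principle be completed, but in each case the step you leave undone is precisely where the work in Dwyer--Kan's proof resides.

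In the first argument, the reduction to $BL_{\pi_0(M)}^{\mathrm{DK}}(M)\simeq BM$ is fine. You then invoke that $B$ carries the defining homotopy pushout of simplicial monoids to a homotopy pushout of spaces, and you correctly flag this as the main obstacle. But the fact that ``$B$ sends free products of monoids to wedges of classifying spaces'' is not independent input: it is a special case of that same gluing theorem (pushout over the trivial simplicial monoid), and for monoids---unlike discrete groups---it is genuinely nontrivial, since $B$ is neither a left adjoint out of $\SMon$ nor right exact in any formal sense. Even $B\mathbb{N}\xrightarrow{\sim}B\mathbb{Z}$ needs an argument (a $\pi_1$-isomorphism that is also a homology isomorphism with local coefficients, say). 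So invoking this while deferring the gluing theorem is circular. Your alternative universal-property argument is cleaner, but it asserts that the Dwyer--Kan pushout is the initial group-like simplicial monoid under $M$ merely ``because'' it is a homotopy pushout in a left proper category. That is not automatic: being a homotopy pushout gives a universal mapping-out property in $\Ho(\SMon)$, not initiality among group-like objects in the homotopy under-category of $M$. Establishing the latter is exactly the kind of work the present paper must do for dg algebras in proving \autoref{lem:simplelocalisation} and \autoref{cor:alglocalisation}, and the simplicial analogue requires similar care (it is the content of Dwyer--Kan's argument). In short, both of your routes are viable strategies, but they postpone rather than discharge the technical core of the cited result.
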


Combining this fact with \autoref{thm:simploc} above we obtain the following.

\begin{theorem}\label{thm:groupcompl}
The dg algebra $L_{\pi_0(M)}C(M)$ is quasi-isomorphic to $C(\Omega \B M)$.\qed
\end{theorem}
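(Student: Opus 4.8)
The plan is to obtain this as a one-line corollary of \autoref{thm:simploc} combined with the Dwyer--Kan identification of the simplicial localisation at all of $\pi_0$ with the group completion. Concretely, I would specialise \autoref{thm:simploc} to the case $S=\pi_0(M)$, which gives a quasi-isomorphism of dg algebras
\[
L_{\pi_0(M)}(C(M)) \simeq C\bigl(L_{\pi_0(M)}^{\mathrm{DK}}(M)\bigr),
\]
and then rewrite the right-hand side using \cite[5.5]{dk:simploc}, according to which the simplicial monoid $L_{\pi_0(M)}^{\mathrm{DK}}(M)$ is homotopy equivalent to $\Omega\B M$ (one may model $\Omega \B M$ by the Moore loop space, or indeed by $L_{\pi_0(M)}^{\mathrm{DK}}(M)$ itself, so that this equivalence is realised by simplicial monoids).

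The remaining step is to note that $C(\Omega\B M)\simeq C\bigl(L_{\pi_0(M)}^{\mathrm{DK}}(M)\bigr)$. This follows because the functor $X\mapsto C(X)$ takes weak equivalences to quasi-isomorphisms: this is already contained in the proof of \autoref{thm:simploc}, where $C$ is factored as a composite $X\mapsto\ground[X]\mapsto N\ground[X]\mapsto iN\ground[X]$ of left Quillen functors (and a Quillen equivalence) each of which preserves the relevant weak equivalences; equivalently it is the standard fact that normalised/singular chains send weak homotopy equivalences to quasi-isomorphisms. Applying $C$ to the equivalence of \cite[5.5]{dk:simploc} and composing with the displayed quasi-isomorphism yields the assertion.

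There is essentially no real obstacle here — the theorem is the formal concatenation of \autoref{thm:simploc} and \cite[5.5]{dk:simploc}, which is why it is marked with \qed in the statement. The only point warranting a word of care, and the mildest of subtleties, is the interpretation of $C(\Omega\B M)$: one should fix a model for $\Omega\B M$ (for instance the Moore loop space, or the simplicial monoid produced by the Dwyer--Kan construction) for which $C(\Omega\B M)$ is literally a dg algebra and for which the comparison map is multiplicative, so that the resulting quasi-isomorphism is one of dg algebras rather than merely of chain complexes. With that convention in place the proof is complete.
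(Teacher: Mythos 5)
Your proof is correct and is exactly the paper's argument: the theorem is stated as an immediate combination of \autoref{thm:simploc} (specialised to $S=\pi_0(M)$) with the Dwyer--Kan identification $L_{\pi_0(M)}^{\mathrm{DK}}(M)\simeq\Omega\B M$, together with the observation that $C$ sends weak equivalences to quasi-isomorphisms. Your remark about fixing a monoid model for $\Omega\B M$ is a reasonable point of care but adds nothing beyond what the paper already implicitly assumes.
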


Of course, a version of \autoref{thm:groupcompl} holds for topological monoids; below $\Omega$ and $\B$ stand for the topological versions of the loop space and the classifying space respectively and $C(X)$ stands for the singular chain complex on a topological space $X$.

\begin{corollary}\label{cor:groupcompl}
Let $M$ be a monoid in topological spaces. Then the dg algebra $L_{\pi_0 (M)}C(M)$ is quasi-isomorphic to $C(\Omega \B M)$.
\end{corollary}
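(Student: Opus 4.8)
The plan is to deduce this from the simplicial statement \autoref{thm:groupcompl} by passing through the singular simplicial set functor $\mathrm{Sing}$. First I would apply $\mathrm{Sing}$ degreewise to the topological monoid $M$; since $\mathrm{Sing}$ is lax (symmetric) monoidal via the Eilenberg--Zilber maps $\mathrm{Sing}(X)\times\mathrm{Sing}(Y)\to\mathrm{Sing}(X\times Y)$, this produces a simplicial monoid $\mathrm{Sing}(M)$. By the very definition of the functor $C$, the dg algebra $C(\mathrm{Sing}(M))=N\ground[\mathrm{Sing}(M)]$ is precisely the (normalised) singular chain complex $C(M)$ of $M$, and the lax monoidality of $\mathrm{Sing}$ makes this an identification of dg algebras. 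Likewise $\pi_0(\mathrm{Sing}(M))=\pi_0(M)$, so the localising sets agree.

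Next I would simply invoke \autoref{thm:groupcompl} for the simplicial monoid $\mathrm{Sing}(M)$, obtaining a quasi-isomorphism of dg algebras
\[
L_{\pi_0(M)}C(M)=L_{\pi_0(\mathrm{Sing}(M))}C(\mathrm{Sing}(M))\simeq C(\Omega\B\,\mathrm{Sing}(M)),
\]
where on the right $\B$ and $\Omega$ denote the simplicial classifying space and based loop space.

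The remaining, and genuinely topological, step is to identify $C(\Omega\B\,\mathrm{Sing}(M))$ with $C(\Omega\B M)$. Here I would use that for any simplicial set $K$ the normalised chain complex $C(K)$ is naturally quasi-isomorphic to the singular chains $C(|K|)$ of its realisation, so it suffices to produce a weak equivalence $|\Omega\B\,\mathrm{Sing}(M)|\simeq\Omega\B M$. The counit $|\mathrm{Sing}(M)|\to M$ is a weak equivalence of topological monoids (realisation commutes with finite products in compactly generated spaces, so $|\mathrm{Sing}(M)|$ is indeed a monoid and the counit is multiplicative), and since $|\mathrm{Sing}(M)|$ is a CW complex it is well-pointed; as replacing $M$ by $|\mathrm{Sing}(M)|$ changes neither side up to weak equivalence (using \autoref{prop:equivloc} on the left), one may assume $M$ itself is well-pointed. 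Then the bar construction is homotopy invariant and commutes with realisation, $|\B\,\mathrm{Sing}(M)|\simeq\B|\mathrm{Sing}(M)|\simeq\B M$, and since $\B\,\mathrm{Sing}(M)$ is a reduced simplicial set the loop space functor commutes with realisation, $|\Omega\B\,\mathrm{Sing}(M)|\simeq\Omega|\B\,\mathrm{Sing}(M)|\simeq\Omega\B M$. Chaining the quasi-isomorphisms then gives the claim.

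I expect the last paragraph to be the main obstacle: everything in it — homotopy invariance and realisation-compatibility of the bar construction, commutation of realisation with looping for reduced simplicial sets, and the well-pointedness bookkeeping — is standard, but it must be assembled carefully, and it is the only place where the topological (as opposed to purely algebraic) content of the statement enters, the algebraic heart being entirely contained in \autoref{thm:groupcompl}.
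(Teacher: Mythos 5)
Your approach is the paper's own: apply $\operatorname{Sing}$ to reduce to \autoref{thm:groupcompl} and then transport along the comparison between simplicial and topological loop/classifying spaces (your third paragraph spells out in detail what the paper asserts in a single line). One small inaccuracy worth noting: $\operatorname{Sing}$, being a right adjoint, preserves products strictly, so the maps $\operatorname{Sing}(X)\times\operatorname{Sing}(Y)\to\operatorname{Sing}(X\times Y)$ are isomorphisms rather than merely lax-monoidal ``Eilenberg--Zilber'' maps --- the Eilenberg--Zilber/Alexander--Whitney maps live at the chain level, not the simplicial-set level --- and the paper invokes precisely this strong product-preservation to produce the simplicial monoid $\operatorname{Sing}(M)$.
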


\begin{proof}
The functor $\operatorname{Sing}$ associating to a topological space the simplicial set of its singular simplices is right adjoint (to the geometric realisation functor) and so, it preserves products. It follows that it transforms topological monoids into simplicial monoids, thus $C(M)$ is indeed a dg algebra. Additionally, the functor $\operatorname{Sing}$ transforms topological loop spaces and classifying spaces to their simplicial counterparts (up to homotopy). The desired statement now follows from \autoref{thm:groupcompl}.
\end{proof}

\autoref{thm:groupcompl} (or \autoref{cor:groupcompl}) could be viewed as a very general form of the group completion theorem. It is most useful when $L_{\pi_0(M)}C(M)$ has no higher derived terms, i.e.~when $H(L_{\pi_0(M)}C(M))\cong H(M)[\pi_0(M)^{-1}]$. By \autoref{thm:stableflat} this happens if and only $H(M)[\pi_0(M)^{-1}]$ is stably flat over $H(M)$. Specialising further, and taking into account \autoref{thm:ore} we have the following corollary.

\begin{corollary}
If $\pi_0(M)\subset H(M)$ is an Ore set, then
\[
H(\Omega \B M)\cong H(M)[\pi_0(M)^{-1}].
\]
\end{corollary}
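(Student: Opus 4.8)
The plan is to combine \autoref{cor:groupcompl} with part~(1) of \autoref{thm:ore}, so the argument is essentially a two-line deduction. First I would set $A := C(M)$, the dg algebra of singular chains on $M$, and record that $H(A) = H(M)$, inside which $\pi_0(M)$ sits as a multiplicatively closed subset of $H_0(M)$: a component is represented by the class of a point lying in it, and the monoid structure on $\pi_0(M)$ is precisely the multiplication of these classes in $H_0(M)$, with $1$ the component of the identity. By hypothesis this subset is a right Ore set in $H(M)$.

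Next I would apply \autoref{thm:ore}(1) with $S = \pi_0(M)$: since $S$ is a right Ore set in $H(A)$, we obtain an isomorphism of $H(A)$--algebras
\[
H\bigl(L_{\pi_0(M)}(C(M))\bigr) \cong H(M)[\pi_0(M)^{-1}].
\]
Then \autoref{cor:groupcompl} supplies a quasi-isomorphism of dg algebras $L_{\pi_0(M)}(C(M)) \simeq C(\Omega\B M)$; passing to homology gives $H\bigl(L_{\pi_0(M)}(C(M))\bigr) \cong H(\Omega\B M)$. Stringing the two isomorphisms together yields $H(\Omega\B M)\cong H(M)[\pi_0(M)^{-1}]$, as required.

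I do not expect a genuine obstacle: the only point deserving a word of care is that the statement "$\pi_0(M)$ is an Ore set'' already presupposes that $\pi_0(M)$ is multiplicatively closed in $H(M)$, which is exactly the observation above about the image of the monoid $\pi_0(M)$ in $H_0(M)$. All the substantive content — that the derived localisation of $C(M)$ at $\pi_0(M)$ models $C(\Omega\B M)$, and that over an Ore set the derived localisation acquires no higher derived terms so that its homology is the ordinary localisation of $H(M)$ — has already been established in \autoref{cor:groupcompl} and \autoref{thm:ore}, so here there is nothing left to do but assemble these facts.
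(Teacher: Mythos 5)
Your proof is correct and matches the paper's own reasoning, which is given (without a displayed proof) in the paragraph preceding the corollary: apply \autoref{thm:groupcompl} (or \autoref{cor:groupcompl}) to identify $H(\Omega\B M)$ with $H(L_{\pi_0(M)}C(M))$, then invoke \autoref{thm:ore}(1) for the Ore set $\pi_0(M)\subset H(M)$ to collapse the derived localisation to the non-derived one.
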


In the special case that $\pi_0(M)$ is central in $H(M)$, we recover the McDuff--Segal group completion theorem \cite{mcduffsegal:gct}.

An alternative formulation of the group completion theorem, which is sometimes more suited to certain applications, is to consider the action of $\pi_0(M)$ on the homology of a certain homotopy colimit $\overline{M}_\infty$. More precisely, take any sequence $x_1,x_2,\dots$ in $\pi_0(M)$ and set
\[
\overline{M}_\infty := \hocolim M \xrightarrow{\cdot x_1} M \xrightarrow{\cdot x_2} \dots
\]
The group completion theorem then becomes the statement that if the (left) action of $\pi_0(M)$ on $H(\overline{M}_\infty)$ is by isomorphisms then $H(\Omega \B M)\cong H(\overline{M}_\infty)$. This formulation follows almost immediately from \autoref{thm:simploc} and \autoref{thm:localisationscoincide} by noting that the construction of $\overline{M}_\infty$ is such that $C(\overline{M}_\infty)$ obviously satisfies the universal property of the localisation of $C(M)$ as a $C(M)$--module, excepting the fact that it may not be $\pi_0(M)$--local, which is precisely the hypothesis, cf.~\autoref{thm:ore2}. Note, however, this hypothesis, namely that for such a sequence $x_1,x_2,\dots$ the module $C(\overline{M}_\infty)$ is $\pi_0(M)$--local, implies that $\pi_0(M)$ is an Ore set in $H(M)$, by \autoref{thm:ore2} (under the assumption $\pi_0(M)$ is countable then, by \autoref{thm:ore}, these hypotheses are in fact equivalent), in which case $H(\overline{M}_\infty)\cong H(M)[\pi_0(M)^{-1}]$. Therefore, this version of the group completion theorem is essentially equivalent to the version above.

\section{Localisation of dg bialgebras}
It is frequently the case that we wish to localise a dg algebra which has additional structure, namely that of a dg bialgebra, for example when considering the dg algebra of chains on a simplicial monoid. Often this dg bialgebra structure will lead to the derived localisation being more easily understood. In this section we will explain this and we will give examples where one can identify completely the localisations of some important dg algebras.

As a motivating example, consider the case of $A:=C(M)$, the dg algebra of chains on a simplicial monoid $M$. The functor $X \mapsto C(X)$ is also \emph{colax comonoidal}. In particular, there is a natural transformation (the Alexander--Whitney map) $C(X\times Y)\to C(X)\otimes C(Y)$ satisfying an appropriate associativity condition. It follows that for any simplicial set $X$ its simplicial chain complex $C(X)$ is a comonoid in $\ground$--modules, i.e.~a dg coalgebra. Therefore, $A$ is a dg bialgebra. Note that there is a 1-1 correspondence between $\pi_0(M)$ and the set of grouplike elements in $A$. We will model the algebraic part of this situation more abstractly. For simplicity, we will restrict to modelling the case where $\pi_0(M)\cong \mathbb{N}$, but more general results can easily be formulated.

Let $A$ be an $\mathbb{N}$--graded monoid in the monoidal category of connected cocommutative dg coalgebras (connected here means having a unique grouplike element). In other words $A$ is a dg bialgebra with $A=\bigoplus_{i\geq 0} A_i$ where each $A_i$ is a connected cocommutative dg coalgebra and the algebra structure is determined by coalgebra maps $A_i\otimes A_j \to A_{i+j}$. Assume further that $A_0=\ground$. It follows that the unique grouplike element $s\in A_1$ freely generates the monoid of grouplike elements in $A$.

Denote by $A_\infty$ the coalgebra obtained as the direct limit of the directed system
\[
A_0\xrightarrow{\cdot s} A_1\xrightarrow{\cdot s} A_2 \xrightarrow{\cdot s}\dots
\]
of coalgebras where $\cdot s$ is right multiplication by the grouplike element $s\in A_1$. The underlying space of $A_\infty$ is the direct limit of the underlying $\ground$--modules and the coalgebra structure is induced by the coalgebra structures on the $A_i$. Then $A_\infty[s,s^{-1}] := A_\infty\otimes \ground [s,s^{-1}]$ is a left $A$--module in coalgebras. The coalgebra structure is the tensor product of the coalgebra structure on $A_\infty$ with that on $\ground [s,s^{-1}]$ defined by declaring $s,s^{-1}$ to be grouplike elements. Then $A$ acts on the left on $A_\infty$ and the corresponding action of $A$ on $A_\infty[s,s^{-1}]$ is defined by declaring $A_i$ to act on $A_\infty[s,s^{-1}]$ by the action on $A_\infty$ multiplied by $s^i$.

\begin{proposition}
Let $A$ be as above. If left multiplication by $s$ on $A_\infty$ is a quasi-isomorphism then as $A$--modules, $L_s(A)\simeq A_\infty[s,s^{-1}]$.
\end{proposition}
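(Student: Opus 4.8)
The plan is to realise the localisation $L_s(A)$ as a mapping telescope and then identify that telescope with $A_\infty[s,s^{-1}]$, using \autoref{thm:ore2} as the bridge. Set $S=\{s\}$ and form the homotopy direct limit
\[
\overline{A}_\infty := \hocolim\left( A\xrightarrow{\cdot s} A\xrightarrow{\cdot s} A \to \cdots\right),
\]
in which every map is right multiplication by the grouplike element $s$, so that $\overline{A}_\infty$ is a left $A$-module. Since $s$ is invertible in $H(L_s(A))$ it lies in $\hat S$, so the constant sequence $y_1=y_2=\cdots=s$, with each $x_n=s$ as representing cycle, is of the type considered in \autoref{thm:ore2} (the suspensions there are trivial as $s$ has degree $0$). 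Hence it will suffice to prove that $\overline{A}_\infty$ is $s$-local and isomorphic to $A_\infty[s,s^{-1}]$ as an $A$-module, whereupon \autoref{thm:ore2} gives $L_s(A)\simeq\overline{A}_\infty\simeq A_\infty[s,s^{-1}]$ as $A$-modules.

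First I would identify $\overline{A}_\infty$ with $A_\infty[s,s^{-1}]$. Because filtered colimits of chain complexes of $\ground$-modules are exact, the mapping telescope is quasi-isomorphic to the strict colimit $\colim\left( A\xrightarrow{\cdot s} A\to\cdots\right)$. I would then write down the map from this strict colimit to $A_\infty[s,s^{-1}]$ which on the $n$-th copy of $A$ sends a homogeneous element $a\in A_i$ to $\bar a\otimes s^{i-n}$, where $\bar a\in A_\infty$ is the image of $a$; it is compatible with the transition maps since $\bar a=\overline{as}$ in $A_\infty$, and it is $A$-linear precisely because $A_i$ acts on $A_\infty[s,s^{-1}]$ by its action on $A_\infty$ multiplied by $s^i$. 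Tracking the powers of $s$ shows this is an isomorphism of chain complexes: it is visibly surjective, and it is injective because $\bar a=0$ in $A_\infty$ forces $as^N=0$ in $A$ for some $N$, so $a$ already vanishes in the colimit.

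Next I would verify $s$-locality. Transporting the $A$-action along the isomorphism just described, the map $l_s$ on $A_\infty[s,s^{-1}]=A_\infty\otimes\ground[s,s^{-1}]$ is the tensor product of left multiplication by $s$ on $A_\infty$ with multiplication by $s$ on $\ground[s,s^{-1}]$. The latter is an isomorphism and, by hypothesis, the former is a quasi-isomorphism; since $\ground[s,s^{-1}]$ is flat over $\ground$, the tensor product $l_s$ is then a quasi-isomorphism of $A_\infty[s,s^{-1}]$, in particular an isomorphism on homology. Thus $\overline{A}_\infty\simeq A_\infty[s,s^{-1}]$ is $s$-local, and \autoref{thm:ore2} completes the argument.

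The only genuine work here is the bookkeeping in the middle step — getting the exponents of $s$ right in the comparison map and checking that it respects the differential and the left $A$-action. The one non-formal ingredient is the hypothesis on $A_\infty$, which enters exactly at the $s$-locality check and is essential: the colimit of left multiplications by $s$ taken along right multiplications by $s$ need not be a homology isomorphism in general, since $sc$ and $cs$ differ.
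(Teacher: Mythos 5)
Your argument is correct and follows essentially the same route as the paper's: identify $A_\infty[s,s^{-1}]$ with the colimit of right multiplications by $s$ on $A$, observe that the hypothesis makes it $s$-local, and then invoke the Ore machinery. The paper cites \autoref{oneelementore} while you cite \autoref{thm:ore2}, but the former is just the one-element specialisation of the latter, and your explicit bookkeeping of the $s$-exponents replaces the paper's slightly terser ``this limit splits as a direct sum of $\mathbb{Z}$ copies of $A_\infty$'' — the same identification, spelled out.
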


\begin{proof}
It is not difficult to see that $A_\infty[s,s^{-1}]$ is isomorphic to the direct limit of the direct system $A\xrightarrow{\cdot s}A\xrightarrow{\cdot s}\dots$. Indeed, this limit splits as a direct sum of $\mathbb{Z}$ copies of directed systems each with limit $A_\infty$ so that it is isomorphic to the direct sum of $\mathbb{Z}$ copies of $A_\infty$ and inspection shows that this is isomorphic to $A_\infty[s,s^{-1}]$ as an $A$--module. But now the statement follows from \autoref{oneelementore}.
\end{proof}

\begin{remark}
The proposition above can be regarded as an algebraic form of (a version of) the group completion theorem. Indeed, set $A=C(M)$ for a simplicial monoid $M=\coprod_{i\in \mathbb{N}} M_i$. Then $L_s(A)\simeq C(\Omega \B M)$ and so the proposition says that if left multiplication by the generator $s\in \pi_0(M_1)$ is a quasi-isomorphism on $M_\infty:=\lim_{\rightarrow}M_i$ then $H(\Omega \B M)\cong H(\mathbb{Z}\times M_\infty)$.
\end{remark}

From now on, $\ground$ will always be assumed to be a field of characteristic zero. If the homology class $[s]$ of $s$ is central in $H(A)$, then $H(A_\infty)$ has an algebra structure induced by the algebra structure on $A$ (the centrality of $[s]$ ensures that this structure is well-defined); moreover it is compatible with the coalgebra structure so that $H(A)$ is a cocommutative bialgebra. We will also assume, in fact, that $H(A)$ is (graded) commutative; it is, thus, a commutative bialgebra with a unique (invertible) grouplike element. Therefore it is a Hopf algebra (see, for example, \cite[Corollary 7.6.11]{radford}).

\begin{proposition}
Let $A$ be as above, with $H(A)$ commutative. Then $H(L_s(A))$ is a Hopf algebra and as Hopf algebras $H(L_s(A))\cong H(A_\infty)[s,s^{-1}]$.
\end{proposition}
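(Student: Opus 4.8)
The plan is to transport the bialgebra structure from $H(A)$ — whose localisation at the grouplike element $s$ is forced to be a bialgebra — and then to recognise that localisation as $H(A_\infty)\otimes\ground[s,s^{-1}]$, a tensor product of Hopf algebras.

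Since $H(A)$ is commutative, $s$ is central in $H(A)$, so \autoref{cor:central} gives a canonical isomorphism of graded algebras $H(L_s(A))\cong H(A)[s^{-1}]$, compatible with the localisation map $H(A)\to H(L_s(A))$. It therefore suffices to equip $H(A)[s^{-1}]$ with a Hopf algebra structure and compute it. The coproduct $\Delta$ of $H(A)$ carries $s$ to the grouplike element $s\otimes s$, which becomes invertible in $H(A)[s^{-1}]\otimes H(A)[s^{-1}]$ (it already is so in the localisation of $H(A)\otimes H(A)$ at the single element $s\otimes s$), so by the universal property of the central localisation both $\Delta$ and the counit extend uniquely to algebra maps on $H(A)[s^{-1}]$. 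Because $H(A)[s^{-1}]$ is commutative its multiplication is an algebra map, so coassociativity, the counit axioms, and the bialgebra compatibility are identities between algebra maps out of tensor powers of $H(A)[s^{-1}]$; each such identity holds after restriction along $H(A)\to H(A)[s^{-1}]$ because $H(A)$ is a bialgebra, and the image of $H(A)$ together with $s^{-1}$ (which is forced to be grouplike) generates $H(A)[s^{-1}]$ as an algebra, so the identities hold on the nose. Thus $H(A)[s^{-1}]$ is a commutative bialgebra.

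Finally I would bring in the weight grading $A=\bigoplus_{i\ge 0}A_i$. Since the coalgebra structure is internal, $\Delta(A_i)\subseteq A_i\otimes A_i$, the induced $\mathbb Z$--grading $H(A)[s^{-1}]=\bigoplus_{k\in\mathbb Z}s^{k}\cdot H(A)[s^{-1}]_{0}$ is compatible with both product and coproduct, and multiplication by $s^{k}$ identifies each graded piece with $H(A)[s^{-1}]_{0}=\colim\bigl(H(A_0)\xrightarrow{\cdot s}H(A_1)\xrightarrow{\cdot s}\cdots\bigr)=H(A_\infty)$, the last equality matching the colimit coalgebra structure on $H(A_\infty)$ with the localised coproduct. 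As $s$ is grouplike, this exhibits an isomorphism of bialgebras $H(A)[s^{-1}]\cong H(A_\infty)\otimes\ground[s,s^{-1}]=H(A_\infty)[s,s^{-1}]$, with $\ground[s,s^{-1}]$ the group Hopf algebra of $\mathbb Z$. Now $H(A_\infty)$ is a Hopf algebra, as noted just before the statement (a connected commutative bialgebra, cf.~\cite[Corollary 7.6.11]{radford}), and a tensor product of Hopf algebras is a Hopf algebra; hence $H(A_\infty)[s,s^{-1}]$, and with it $H(L_s(A))$, is a Hopf algebra, and the isomorphism above is one of Hopf algebras.

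The main obstacle is verifying that $H(A)[s^{-1}]$ is genuinely a bialgebra: one must check that the extended coproduct lands in $H(A)[s^{-1}]\otimes H(A)[s^{-1}]$ rather than in some completion, and that compatibility of product and coproduct survives localisation — both points rely essentially on commutativity of $H(A)$ and on $s$ being grouplike. A lesser subtlety is keeping track of the two gradings (homological and weight) and checking that the internal graded coalgebra structure $\Delta(A_i)\subseteq A_i\otimes A_i$ is exactly what makes the identification with $H(A_\infty)\otimes\ground[s,s^{-1}]$ a morphism of coalgebras. One could instead start from the preceding proposition, $L_s(A)\simeq A_\infty[s,s^{-1}]$ as $A$--modules, but the route above avoids having to compare the a priori unrelated product (from $L_s(A)$) and coproduct (from $A_\infty[s,s^{-1}]$).
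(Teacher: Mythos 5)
The proposal is correct and follows the paper's approach: reduce to the non-derived localisation of $H(A)$ via \autoref{cor:central} (commutativity/centrality of $s$), then recognise a commutative bialgebra with $s$ an invertible grouplike as a Hopf algebra. You fill in substantially more detail than the paper's one-sentence proof — in particular an explicit construction of the coproduct on $H(A)[s^{-1}]$ via the universal property and a weight-grading decomposition to establish the isomorphism with $H(A_\infty)[s,s^{-1}]$ — where the paper leaves the inheritance of the bialgebra structure and the identification with $H(A_\infty)[s,s^{-1}]$ implicit (the latter follows from the preceding proposition and the discussion before the statement).
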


\begin{proof}
Since $H(A)$ is commutative then $H(L_s(A))$ is simply the non-derived localisation of $H(A)$, hence it is also commutative and, since $s$ is invertible, it is a Hopf algebra.
\end{proof}

In this situation, $H(A_\infty)$ is the universal enveloping algebra of its primitive elements, in which case it is just a polynomial bialgebra $S(V)$ generated by the space of primitives $V$. In many examples we will be able to identify the space of primitives explicitly.

\begin{remark}
If, moreover, $A$ is commutative then there is a quasi-isomorphism of dg algebras $H(L_s(A))\to L_s(A)$ by choosing representing cycles for the primitive elements. Thus if $A$ is quasi-isomorphic to a dg commutative algebra then $L_s(A)$ is formal, i.e.~it is in the same quasi-isomorphism class as its homology.
\end{remark}

\subsection{Cyclic homology}
We show how one can interpret the cyclic homology of a dg algebra as the derived localisation of a suitable noncommutative dg algebra.

Given a dg algebra $A$ define
\[
A_{\gl}=\bigoplus_{n=0}^\infty\CE(\gl_n(A))
\]
where $\CE(\gl_n(A))$ stands for the Chevalley--Eilenberg complex of $\gl_n(A)$, the dg Lie algebra of $n\times n$ matrices with entries in $A$. Recall that for a dg Lie algebra $\mathfrak g$ its Chevalley--Eilenberg complex has underlying space defined as $\CE(\mathfrak{g})=S(\Sigma\mathfrak{g})$ where $S$ stands for the cofree connected cocommutative coalgebra on a graded vector space and the differential is induced by the Lie bracket on $\mathfrak g$, it is therefore a connected cocommutative dg coalgebra.

The matrix block addition
\begin{equation}\label{blockadd}
\gl_n(A)\oplus \gl_m(A)\to\gl_{n+m}(A)
\end{equation}
induces a map of dg vector spaces
\[
\CE(\gl_n(A))\otimes \CE(\gl_m(A))\to \CE(\gl_{n+m}(A))
\]
and thus, the structure of a dg bialgebra on $A_{\gl}$.

Since the map \ref{blockadd} is commutative up to conjugation action of $\GL_{m+n}$ and since the latter action is trivial on the Chevalley--Eilenberg cohomology, we conclude that $H(A_{\gl})$ is a graded commutative and cocommutative bialgebra. Moreover, the element $s=1\in \ground \subset \HCE(\gl_1(A))\subset H(A_\gl)$ generates the monoid of grouplike elements. There is a nested system of dg Lie algebras $\gl_0\subset \gl_1\subset\dots$ and we denote the direct limit of this system by $\mathfrak {gl}_\infty$.

Therefore we have following result.

\begin{proposition}\label{thm:lodayquillen}
There is a quasi-isomorphism of $A_\gl$--modules
\[
L_s(A_\gl)\simeq \CE(\gl_\infty(A))[s,s^{-1}].
\]\qed
\end{proposition}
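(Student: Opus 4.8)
The plan is to deduce this directly from the preceding proposition, applied to the dg bialgebra $A_\gl=\bigoplus_{n\ge 0}\CE(\gl_n(A))$, which is of the required form: it is an $\mathbb{N}$--graded monoid in connected cocommutative dg coalgebras with $(A_\gl)_0=\CE(\gl_0(A))=\ground$, and its monoid of grouplike elements is freely generated by $s=1\in\ground\subset\CE(\gl_1(A))$. Thus it suffices to verify the single hypothesis of that proposition, namely that left multiplication by $s$ is a quasi-isomorphism of the coalgebra $(A_\gl)_\infty$, the colimit of $\CE(\gl_0(A))\xrightarrow{\cdot s}\CE(\gl_1(A))\xrightarrow{\cdot s}\cdots$; the proposition will then identify $L_s(A_\gl)$ with $(A_\gl)_\infty[s,s^{-1}]$ as $A_\gl$--modules.

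First I would identify $(A_\gl)_\infty$ with $\CE(\gl_\infty(A))$. Right multiplication by $s$ on $\CE(\gl_n(A))$ sends $x$ to the image of $x\otimes 1$ under $\CE(\gl_n(A))\otimes\CE(\gl_1(A))\to\CE(\gl_{n+1}(A))$, and since $s=1$ is the unit of $\CE(\gl_1(A))$, this is precisely $\CE$ applied to the standard block inclusion $\gl_n(A)\hookrightarrow\gl_{n+1}(A)$. As the functor $\CE(-)=S(\Sigma(-))$ together with its differential commutes with filtered colimits of dg Lie algebras, we get $(A_\gl)_\infty=\colim_n\CE(\gl_n(A))\cong\CE(\colim_n\gl_n(A))=\CE(\gl_\infty(A))$ as dg coalgebras.

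The substance of the argument is the verification that left multiplication by $s$ on $(A_\gl)_\infty$ is a quasi-isomorphism. Right multiplication by $s$ on $(A_\gl)_\infty$ is the identity map, being the colimit of the structure maps of the defining directed system. On each $\CE(\gl_n(A))$, left multiplication by $s$ is $\CE$ of the block inclusion $\gl_n(A)\hookrightarrow\gl_{n+1}(A)$ into the complementary block, which differs from the standard (top-left) inclusion by conjugation by a permutation matrix lying in $\GL_{n+1}(\ground)\subset\GL_{n+1}(A)$. Because the conjugation action of $\GL_{n+1}$ is trivial on Chevalley--Eilenberg cohomology --- the very fact already used above to see that $H(A_\gl)$ is commutative --- the two maps $\HCE(\gl_n(A))\to\HCE(\gl_{n+1}(A))$ induced by left and by right multiplication by $s$ agree. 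Passing to the colimit over $n$, left multiplication by $s$ induces the identity on $H((A_\gl)_\infty)$, hence is a quasi-isomorphism. Invoking the preceding proposition now yields $L_s(A_\gl)\simeq(A_\gl)_\infty[s,s^{-1}]=\CE(\gl_\infty(A))[s,s^{-1}]$ as $A_\gl$--modules, as required.

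The only place any care is needed is this last step: one must be sure that the two block inclusions $\gl_n(A)\hookrightarrow\gl_{n+1}(A)$ arising from left and right multiplication by $s$ are conjugate by an element of $\GL_{n+1}$, and that this group's conjugation action is trivial on $\HCE$. Granting these --- both already implicit in the earlier discussion of $A_\gl$ --- no genuine obstacle remains, the rest being bookkeeping with filtered colimits and block matrices.
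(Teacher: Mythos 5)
Your argument is correct and is exactly the route the paper intends: the proposition carries a $\qed$ because it is a direct application of the immediately preceding proposition, and you verify the one nontrivial hypothesis (that $l_s$ is a quasi-isomorphism on $(A_\gl)_\infty\cong\CE(\gl_\infty(A))$) using the same conjugation-invariance of $\HCE$ under $\GL$ that the paper invokes just above to show $H(A_\gl)$ is commutative. One could shorten your middle step slightly by citing that established commutativity of $H(A_\gl)$ directly, so that $l_s=r_s$ on homology without re-deriving the block-conjugation fact, but the content is the same.
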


Moreover, we may identify the space of primitives of the Hopf algebra $H(L_s(A_{\gl}))$ explicitly.
\begin{theorem}
There is an isomorphism of Hopf algebras
\[
H(L_s(A_\gl))\cong S(\Sigma^{-1}\HCC(A))[s,s^{-1}]
\]
where $\HCC(A)$ is the cyclic homology of $A$.
\end{theorem}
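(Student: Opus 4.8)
The plan is to deduce this from \autoref{thm:lodayquillen} together with the Loday--Quillen--Tsygan theorem. First I would observe that the colimit defining $(-)_\infty$, applied to $A_\gl=\bigoplus_n\CE(\gl_n(A))$, is exactly the filtered colimit of the Chevalley--Eilenberg complexes $\CE(\gl_n(A))$ along the maps induced by the block inclusions $\gl_n(A)\hookrightarrow\gl_{n+1}(A)$; since $\CE(-)$ commutes with filtered colimits we get $(A_\gl)_\infty=\CE(\gl_\infty(A))$ and hence $H((A_\gl)_\infty)=\HCE(\gl_\infty(A))$, the Lie homology of the dg Lie algebra $\gl_\infty(A)$. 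Because $\ground$ is a field of characteristic zero and $H(A_\gl)$ is graded commutative and cocommutative, the structure results established above in this section give an isomorphism of Hopf algebras $H(L_s(A_\gl))\cong\HCE(\gl_\infty(A))[s,s^{-1}]$, in which $s$ is grouplike, $\HCE(\gl_\infty(A))$ carries the product induced by the matrix block sums $\gl_n(A)\oplus\gl_m(A)\to\gl_{n+m}(A)$ (passed to the colimit via $\gl_\infty(A)\oplus\gl_\infty(A)\cong\gl_\infty(A)$) and the canonical coproduct of a Chevalley--Eilenberg complex, and $\HCE(\gl_\infty(A))\cong S(V)$ is the free graded commutative algebra on its space of primitives $V$. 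So it suffices to identify $V$ with $\Sigma^{-1}\HCC(A)$ compatibly with the structure, which, since both sides are free graded commutative algebras on their primitives, reduces to an isomorphism of the graded vector spaces of primitives.

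Second, I would invoke the Loday--Quillen--Tsygan theorem, which asserts precisely that for an associative algebra $A$ over a field of characteristic zero the Lie homology $\HCE(\gl_\infty(A))$ is, as a Hopf algebra, the free graded commutative algebra on a degree shift of the cyclic homology of $A$, with its space of primitives naturally isomorphic to $\Sigma^{-1}\HCC(A)$. Combined with the previous paragraph, this yields the asserted isomorphism of Hopf algebras.

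The hard part will be the dg enhancement of Loday--Quillen--Tsygan: here $A$ is a dg algebra, so $\gl_\infty(A)$ is a dg Lie algebra, $\CE(\gl_\infty(A))$ carries both its Chevalley--Eilenberg differential and the internal differential coming from that of $A$, and $\HCC(A)$ is the cyclic homology of the \emph{dg} algebra $A$ (computed from its mixed complex). To handle this I would use the Loday--Quillen--Tsygan trace map, a natural map of dg coalgebras from $\CE(\gl_\infty(A))$ to the symmetric algebra on a shift of the cyclic chain complex of $A$, which is manifestly multiplicative for block sum. Filtering both source and target by the internal degree inherited from $A$ produces a morphism of spectral sequences which is an isomorphism on the first page, by the classical graded Loday--Quillen--Tsygan theorem applied to $A$ with its differential forgotten; both filtrations are exhaustive and bounded below in each total degree, so the spectral sequences converge and the trace map is a quasi-isomorphism in general. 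Its multiplicativity then promotes this to an isomorphism of Hopf algebras, and restricting to primitives gives the identification required in the first paragraph. (Alternatively, one may simply appeal to the dg form of the theorem available in the literature.)
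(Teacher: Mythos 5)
Your proposal is correct and follows essentially the same route as the paper: combine \autoref{thm:lodayquillen} and the Hopf-algebra structure result established earlier in the section with the Loday--Quillen--Tsygan theorem to identify the primitives with $\Sigma^{-1}\HCC(A)$. The paper's proof is a one-line citation of Loday--Quillen--Tsygan; your extra paragraph carefully justifying the dg enhancement via a filtration/spectral-sequence argument is a sensible elaboration of a point the paper leaves implicit, but it is not a different proof strategy.
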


\begin{proof}
According to Loday--Quillen and Tsygan \cite[Chapter 10]{loday1992:cyclichombook} there is an isomorphism
\[
\HCE(\gl_\infty(A))[s,s^{-1}]\cong S(\Sigma^{-1}\HCC(A))[s,s^{-1}]
\]
where the space of primitive elements is precisely $\Sigma^{-1}\HCC(A)$.
\end{proof}

\begin{remark}\label{lqtformalremark}
Since the conjugation action of $\GL_n$ is trivial on homology, $A_\gl$ is actually quasi-isomorphic to the subalgebra of $\GL_\infty$--invariants which is a dg commutative algebra. Thus $L_s(A_\gl)$ is formal.
\end{remark}

\subsection{Graph homology}
Our next example concerns Kontsevich's graph homology. There are many versions of Kontsevich's graph complex as it is, essentially, the vacuum part of the Feynman transform of a modular operad \cite{getzlerkapranov1998:modularoperads}. Our result can be formulated in this generality but we shall refrain from doing that and instead, work it out in the simplest case of \emph{commutative graph homology}. This is, in some sense, a nonlinear analogue of cyclic homology.

Let $\mathfrak l_n$ be the space of polynomial symplectic vector fields in the linear symplectic vector space $\ground^{2n}$ vanishing at zero; it is a Lie algebra with respect to the commutator bracket. There is a nested system of Lie algebras ${\mathfrak l}_0\subset{\mathfrak l}_1\subset\dots$ and we denote the direct limit of this system by ${\mathfrak l}_\infty$. Note that analogously to the matrix case there is a map ${\mathfrak l}_n\oplus {\mathfrak l}_m\to {\mathfrak l}_{n+m}$ which is commutative up to the action of the linear symplectic group. Let us now define:
\[
A_{\mathfrak l}=\bigoplus_{n=0}^\infty\CE({\mathfrak l}_n)
\]
Just as $A_{\gl}$, the space $A_{\mathfrak l}$ has the structure of an graded monoid in connected cocommutative dg coalgebras and, since the action of the linear symplectic group is trivial on homology, $H(A_\mathfrak{l})$ is a graded commutative and cocommutative bialgebra. The element $s=1\in\ground\in \CE(\mathfrak{l}_1)\subset (A_{\mathfrak{l}})$ generates the monoid of grouplike elements. Then we have the following result.

\begin{proposition}
There is a quasi-isomorphism of $A_\mathfrak{l}$--modules
\[
L_s(A_\mathfrak{l})\simeq \CE(\mathfrak{l}_\infty)[s,s^{-1}].
\]\qed
\end{proposition}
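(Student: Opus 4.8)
The plan is to deduce the statement from the general proposition proved earlier in this section, which identifies $L_s(A)$ with $A_\infty[s,s^{-1}]$ for any $\mathbb{N}$--graded monoid $A$ in connected cocommutative dg coalgebras with $A_0=\ground$, provided left multiplication by the generating grouplike element $s$ is a quasi-isomorphism on $A_\infty$. The argument runs exactly parallel to the matrix case treated in \autoref{thm:lodayquillen}, so only the hypotheses need to be checked.

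First I would record that $A_{\mathfrak l}=\bigoplus_{n\geq 0}\CE(\mathfrak l_n)$ is indeed of the required shape: it is an $\mathbb{N}$--graded monoid in connected cocommutative dg coalgebras, with multiplication $\CE(\mathfrak l_n)\otimes\CE(\mathfrak l_m)\to\CE(\mathfrak l_{n+m})$ induced by the block sum $\mathfrak l_n\oplus\mathfrak l_m\to\mathfrak l_{n+m}$, with $A_0=\CE(\mathfrak l_0)=\CE(0)=\ground$, and with $s=1\in\ground\subset\CE(\mathfrak l_1)$ the canonical grouplike element generating the monoid of grouplike elements. Since $\CE(-)=S(\Sigma(-))$ commutes with filtered colimits, the direct limit coalgebra $A_\infty$ of the system $A_0\xrightarrow{\cdot s}A_1\xrightarrow{\cdot s}\cdots$, whose transition maps are precisely the maps $\CE(\mathfrak l_n)\to\CE(\mathfrak l_{n+1})$ induced by the block inclusions $\mathfrak l_n\hookrightarrow\mathfrak l_{n+1}$, is canonically identified with $\CE(\mathfrak l_\infty)$.

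The one substantive point, which I expect to be the main obstacle, is to verify that left multiplication by $s$ on $A_\infty=\CE(\mathfrak l_\infty)$ is a quasi-isomorphism. Here I would argue as in the matrix case: right multiplication by $s$ on $A_\infty$ is the colimit of the transition maps of the defining directed system, hence an isomorphism, so it suffices to show that $l_s$ and $r_s$ induce the same map on $H(A_\infty)$. For each $n$ the maps $l_s,r_s\colon\CE(\mathfrak l_n)\to\CE(\mathfrak l_{n+1})$ are induced by the two evident block inclusions $\mathfrak l_n\hookrightarrow\mathfrak l_{n+1}$ (as symplectic vector fields supported on the first, respectively the last, $2n$ coordinates of $\ground^{2(n+1)}$), and these differ by conjugation by a linear symplectomorphism of $\ground^{2(n+1)}$, i.e.~by an element of the linear symplectic group $\mathrm{Sp}_{2(n+1)}$, which acts trivially on the Chevalley--Eilenberg homology of $\mathfrak l_{n+1}$ (the same triviality already invoked to see that $H(A_{\mathfrak l})$ is commutative and cocommutative). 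Passing to the limit, $l_s$ and $r_s$ agree on $H(A_\infty)$, so $l_s$ is a quasi-isomorphism. The general proposition then yields a quasi-isomorphism of $A_{\mathfrak l}$--modules $L_s(A_{\mathfrak l})\simeq A_\infty[s,s^{-1}]\cong\CE(\mathfrak l_\infty)[s,s^{-1}]$, as required.
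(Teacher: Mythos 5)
Your proposal is correct and follows essentially the same route as the paper: the result is stated there with a \qed precisely because it is a direct application of the general proposition earlier in the section, together with the already-recorded fact that the linear symplectic group acts trivially on Chevalley--Eilenberg homology (which is what makes $H(A_{\mathfrak l})$ commutative, hence $l_s$ a quasi-isomorphism on $A_\infty$). You have merely unpacked the verification of the hypothesis more explicitly than the paper does, which is a reasonable elaboration rather than a different proof.
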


As above, we may identify the space of primitives of the graded Hopf algebra $H(L_s(A_\mathfrak{l}))$ explicitly; the only difference is that the Loday--Quillen--Tsygan result should be replaced by the corresponding result of Kontsevich \cite{kontsevich1993:ncsg,kontsevich1994:feynman}.

\begin{theorem}
There is an isomorphism of Hopf algebras
\[
H(L_s(A_\mathfrak{g}))\cong S(\Sigma^{-1}\mathrm{H}^\Gamma)[s,s^{-1}]
\]
where $\mathrm{H}^\Gamma$ is the (commutative) graph homology.\qed
\end{theorem}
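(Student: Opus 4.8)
The plan is to run the same argument as in the cyclic homology case, with Kontsevich's computation playing the role of the Loday--Quillen--Tsygan theorem. First I would apply the preceding proposition, which gives a quasi-isomorphism of $A_{\mathfrak l}$--modules $L_s(A_{\mathfrak l})\simeq \CE(\mathfrak l_\infty)[s,s^{-1}]$, hence an isomorphism $H(L_s(A_{\mathfrak l}))\cong H(\CE(\mathfrak l_\infty))[s,s^{-1}]$. Since $H(A_{\mathfrak l})$ is graded commutative, $H(L_s(A_{\mathfrak l}))$ is the non-derived localisation of $H(A_{\mathfrak l})$, so this isomorphism is in fact one of graded commutative, cocommutative bialgebras in which $s$ is grouplike and invertible, i.e.\ of Hopf algebras.

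Next I would analyse $H(\CE(\mathfrak l_\infty))$ as a Hopf algebra. It is a connected graded commutative and cocommutative Hopf algebra over a field of characteristic zero, so by the structure theorem for such Hopf algebras (cf.\ \cite[Corollary 7.6.11]{radford} together with the graded Milnor--Moore theorem) it is the free graded-commutative algebra $S(V)$ on its space $V$ of primitive elements, and $V$ is canonically the reduced, suspended Chevalley--Eilenberg homology of $\mathfrak l_\infty$. The key input is then Kontsevich's theorem \cite{kontsevich1993:ncsg,kontsevich1994:feynman}, which identifies this stable primitive space with the commutative graph homology: $V\cong \Sigma^{-1}\mathrm{H}^\Gamma$, so that $H(\CE(\mathfrak l_\infty))\cong S(\Sigma^{-1}\mathrm{H}^\Gamma)$ as Hopf algebras. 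Tensoring with $\ground[s,s^{-1}]$ (with $s$ grouplike) and combining with the first step yields $H(L_s(A_{\mathfrak l}))\cong S(\Sigma^{-1}\mathrm{H}^\Gamma)[s,s^{-1}]$, as claimed.

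Since the genuinely difficult content is entirely contained in the cited theorem of Kontsevich and in the already-established module-level description of $L_s(A_{\mathfrak l})$, no serious obstacle remains. The only point needing a little care --- handled exactly as in the cyclic homology case --- is that the comultiplication on $A_{\mathfrak l}$ is induced from those on the Chevalley--Eilenberg complexes $\CE(\mathfrak l_n)$ and is compatible with the stabilisation maps $\mathfrak l_n\hookrightarrow \mathfrak l_\infty$, so that the identifications above are genuinely compatible with the coalgebra structure and with the grouplike element $s$.
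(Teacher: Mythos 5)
Your proof is correct and matches the paper's intended argument exactly: the paper explicitly states that the graph homology case is handled ``as above'' in the cyclic homology section, with Kontsevich's theorem substituting for Loday--Quillen--Tsygan, which is precisely the route you take (using the preceding proposition for the module identification, the general Hopf-algebra lemma for commutative $H(A)$, and Kontsevich for the stable primitives). The one minor point worth flagging is that the theorem statement has a typo ($A_{\mathfrak{g}}$ for $A_{\mathfrak{l}}$), which you have implicitly and correctly corrected.
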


\begin{remark}
Similar to \autoref{lqtformalremark}, since the conjugation action of the linear symplectic group is trivial on homology, $A_\mathfrak{l}$ is quasi-isomorphic to the subalgebra of invariants which is a dg commutative algebra. Thus $L_s(A_\mathfrak{l})$ is formal.
\end{remark}

\subsection{Algebraic $K$--theory}
Let $A$ be an associative algebra and form the topological monoid $M=\coprod_{n=0}^\infty\B\GL_n(A)$. The monoid structure is induced by the block addition of matrices $GL_n(A)\times GL_m(A)\to GL_{n+m}(A)$. Since the latter is commutative up to conjugation, this monoid is homotopy commutative.

Consider the cocommutative dg bialgebra $\mathcal{A} =C(M)$. Denote by $s$ the element in $H_0\B\GL_1(A)$ corresponding to the base point in $\B\GL_1(A)$. Note that the direct limit of the sequence of complexes $C(\B\GL_0(A))\xrightarrow{\cdot s}C(\B\GL_1(A))\xrightarrow{\cdot s}\dots$ can be identified with $C(\B\GL_\infty)(A)$. We obtain the following:

\begin{proposition}
There is a quasi-isomorphism of $\mathcal{A}$--modules
\[
L_s(\mathcal A)\simeq C(\B\GL_\infty(A))[s,s^{-1}].
\]\qed
\end{proposition}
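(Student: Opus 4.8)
The plan is to recognise $\mathcal{A}=C(M)$ as an instance of the situation treated earlier in this section and to apply the Proposition above which, for $\mathcal{A}$ an $\mathbb{N}$--graded monoid in connected cocommutative dg coalgebras with $\mathcal{A}_0=\ground$, identifies $L_s(\mathcal{A})$ with $\mathcal{A}_\infty[s,s^{-1}]$ as $\mathcal{A}$--modules as soon as left multiplication by $s$ is a quasi-isomorphism on $\mathcal{A}_\infty$. First I would check the hypotheses: $\mathcal{A}=C(M)=\bigoplus_{n\geq 0}C(\B\GL_n(A))$ is indeed of this form, since each $\B\GL_n(A)$ is connected, the bialgebra structure is the one induced by block sum of matrices on $M$ together with the Eilenberg--Zilber and Alexander--Whitney transformations, $\mathcal{A}_0=C(\B\GL_0(A))=C(\pt)=\ground$, and the basepoint $s\in\mathcal{A}_1$ is the grouplike element freely generating the monoid of grouplikes. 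As already noted above, the associated $\mathcal{A}_\infty=\colim\bigl(C(\B\GL_0(A))\xrightarrow{\cdot s}C(\B\GL_1(A))\xrightarrow{\cdot s}\cdots\bigr)$ is canonically $C(\B\GL_\infty(A))$, because singular chains commute with filtered colimits of spaces.

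The one point that needs verifying, and the step I expect to be the main obstacle, is that left multiplication by $s$ acts as a quasi-isomorphism on $\mathcal{A}_\infty$. Left multiplication by $s$ is induced by the stabilisation maps $\B\GL_n(A)\to\B\GL_{n+1}(A)$ sending $P$ to the block sum $(1)\oplus P$, while the transition maps defining $\mathcal{A}_\infty$ are induced by $P\mapsto P\oplus(1)$. I would observe that these two maps $\GL_n(A)\to\GL_{n+1}(A)$ differ by conjugation by a fixed permutation matrix $\sigma\in\GL_{n+1}(\mathbb{Z})\subseteq\GL_{n+1}(A)$ (namely $(1)\oplus P=\sigma(P\oplus(1))\sigma^{-1}$), and that conjugation by a fixed element of a group induces a self-map of the classifying space homotopic to the identity. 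Hence left and right multiplication by $s$ induce chain-homotopic maps $\mathcal{A}_n\to\mathcal{A}_{n+1}$, so on passing to the colimit along the right multiplications—where right multiplication by $s$ becomes the identity—left multiplication by $s$ induces the identity on $H(\mathcal{A}_\infty)$, in particular a quasi-isomorphism. This is precisely the usual manifestation of the homotopy commutativity of $M$ forcing the $\pi_0(M)$--action on the stable homology to be invertible.

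Granting this, the Proposition above applies and yields the quasi-isomorphism of $\mathcal{A}$--modules $L_s(\mathcal{A})\simeq\mathcal{A}_\infty[s,s^{-1}]=C(\B\GL_\infty(A))[s,s^{-1}]$, which is the assertion. Alternatively, the same homotopy commutativity shows that $s$ is central in the Pontryagin ring $H(\mathcal{A})=H_*(M)$, so that $\{1,s,s^2,\dots\}$ is a (central, hence right) Ore set in $H(\mathcal{A})$, and one may instead invoke \autoref{oneelementore} together with \autoref{thm:ore2}: the telescope $\hocolim\bigl(\mathcal{A}\xrightarrow{\cdot s}\mathcal{A}\xrightarrow{\cdot s}\cdots\bigr)$ splits, up to quasi-isomorphism, as a direct sum of $\mathbb{Z}$ shifted copies of $C(\B\GL_\infty(A))$ on which $s$ acts invertibly, hence is an $s$--local $\mathcal{A}$--module realising $L_s(\mathcal{A})$, and this sum is exactly $C(\B\GL_\infty(A))[s,s^{-1}]$.
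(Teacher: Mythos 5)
Your proof is correct and follows essentially the same route as the paper: recognise $\mathcal{A}=C(M)$ as an instance of the abstract bialgebra setup earlier in the section, identify $\mathcal{A}_\infty$ with $C(\B\GL_\infty(A))$, and apply the general proposition $L_s(A)\simeq A_\infty[s,s^{-1}]$. The paper leaves the verification that $l_s$ acts as a quasi-isomorphism on $\mathcal{A}_\infty$ implicit (it follows from the homotopy commutativity of $M$, already noted there to hold because block sum is commutative up to conjugation), whereas you spell out the conjugation-by-a-permutation-matrix argument explicitly; this is a reasonable elaboration rather than a different approach.
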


The $K$--groups of $A$ are defined as $K_i(A)=\pi_i(\Omega \B M), i=1,2,\dots$. Denote by $K_*^{\mathbb{Q}}(A)=\pi_*(\Omega \B M)\otimes\mathbb{Q}=\oplus_{i=1}^\infty\Sigma^iK_i(A)\otimes\mathbb{Q}$, the direct sum of (appropriately shifted) $K$--groups of $A$ tensored with the rationals. Then we have:

\begin{theorem}\label{ktheory}
There is an isomorphism of Hopf algebras
\[
H(L_s(\mathcal A))\otimes \mathbb Q\cong S(K_*^{\mathbb{Q}}(A))[s,s^{-1}].
\]\qed
\end{theorem}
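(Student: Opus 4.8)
The plan is to reduce the assertion to the rational homotopy theory of the $K$--theory space. The dg bialgebra $\mathcal A=C(M)$ is an $\mathbb N$--graded monoid in connected cocommutative dg coalgebras with $\mathcal A_0=\ground$, its weight $n$ component being $C(\B\GL_n(A))$; moreover $H(\mathcal A)=\bigoplus_n H_*(\B\GL_n(A))$ is commutative, since block sum of matrices is commutative up to a conjugation that acts trivially on homology. Hence the proposition preceding this theorem, in the form upgrading it to an isomorphism of Hopf algebras (as in the dg bialgebra results above), gives
\[
H(L_s(\mathcal A))\;\cong\;H_*(\B\GL_\infty(A);\ground)\otimes\ground[s,s^{-1}],
\]
with $s$ grouplike. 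It therefore suffices to prove that, rationally, $H_*(\B\GL_\infty(A);\mathbb Q)$ is the free graded-commutative Hopf algebra on $K_*^{\mathbb{Q}}(A)$.

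For this I would use Quillen's plus construction. The map $\B\GL_\infty(A)\to\B\GL_\infty(A)^+$ is a homology isomorphism, and $\B\GL_\infty(A)^+$ is the identity component of $\Omega\B M$; since $M$ is homotopy commutative, $\Omega\B M$, and hence $\B\GL_\infty(A)^+$, is a homotopy-commutative $H$--space (in fact an infinite loop space), with $\pi_i(\B\GL_\infty(A)^+)=K_i(A)$ for $i\ge 1$. Being an $H$--space it is simple, hence nilpotent, so its rational homology is a connected Hopf algebra which is cocommutative (via the diagonal) and commutative (by homotopy commutativity of the Pontryagin product). By the Milnor--Moore structure theorem over a field of characteristic zero such a Hopf algebra is the free graded-commutative algebra on its space of primitives, and by the rational Hurewicz (Cartan--Serre) theorem for nilpotent spaces the Hurewicz map identifies those primitives with $\pi_*(\B\GL_\infty(A)^+)\otimes\mathbb Q=\bigoplus_{i\ge 1}\Sigma^iK_i(A)\otimes\mathbb Q=K_*^{\mathbb{Q}}(A)$. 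Combining with the displayed isomorphism above and tensoring with $\mathbb Q[s,s^{-1}]$ yields $H(L_s(\mathcal A))\otimes\mathbb Q\cong S(K_*^{\mathbb{Q}}(A))[s,s^{-1}]$ as Hopf algebras.

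The routine points to verify are compatibilities of coalgebra structures: that the $\ground[s,s^{-1}]$ tensor factor accounts for precisely the grouplike elements, and that translation by $s$ identifies the components of $\Omega\B M$ as $H$--spaces, so that passing to the identity component loses nothing. The only genuinely nontrivial ingredient is the rational homotopy input quoted above — that the rational homology of a homotopy-commutative $H$--space is the free graded-commutative algebra on its rational homotopy groups — which here plays the exact role that the Loday--Quillen--Tsygan theorem plays for cyclic homology and Kontsevich's theorem for graph homology; I would cite it rather than reprove it. A small subtlety worth keeping in mind is that the groups $K_i(A)$ need not be finitely generated, so one should invoke the Milnor--Moore and Cartan--Serre results in the form valid for nilpotent spaces not of finite type, without dualising.
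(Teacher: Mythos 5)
Your proposal is correct and takes essentially the same route as the paper, which states the theorem with no separate proof because it follows from the general bialgebra localisation framework together with the observation (spelled out in the remark immediately following) that the rational $K$--groups are the primitives of $H(\Omega\mathrm{B}M)\otimes\mathbb{Q}$. You simply make explicit the ``more or less by definition'' step, invoking the plus construction, Milnor--Moore, and Cartan--Serre in the nilpotent (not necessarily finite-type) form, and you are right that this rational homotopy input plays the same role here that Loday--Quillen--Tsygan and Kontsevich's theorem play in the cyclic and graph homology examples.
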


Similar results can be obtained taking $\mathcal A$ to be $C(\coprod_P\B\Aut(P))$, where the coproduct ranges through isomorphism classes of finitely generated projective $A$--modules. It can also be extended to topological $K$--theory (assuming that $A$ is a Banach $\mathbb C$--algebra). The details are standard and we omit them.

\begin{remark}\label{doubleloop}
It is also easy to see that the dg algebra $L_s(\mathcal{A})$ is rationally formal. Indeed, we have $\Omega\B M\simeq (\Omega\B M)_0\times\mathbb Z$ where $(\Omega\B M)_0$ is the connected component of the identity of the monoid $\Omega\B M$. It is known that the monoid $(\Omega\B M)_0$ is a double loop space (actually an infinite loop space). Any connected loop space rationally splits as a disjoint union of products of odd spheres and loop spaces on odd spheres. It follows that the rational chain dg algebra on $(\Omega\B M)_0$ is formal and thus, so is the rational chain dg algebra on $\Omega\B M$.
\end{remark}

\begin{remark}
\autoref{ktheory} is somewhat circular in the sense that the rational $K$--groups of $A$ are more or less by definition the space of primitive elements of the graded Hopf algebra $H(\Omega \B M)$ which we know from \autoref{thm:groupcompl} is precisely $H(L_s(\mathcal{A}))$.

Since $H(L_s(A_\gl))$ is a graded Hopf algebra, one could likewise \emph{define} the cyclic homology of $A$ as the set of primitive elements in this Hopf algebra, precisely analogously to defining the algebraic $K$--groups of $A$ as the homotopy groups of the group completion of the topological monoid $M$.

A similar remark applies to $A_{\mathfrak l}$, so that the graph homology $\mathrm{H}^\Gamma$ can be identified with the space of primitive elements in the Hopf algebra $H(L_s(A_\mathfrak{l}))$.
\end{remark}

\begin{remark}
The computation of the derived localisation of this last example, being the group completion of a topological monoid, can of course be viewed as an application of the classical group completion theorem, together with \autoref{thm:groupcompl}. However, the first two examples do not arise from topology and illustrate our `algebraic' version of the group completion theorem.
\end{remark}

\section{The stable mapping class group}
In this section we will examine examples of monoids arising from mapping class groups of surfaces and explain how their derived localisations can be computed in terms of the stable mapping class group. While some of these examples can be computed using the classical group completion theorem (although understanding this as computing the derived localisation relies on \autoref{thm:groupcompl}), we show that one can also compute `partial group completions' using our more general theory.

\subsection{The pair of pants gluing}
We begin with an example that follows the same pattern as the examples in the previous section.

Let $\Diff_{g,1}$ denote the topological category whose objects are compact oriented surfaces of genus $g$ with one non-empty boundary component equipped with a collar neighbourhood and whose morphisms are orientation preserving diffeomorphisms which preserve the collar neighbourhoods pointwise. Denote by $\Gamma_{g,1}=\pi_0\Diff_{g,1}$ the corresponding discrete groupoid of connected components. Note that, for a surface $S\in \Gamma_{g,1}$ the group of automorphisms of $S$ as an object in the groupoid $\Gamma_{g,1}$ is the usual mapping class group of $S$, which we denote $\Gamma(S)$. Moreover, the inclusion $\Gamma(S)\hookrightarrow \Gamma_{g,1}$ is an equivalence of categories.

Let $P$ be a fixed surface of genus zero with $3$ ordered, collared boundary components. There are functors
\begin{equation}\label{eq:stable}
\Gamma_{g,1}\times \Gamma_{g',1}\to \Gamma_{g+g',1}
\end{equation}
which are defined on objects as sending $(S,S')$ to $S\#_P S'$, where $S\#_P S'$ denotes the surface obtained by gluing (using the collar neighbourhoods) the boundary of $S$ to the first boundary component of $P$ and the boundary of $S'$ to the second boundary component of $P$. On morphisms these functors are given by extending the diffeomorphisms from $S$ and $S'$ to a diffeomorphism from the surface $S\#_P S'$ by setting it to be the identity on $P$ cf.~\cite{Miller}.

\begin{remark}
The categories $\Diff_{g,1}$ and $\Gamma_{g,1}$ as defined have a proper class of objects. This technical detail can be easily overcome by choosing fixed surfaces for each genus and considering the small subcategories whose objects are obtained from all ways of gluing these surfaces via the operation $\#_P$.
\end{remark}

This map is well-defined and strictly associative. Taking in \ref{eq:stable} $g'=1$ and restricting to a single object with the identity map in $\Gamma_{1,1}$ we obtain an embedding $\Gamma_{g,1}\subset\Gamma_{g+1,1}$; we denote $\Gamma_{\infty}= \colim_{g\to\infty}\Gamma_{g,1}$.

Taking classifying spaces we obtain an associative topological monoid $M'_1=\coprod_{g=0}^{\infty} B\Gamma_{g,1}$. It is not hard to see that $M'_1$ is homotopy commutative. Consider the dg algebra $\mathcal{A}'_1 = C(M'_1)$. Denote by $s$ the element in $H_0(B\Gamma_{1,1})$ corresponding to the basepoint in $B\Gamma_{1,1}$.

\begin{proposition}\label{thm:closedglue}
There is a quasi-isomorphism of $\mathcal{A}'_1$--modules
 \[
L_s(\mathcal{A}'_1)\simeq C(B\Gamma_{\infty})[s,s^{-1}].
\]
\end{proposition}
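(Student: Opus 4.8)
The plan is to deduce the statement from the general dg bialgebra computation already carried out for $A_{\gl}$, $A_{\mathfrak l}$ and $\mathcal A$ in the previous section, since $\mathcal A'_1 = C(M'_1)$ is of exactly the same shape. First I would observe that $\mathcal A'_1 = \bigoplus_{g\geq 0} C(B\Gamma_{g,1})$ is an $\mathbb N$--graded monoid (graded by genus) in connected cocommutative dg coalgebras, the multiplication being induced by the gluing functors \ref{eq:stable}; moreover $C(B\Gamma_{0,1}) = \ground$, because $\Gamma_{0,1}$, the mapping class group of a disc, is trivial by the Alexander trick, and the unique grouplike element $s\in H_0(B\Gamma_{1,1})$ generates the monoid $\pi_0(M'_1)\cong\mathbb N$ of grouplike elements of $\mathcal A'_1$.

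Next I would identify the coalgebra colimit
\[
(\mathcal A'_1)_\infty = \colim\bigl(C(B\Gamma_{0,1})\xrightarrow{\cdot s}C(B\Gamma_{1,1})\xrightarrow{\cdot s}C(B\Gamma_{2,1})\xrightarrow{\cdot s}\cdots\bigr),
\]
whose transition maps are the stabilisation maps coming from the embeddings $\Gamma_{g,1}\subset\Gamma_{g+1,1}$, with $C(B\Gamma_\infty)$; this is immediate from the definition $\Gamma_\infty = \colim_g\Gamma_{g,1}$ together with the fact that singular chains commute with this sequential colimit. I would then verify the one non-formal hypothesis of the general proposition, namely that left multiplication by $s$ is a quasi-isomorphism on $(\mathcal A'_1)_\infty$: since $M'_1$ is homotopy commutative, left and right multiplication by $s$ induce chain-homotopic self-maps of $C(M'_1)$ and hence of $(\mathcal A'_1)_\infty$, while right multiplication by $s$ is invertible on $(\mathcal A'_1)_\infty$ essentially by construction of the colimit. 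Applying the general result for dg bialgebras of this type — or, what amounts to the same thing, \autoref{oneelementore} with $s = y_1 = y_2 = \cdots$, after noting that $(\mathcal A'_1)_\infty[s,s^{-1}]$ is the module $\mathcal A'_1\otimes_{\ground\langle s\rangle}\ground\langle s,s^{-1}\rangle$ — then yields $L_s(\mathcal A'_1)\simeq (\mathcal A'_1)_\infty[s,s^{-1}] = C(B\Gamma_\infty)[s,s^{-1}]$ as $\mathcal A'_1$--modules.

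I do not expect a serious obstacle, as the argument runs in parallel with the earlier examples; the only points calling for a little care are the homotopy-commutativity input for $M'_1$ (which the text already asserts, coming from the fact that interchanging the two incoming boundary components of the pair of pants $P$ is realised by a diffeomorphism of the glued surface, so that the two orderings of the gluing give conjugate, hence homotopic, multiplication maps) together with the compatibility of the resulting chain homotopies with the colimit, and the minor technical matter of realising $C(M'_1)$ as a genuine connected cocommutative-coalgebra-valued monoid rather than merely an $E_\infty$ one, which is harmless over a field of characteristic zero and can in any case be bypassed by arguing directly with \autoref{oneelementore}. As a consistency check, since $s$ generates all of $\pi_0(M'_1)$ this proposition is also an instance of \autoref{thm:groupcompl}, identifying $L_s(\mathcal A'_1)$ with $C(\Omega\B M'_1)$, so it is precisely a chain-level group completion statement for the pair-of-pants monoid.
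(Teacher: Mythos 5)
Your proposal is correct and follows the same route the paper implicitly intends: apply the general Proposition on $\mathbb N$--graded monoids in connected cocommutative dg coalgebras (equivalently, invoke \autoref{oneelementore} directly) after identifying $(\mathcal A'_1)_\infty$ with $C(B\Gamma_\infty)$ and using homotopy commutativity of $M'_1$ — together with homological stability/commutativity of $H(\mathcal A'_1)$ — to see that $\mathcal A'_1\otimes_{\ground[s]}\ground[s,s^{-1}]$ is $s$--local. Your observation that the Alexander--Whitney coproduct is only cocommutative up to homotopy, and that this is irrelevant once one argues straight from \autoref{oneelementore}, is a sensible precaution that the paper glosses over.
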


We know \cite{MadsenWeiss} that $H(B\Gamma_{\infty})\otimes\mathbb Q$ is a Hopf algebra which is isomorphic to $\mathbb Q[\kappa_1,\kappa_2,\dots]$ where $\kappa_i, i=1,2,\dots$ are the Morita--Mumford--Miller classes. We obtain the following result.

\begin{theorem}\label{thm:millergluing}
There is an isomorphism of Hopf algebras
\[
H(L_s(\mathcal{A}'_1))\otimes \mathbb Q\cong \mathbb Q[\kappa_1,\kappa_2,\dots][s,s^{-1}].
\]
\end{theorem}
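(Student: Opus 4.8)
The plan is to recognise $\mathcal{A}'_1 = C(M'_1)$ as an instance of the dg bialgebra setup of the previous subsection and then simply extract the homology. Working over $\ground=\mathbb{Q}$, I would first verify the hypotheses: $M'_1 = \coprod_{g\geq 0}B\Gamma_{g,1}$ is an $\mathbb{N}$--graded topological monoid, so $\mathcal{A}'_1$ is an $\mathbb{N}$--graded monoid in connected cocommutative dg coalgebras; its degree--zero piece is $C(B\Gamma_{0,1})=C(\mathrm{pt})=\ground$ since the mapping class group of a disc is trivial; and $s\in H_0(B\Gamma_{1,1})$ is the grouplike element that freely generates the monoid $\pi_0(M'_1)\cong\mathbb{N}$ of grouplike elements of $\mathcal{A}'_1$. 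Thus $\mathcal{A}'_1$ is exactly of the required form, with $(\mathcal{A}'_1)_\infty = \colim\bigl(C(B\Gamma_{0,1})\xrightarrow{\cdot s}C(B\Gamma_{1,1})\xrightarrow{\cdot s}\cdots\bigr)$.

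Next I would observe that $H_*(M'_1;\mathbb{Q})$ is graded--commutative: the block--gluing maps $B\Gamma_{g,1}\times B\Gamma_{g',1}\to B\Gamma_{g+g',1}$ commute with the flip up to conjugation by an element of the mapping class group of the glued surface, and inner automorphisms act trivially on group homology, so the Pontryagin product is commutative. Hence $s$ is central in $H(\mathcal{A}'_1)$ and $\{1,s,s^2,\dots\}$ is a (central, hence right) Ore set; applying the commutative--homology case of the bialgebra localisation proposition (equivalently, combining \autoref{cor:central} and \autoref{oneelementore} with the fact that $H(\mathcal{A}'_1)$ is a commutative bialgebra in which $s$ becomes invertible) yields that $H(L_s(\mathcal{A}'_1))$ is a Hopf algebra and that there is an isomorphism of Hopf algebras $H(L_s(\mathcal{A}'_1))\cong H\bigl((\mathcal{A}'_1)_\infty\bigr)[s,s^{-1}]$. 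The underlying module--level equivalence $L_s(\mathcal{A}'_1)\simeq (\mathcal{A}'_1)_\infty[s,s^{-1}]$ is exactly \autoref{thm:closedglue}.

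Then I would identify $(\mathcal{A}'_1)_\infty$: since chains and homology commute with filtered colimits and the maps $\cdot s\colon C(B\Gamma_{g,1})\to C(B\Gamma_{g+1,1})$ are chains on the standard genus--stabilisation maps, $(\mathcal{A}'_1)_\infty\simeq C(B\Gamma_\infty)$ as dg coalgebras, so $H((\mathcal{A}'_1)_\infty)\cong H(B\Gamma_\infty)$ as Hopf algebras. Feeding in the Madsen--Weiss computation $H(B\Gamma_\infty)\otimes\mathbb{Q}\cong\mathbb{Q}[\kappa_1,\kappa_2,\dots]$ as Hopf algebras \cite{MadsenWeiss}, recalled just above, gives $H(L_s(\mathcal{A}'_1))\otimes\mathbb{Q}\cong\mathbb{Q}[\kappa_1,\kappa_2,\dots][s,s^{-1}]$ as Hopf algebras, as claimed. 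The only point requiring genuine care is the bookkeeping of coalgebra structures: checking that the equivalence of \autoref{thm:closedglue} is compatible with the Alexander--Whitney coalgebra structures (so that it induces a Hopf algebra isomorphism on homology) and that the colimit carries the standard diagonal coalgebra structure of $B\Gamma_\infty$. The homological stability input (Harer's theorem), which ensures that $\cdot s$ acts by a quasi-isomorphism on $C(B\Gamma_\infty)$, is already absorbed into \autoref{thm:closedglue}, and the commutativity of $H_*(M'_1;\mathbb{Q})$ makes the relevant Ore and flatness arguments automatic, so no new work is needed on that front.
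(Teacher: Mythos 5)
Your proof is correct and follows essentially the same route as the paper: verify that $\mathcal{A}'_1$ fits the bialgebra localisation framework, use the resulting module equivalence $L_s(\mathcal{A}'_1)\simeq C(B\Gamma_\infty)[s,s^{-1}]$ of \autoref{thm:closedglue}, and combine the Hopf algebra proposition from the bialgebra section with the Madsen--Weiss identification of $H(B\Gamma_\infty)\otimes\mathbb{Q}$. One minor inaccuracy: Harer's homological stability theorem is not actually needed for this example — homotopy commutativity of $M'_1$ alone guarantees that left multiplication by $s$ is a quasi-isomorphism on $(\mathcal{A}'_1)_\infty$, since right multiplication on a sequential self-colimit is automatically an isomorphism; the paper invokes Harer's theorem only for the later open- and closed-gluing monoids, which fail to be homotopy commutative.
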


\begin{remark}
It is easy to see that the dg algebra $L_s(\mathcal{A}'_1)$ is formal. The same justification as in Remark \ref{doubleloop} applies.
\end{remark}

\subsection{Punctures}
We may generalise the example above to allow surfaces to have punctures. Let $\Diff_{g,1}^{(h)}$ denote the topological category whose objects are compact oriented surfaces of genus $g$ with one non-empty boundary component equipped with a collar neighbourhood and $h$ unlabelled punctures and whose morphisms are orientation preserving diffeomorphisms which preserve the collar neighbourhoods pointwise. Denote by $\Gamma_{g,1}^{(h)}=\pi_0\Diff_{g,1}^{(h)}$. Again, note that for a surface $S\in \Gamma_{g,1}$ the group of automorphisms of $S$ as an object in the groupoid $\Gamma_{g,1}^{(h)}$ is the usual mapping class group of a surface with punctures where punctures may be permuted. The gluing along a pair of pants $P$ defined above extends to these groupoids and we denote by $M_1=\pi_{g=0,h=0}^{\infty}B\Gamma_{g,1}^{(h)}$ the corresponding topological monoid and $\mathcal{A}_1=C(M_1)$ the corresponding dg algebra. Once again, it is straightforward to see that $M_1$ is homotopy commutative.

The main difference between $\mathcal{A}'_1$ and $\mathcal{A}_1$ when it comes to computing the localisation is that the monoid of grouplike elements in $\mathcal{A}_1$ is now $\ground [s,t]$ where $s\in H_0(B\Gamma_{1,1}^{(0)})$ and $t\in H_0(B\Gamma_{0,1}^{(1)})$ correspond to the basepoints in $B\Gamma_{1,1}^{(0)}$ and $B\Gamma_{0,1}^{(1)}$, illustrated in \autoref{fig:closedgenerators}.

\begin{figure}[ht!]
  \centering
  \subfigure[Genus one surface corresponding to $s$]{
    \includegraphics[scale=1.2]{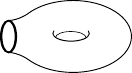}}
  \qquad\qquad
  \subfigure[Genus zero surface with one puncture corresponding to $t$]{
    \hspace{10pt}
    \raisebox{10pt}{\includegraphics[scale=1.2]{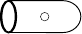}}
    \hspace{10pt}}
  \caption{Generators of the monoid of grouplike elements in $\mathcal{A}_1$}
  \label{fig:closedgenerators}
\end{figure}

Denote by $D$ the topological monoid $\coprod_{h\geq 0} B(S^1\wr \Sigma_h)$ where $S^1\wr \Sigma_h$ is the wreath product of the circle group with $\Sigma_h$, the symmetric group on $h$ letters. Then $C(D)$ is a dg $k[t]$--algebra by sending $t$ to the basepoint of $B(S^1\wr \Sigma_1)\simeq \mathbb{C}P^{\infty}$. We define $S^1\wr\Sigma_{\infty}=\colim_{h\to \infty} S^1\wr \Sigma_h$.

\begin{proposition}\label{prop:punctures}
There are quasi-isomorphisms of $\mathcal A_1$--modules:
\begin{enumerate}
\item $L_s(\mathcal A_1)\simeq C(B\Gamma_{\infty})\otimes C (D)[s,s^{-1}]$
\item
$L_{t,s}(\mathcal A_1)\simeq C(B\Gamma_{\infty})\otimes C (B (S^1\wr \Sigma_\infty))[t,t^{-1},s,s^{-1}]$
\end{enumerate}
\end{proposition}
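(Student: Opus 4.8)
The plan is to run the same argument as for \autoref{thm:closedglue}, now keeping track of the puncture grading. View $\mathcal{A}_1=C(M_1)$ as $\mathbb{N}^2$--graded, a chain on $B\Gamma_{g,1}^{(h)}$ having bidegree $(g,h)$; since a disc has trivial mapping class group we have $(\mathcal{A}_1)_{0,0}=\ground$, and the monoid of grouplike elements is the free commutative monoid on $s$ (bidegree $(1,0)$) and $t$ (bidegree $(0,1)$). Because $M_1$ is homotopy commutative, the Pontryagin ring $H(\mathcal{A}_1)$ is graded commutative, hence every multiplicatively closed subset of its degree--zero part is automatically a right Ore set; in particular so are the countable sets $\{1,s,s^2,\dots\}$ and $\{\,s^at^b:a,b\geq 0\,\}$. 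We may therefore apply \autoref{thm:ore}(2) (equivalently, the $\mathbb{N}^2$--graded version of the bialgebra localisation results of this section), taking for the second set a sequence of homology classes that interleaves $s$ and $t$; its partial products exhaust the Ore set, so hypotheses (a), (b) of \autoref{thm:ore}(2) hold by commutativity. As $s$ and $t$ have degree zero, this identifies the module localisations with mapping telescopes:
\begin{gather*}
L_s(\mathcal{A}_1)\simeq\hocolim\bigl(\mathcal{A}_1\xrightarrow{\cdot s}\mathcal{A}_1\xrightarrow{\cdot s}\cdots\bigr),\\
L_{t,s}(\mathcal{A}_1)\simeq\hocolim\bigl(\mathcal{A}_1\xrightarrow{\cdot s}\mathcal{A}_1\xrightarrow{\cdot t}\mathcal{A}_1\xrightarrow{\cdot s}\cdots\bigr).
\end{gather*}

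It then remains to compute these telescopes. Right multiplication by $s$ raises the genus grading by one (it glues on a genus--one piece along the pair of pants $P$) and fixes the puncture grading, while right multiplication by $t$ raises the puncture grading by one. Consequently the first telescope splits, as an $\mathcal{A}_1$--module, as $\bigl(\bigoplus_{h\geq 0}\colim_g C(B\Gamma_{g,1}^{(h)})\bigr)\otimes\ground[s,s^{-1}]$, the factor $\ground[s,s^{-1}]$ recording the ``net genus'' (current genus minus the number of telescope steps, which runs over $\mathbb{Z}$); similarly the second telescope is $\bigl(\colim_{g,h}C(B\Gamma_{g,1}^{(h)})\bigr)\otimes\ground[t,t^{-1},s,s^{-1}]$. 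Writing $B\Gamma_\infty^{(h)}=\colim_g B\Gamma_{g,1}^{(h)}$ and $B\Gamma_\infty^{(\infty)}=\colim_h B\Gamma_\infty^{(h)}$, and using that singular chains commute with these sequential colimits, the two inner factors are $\bigoplus_h C(B\Gamma_\infty^{(h)})$ and $C(B\Gamma_\infty^{(\infty)})$. I now invoke the relevant homological stability theorems: Harer's theorem, that $H_*(B\Gamma_{g,1}^{(h)})$ stabilises in $g$, together with the Madsen--Weiss theorem \cite{MadsenWeiss} and its extension to surfaces decorated with unordered punctures --- each puncture contributing a $\mathbb{C}P^\infty=BS^1$ factor and their permutations a $\Sigma_h$ --- gives a homology equivalence $B\Gamma_\infty^{(h)}\to B\Gamma_\infty\times B(S^1\wr\Sigma_h)$, and then, stabilising also the number of punctures by homological stability for the wreath products $S^1\wr\Sigma_h$ (whose colimit is $B(S^1\wr\Sigma_\infty)$), a homology equivalence $B\Gamma_\infty^{(\infty)}\to B\Gamma_\infty\times B(S^1\wr\Sigma_\infty)$. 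Passing to singular chains and summing over $h$ yields $\bigoplus_h C(B\Gamma_\infty^{(h)})\simeq C(B\Gamma_\infty)\otimes C(D)$, since $C(D)=\bigoplus_h C(B(S^1\wr\Sigma_h))$, and $C(B\Gamma_\infty^{(\infty)})\simeq C(B\Gamma_\infty)\otimes C(B(S^1\wr\Sigma_\infty))$. Substituting these into the two displays above gives precisely the claimed quasi-isomorphisms; part (1) is just the portion of the argument that stabilises the genus alone.

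The localisation-theoretic part of this --- that $L_s$ and $L_{t,s}$ are computed by the above telescopes --- is routine given \autoref{thm:ore} once one observes that $H(\mathcal{A}_1)$ is commutative, and the ``net genus/puncture'' bookkeeping is elementary. The real content, and the main obstacle, is the topological identification of the homology of the telescopes: it requires (i) Harer's genus stability in the presence of punctures, (ii) the Madsen--Weiss theorem in the form appropriate to surfaces with unordered punctures, and (iii) homological stability for $S^1\wr\Sigma_h$ as $h\to\infty$. One must also arrange these homology equivalences compatibly with the gluing maps, so that the final comparisons are of $\mathcal{A}_1$--modules and not merely of chain complexes; this is where the bialgebra formalism of the previous section is convenient, as it constructs the $\mathcal{A}_1$--module structures on $C(B\Gamma_\infty)\otimes C(D)[s,s^{-1}]$ and $C(B\Gamma_\infty)\otimes C(B(S^1\wr\Sigma_\infty))[t,t^{-1},s,s^{-1}]$ directly from gluing, matching the telescope side by construction.
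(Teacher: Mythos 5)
Your argument follows the paper's approach: identify the module localisations as Ore mapping telescopes (using graded commutativity of $H(\mathcal{A}_1)$), decompose by the puncture grading, and identify each summand via the stable splitting of the punctured mapping class group. The mathematics is right, but the attribution of the key input is not: the homology equivalence $B\Gamma_\infty^{(h)} \to B\Gamma_\infty \times B(S^1\wr\Sigma_h)$ is B\"odigheimer--Tillmann's splitting theorem \cite[Theorem 1.1]{boedigheimertillmann}, which is logically independent of (and earlier than) Madsen--Weiss; Madsen--Weiss is only needed afterwards, in \autoref{thm:millergluingpunctures}, to identify $H(\Gamma_\infty;\mathbb{Q})$ explicitly as the polynomial algebra on the $\kappa_i$. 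A smaller point: you invoke homological stability for the wreath products $S^1\wr\Sigma_h$, but that is not needed for this proposition --- $B(S^1\wr\Sigma_\infty)$ is by definition the filtered colimit of the $B(S^1\wr\Sigma_h)$, and singular chains commute with such colimits, so the identification needed for part (2) is definitional once part (1) is in hand and one unwinds the second telescope.
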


\begin{proof}
Since the homology of $\mathcal{A}_1$ is commutative then
\[
L_s(\mathcal{A}_1)\simeq \mathcal{A}_1\otimes_{\ground[s]} \ground[s,s^{-1}] \simeq \colim(\mathcal{A}_1\xrightarrow{\cdot s}\mathcal{A}_1\dots).
\]
But $\colim(\mathcal{A}_1\xrightarrow{\cdot s}\mathcal{A}_1\dots)$ decomposes as
\[
\bigoplus_{h=0}^{\infty}\colim\left (C(B\Gamma_{0,1}^{(h)})\xrightarrow{\cdot s}C(B\Gamma_{1,1}^{(h)})\xrightarrow{\cdot s}\dots\right)[s,s^{-1}]
\]
and by B\"odigheimer--Tillmann \cite[Theorem 1.1]{boedigheimertillmann}
\[
\colim\left (C(B\Gamma_{0,1}^{(h)})\xrightarrow{\cdot s}C(B\Gamma_{1,1}^{(h)})\xrightarrow{\cdot s}\dots\right)\simeq C(B\Gamma_\infty)\otimes C(B(S^1\wr \Sigma_h)),
\]
part (1) of the Proposition now follows.

For part (2) observe that $L_{t,s}(\mathcal{A}_1)\simeq L_t(L_s(\mathcal{A}_1))\simeq L_s(\mathcal{A}_1)\otimes_{\ground[t]}\ground[t,t^{-1}]$ and then unwrap the corresponding colimit similarly.
\end{proof}

Note that $H(L_{t,s}(\mathcal{A}_1))$ is a commutative and cocommutative Hopf algebra, so we can easily identify the algebra structure. However, $H(L_s(\mathcal{A}_1))$ is not a Hopf algebra, the grouplike element $t$ not being invertible.

\begin{theorem}\label{thm:millergluingpunctures}\mbox{}
\begin{enumerate}
\item There is an isomorphism of bialgebras
\[H(L_s(\mathcal A_1))\otimes \mathbb Q\cong \mathbb Q[\kappa_1,\kappa_2,\dots]\otimes \mathcal{D}[s,s^{-1}],\]
where $\mathcal{D}\subset \mathbb{Q}[\nu,t]$ is the subalgebra of elements of the form $\lambda + tx$ with $\lambda\in \mathbb{Q}, x \in\mathbb{Q}[\nu,t]$.
\item There is an isomorphism of Hopf algebras
\[H(L_{t,s}(\mathcal A_1))\otimes \mathbb Q\cong\mathbb Q[\nu,\kappa_1,\kappa_2,\dots] [t,t^{-1},s,s^{-1}].\]
\end{enumerate}
Here the element $\nu$ has degree $2$.
\end{theorem}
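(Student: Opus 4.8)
The plan is to deduce both parts from Proposition~\ref{prop:punctures} by a Künneth argument over $\mathbb{Q}$, using the Madsen--Weiss theorem for the $\Gamma_\infty$-factor and computing the two decoration factors $H(D;\mathbb{Q})$ and $H(B(S^1\wr\Sigma_\infty);\mathbb{Q})$ directly.

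First I would unwind Proposition~\ref{prop:punctures}. Since $\mathbb{Q}$ is a field, the quasi-isomorphisms there yield, via Künneth, isomorphisms of graded algebras
\[
H(L_s(\mathcal{A}_1))\otimes\mathbb{Q}\cong H(B\Gamma_\infty;\mathbb{Q})\otimes H(D;\mathbb{Q})[s,s^{-1}]
\]
and
\[
H(L_{t,s}(\mathcal{A}_1))\otimes\mathbb{Q}\cong H(B\Gamma_\infty;\mathbb{Q})\otimes H(B(S^1\wr\Sigma_\infty);\mathbb{Q})[t,t^{-1},s,s^{-1}],
\]
provided the splittings of Proposition~\ref{prop:punctures} respect products. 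For this one can observe that, since $H(\mathcal{A}_1)$ is commutative, \autoref{cor:central} gives $H(L_s(\mathcal{A}_1))\otimes\mathbb{Q}\cong H_*(M_1;\mathbb{Q})[s^{-1}]$ as graded algebras, and likewise $H(L_{t,s}(\mathcal{A}_1))\otimes\mathbb{Q}\cong H_*(M_1;\mathbb{Q})[s^{-1},t^{-1}]$; evaluating these by stabilising the genus (and then the number of punctures) via B\"odigheimer--Tillmann recovers the displayed tensor decompositions, the gluing product on $M_1$ stabilising to the tensor of the $H$-space product on $B\Gamma_\infty$ with the monoidal product on $D=\coprod_h B(S^1\wr\Sigma_h)$. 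The coalgebra structure on $H(L_s(\mathcal{A}_1))$ and $H(L_{t,s}(\mathcal{A}_1))$ comes from the Alexander--Whitney diagonal on $\mathcal{A}_1=C(M_1)$, which realises the space-level diagonal; under the same identifications it is the tensor-product coalgebra structure, with $s$ and $t$ grouplike, so the displayed isomorphisms can be taken to be isomorphisms of bialgebras.

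Next I would identify the three factors. For the $\Gamma_\infty$-factor, the Madsen--Weiss theorem~\cite{MadsenWeiss} (recalled just before \autoref{thm:millergluing}) gives $H(B\Gamma_\infty;\mathbb{Q})\cong\mathbb{Q}[\kappa_1,\kappa_2,\dots]$, the polynomial Hopf algebra on primitive generators $\kappa_i$ in degree $2i$. For the decoration factors, I would use that $D=\coprod_h B(S^1\wr\Sigma_h)$ is a model for the free $E_\infty$-space on $BS^1=\mathbb{C}P^{\infty}$, with $t\in H_0(B(S^1\wr\Sigma_1))$ the grouplike element counting punctures, and that $B(S^1\wr\Sigma_\infty)=\colim_h B(S^1\wr\Sigma_h)$ is the group completion obtained by inverting $t$, so that $\mathbb{Z}\times B(S^1\wr\Sigma_\infty)^{+}\simeq\Omega^\infty\Sigma^\infty(\mathbb{C}P^{\infty}_{+})$ by Barratt--Priddy--Quillen--Segal. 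Using the Borel-construction description $B(S^1\wr\Sigma_h)\simeq E\Sigma_h\times_{\Sigma_h}(\mathbb{C}P^{\infty})^h$ and the transfer over $\mathbb{Q}$, a direct computation of the rational homology of these spaces --- tracking the weight grading by number of punctures (recorded by the power of $t$) and the monoidal product --- identifies $H(D;\mathbb{Q})$ with the sub-bialgebra $\mathcal{D}\subset\mathbb{Q}[\nu,t]$ and, after inverting $t$, identifies $H(B(S^1\wr\Sigma_\infty);\mathbb{Q})$ with the polynomial Hopf algebra $\mathbb{Q}[\nu]$ on the primitive class $\nu\in H_2(\mathbb{C}P^{\infty};\mathbb{Q})$.

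Assembling the factors via the displayed isomorphisms proves part (1). Part (2) then follows by applying $L_t(-)\simeq(-)\otimes_{\mathbb{Q}[t]}\mathbb{Q}[t,t^{-1}]$ to part (1), which is legitimate because $L_{t,s}(\mathcal{A}_1)\simeq L_t(L_s(\mathcal{A}_1))$ as in the proof of Proposition~\ref{prop:punctures}: inverting $t$ turns $\mathcal{D}$ into $\mathbb{Q}[\nu,t,t^{-1}]$, so all generators become invertible and the bialgebra becomes the Hopf algebra $\mathbb{Q}[\nu,\kappa_1,\kappa_2,\dots][t,t^{-1},s,s^{-1}]$. I expect the main obstacle to be the explicit identification of $H(D;\mathbb{Q})$ with $\mathcal{D}$: it requires controlling the B\"odigheimer--Tillmann stable decoration not merely additively but together with its monoidal product and with the precise role of the extra grouplike generator $t$, which is exactly what forces $\mathcal{D}$ to be the proper subalgebra $\mathbb{Q}\oplus t\,\mathbb{Q}[\nu,t]$ of $\mathbb{Q}[\nu,t]$ rather than an honest polynomial ring. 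The remaining ingredients --- Künneth, the Madsen--Weiss input, and inverting $t$ --- are routine.
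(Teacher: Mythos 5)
Your proposal assembles the same ingredients as the paper's — \autoref{prop:punctures}, Madsen--Weiss, B\"odigheimer--Tillmann, the rational homology of $S^1\wr\Sigma_h$ — but reverses the logical order of the two parts, and this is where the two arguments genuinely diverge. You establish part (1) first, by directly identifying $H(D;\mathbb{Q})$ with the sub-bialgebra $\mathcal{D}\subset\mathbb{Q}[\nu,t]$ \emph{including its multiplication}, and then obtain part (2) by inverting $t$. The paper proves part (2) first and derives part (1) from it, and this order sidesteps exactly the step you single out at the end as ``the main obstacle'' and leave as a claim.

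The reason part (2) is easier to pin down first is structural: since $M_1$ is homotopy commutative, $H(L_{t,s}(\mathcal{A}_1))\otimes\mathbb{Q}$ is a graded-commutative and cocommutative Hopf algebra over a field of characteristic zero, so by the structure theorem for such Hopf algebras it is free on its primitives $\nu,\kappa_1,\kappa_2,\dots$ together with the group algebra on the grouplike elements $t^{\pm1},s^{\pm1}$; the algebra structure in (2) is then forced, with no need to compute the Pontryagin product on $H(D;\mathbb{Q})$ at all. Part (1) then follows because $H(L_{t,s}(\mathcal{A}_1))\cong L_t(H(L_s(\mathcal{A}_1)))$, so $H(L_s(\mathcal{A}_1))$ embeds as a subalgebra of a Hopf algebra whose structure is already known, which forces the multiplication on $H(L_s(\mathcal{A}_1))$ to be the evident one. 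As written, your plan still requires the explicit multiplicative identification of $H(D;\mathbb{Q})$ with $\mathcal{D}$ — you flag this correctly as the hard point but defer it to ``a direct computation,'' whereas the paper's order makes that computation unnecessary. I would recommend reorganising your argument to prove (2) first, letting Hopf-algebra rigidity carry the algebra structure, and then deducing (1) by restriction along the inclusion of $H(L_s(\mathcal{A}_1))$ into its localisation at $t$.
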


\begin{proof}
Observe that we are working rationally, $\Sigma_h$ is a finite group and $H(S^1\wr \Sigma_h,\mathbb{Q})\cong H(\Sigma_h,\mathbb{Q}[x_1,\dots,x_h])\cong \mathbb{Q}[\nu]$ for $h>0$, where $\nu,x_i$ have degree $2$. Thus $H(D)\otimes \mathbb{Q}\cong \mathcal{D}\subset \mathbb{Q}[\nu,t])$ is the subalgebra of elements of the form $\lambda + tx$ with $\lambda\in \mathbb{Q},x\in\mathbb{Q}[\nu,t]$ and $H(S^1\wr \Sigma_\infty,\mathbb{Q})\cong \mathbb{Q}[\nu]$.

Since $H(L_{t,s}(\mathcal{A}_1))$ is a commutative and cocommutative Hopf algebra part (2) is now clear. For part (1) observe that homotopy commutativity implies that $H(L_{t,s}(\mathcal{A}_1))\cong L_t(H(L_s(\mathcal{A}_1)))$ and therefore $H(L_s(\mathcal{A}_1))$ is the obvious subalgebra of $H(L_{t,s}(\mathcal{A}_1))$ and so the algebra structure is as described.
\end{proof}

\subsection{Open gluing}
We now discuss a monoid which uses \emph{open gluing} of surfaces and generalises the examples above even further.

Let $\Diff_{g,\ast}^{(h)}$ denote the topological category whose objects are compact oriented surfaces of genus $g$ with non-empty boundary and $2$ ordered parametrised intervals embedded in the boundary, with $h$ boundary components which do not contain embedded intervals. The embedded intervals will be called \emph{open boundaries} and the $h$ boundary components not containing open boundaries will be called \emph{free boundaries}. The morphisms are orientation preserving diffeomorphisms which preserve the open boundaries pointwise. Let $\Gamma_{g,\ast}^{(h)}=\pi_0\Diff_{g,\ast}^{(h)}$ be the corresponding discrete groupoid. Abusing notation slightly, we denote by $\Gamma_{g,1}^{(h)}$ and $\Gamma_{g,2}^{(h)}$ the full subcategories on surfaces with $h+1$ or $h+2$ boundary components respectively.

\begin{remark}
The objects of $\Diff_{g,\ast}^{(h)}$ should, of course, be two dimensional manifolds with corners compatible with the embedded intervals and equipped with appropriate collar neighbourhoods of the intervals in order for gluing to be well-defined, but we skip the details here.
\end{remark}

Given surfaces $S\in\Diff_{g,\ast}^{(h)}$ and $S'\in\Diff_{g,\ast}^{(h)}$ the surface $S\#_I S'$ is defined by gluing the first interval in the boundary of $S'$ to the second interval in the boundary of $S$. Applying the classifying space functor we obtain by this gluing a topological monoid $M_\ast=\coprod_{g=0,h=0}^{\infty} B\Gamma_{g,\ast}^{(h)}$. It is not hard to see that the monoid $M_1$ considered above is equivalent to the submonoid of $M_\ast$ corresponding to surfaces having precisely one non-free boundary component, cf.~\cite[Section~4]{tillmann:stablemapping}.

\begin{lemma}
The commutative monoid $\pi_0(M_\ast)$ has three generators $s,t$ and $u$ subject to the relation $u^2=ut$.
\end{lemma}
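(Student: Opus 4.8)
The plan is to classify the relevant surfaces explicitly and then read off the gluing operation in those terms. First I would record the elementary classification: a compact connected oriented surface $S$ carrying two ordered parametrised intervals in its boundary and $h$ free boundary components is determined, up to a diffeomorphism preserving all this structure, by a triple $(g,h,\varepsilon)$, where $g\ge 0$ is the genus, $h\ge 0$, and $\varepsilon\in\{0,1\}$ records whether the two intervals lie on a single boundary circle ($\varepsilon=0$) or on two distinct ones ($\varepsilon=1$). This follows from the classification of oriented surfaces with boundary together with the fact that two disjoint ordered parametrised arcs on a circle form a configuration unique up to orientation-preserving diffeomorphism; the total number of boundary components is then $h+1+\varepsilon$. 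Thus, as a set, $\pi_0(M_\ast)\cong\mathbb N\times\mathbb N\times\{0,1\}$.

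Next I would compute $\#_I$ in these coordinates. Write $S_i=(g_i,h_i,\varepsilon_i)$ for $i=1,2$. The gluing $S_1\#_I S_2$ identifies the second interval of $S_1$ with the first interval of $S_2$ along a contractible arc, so $\chi(S_1\#_I S_2)=\chi(S_1)+\chi(S_2)-1$, while the two boundary circles carrying the glued arcs merge into one, dropping the number of boundary components by one; comparing with $\chi=2-2g-b$ yields $g(S_1\#_I S_2)=g_1+g_2$. The merged circle carries both surviving intervals exactly when $\varepsilon_1=\varepsilon_2=0$, carries one when exactly one $\varepsilon_i=1$, and carries none when $\varepsilon_1=\varepsilon_2=1$, in which last case it becomes a \emph{new} free boundary component. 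Hence $\varepsilon(S_1\#_I S_2)=\varepsilon_1\vee\varepsilon_2$, and counting boundary components gives $h(S_1\#_I S_2)=h_1+h_2+\varepsilon_1\varepsilon_2$. In particular the product depends only on the triples and is symmetric in them, so $\pi_0(M_\ast)$ is commutative with identity $(0,0,0)$ (the disk).

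Finally I would match this monoid with the stated presentation. Put $s=(1,0,0)$, $t=(0,1,0)$ and $u=(0,0,1)$; the multiplication formula gives $s^at^b=(a,b,0)$ and $u\,s^at^b=(a,b,1)$, so these three elements generate, and $u^2=(0,0,1)\#_I(0,0,1)=(0,1,1)=(0,0,1)\#_I(0,1,0)=ut$. Conversely, in the free commutative monoid on $s,t,u$ the single relation $u^2=ut$ rewrites every element into the form $s^at^b$ or $s^at^bu$, so the presented monoid surjects onto $\pi_0(M_\ast)$ via a map sending these normal forms to the pairwise distinct elements $(a,b,0)$ and $(a,b,1)$; hence the surjection is a bijection and the presentation is exact. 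The one point requiring care is the middle step, in particular the observation that gluing two surfaces each of type $\varepsilon=1$ spawns an extra free boundary component, which is precisely what produces the relation $u^2=ut$ rather than $u^2=u$.
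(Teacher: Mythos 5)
Your proof is correct and follows essentially the same route as the paper: classify the components of $M_\ast$ by genus, number of free boundary circles, and whether the two parametrised intervals lie on the same or on distinct boundary circles (your $\varepsilon$, the paper's $k-1$), then observe that every class has the unique normal form $s^g t^h u^{k-1}$, giving the stated presentation. The one thing you add beyond what the paper writes out is the explicit gluing formula $(g_1,h_1,\varepsilon_1)\#_I(g_2,h_2,\varepsilon_2)=(g_1+g_2,\;h_1+h_2+\varepsilon_1\varepsilon_2,\;\varepsilon_1\vee\varepsilon_2)$ — the paper treats the relation $u^2=ut$ as obvious — and this derivation cleanly isolates why the relation is $u^2=ut$ rather than, say, $u^2=u$.
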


\begin{proof}
The generator $s$ corresponds to the surface of genus one with two boundary components one of which is a free boundary, the generator $t$ corresponds to the surface of genus zero with a single boundary component and the generator $u$ corresponds to the surface of genus zero having two boundary components and no free boundary, as shown in \autoref{fig:pi0generators}. The relation $u^2=ut$ is obvious. Next, a surface with two open boundaries is determined uniquely by its genus $g\geq 0$, the number of free boundary components $h\geq 0$, and the number of boundary components containing open boundaries $k\in\{1,2\}$. Thus, any such surface can be uniquely decomposed as a product $s^{g}\cdot t^{h}\cdot u^{k-1}$ which implies the lemma.
\end{proof}

\begin{figure}[ht!]
  \centering
  \subfigure[Genus one surface corresponding to $s$]{
    \includegraphics[scale=1.2]{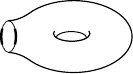}}
  \\
  \subfigure[Genus zero surface with free boundary corresponding to $t$]{
    \includegraphics[scale=1.2]{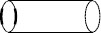}}
  \qquad\qquad
  \subfigure[Genus zero surface corresponding to $u$]{
    \includegraphics[scale=1.2]{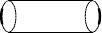}}
  \caption{Generators of $\pi_0(M_\ast)$}
  \label{fig:pi0generators}
\end{figure}

Consider now the dg algebra $\mathcal A_*=C(M_*)$. It is a cocommutative dg bialgebra; the elements $s,t,u\in\pi_0(M_*)\subset H_0(M_*)$ are the grouplike generators. Even though we do not have homotopy commutativity for this monoid we do still have homological stability \cite{harer}, in other words $l_s\co H_k(M_*)\to H_k(M_*)$ and $l_u\co H_k(M_*)\to H_k(M_*)$ are isomorphisms in the stable range, i.e.~on summands with genus $g\gg k$.

\begin{proposition} There are quasi-isomorphisms of $\mathcal A_*$--modules:
\begin{enumerate}
\item $L_s(\mathcal A_*)\simeq C(B\Gamma_{\infty})\otimes C(D)\otimes_{\ground [t]}\ground[u,t, s,s^{-1}]/(u^2-ut)$
\item $L_{u,s}(\mathcal A_*)\simeq C(B\Gamma_{\infty})\otimes C ( B (S^1\wr \Sigma_\infty))[t,t^{-1},s,s^{-1}]$
\end{enumerate}
\end{proposition}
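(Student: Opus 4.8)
The plan is to follow the proof of \autoref{prop:punctures}, adapted to the two new features of this example: $M_*$ is not homotopy commutative (only homologically stable, by \cite{harer}), and $\pi_0(M_*)$ carries the extra generator $u$ with the relation $u^2=ut$. So the formal skeleton — identify the localisation with a homotopy colimit, decompose the colimit genus-summand by genus-summand, and apply a Bödigheimer--Tillmann-type stability theorem to each piece — is the same; the work is in replacing the commutativity input and in bookkeeping the $u$-variable.

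The first step replaces the appeal to commutativity used in \autoref{prop:punctures}. Set $\overline{\mathcal A_*}_\infty:=\hocolim(\mathcal A_*\xrightarrow{\cdot s}\mathcal A_*\xrightarrow{\cdot s}\cdots)$, where $\cdot s$ is right multiplication by a cycle representing $s$ (no suspensions appear since $\degree{s}=0$). Its homology is $\colim(H(\mathcal A_*)\xrightarrow{\cdot s}H(\mathcal A_*)\to\cdots)$, and since every homology class of this colimit is represented on a summand of arbitrarily large genus, homological stability for $l_s$ forces left multiplication by $s$ to be an isomorphism on $H(\overline{\mathcal A_*}_\infty)$; thus $\overline{\mathcal A_*}_\infty$ is an $s$--local $\mathcal A_*$--module. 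By \autoref{thm:ore2} (equivalently \autoref{oneelementore}, with $x_1=x_2=\cdots=s$) it follows that $L_s(\mathcal A_*)\simeq\overline{\mathcal A_*}_\infty$ in $\Ho(\modcat{\mathcal A_*})$ and that $\{1,s,s^2,\dots\}$ is a right Ore set in $H(\mathcal A_*)$. The same reasoning applied to $L_s(\mathcal A_*)$ and the class $u$ (using homological stability for $l_u$) gives $L_{u,s}(\mathcal A_*)\simeq L_u(L_s(\mathcal A_*))$ realised as the homotopy colimit along $\cdot s$ and $\cdot u$.

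The second step is the decomposition and identification of these colimits. Index the summands of $\mathcal A_*$ by the genus $g$, the number $h$ of free boundary components, and the number $k\in\{1,2\}$ of boundary components meeting the two parametrised intervals, so $\mathcal A_*\cong\bigoplus_{g,h,k}C(B\Gamma_{g,k}^{(h)})$; right multiplication by $s$ raises $g$ by one and preserves $h$ and $k$, so $\overline{\mathcal A_*}_\infty\cong\bigoplus_{h,k}\bigl(\colim_g C(B\Gamma_{g,k}^{(h)})\bigr)[s,s^{-1}]$. For $k=1$ the Bödigheimer--Tillmann stability theorem \cite[Theorem~1.1]{boedigheimertillmann} identifies $\colim_g C(B\Gamma_{g,1}^{(h)})\simeq C(B\Gamma_\infty)\otimes C(B(S^1\wr\Sigma_h))$, exactly as in \autoref{prop:punctures}; the $k=2$ summands are obtained from the $k=1$ ones by multiplication by $u$, and the relation $u^2=ut$ records precisely how $\bigoplus_h C(B\Gamma_{g,2}^{(h)})$ sits over $\bigoplus_h C(B\Gamma_{g,1}^{(h)})$ as a $\ground[u]$--module. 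Reassembling the pieces and keeping track of the residual $\ground[t]$-- and $\ground[u]$--actions yields the right-hand side of (1), with $C(D)=\bigoplus_h C(B(S^1\wr\Sigma_h))$.

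Part (2) then follows formally: $L_{u,s}(\mathcal A_*)\simeq L_u(L_s(\mathcal A_*))$, and inverting $u$ in $\ground[u,t,s,s^{-1}]/(u^2-ut)$ forces $u=t$ and hence also inverts $t$, whereupon the factor $C(D)\otimes_{\ground[t]}\ground[t,t^{-1},\dots]$ collapses — by the colimit computation already used in \autoref{prop:punctures}(2) — to $C(B(S^1\wr\Sigma_\infty))[t,t^{-1}]$, giving the stated expression. The main obstacle I expect is the first step: one must know that homological stability, as stated for the one-sided stabilisation maps $l_s$ and $l_u$, genuinely makes $\overline{\mathcal A_*}_\infty$ $s$--local, i.e.\ that right and left multiplication by $s$ are simultaneously isomorphisms on the colimit. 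This is the one point where the argument draws on genuine surface topology (Harer-type stability together with the fact that the two stabilisation maps agree in homology in the stable range) rather than on the formal derived-localisation machinery developed earlier; a secondary, purely combinatorial, difficulty is tracking the $u$--action and the relation $u^2=ut$ through the Bödigheimer--Tillmann identification.
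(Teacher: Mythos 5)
Your argument follows the paper's proof in all essentials: use homological stability to see that the naive module of fractions $\mathcal A_*\otimes_{\ground[s]}\ground[s,s^{-1}]\simeq\hocolim(\mathcal A_*\xrightarrow{\cdot s}\cdots)$ is $s$--local, invoke \autoref{oneelementore}, decompose by $(g,h,k)$ and apply B\"odigheimer--Tillmann to each $h$--summand, then localise at $u$ for part (2). One small caveat: you describe tracking the $u$--action as a ``purely combinatorial'' matter, but identifying $\colim_g C(B\Gamma_{g,2}^{(h)})$ with $\colim_g C(B\Gamma_{g,1}^{(h)})$ (multiplication by $u$ becoming a quasi-isomorphism on the colimit) is itself an application of Harer stability, which is exactly what the paper flags with ``homological stability implies the various colimits do not depend on the parameter $u$''; you carry out the step correctly but slightly mislabel its nature.
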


\begin{proof}
Homological stability implies immediately that $\mathcal A_*\otimes_{\ground[s]}\ground [s,s^{-1}]$ is $s$--local. By \autoref{oneelementore} we have a quasi-isomorphism of $\mathcal A_*$--modules
\[
L_s(\mathcal A_*)\simeq \mathcal A_*\otimes_{\ground[s]}\ground[s,s^{-1}].
\]
The proof of part (1) now proceeds as in \autoref{prop:punctures} noting that homological stability implies the various colimits do not depend on the parameter $u$.
Similarly, for part (2) observe that $L_s(\mathcal A_*)\otimes_{\ground[u]}\ground [u,u^{-1}]$ is $u$--local using the fact that $s$ is invertible in homology together with homological stability, then proceed as in \autoref{prop:punctures}.
\end{proof}

We obtain the following result, analogous but more general than \autoref{thm:millergluing} and \autoref{thm:millergluingpunctures}.

\begin{theorem}\label{thm:opengluings}\mbox{}
\begin{enumerate}
\item There is an isomorphism of bialgebras
\[
H(L_s(\mathcal A_*))\otimes \mathbb Q\cong \mathbb Q[\kappa_1,\kappa_2,\dots]\otimes \mathcal{D}_*[s,s^{-1}]
\]
where $\mathcal{D}_*\subset \mathbb{Q}[\nu,t,u]/(u^2-ut)$ is the subalgebra of elements of the form $\lambda + tx$ with $\lambda\in \mathbb{Q}, x \in\mathbb{Q}[\nu,t,u]/(u^2-ut)$.
\item There is an isomorphism of Hopf algebras
\[
H(L_{u,s}(\mathcal A_*))\otimes \mathbb Q\cong\mathbb Q[\nu,\kappa_1,\kappa_2,\dots][t,t^{-1},s,s^{-1}].
\]
\end{enumerate}
Here the generator $\nu$ has degree $2$.
\end{theorem}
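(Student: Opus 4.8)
The plan is to derive both isomorphisms from the preceding proposition, which identifies $L_s(\mathcal{A}_*)$ and $L_{u,s}(\mathcal{A}_*)$ as $\mathcal{A}_*$--modules, in exactly the way \autoref{thm:millergluingpunctures} was obtained from \autoref{prop:punctures}. Working over the field $\mathbb{Q}$, the K\"unneth theorem converts those module quasi-isomorphisms into tensor decompositions of rational homology, so the only external inputs are the rational homologies of the spaces appearing: the Madsen--Weiss theorem \cite{MadsenWeiss}, giving $H(B\Gamma_\infty)\otimes\mathbb{Q}\cong\mathbb{Q}[\kappa_1,\kappa_2,\dots]$ as a Hopf algebra with the $\kappa_i$ primitive, and the Borel-type computation $H(B(S^1\wr\Sigma_h);\mathbb{Q})\cong\mathbb{Q}[\nu]$ for $h>0$ recalled in the proof of \autoref{thm:millergluingpunctures}, which yields $H(D)\otimes\mathbb{Q}\cong\mathcal{D}$ and $H(B(S^1\wr\Sigma_\infty);\mathbb{Q})\cong\mathbb{Q}[\nu]$ with $\nu$ primitive. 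Base-changing $\mathcal{D}\subset\mathbb{Q}[\nu,t]$ along $\ground[t]\to\ground[u,t]/(u^2-ut)$ produces $\mathcal{D}_*$, and pulling the invertible grouplike $s^{\pm1}$ (and, in part (2), $t^{\pm1}$) out of the coefficients gives the underlying graded vector spaces of both statements, together with their coalgebra structures, since the factors are sub-coalgebras.

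For part (2) the algebra structure is then forced. The localisation $L_{u,s}(\mathcal{A}_*)$ is a dg bialgebra: the coalgebra structure of $\mathcal{A}_*=C(M_*)$ descends because right multiplication by a grouplike element is a coalgebra map, so the colimit presenting $L_{u,s}(\mathcal{A}_*)$ in the preceding proposition is a colimit of dg coalgebras, while the algebra structure is supplied by \autoref{thm:localisationscoincide}. Thus $H(L_{u,s}(\mathcal{A}_*))\otimes\mathbb{Q}$ is a graded bialgebra; it is cocommutative, being assembled from diagonals of spaces, and commutative, because inverting $u$ forces $t$ to be invertible as well (by the relation $u^2=ut$, so $u=t$) and homological stability \cite{harer} then identifies the relevant colimit with a homotopy-commutative object, exactly as in the proof of the preceding proposition. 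Splitting off the group ring on the invertible grouplikes $t^{\pm1},s^{\pm1}$ leaves a connected graded commutative cocommutative Hopf algebra over $\mathbb{Q}$, hence the free graded-commutative algebra on its primitives; the coalgebra computation of the first paragraph identifies those primitives as $\nu,\kappa_1,\kappa_2,\dots$, giving the claimed Hopf-algebra isomorphism.

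Part (1) runs identically as far as the identification of the \emph{bialgebra} structure of $H(L_s(\mathcal{A}_*))\otimes\mathbb{Q}$: once more $L_s(\mathcal{A}_*)$ is a dg bialgebra, so its rational homology is a graded bialgebra whose coalgebra and underlying graded module are already known. To pin down the products one compares with part (2) along the localisation map $L_s(\mathcal{A}_*)\to L_{u,s}(\mathcal{A}_*)$ — reading off the products among the $\kappa_i$, $\nu$ and $s^{\pm1}$ — reads off the products of $t$ and $u$ from the monoid $\pi_0(M_*)$, and records the module relation $u^2=ut$ inherited from $\pi_0(M_*)$; this relation is precisely what turns $\mathcal{D}$ into $\mathcal{D}_*$. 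I expect this last step to be the main obstacle. Unlike in part (2) the bialgebra structure is not pinned down by the coalgebra alone, the monoid $M_*$ is not homotopy commutative, and the ring $\mathbb{Q}[\nu,t,u]/(u^2-ut)$ is not an integral domain (for instance $u(u-t)=0$), so the comparison map $H(L_s(\mathcal{A}_*))\otimes\mathbb{Q}\to H(L_{u,s}(\mathcal{A}_*))\otimes\mathbb{Q}$ is not injective and cannot by itself recover the algebra structure of its source. One therefore has to argue directly that homological stability yields ``stable commutativity'' before $u$ is inverted, and that the colimit of the preceding proposition assembles the products in exactly the stated manner; this is the only genuinely non-formal point.
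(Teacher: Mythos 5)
Your proposal correctly recovers the module structures, the underlying graded coalgebras, and the external inputs (Madsen--Weiss and the Borel-type computation for $H(B(S^1\wr\Sigma_h);\mathbb{Q})$), and for part (2) your route via a commutativity argument is plausible even if the appeal to homological stability to obtain homotopy commutativity is not spelled out. But for part (1) you explicitly leave the determination of the algebra structure open, labelling it ``the only genuinely non-formal point,'' and the comparison you consider --- the map $H(L_s(\mathcal{A}_*))\otimes\mathbb{Q}\to H(L_{u,s}(\mathcal{A}_*))\otimes\mathbb{Q}$ --- you correctly observe is not injective and so cannot recover the source's products. That is a genuine gap, not a routine verification.

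The missing idea is the comparison in the other direction, through the homotopy-commutative submonoid $M_1\subset M_*$ of surfaces with a single non-free boundary component. Since $M_1$ is homotopy commutative, the Hopf and bialgebra structures of $H(L_{t,s}(\mathcal{A}_1))\otimes\mathbb{Q}$ and $H(L_s(\mathcal{A}_1))\otimes\mathbb{Q}$ are already known from \autoref{thm:millergluingpunctures}. For part (2), the universal property of localisation gives a dg algebra map $L_{t,s}(\mathcal{A}_1)\to L_{u,s}(\mathcal{A}_*)$ (the composite $\mathcal{A}_1\to L_{u,s}(\mathcal{A}_*)$ is $\{t,s\}$-inverting because $u^2=ut$ forces $u=t$ once $u$ is inverted), and this map is by inspection an isomorphism of the underlying graded vector spaces, so the target inherits the known Hopf algebra structure; this avoids having to argue commutativity of $M_*$ directly. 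For part (1), $L_s(\mathcal{A}_1)$ embeds as a subalgebra of $L_s(\mathcal{A}_*)$ accounting for everything except the grouplike $u$, so the only remaining multiplication to determine is the action of $u$, which is transparent from $\pi_0(M_*)$ and is precisely what turns $\mathcal{D}$ into $\mathcal{D}_*$. Without this comparison through $M_1$, your argument for part (1) remains incomplete.
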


\begin{proof}
For part (2), although we do not have homotopy commutativity of $M_*$, we do have that $M_1$ is (equivalent to) a submonoid of $M_*$ so there is a dg algebra map from $L_{t,s}(\mathcal{A}_1)$ to $L_{u,s}(\mathcal{A}_*)$. By inspection this map is the identity as a map of vector spaces, which shows the algebra structures coincide.

Similarly, for part (1), $L_s(\mathcal{A}_1)$ is the obvious subalgebra of $L_s(\mathcal{A}_*)$ so we only need to verify that the multiplicative action of the element $u$ is as described, which is clear.
\end{proof}

\begin{remark}
We have considered here the mapping class groups of surfaces with free boundary and no restriction on what diffeomorphisms do to the free boundary/punctures because this is the case most commonly associated with the ribbon graph complex. However, we could equally have considered versions where we require that the free boundaries/punctures are not allowed to be permuted (sometimes called the pure mapping class group), or where we require that each free boundary is parametrised and diffeomorphisms can only permute free boundaries in a way that is compatible with the given parametrisation (in fact, B\"odigheimer--Tillmann \cite{boedigheimertillmann} reserve the term `free boundary' for this notion). To obtain corresponding results, we replace $S^1\wr \Sigma_k$ by $(S^1)^{\times k}$ in the first case and by $\Sigma_k$ in the second case.
\end{remark}

\subsection{Closed gluing}
We may also consider a monoid which uses usual gluing of surfaces along their boundary. Indeed, let $\Diff_{g,2}$ be the category whose objects are compact oriented surfaces of genus $g$ with two ordered non-empty boundary components equipped with a collar neighbourhood and whose morphisms are orientation preserving diffeomorphisms which preserve the collar neighbourhoods pointwise. Then gluing the first boundary component of one surface to the second boundary component of another induces an associative operation on the groupoids $\Gamma_{g,2}=\pi_0 \Diff_{g,2}$ making $N=\coprod_{g=0}^{\infty} B\Gamma_{g,2}$ into a topological monoid. Then the dg algebra $\mathcal{B}=C(N)$ has a grouplike element $s\in H_0(B\Gamma_{1,2})$ which generates the monoid of grouplike elements.

Again, we do not have homotopy commutativity so the standard argument does not quite apply. Nevertheless, we do have homological stability, which is sufficient to deduce the following.

\begin{proposition}
There is a quasi-isomorphism of $\mathcal{B}$--modules
\[
L_s(\mathcal{B})\simeq C(B\Gamma_\infty)[s,s^{-1}].
\]
\qed
\end{proposition}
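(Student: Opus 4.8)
The plan is to mimic the proof for the open‑gluing monoid $M_*$: first show that the non‑derived localisation $\mathcal{B}\otimes_{\ground[s]}\ground[s,s^{-1}]$ is already $s$--local, then invoke \autoref{oneelementore} to identify it with $L_s(\mathcal{B})$, and finally unwind the resulting colimit using homological stability.

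First I would fix a $0$--cycle in $C(B\Gamma_{1,2})\subset\mathcal{B}$ representing $s$ (harmless since $B\Gamma_{1,2}$ is connected); this makes $\mathcal{B}$ a $\ground[s]$--module for which right multiplication $r_s\co\mathcal{B}\to\mathcal{B}$ restricts on the genus--$g$ summand $C(B\Gamma_{g,2})$ to a chain--level model of the stabilisation map $B\Gamma_{g,2}\to B\Gamma_{g+1,2}$. Then $\mathcal{B}\otimes_{\ground[s]}\ground[s,s^{-1}]\cong\colim\bigl(\mathcal{B}\xrightarrow{r_s}\mathcal{B}\xrightarrow{r_s}\dots\bigr)$, and exactly as in the proof of \autoref{prop:punctures} this colimit decomposes, according to the virtual genus $v=g-n\in\mathbb{Z}$, as a direct sum $\bigoplus_{v\in\mathbb{Z}}\colim_g C(B\Gamma_{g,2})$ of $\mathbb{Z}$ copies of $\colim_g C(B\Gamma_{g,2})$. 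By Harer's homological stability theorem \cite{harer} each such summand is quasi-isomorphic to $C(B\Gamma_\infty)$, so $\mathcal{B}\otimes_{\ground[s]}\ground[s,s^{-1}]\simeq C(B\Gamma_\infty)[s,s^{-1}]$ as $\mathcal{B}$--modules, the left $\mathcal{B}$--action raising the virtual genus and acting through $H(B\Gamma_\infty)$ on the stable part.

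It then remains to check $s$--locality, i.e.\ that left multiplication $l_s$ acts by a quasi-isomorphism on $\mathcal{B}\otimes_{\ground[s]}\ground[s,s^{-1}]$. On homology $l_s$ sends the virtual genus $v$ summand to the $v+1$ summand, and at each finite genus it is the stabilisation map gluing the standard genus--one two--holed surface onto the \emph{other} boundary component; homological stability \cite{harer} — which, as noted for $M_*$, holds in the form that $l_s$ is an isomorphism on $H_k$ in the stable range — shows this induces an isomorphism $H(B\Gamma_\infty)\to H(B\Gamma_\infty)$, so $l_s$ is bijective on $H\bigl(\mathcal{B}\otimes_{\ground[s]}\ground[s,s^{-1}]\bigr)$. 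Granting $s$--locality, \autoref{oneelementore} gives $L_s(\mathcal{B})\simeq\mathcal{B}\otimes_{\ground[s]}\ground[s,s^{-1}]$ as $\mathcal{B}$--modules, and combining with the previous step completes the proof. The one genuinely substantive point — and the reason the "standard" homotopy‑commutative argument does not apply verbatim — is this verification that $l_s$, not merely $r_s$, becomes invertible after inverting $s$ on the right; it is exactly here that two--sided homological stability for the mapping class groups is used, and everything else is the routine bookkeeping of sequential colimits of chain complexes already carried out for $\mathcal{A}_1$ and $\mathcal{A}_*$.
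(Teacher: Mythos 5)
Your proof is correct and follows the route the paper intends: the paper leaves this proposition's argument implicit (marking it with a \qed after noting that homological stability replaces homotopy commutativity), and your steps—splitting the colimit by virtual genus, identifying each summand with $C(B\Gamma_\infty)$ via Harer stability, and invoking \autoref{oneelementore}—are exactly those carried out for $\mathcal{A}_*$ just before. You also correctly isolate the substantive point, namely that two-sided homological stability is what makes $l_s$ (not just $r_s$) act by quasi-isomorphism on the colimit, replacing the homotopy-commutativity argument used for $\mathcal{A}_1'$ and $\mathcal{A}_1$.
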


\begin{theorem}
There is an isomorphism of Hopf algebras
\[
H(L_s(\mathcal{B}))\otimes\mathbb{Q} \cong \mathbb{Q}[\kappa_1,\kappa_2,\dots][s,s^{-1}].
\]
\end{theorem}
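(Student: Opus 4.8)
The plan is to deduce the theorem from the preceding Proposition together with the Madsen--Weiss theorem, being careful to track the Hopf algebra structure throughout. As graded $\mathbb{Q}$--vector spaces the preceding Proposition already gives $H(L_s(\mathcal{B}))\otimes\mathbb{Q}\cong H_*(B\Gamma_\infty;\mathbb{Q})[s,s^{-1}]$, and by \cite{MadsenWeiss} the right hand side has the same graded dimensions as $\mathbb{Q}[\kappa_1,\kappa_2,\dots][s,s^{-1}]$ with $\degree{\kappa_i}=2i$; so the real content is the identification of the Hopf algebra structure, and for that I would pass through the group completion.

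The dg algebra $\mathcal{B}=C(N)$ is a dg bialgebra via the lax monoidal (Eilenberg--Zilber) and colax comonoidal (Alexander--Whitney) structures on singular chains, and $\pi_0(N)=\mathbb{N}$ is generated by the grouplike element $s$, so localising at $s$ is localising at all of $\pi_0(N)$. By \autoref{thm:groupcompl} the singular chains functor carries the group completion to the derived localisation; as in the proof of \autoref{thm:simploc} this functor preserves the relevant homotopy pushouts, and since it respects both the monoidal and the comonoidal structures one obtains $L_s(\mathcal{B})=L_{\pi_0(N)}(C(N))\simeq C(\Omega\B N)$ as dg bialgebras. (Alternatively one checks directly that $L_s(\mathcal{B})$ inherits a dg bialgebra structure: using the identification $L_{\{s\otimes 1,1\otimes s\}}(\mathcal{B}\otimes\mathcal{B})\simeq L_s(\mathcal{B})\otimes L_s(\mathcal{B})$, the comultiplication $\mathcal{B}\to\mathcal{B}\otimes\mathcal{B}$ composed with localisation is $s$--inverting because it sends $s$ to the invertible element $s\otimes s$, hence factors through $L_s(\mathcal{B})$.) Consequently $H(L_s(\mathcal{B}))\otimes\mathbb{Q}\cong H_*(\Omega\B N;\mathbb{Q})$ as graded Hopf algebras, the right hand side being the homology Hopf algebra of a group--like $H$--space.

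It remains to identify this Hopf algebra. By Harer's homological stability \cite{harer} and the group completion theorem, $\Omega\B N$ is rationally equivalent to $\mathbb{Z}\times B\Gamma_\infty^{+}$, and $B\Gamma_\infty^{+}$ is an infinite loop space; hence $H_*(\Omega\B N;\mathbb{Q})$ is a graded commutative, graded cocommutative Hopf algebra, isomorphic to $\mathbb{Q}[\mathbb{Z}]\otimes H_*(B\Gamma_\infty;\mathbb{Q})$, where $\mathbb{Q}[\mathbb{Z}]=\mathbb{Q}[s,s^{-1}]$ with $s$ grouplike. By the Milnor--Moore/Borel structure theorem in characteristic zero, the connected factor $H_*(B\Gamma_\infty;\mathbb{Q})$ is the free graded commutative algebra on its space of primitives; being the graded dual of the cohomology Hopf algebra $H^*(B\Gamma_\infty;\mathbb{Q})\cong\mathbb{Q}[\kappa_1,\kappa_2,\dots]$, and noting that over $\mathbb{Q}$ the dual of a polynomial Hopf algebra on even--degree primitive generators is again of the same form, its space of primitives is one--dimensional in each even positive degree and zero otherwise. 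Relabelling these primitive generators as $\kappa_1,\kappa_2,\dots$ with $\degree{\kappa_i}=2i$ gives $H(L_s(\mathcal{B}))\otimes\mathbb{Q}\cong\mathbb{Q}[\kappa_1,\kappa_2,\dots][s,s^{-1}]$ as Hopf algebras. The main obstacle in writing this out carefully is the bookkeeping of the coalgebra structure: one must ensure that the quasi--isomorphism $L_s(\mathcal{B})\simeq C(\Omega\B N)$ (hence the module--level identification of the preceding Proposition) is compatible with comultiplication, which is precisely what licenses the passage from the Madsen--Weiss ring computation to the homology Hopf algebra via Milnor--Moore duality.
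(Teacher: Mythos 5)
Your approach diverges from the paper's at the decisive step, and I think it has a genuine gap there. You argue: \emph{``$\Omega\B N$ is rationally equivalent to $\mathbb{Z}\times B\Gamma_\infty^{+}$, and $B\Gamma_\infty^{+}$ is an infinite loop space; hence $H_*(\Omega\B N;\mathbb{Q})$ is graded commutative.''} The ``hence'' is not automatic. The Pontryagin product on $H_*(\Omega\B N)$ comes from the specific $H$--space structure on $\Omega\B N$, namely loop concatenation, which up to homotopy coincides with the group-completed monoid structure induced by the gluing operation on $N$ --- and $N$ is precisely the monoid that is \emph{not} known to be homotopy commutative, which is the whole difficulty the paper flags before the proof (``commutativity is no longer so clear''). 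The Tillmann/Madsen--Weiss infinite loop space structure on $B\Gamma_\infty^{+}$ lives a priori on a \emph{different} delooping (disjoint union / pair-of-pants operad), and identifying that $H$--space structure with the loop-concatenation structure on $\Omega\B N$ is a nontrivial compatibility statement. In fact, establishing exactly that compatibility is the content of the paper's proof, so you cannot simply invoke it as a known fact without providing the comparison.

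The paper fills this gap elementarily: there is a map of topological monoids $M_1'\to N$ (Tillmann's observation, Figure \ref{fig:tillmannpic}) from the pair-of-pants gluing monoid to the closed-gluing monoid. Since $M_1'$ \emph{is} homotopy commutative, $H(L_s(\mathcal{A}_1'))$ is already understood (it is $\mathbb{Q}[\kappa_1,\kappa_2,\dots][s,s^{-1}]$ by \autoref{thm:millergluing}); the induced map of dg algebras $L_s(\mathcal{A}_1')\to L_s(\mathcal{B})$ is, by inspection and homological stability, an isomorphism on homology as graded vector spaces, hence an isomorphism of algebras and Hopf algebras. This transfers commutativity from $M_1'$ to the closed-gluing side without ever invoking infinite loop space structures. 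Your Milnor--Moore/Borel step at the end is fine once commutativity and cocommutativity are in hand, and your care about whether the localisation $L_s(\mathcal{B})\simeq C(\Omega\B N)$ respects coalgebra structure is a legitimate concern, but the weight of the proof rests on the commutativity of the Pontryagin product, and that is what needs the Tillmann comparison rather than the ``$B\Gamma_\infty^{+}$ is an infinite loop space'' shortcut.
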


\begin{proof}
Once again, the problem lies in determining that the algebra structure is as stated, since commutativity is no longer so clear.

There is a map of monoids $M_1'\to N$ (more accurately, this map is between topological monoids equivalent to $M_1'$ and $N$) which for a surface of genus $1$ is illustrated in \autoref{fig:tillmannpic}, this observation (and elegant illustration) is from Tillmann \cite{tillmann:stablemapping}. Therefore, by the universal property of localisation, there is a map of dg algebras $L_s(\mathcal{A}_1')\to L_s(\mathcal{B})$. By inspection this is the identity on homology as a map of vector spaces and hence it is an isomorphism of algebras, in fact of Hopf algebras, on homology as required.
\end{proof}

\begin{figure}[ht!]\label{fig:tillmannpic}
  \centering
    \includegraphics[scale=1.2]{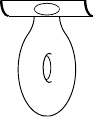}\qquad\raisebox{10mm}{$\longmapsto$}\qquad\includegraphics[scale=1.2]{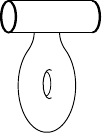}
  \caption{Map from open gluing to closed gluing}
\end{figure}

\begin{remark}
We can also obtain corresponding results for closed gluings of surfaces with punctures; we omit these extra examples since the methods are the same as those already employed above.
\end{remark}

\subsection{Localisation of the dg algebra of ribbon graphs}
Consider the anticyclic operad $\ass$ whose algebras are dg associative algebras with an odd scalar product. Viewing $\ass$ as a \emph{modular} operad with trivial self-gluings we can form its \emph{Feynman transform} $\fass$ \cite{getzlerkapranov1998:modularoperads}. The vacuum part $\fass((0))$ of it is just Kontsevich's ribbon graph complex \cite{kontsevich1994:feynman} and its cohomology is essentially the homology of moduli spaces of Riemann surfaces with unlabelled boundary components. Similarly, the dg space $\fass((n))$ for $n > 0$ is the complex of ribbon graphs with $n$ marked legs; the corresponding homology is the homology of the moduli spaces of Riemann surfaces with $n$ labelled open intervals embedded into the boundary \cite{costello2007:ribbondecomp}. In view of the correspondence between mapping class groups and moduli spaces of Riemann surfaces, the dg space $\fass((2))$ is quasi-isomorphic to $C(M_*)$. Moreover, the operadic composition determines the structure of a dg algebra on $\fass((2))$ which reflects the gluing of Riemann surfaces along open boundaries. This operation corresponds to the multiplication on the monoid $M_*$ and we conclude that $\fass((2))$ and $C(M_*)$ are quasi-isomorphic as dg algebras. Under this quasi-isomorphism the class $s\in H_0(M_*)$ is represented by any trivalent ribbon graph of topological genus one, with two legs and a single boundary component. Similarly, the class $u\in H_0(M_*)$ is represented by a trivalent ribbon graph of topological genus zero, two single boundary components and a leg attached to each boundary component, see \autoref{fig:ribbongraphgenerators}.

\begin{figure}[ht!]
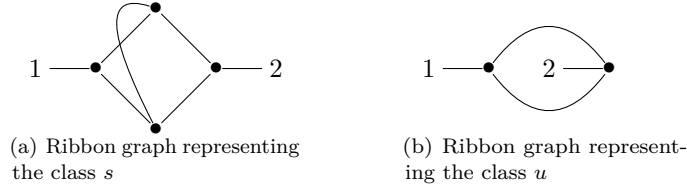

  \centering
  \subfigure[Ribbon graph representing the class $s$]{
{\xygraph{!{<0cm,0cm>;<0.8cm,0cm>:<0cm,0.8cm>::}
&& *{\bullet}="t"\\
{1}="1" & *{\bullet}="l" && *{\bullet}="r" & {2}="2"\\
&& *{\bullet}="b"\\
"1"-"l" "l"-"t" "l"-"b" "t"-"r" "b"-"r" "r"-"2" "b"-@`{"t"+(-1.3,0.5)}"t"}}
}\qquad\qquad
\subfigure[Ribbon graph representing the class $u$]{
\xygraph{!{<0cm,0cm>;<0.8cm,0cm>:<0cm,0.8cm>::}
&& *{\phantom{\bullet}}="t"\\
{1}="1" & *{\bullet}="l" &{2}="2" & *{\bullet}="r" & {\phantom{2}}\\
&& *{\phantom{\bullet}}="b"\\
"1"-"l" "l"-@`{"t"+(0,0.4)}"r" "l"-@`{"b"+(0,-0.4)}"r" "r"-"2"}}
  \caption{Ribbon graphs corresponding to generators of $\pi_0(M_*)$}
  \label{fig:ribbongraphgenerators}
\end{figure}

We obtain the following corollary, from \autoref{thm:opengluings}.

\begin{corollary}\mbox{}
\begin{enumerate}
\item The dg algebra $L_s(\fass((2)))$ has homology \[\mathbb Q[\kappa_1,\kappa_2,\dots]\otimes \mathcal{D}_*[s,s^{-1}],\] where $\mathcal{D}_*\subset \mathbb{Q}[\nu,t,u]/(u^2-ut)$ is the subalgebra of elements of the form $\lambda + tx$ with $\lambda\in \mathbb{Q}, x \in\mathbb{Q}[\nu,t,u]/(u^2-ut) $.
\item The dg algebra $L_{u,s}(\fass((2)))$ has homology \[\mathbb Q[\nu, \kappa_1,\kappa_2,\dots][t,t^{-1},s,s^{-1}].\]
\end{enumerate}
\end{corollary}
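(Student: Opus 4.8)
The plan is to deduce both parts from \autoref{thm:opengluings} by transporting the computation for $\mathcal A_*=C(M_*)$ along the quasi-isomorphism of dg algebras $\fass((2))\simeq\mathcal A_*$ recalled in the discussion above. The key formal input is quasi-isomorphism invariance of derived localisation: by \autoref{prop:equivloc}, a quasi-isomorphism $A\we A'$ — and hence any zig-zag of such, i.e.~any isomorphism in $\Ho(\dgalg)$ — induces a quasi-isomorphism $L_T(A)\we L_T(A')$ for any set $T$ of homogeneous homology classes, the localising sets being matched under the induced isomorphism on homology. Since we work over the field $\mathbb Q$, every dg algebra in sight is left proper (\autoref{thm:flatleftproper}), so there is no cofibrant-replacement subtlety to worry about.

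First I would pin down which homology classes of $\mathcal A_*$ correspond to the classes $s$ and $u$ of $\fass((2))$. Under the identification of $\fass((2))$ with $C(M_*)$ — which rests on the dictionary between ribbon graphs with two marked legs and moduli of Riemann surfaces with two embedded open boundary intervals, cf.~\cite{kontsevich1994:feynman, costello2007:ribbondecomp} — a connected trivalent ribbon graph $G$ determines a bordered surface whose topological genus and number of boundary components are read off from the combinatorics of $G$: the genus from the Euler characteristic $\#V-\#E$ of the underlying graph, and the boundary components from the faces of the cyclic orderings at the vertices. I would check that the graph in \autoref{fig:ribbongraphgenerators}(a) has topological genus one, two legs and a single boundary component, so that it represents the degree-zero class generating the factor $s\in\pi_0(M_*)\subset H_0(M_*)$; and that the graph in \autoref{fig:ribbongraphgenerators}(b) has genus zero and two boundary components with one leg attached to each, so that it represents the class generating $u$. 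These are degree-zero classes because $\fass((2))$ is non-negatively graded with $H_0$ spanned by connected graphs. This combinatorial bookkeeping is the only place where genuine content enters, and I expect it to be the main obstacle; everything else is formal.

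Granting this identification, \autoref{prop:equivloc} yields quasi-isomorphisms of dg algebras $L_s(\fass((2)))\simeq L_s(\mathcal A_*)$ and $L_{u,s}(\fass((2)))\simeq L_{u,s}(\mathcal A_*)$ — in the second case one localises directly at the two-element set $\{u,s\}$ of homology classes, matched on both sides, although it is equally valid to write $L_{u,s}=L_u\circ L_s$ as in the proof of \autoref{thm:opengluings} via \autoref{lem:pushoutlemma}. Passing to homology with rational coefficients — so that the ``$\otimes\mathbb Q$'' appearing in \autoref{thm:opengluings} may be suppressed — and quoting parts (1) and (2) of that theorem gives
\[
H(L_s(\fass((2))))\cong \mathbb Q[\kappa_1,\kappa_2,\dots]\otimes\mathcal D_*[s,s^{-1}]
\]
and
\[
H(L_{u,s}(\fass((2))))\cong \mathbb Q[\nu,\kappa_1,\kappa_2,\dots][t,t^{-1},s,s^{-1}],
\]
with $\mathcal D_*\subset\mathbb Q[\nu,t,u]/(u^2-ut)$ the subalgebra described in \autoref{thm:opengluings}, which is the assertion of the corollary.
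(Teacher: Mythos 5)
Your proposal is correct and follows exactly the paper's (implicit) argument: transport the computation of \autoref{thm:opengluings} across the quasi-isomorphism $\fass((2))\simeq C(M_*)$ using quasi-isomorphism invariance of localisation (\autoref{prop:equivloc}), after matching the ribbon-graph representatives of $s$ and $u$ with their counterparts in $\pi_0(M_*)$ as described in the paragraph preceding the corollary. The paper does not give an explicit proof — it simply records the corollary as following from \autoref{thm:opengluings} — and your write-up spells out precisely this deduction.
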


\begin{remark}
The dg algebra $\fass((2))$ is a cofibrant dg $\ground[s]$--algebra so the localisation $L_s(\fass((2)))$ can be obtained by just adjoining a single strict inverse to the element $s$. A similar remark holds for the localisation $L_{u,s}(\fass((2)))$.
\end{remark}

\bibliography{references}
\bibliographystyle{alphaurl}

\end{document}